\def\TKsuppress#1{}
\def\LPsuppress#1{}
\def\MDsuppress#1{}
\def\COMMENT#1{}
\numberwithin{equation}{section}
\newcommand{\eps}{\varepsilon}
\newcommand{\mc}[1]{\mathcal{#1}}
\newcounter{i}
\theoremstyle{plain}
\newtheorem{thm}{Theorem}[section]
\newtheorem{lem}[thm]{Lemma}
\newtheorem{claim}{Claim}[thm]
\newtheorem{proposition}[thm]{Proposition}
\newtheorem{fact}[thm]{Fact}
\newtheorem{cor}[thm]{Corollary}
\newtheorem{conj}[thm]{Conjecture}
\newenvironment{proofclaim}[1][]%
{\noindent \emph{Proof.} {}{#1}{}}{\hfill
	$\Diamond$\vspace{1em}}
\theoremstyle{plain} 
\newcommand{\thistheoremname}{}
\newtheorem{genericthm}{\thistheoremname}
\theoremstyle{definition}
\newtheorem{definition}[thm]{Definition}
\newcommand{\cQ}{\mathcal{Q}} 
\newcommand{\cC}{\mathcal{C}}
\newcommand{\cD}{\mathcal{D}} 
\newcommand{\cB}{\mathcal{B}}
\newcommand{\cF}{\mathcal{F}}
\newcommand{\cH}{\mathcal{H}}
\newcommand{\cG}{\mathcal{G}}
\newcommand{\cW}{\mathcal{W}}
\newcommand{\Prob}[1]{\ensuremath{%
\mathbb P\left[#1\right]
}}
\newcommand{\ProbCond}[2]{\ensuremath{%
    \mathbb P\left[#1\:\middle|\:#2\right]
  }}
\newcommand{\Expect}[1]{\ensuremath{%
\mathbb E\left[#1\right]
}}
\newcommand{\aas}{a.a.s.\ }
\title{Clique Decompositions in Random Graphs via Refined Absorption}
\author{
Michelle Delcourt
\thanks{Department of Mathematics, Toronto Metropolitan University (formerly named Ryerson University),
Toronto, Ontario M5B 2K3, Canada {\tt mdelcourt@torontomu.ca}. Research supported by NSERC under Discovery Grant No. 2019-04269.}
\and
Tom Kelly
\thanks{School of Mathematics, Georgia Institute of Technology, Atlanta, GA 30332, USA {\tt tom.kelly@gatech.edu}.
Research supported by the National Science Foundation under Grant No. DMS-2247078.}
\and
Luke Postle
\thanks{Combinatorics and Optimization Department,
University of Waterloo, Waterloo, Ontario N2L 3G1, Canada {\tt lpostle@uwaterloo.ca}. Partially supported by NSERC
under Discovery Grant No. 2019-04304.}}
\date{February 29, 2024}
\begin{document}

\maketitle

\begin{abstract} 
We prove that if $p\ge n^{-\frac{1}{3}+\beta}$ for some $\beta > 0$, then asymptotically almost surely the binomial random graph $G(n,p)$ has a $K_3$-packing containing all but at most $n + O(1)$ edges. Similarly, we prove that if $d \ge n^{\frac{2}{3}+\beta}$ for some $\beta > 0$ and $d$ is even, then asymptotically almost surely the random $d$-regular graph $G_{n,d}$ has a triangle decomposition provided $3 \mid d \cdot n$. We also show that $G(n,p)$ admits a fractional $K_3$-decomposition for such a value of $p$. We prove analogous versions for a $K_q$-packing of $G(n,p)$ with $p\ge n^{-\frac{1}{q+0.5}+\beta}$ and leave of $(q-2)n+O(1)$ edges, for $K_q$-decompositions of $G_{n,d}$ with $(q-1)~|~d$ and $d\ge n^{1-\frac{1}{q+0.5}+\beta}$ provided $q\mid d\cdot n$, and for fractional $K_q$-decompositions.
\end{abstract}

\section{Introduction}

For (hyper)graphs $F$ and $G$, an \textit{$F$-decomposition} of $G$ is a collection of pairwise edge-disjoint copies of $F$ in $G$ whose edge sets partition $E(G)$, and an \textit{$F$-packing} of $G$ is an $F$-decomposition of a subgraph of $G$.  Decompositions of graphs are closely related to combinatorial designs.  For example, a decomposition of a complete graph into triangles is a \textit{Steiner triple system}.  A \textit{balanced incomplete block design (BIBD)} with parameters $n,q,\lambda$, or a \textit{$2$-$(n,q,\lambda)$ design}, is a $K_q$-decomposition of the $n$-vertex multi-graph with $\lambda$ parallel edges between every pair of distinct vertices.  More generally, an \textit{$r$-$(n,q,\lambda)$ design} is a $K_q^r$-decomposition of the $n$-vertex hypergraph with $\lambda$ hyperedges containing every $r$-set of vertices.

If a graph $G$ admits an $F$-decomposition, then $e(F)\mid e(G)$, where $e(G)$ denotes the number of edges of $G$; moreover, if $F$ is $r$-regular, then the degree $d(v)$ of every vertex in $G$ is divisible by $r$.  To that end, we say $G$ is \textit{$K_q$-divisible} if $\binom{q}{2} \mid e(G)$ and $q - 1 \mid d(v)$ for every $v \in V(G)$.  In the 1970s, Wilson~\cite{WiI, WiII, WiIII} famously proved that a $2$-$(n,q,\lambda)$ design exists for sufficiently large $n$ satisfying these necessary divisibility conditions.  This result is the $r = 2$ case of the Existence Conjecture, dating back to the 1850s, which states that an $r$-$(n,q,\lambda)$ design exists for sufficiently large $n$ such that $\binom{q - i}{r - i} \mid \lambda\binom{n - i}{r - i}$ for every $i \in \{0,\dots,r-1\}$.  The Existence Conjecture remained open until a breakthrough of Keevash~\cite{K14} in 2014.

In his proof of the Existence Conjecture, Keevash introduced an \textit{absorption} technique he called ``Randomized Algebraic Construction''.  In 2016, Glock, K\"uhn, Lo, and Osthus~\cite{GKLO16} provided a new, purely combinatorial, proof of the Existence Conjecture using ``Iterative Absorption''.  The absorption method was first systematized by R\"odl, Ruci\'{n}ski, and Szemer\'edi~\cite{RRS06, RRS06pm, RRS08, RRS09} in the 2000s for finding perfect matchings and Hamilton cycles in hypergraphs, which may be considered ``embedding'' problems, and it has driven many advances in the last two decades.  The absorption method was first used for decomposition problems by Knox, K\"uhn, and Osthus~\cite{KKO15} and K\"uhn and Osthus~\cite{KO13} and has been used to settle many longstanding decomposition problems since, including the Existence Conjecture \cite{KO13, K14, Ke18, Ke18EOD, GJKKO21, MPS21, KS20, KKKMO23}.

An important area of research is the study of embeddings in and decompositions of \textit{random graphs}. 
One of the most highly studied models is the \textit{binomial random graph $G(n,p)$}, which is an $n$-vertex random graph in which each edge is included independently with probability $p$.
In 1966, Erd\H{o}s and R\'{e}nyi~\cite{ER66} proved that if $p = \omega(n^{-1}\log n)$, then asymptotically almost surely (which we abbreviate as a.a.s.) $G(2n,p)$ contains a perfect matching.  In 1976, P\'osa~\cite{Po76} generalized this result by showing that if $p = \omega(n^{-1}\log n)$, then \aas $G(n,p)$ contains a Hamilton cycle.  
Both of these results are best possible in the sense that, for $p = o(n^{-1}\log n)$, \aas $G(n,p)$ has a vertex contained in no edges, and thus does not contain a perfect matching or Hamilton cycle.  That is, $n^{-1}\log n$ is a \textit{threshold function} for $G(n,p)$ to contain a Hamilton cycle.  In 2019, Montgomery~\cite{Mo19} proved that for every fixed $\Delta$, if $p = \omega(n^{-1}\log n)$, then \aas $G(n,p)$ in fact contains a copy of every spanning tree of maximum degree at most $\Delta$.  In this proof, Montgomery introduced the ``distributive absorption'' or ``template absorption'' method, which has found several important applications since.

Erd\H{o}s and R\'{e}nyi's result can be generalized in at least two additional ways. 
First, Shamir's Problem from the 1970s asked for the threshold for the binomial random $k$-uniform hypergraph to contain a perfect matching, and a closely related problem is to determine the threshold for $G(rn, p)$ to contain a $K_r$-factor.  Both problems were settled by Johansson, Kahn, and Vu~\cite{JKV08} in 2008 and also follow from 
recent breakthroughs of Frankston, Kahn, Narayanan, and Park~\cite{FKNP21} and Park and Pham~\cite{PP23} on the Kahn--Kalai Conjecture~\cite{KK07}.
Notably, both proofs are non-constructive, and there appears to be a theoretical barrier to obtaining optimal threshold bounds proved with known absorption-based approaches.  Prior to Johansson, Kahn, and Vu's proof, Krivelevich~\cite{Kr97} proved an upper bound on the triangle-factor threshold with a technique that could be considered the first use of the absorption method. Recently Ferber and Kwan~\cite{FK22} used the distributive absorption method to prove a ``resilience version'' of Shamir's Problem which provides an upper bound on the perfect matching threshold.  However, these bounds do not match the known threshold functions.

Shamir's Problem can be understood as asking for the threshold for the binomial random $qn$-vertex $q$-uniform hypergraph to contain a $1$-$(qn, q, 1)$ design.  
It also may be understood as a problem of decomposing \textit{vertices} rather than \textit{edges}.  It is natural to ask about (edge) decompositions of random graphs; however, due to the divisibility requirements, it is unlikely that $G(n,p)$ admits an $F$-decomposition for most choices of $F$.
There are two natural ways to amend this question.  First, we can ask for an $F$-packing of $G(n,p)$ with small \textit{leave}, or we can ask whether the uniformly random $n$-vertex $d$-regular graph $G_{n,d}$ contains a $K_q$-packing, provided $n$ and $d$ are chosen to satisfy the divisibility conditions.
(In a companion paper~\cite{DKPIV}, we study the threshold for the binomial random $q$-uniform $n$-vertex hypergraph to contain a $2$-$(n, q, 1)$ design.)
The case for when $F$ is a Hamilton cycle, which was first considered by Bollob{\'a}s and Frieze~\cite{BF83} in the 1980s, has attracted considerable attention.  
Frieze and Krivelevich~\cite{FK08} conjectured that for \textit{every} $p = p(n)$, \aas $G\sim G(n,p)$ contains $\lfloor \delta(G) / 2\rfloor$ pairwise edge-disjoint Hamilton cycles, plus a perfect matching if $\delta(G)$ is odd.
Here, $\delta(G)$ denotes the minimum degree of a vertex of $G$, so every graph $G$ has at most $\lfloor \delta(G) / 2\rfloor$ pairwise edge-disjoint Hamilton cycles.
This conjecture was settled by the combined work of several sets of authors \cite{BF83, KS12, KKO15, KO13}.
Notably, the ``dense'' range of $p$ for this problem, $p \geq n^{-1}\log^{50} n$, was settled by the first two papers to use the Iterative Absorption method \cite{KKO15, KO13}.  
The analogue of Frieze and Krivelevich's conjecture for random regular graphs was posed by K\"uhn and Osthus~\cite{KO14} and remains open.
Other packing problems in random graphs include packing spanning trees, or packing perfect matchings and Hamilton cycles in directed graphs and hypergraphs \cite{FLMN17, FS19, FV18, FKL17}.

In a 2007 survey, Yuster~\cite{Yu07} posed conjectures about both $K_q$-packings in $G(n,p)$ with small leave and $K_q$-decompositions of random regular graphs.  Essentially, he conjectured that if the expected density of the random structure is large enough so that \aas every edge is in a copy of $K_q$, then \aas it is possible to optimally pack copies of $K_q$ in the random structure (see Conjectures~\ref{conj:YusterGnp} and \ref{conj:YusterGnd}).
Despite the many advances with the absorption method in recent years, very little progress has been made on these conjectures.  
The Randomized Algebraic approach of Keevash~\cite{K14} and the Iterative Absorption approach of Glock, K\"uhn, Lo, and Osthus~\cite{GKLO16} for constructing designs have limited applicability for packing cliques in $G(n,p)$ or $G_{n,d}$ when $p$ or $d$ is small.  
Recently, the first and third authors~\cite{DPI} introduced a technique called ``Refined Absorption'' and provided a new proof of the Existence Conjecture.  In this paper, we use this technique to significantly improve the bounds for Yuster's conjectures.

\subsection{Clique Packings and Decompositions of Random Graphs}

For triangle packings in $G(n,p)$, Yuster conjectured that $\sqrt{(\log n) / n}$ is a \textit{sharp threshold} function for the property that $G(n,p)$ has a $K_3$-packing with leave of size at most $3n$, as follows.

\begin{conj}[Yuster \cite{Yu07}]\label{conj:YusterGnp}
    If $p \geq (1 + \eps)\sqrt{(\log n) / n}$, then asymptotically almost surely $G(n, p)$ has a $K_3$-packing containing all but at most $3n$ edges.
\end{conj}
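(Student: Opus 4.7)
The plan is to realise the conjecture as a near-perfect matching problem in the auxiliary $3$-uniform hypergraph $\mc{T}$ whose vertex set is $E(G)$ and whose hyperedges are the triangles of $G \sim G(n,p)$. At $p = (1+\eps)\sqrt{(\log n)/n}$, a Chernoff-type computation shows that \aas every edge of $G$ lies in at least $(1+\eps/2)^2 \log n$ triangles, while any two edges sharing a common vertex are contained in at most one common triangle; thus $\mc{T}$ is essentially $D$-regular with $D \sim \log n$ and is linear (codegree $\le 1$). A matching in $\mc{T}$ missing at most $3n$ vertices then translates directly into a $K_3$-packing with the required leave.

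First, I would set aside a random \emph{absorber} $A \subseteq G$ by retaining each edge independently with a small probability $p' = o(p)$, chosen so that $A$ itself has the property that every $K_3$-divisible subgraph of $A$ (up to a controlled defect) admits a $K_3$-decomposition using only triangles of $A$. Constructing such an absorber at this density is the heart of the problem: I would adapt the Refined Absorption framework of Delcourt--Postle~\cite{DPI}, building a collection of localised ``gadget'' absorbers each responsible for a small piece of the potential leave, and gluing them together by a template argument in the spirit of Montgomery's distributive absorption~\cite{Mo19}.

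Next, I would run the R\"odl nibble (Pippenger--Spencer style) on $G \setminus A$ to produce a $K_3$-packing covering all but a pseudorandom remainder $R$. Because $\mc{T}$ is linear with degree $D \sim \log n$, the standard nibble bound gives $|R| = O(e(G)/\sqrt{\log n}) = O(n^{3/2})$, which is enormously larger than the target leave of $3n$. To bridge this gap, I would iterate the nibble with fresh randomness (including triangles supported partially in $A$) and interleave it with an augmentation/booster argument that exploits the linearity of $\mc{T}$ together with the pseudorandomness of $R$ inherited from $G(n,p)$, driving the leave down to something degree-bounded and $K_3$-divisible modulo $O(n)$ edges. The absorber is then invoked on $R \cup A$ to close out the packing.

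The central obstacle is precisely this \emph{gap between the nibble leave and the target leave}: unlike the dense regime $p \ge n^{-1/3+\beta}$ treated in this paper, at the conjectural threshold the triangle hypergraph is too sparse for a single nibble pass to reduce the leave below $\Omega(n^{3/2})$, yet the absorber $A$ must itself be sparse enough (edge-density $o(p)$) not to spoil divisibility or exhaust the triangle reservoir globally. Making the Refined Absorption technology of~\cite{DPI} function in this extreme sparsity regime --- in particular, producing an absorber that can ingest a leave of order $n^{3/2}$ while containing only $O(n^2 p')$ edges --- is where I expect the principal new idea to be required, and it is the reason the conjecture has so far resisted the absorption-based approaches that succeed for denser random graphs.
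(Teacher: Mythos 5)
This is Conjecture~\ref{conj:YusterGnp}, which the paper states as \emph{open}: the authors do not prove it and explicitly say it is ``still wide open,'' offering instead Theorem~\ref{thm:YusterTriangle} with the much larger threshold $p \ge n^{-1/3+\beta}$. So there is no ``paper's proof'' to match against; your write-up is correctly an outline with an acknowledged gap rather than a proof.

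That said, your diagnosis of where the gap lies does not match how the machinery in this paper actually works, and I think this is worth flagging because it would send you down an unproductive path. You identify the central obstacle as the discrepancy between the standard nibble's leave $\Theta(n^{3/2})$ and the target $O(n)$, and propose to close it by iterating the nibble and interleaving booster steps. But the paper never confronts that discrepancy at all: it uses the ``nibble with reserves'' theorem (Theorem~\ref{thm:NibbleReserves}), in which one reserves a random slice $X$ up front with the property that every edge outside $X \cup A$ sits in $\gg 1$ reserve triangles supported in $X$, and the matching theorem then returns a \emph{perfect} (not almost-perfect) cover of $G \setminus (X \cup A)$. The $n^{3/2}$ figure simply never appears; nothing is being ``driven down.'' The genuine bottleneck, which the paper names explicitly in Section~\ref{s:Overview} and again in Section~\ref{s:Conclusion}, is the \emph{absorber embedding threshold}: to carry out Step~(2) one must embed private $K_3$-absorbers inside $G(n,p)$, and the embedding theorem (Theorem~\ref{thm:Embed}) requires $p \gtrsim n^{-1/m_2(A,V(L))}$. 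Since the best constructions give $d_{\rm abs}(K_3) \le 3$ (Theorem~\ref{thm:GeneralTriangleAbsorberThreshold}) and the lower bound $d_{\rm abs}(K_3) \ge 2$ holds (Corollary~\ref{cor:AbsLowerBound}, with the authors conjecturing equality at $3$), constant-size absorption caps out around $p \approx n^{-1/3}$, well above $\sqrt{(\log n)/n}$. So the new idea needed at the conjectural threshold is not a better nibble but a fundamentally different (or non-constant-size) absorption mechanism; the authors say as much in the conclusion.

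One smaller point: your description of $A$ as having the property that ``every $K_3$-divisible subgraph of $A$ (up to a controlled defect) admits a $K_3$-decomposition using only triangles of $A$'' conflates the absorber with the thing it absorbs. An omni-absorber $A$ for $X$ (Definition in Section~\ref{s:Overview}) is edge-disjoint from $X$, and the guarantee is that $L \cup A$ admits a $K_3$-decomposition for every $K_3$-divisible $L \subseteq X$. It absorbs divisible leftovers from $X$, not from itself.
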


A $d$-regular $n$-vertex graph is $K_3$-divisible provided $d$ is even and at least one of $d$ and $n$ is divisible by 3. Here then is Yuster's conjecture for triangle decompositions of random regular graphs.

\begin{conj}[Yuster \cite{Yu07}]\label{conj:YusterGnd}
    If $d \gg \sqrt{n \log n}$ and $d$ is even, then asymptotically almost surely $G_{n,d}$ has a $K_3$-decomposition provided $3 \mid d \cdot n$.
\end{conj}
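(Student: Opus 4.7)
\medskip
\noindent\textbf{Proof proposal.} The plan is to combine quantitative pseudorandomness of $G_{n,d}$ with the Refined Absorption framework of~\cite{DPI}, pushed to its sparse extreme. Write $p = d/n$, so the hypothesis $d \gg \sqrt{n \log n}$ gives $p \gg \sqrt{(\log n)/n}$, which is exactly the threshold at which \aas every edge of $G_{n,d}$ lies in at least one triangle. Using switching methods or recent sandwich theorems between $G_{n,d}$ and $G(n,p)$, one first establishes that \aas $G \sim G_{n,d}$ is quasirandom in a strong sense: codegrees concentrate at $(1 \pm o(1))d^2/n$, every edge lies in $(1 \pm o(1))d^2/n$ triangles (i.e.\ $\Theta(\log n)$ near the threshold), and the counts of $C_4$'s, $K_4$'s, and joint neighbourhoods at every vertex and edge are simultaneously concentrated.

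With quasirandomness in hand, I would then execute Refined Absorption. Reserve a sparse random ``absorbing reservoir'' $R \subseteq E(G)$ of density $\eps$ and construct inside $R$ an absorber $A$ with the property that for every $K_3$-divisible subgraph $L \subseteq G \setminus A$ of size $|E(L)| \le n^{2-\eta}$ satisfying mild local pseudorandomness, $A \cup L$ admits a $K_3$-decomposition. Next, run a $K_3$-nibble on $G \setminus A$—for instance the R\"odl nibble with degree-tracking concentration—to obtain a packing whose leave $L^*$ has $|E(L^*)| \le n^{2-\eta}$; use the flexibility built into $A$ to correct any divisibility or parity obstructions of $A \cup L^*$; and finally invoke the absorber to finish a $K_3$-decomposition of $G$.

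The chief difficulty, and the step I expect to be the main obstacle, is constructing the absorber $A$ in the sparse regime $d \approx \sqrt{n \log n}$. At this threshold each edge participates in only $\Theta(\log n)$ triangles, and a positive fraction of those triangles involve a ``thin'' edge—one that itself lies in very few triangles. Absorbers in dense regimes, including those developed in the present paper (which require $d \ge n^{2/3+\beta}$), rely on abundant alternative triangles per edge to guarantee the rerouting flexibility needed by the absorber, and at the conjectured threshold that slack essentially disappears. A promising route is a hierarchical refined absorber in which the scarce triangles at the finest scale are reserved a priori and combined through nested absorbers at coarser scales, with divisibility corrections encoded locally via small built-in ``parity gadgets'' that respect the $d$-regularity of the host. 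Producing such a structure \aas inside $G_{n,d}$, and proving it absorbs any leave the nibble could produce, is the technical heart of the argument and the true source of the gap between the conjectured threshold and what the current methods can achieve.
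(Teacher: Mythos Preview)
This statement is an \emph{open conjecture} that the paper does \emph{not} prove; it is stated as Conjecture~\ref{conj:YusterGnd} and left open. The paper's contribution toward it is Theorem~\ref{thm:YusterTriangleRegular}, which requires the much stronger hypothesis $d \ge n^{2/3+\beta}$, and the paper explicitly identifies the absorber rooted $2$-density threshold $d_{\rm abs}(K_3)$ (together with Corollary~\ref{cor:AbsLowerBound}) as the barrier preventing their method from going further.

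Your proposal is therefore not comparable to any proof in the paper, and as written it is not a proof at all: you correctly isolate the sandwich-plus-refined-absorption framework the paper uses, and you correctly diagnose that the bottleneck is embedding an absorber when each edge lies in only $\Theta(\log n)$ triangles. But your final paragraph concedes precisely this step as ``the technical heart of the argument'' and offers only a speculative ``hierarchical refined absorber'' with no construction, no density bound, and no argument that such an object can be embedded in $G_{n,d}$ at the conjectured threshold. The paper shows (Corollary~\ref{cor:AbsLowerBound}) that \emph{any} constant-size $K_3$-absorber has maximum rooted $2$-density at least $2$, and the Embedding Theorem~\ref{thm:Embed} requires $p \ge n^{-1/m_2(S,R)+o(1)}$; so constant-size absorbers provably cannot be embedded below $p \approx n^{-1/2}$ by this route, which is already the conjectured threshold with no room to spare. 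Your ``hierarchical'' suggestion would need non-constant-size absorbers and an entirely new embedding argument, neither of which you supply. In short, you have restated the open problem rather than solved it.
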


In both conjectures, the value of $d$ or $p$ is essentially the smallest one that still guarantees that every edge is in a triangle. 
Yuster also generalized these conjectures to $K_q$-decompositions.
Namely Yuster asked for the threshold for the random $d$-regular graph $G_{n,d}$ to have a $K_q$-decomposition (assuming here that $n$ and $d$ satisfy the necessary divisibility conditions, namely $(q-1)~|~d$ and $q~|~d\cdot n$), and it seems like this threshold should coincide with the threshold for every edge being in a $K_q$, namely $d \gg \left(\log^{1/\left(\binom{q}{2}-1\right)}n\right)\cdot n^{(q-1)/(q+1)}$.  Similarly, Yuster asked for the threshold for $G(n, p)$ to have a $K_q$-packing containing all but at most $qn$ edges, which we expect coincides with the threshold for every edge to be in a $K_q$, so $p \gg \left(\log^{1/\left(\binom{q}{2}-1\right)}n\right) / n^{2/(q+1)}$.  Here we are using that $(q - 2)/\left(\binom{q}{2} - 1\right) = 2/(q + 1)$.

Now one might wonder about the value of $3n$ (or more generally $qn$) in Conjecture~\ref{conj:YusterGnp}. 
Since $G(n,p)$ \aas contains $\Omega(n)$ vertices with degree not divisible by $q - 1$, the size of the leave must at least be linear in $n$.
It seems that Yuster conjectured $3n$ as a simple natural bound that was easy to state or generalize. 
We discuss what the optimal bounds could be in the conclusion. 

Yuster's conjectures are still wide open.  When Keevash~\cite{K14} proved the Existence Conjecture, he also proved that $G(n, 1/2)$ \aas has a triangle packing containing all but at most $n/4 + o(n)$ edges, and his work on decomposing quasirandom / typical graphs implies that $G_{n,d}$ has a $K_q$-decomposition for $d \geq n^{1 - \eps}$ for some unspecified $\eps > 0$.  It is conceivable his methods could also be used to provide a similar result in the $G(n,p)$ setting for $p \geq n^{-\eps}$.  However, the value of $\eps$ is likely far from optimal, at best, say $\eps \leq 2^{-\Omega(q)}$.

We make significant progress on the conjectures of Yuster using the method of refined absorbers, introduced by the first and third authors in~\cite{DPI}, proving a $p$ value that seemed out of reach when using previous methods.
Here is our first result. 

\begin{thm}\label{thm:YusterTriangle}
If $p\ge n^{-\frac{1}{3}+\beta}$ for some $\beta > 0$, then asymptotically almost surely $G(n,p)$ has a $K_3$-packing containing all but at most $n+O(1)$ edges.
\end{thm}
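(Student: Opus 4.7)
The plan is to combine the fractional $K_3$-decomposition of $G(n,p)$, proved elsewhere in this paper for the same range of $p$, with a \emph{refined absorption} argument in the sense of Delcourt--Postle~\cite{DPI} to convert the fractional decomposition into an integer $K_3$-packing with small leave. The proof proceeds in four stages: parity correction, reservation of an absorber, approximate integer decomposition via a nibble, and absorption of the remainder.

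First I would sample $G \sim G(n,p)$ and condition on typicality: \aas every vertex has degree $(1 \pm o(1))pn$, every pair of vertices has codegree $(1 \pm o(1))p^2 n$, and every edge lies in $(1 \pm o(1))p^2 n$ triangles. Let $T \subseteq V(G)$ be the set of vertices of odd degree; then $|T|$ is even and $|T| = n/2 \pm O(\sqrt{n \log n})$ \aas. Since $p \gg (\log n)/n$, the induced random graph $G[T]$ \aas contains a perfect matching $M_T$. Set $L_{\mathrm{par}} := M_T$, augmented by $0$, $1$, or $2$ carefully chosen edges so that $e(G \setminus L_{\mathrm{par}}) \equiv 0 \pmod 3$; then $|L_{\mathrm{par}}| \le n/4 + O(1)$ and $G' := G \setminus L_{\mathrm{par}}$ is $K_3$-divisible.

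Next, I would reserve a small random subgraph $A \subseteq G'$ as a refined absorber: $A$ is constructed so that for every sufficiently small $K_3$-divisible graph $R$ supported on a prescribed ``flexible'' edge set, the union $A \cup R$ admits a $K_3$-decomposition. Apply the fractional $K_3$-decomposition theorem to $G' \setminus A$ and round it to an integer $K_3$-packing via a semi-random nibble; this succeeds because each edge lies in $\gg n^{1/3}$ triangles, providing ample room for the standard concentration estimates. The uncovered remainder $R$ has $o(|E(G')|)$ edges and can be steered into the flexible region by a redistribution step. The absorber then decomposes $A \cup R$ into triangles, producing a $K_3$-decomposition of $G'$ and hence a $K_3$-packing of $G$ with leave $L_{\mathrm{par}}$ of size at most $n/4 + O(1) < n + O(1)$.

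The main obstacle is the construction and analysis of the refined absorber at such low density. With $p = n^{-1/3 + \beta}$, each edge lies in only $\sim n^{1/3 + 2\beta}$ triangles, so the absorber is forced to operate on an extremely tight triangle budget. Classical iterative absorption builds nested structures consuming many triangles per edge and breaks down in this near-critical range; the refined absorption of \cite{DPI} is engineered precisely for this regime. A secondary difficulty is ensuring that the nibble's remainder inherits sufficient quasirandomness to be absorbable, which requires careful tracking of edge distributions throughout the semi-random process and also control of how $M_T$ and the absorber $A$ interact within triangle neighborhoods.
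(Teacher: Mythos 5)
Your high-level architecture (divisibility correction, an absorber reserved inside the random graph, a nibble, then absorption of the remainder into the reserve) matches the paper's, but two steps that you treat as routine are genuine gaps. First, your parity correction asserts that, with $T$ the set of odd-degree vertices, ``$G[T]$ \aas contains a perfect matching since $p \gg (\log n)/n$.'' This does not follow: $T$ is determined by the degrees of $G$, so conditionally on $T$ the graph $G[T]$ is not a binomial random graph, and the existence of a perfect matching on the odd-degree vertices of $G(n,p)$ is stated in this paper's conclusion as an \emph{open conjecture} (and an even stronger, parity-prescribed version would be needed to make it compatible with the slice structure of the argument). This is exactly why the paper settles for leave $n+O(1)$ rather than $n/4+O(1)$: it instead deletes a subgraph of a $K_3$-divisibility fixer $F$ with $e(F)\le n+O(1)$ (Lemma~\ref{lem:DivFixer}), built from a Hamilton-path power plus fake edges and embedded in a small slice of $G(n,p)$. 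A smaller quibble in the same step: ``augmenting $M_T$ by $1$ or $2$ carefully chosen edges'' to fix $e(G')\equiv 0\pmod 3$ destroys the degree condition again, since each deleted edge flips two vertex parities; one must instead delete parity-preserving gadgets (e.g.\ paths of length two through even-degree vertices), which is what a divisibility fixer formalizes.

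Second, you invoke ``the refined absorption of \cite{DPI}, engineered precisely for this regime'' as a black box for constructing the absorber $A\subseteq G(n,p)$, but the omni-absorber theorem of \cite{DPI} (Theorem~\ref{thm:omni-absorber}) only produces an absorber inside $K_n$; transporting it \emph{into} $G(n,p)$ at $p=n^{-1/3+\beta}$ is the main technical content of this paper and does not follow from \cite{DPI} alone. Concretely, one needs (i) an embedding theorem (Theorem~\ref{thm:Embed}) whose admissible $p$ is governed by the maximum rooted $2$-density of the gadgets being embedded, (ii) the fake-edge/private-absorber template turning the $K_n$-omni-absorber into one living in $G(n,p)$ (Theorem~\ref{thm:RandomOmniAbsorber}), and, crucially, (iii) \emph{new} $K_3$-absorber constructions with rooted $2$-density at most $3$ (Theorem~\ref{thm:GeneralTriangleAbsorberThreshold}); with the previously known absorbers (rooted degeneracy $4$) the same scheme would only reach $p\ge n^{-1/4+\beta}$. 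Your proposal also glosses over how the nibble's leftover is ``steered into the flexible region'': in the paper this is forced structurally by running a nibble \emph{with reserves} (Theorem~\ref{thm:NibbleReserves}) so that the uncovered edges automatically lie in the reserved set $X'$, and the regularity boosting has to be done with care because part of the host (the revealed slice) is deterministic while the rest is still random. Without items (i)--(iii) and a mechanism confining the leftover to the absorbable set, the proposal does not yet constitute a proof.
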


We prove a similar theorem for random regular graphs as follows. 

\begin{thm}\label{thm:YusterTriangleRegular}
If $d \ge n^{\frac{2}{3}+\beta}$ for some $\beta > 0$ and $d$ is even, then asymptotically almost surely $G_{n,d}$ has a triangle decomposition provided $3 \mid d \cdot n$.
\end{thm}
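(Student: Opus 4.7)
The plan is to adapt the strategy used for Theorem~\ref{thm:YusterTriangle} (the $G(n,p)$ packing result) to the random regular setting, where the built-in divisibility of $G_{n,d}$ lets us promote a near-decomposition into an actual $K_3$-decomposition via absorption. The main new tool is refined absorption from~\cite{DPI}, which produces absorbers inside sufficiently quasirandom host graphs.

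First I would establish that $G_{n,d}$ is \aas quasirandom at the scale needed: codegrees concentrate around $d^2/n$, every edge lies in $\Omega(n^{1/3+2\beta})$ triangles, and the edge and triangle statistics match those of $G(n,d/n)$. For $d \ge n^{2/3+\beta}$ this follows either from switching arguments on the configuration model or from a sandwich/contiguity coupling between $G_{n,d}$ and $G(n,d/n)$, both of which are available well inside this density regime.

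Next I would use this quasirandomness to construct, inside $G_{n,d}$, a \emph{refined absorber} $A$ in the sense of~\cite{DPI}: a sparse, $K_3$-divisible subgraph of $G_{n,d}$ with the property that for every $K_3$-divisible $L \subseteq G_{n,d} \setminus A$ whose size is below a small polynomial threshold, $A \cup L$ admits a $K_3$-decomposition. The hypotheses that $d$ is even and $3 \mid d \cdot n$ ensure that $G_{n,d}$ is $K_3$-divisible; choosing $A$ itself to be $K_3$-divisible guarantees that any leave produced later is automatically $K_3$-divisible. I would then apply a nibble-type near-decomposition (for instance random triangle removal, or the iterative near-decomposition step already present in~\cite{DPI}) to the quasirandom graph $G_{n,d} \setminus A$ to obtain a $K_3$-packing whose leave $L$ has polynomially small size, and finally invoke the absorbing property of $A$ to decompose $A \cup L$, completing a $K_3$-decomposition of $G_{n,d}$.

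The main obstacle is constructing the refined absorber inside the regular, rather than binomial, model. In $G(n,p)$ one can expose edges in two rounds and use fresh randomness for both the absorber and the nibble; in $G_{n,d}$ the regularity constraint couples all edges, so one must either expose a random ``slice'' of $G_{n,d}$ with binomial-like statistics via switchings, or work directly in $G_{n,d}$ and verify that the refined-absorber construction of~\cite{DPI} relies only on quasirandomness that $G_{n,d}$ itself enjoys. Either route requires careful bookkeeping to ensure that each subgraph singled out along the way (first $A$, and ultimately the leave $L$) is $K_3$-divisible, so that degree parities and edge-count divisibilities propagate correctly through every stage of the argument.
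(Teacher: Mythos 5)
Your outline has the right skeleton (sandwich/coupling to import quasirandomness, sparse absorber, nibble, absorb the leave), but it misses the two features that actually make the argument go through at density $d \ge n^{2/3+\beta}$, and the absorption paradigm you invoke is not the one that refined absorption provides.

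First, you describe an absorber $A$ that can decompose $A \cup L$ for \emph{any} $K_3$-divisible $L$ of ``polynomially small size.'' That is the classical absorber paradigm, and it does not combine with a plain nibble here: the leave of a nibble in $G_{n,d}$ is an essentially arbitrary sparse graph on arbitrary vertices, and an absorber that can eat all such leaves would have to be far denser than $n^{2/3}$ permits. What the paper actually uses is a $K_3$-\emph{omni}-absorber $A$ for a pre-reserved random subgraph $X \subseteq G_{n,d}$, meaning $A \cup L$ admits a $K_3$-decomposition for every $K_3$-divisible $L \subseteq X$. This only works because the nibble step is replaced by a ``nibble with reserves'' matching theorem (Theorem~\ref{thm:NibbleReserves}) which \emph{steers} the leave so that it is guaranteed to lie inside $X$. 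You never set aside a reserve $X$, and without it there is no way to ensure your leave is something $A$ can absorb. This is the load-bearing idea, and it is absent from your proposal.

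Second, even restricting to the omni-absorber framework, the sandwich coupling only gives $G(n, p_*) \subseteq G_{n,d} \subseteq G(n, p^*)$ with $p_* = (1-o(1))d/n$, so running the nibble (and building $X$, $A$) inside $G(n,p_*)$ leaves the edges of $G_{n,d} \setminus G(n,p_*)$ completely untouched. The sandwich is not tight enough for these stray edges to be negligible in the clique regularity required by the nibble, so they cannot simply be folded in. The paper handles them by ``covering down'': it reserves an auxiliary part $X'\setminus X$ and uses a bipartite hypergraph matching (Theorem~\ref{thm:EasyPerfectMatching} applied to ${\rm Reserve}_{K_q}((G\setminus G_*)\cup(X'\setminus X), G\setminus G_*, X'\setminus X)$, as set up by~\ref*{lem:what-appears-in-Gnp}.6) to cover every edge of $G_{n,d}\setminus G(n,p_*)$ by a triangle whose other two edges lie in $X'\setminus X$, pushing the residue back into $X'$ where the omni-absorber can handle it. This cover-down step is essential, is the reason both sides of the sandwich are needed (the upper coupling bounds the degrees in the auxiliary hypergraph), and is missing from your plan. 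Without both the reserve-based nibble and the cover-down, the proposal does not close.
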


As for general $q$, we also make significant progress on these questions. Here the numbers are slightly worse than the triangle case (even comparatively); this is because the known absorber constructions are comparatively less sparse.

\begin{thm}\label{thm:YusterRandom}
Let $q \ge 4$ be an integer. If $p\ge n^{-\frac{1}{q+0.5}+\beta}$ for some $\beta > 0$, then asymptotically almost surely $G(n,p)$ has a $K_q$-packing containing all but at most $(q-2)n+O(1)$ edges. 
\end{thm}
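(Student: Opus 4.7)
The plan is to adapt the refined-absorption framework of Delcourt--Postle~\cite{DPI}---the same strategy used to prove Theorem~\ref{thm:YusterTriangle}---by replacing triangles with $K_q$ throughout. In outline: expose $G \sim G(n,p)$ as $G_1 \cup G_2$, where $G_2$ is a sparse reservoir of density about $n^{-\alpha} p$ for a small $\alpha > 0$; build a family of refined $K_q$-absorbers inside $G_2$; produce a near-$K_q$-packing of $G_1$ using a R\"odl-nibble/semi-random argument guided by a fractional $K_q$-decomposition; and finally absorb the resulting leave down to the unavoidable $(q-2)n + O(1)$ edges forced by the local degree-divisibility obstruction.

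More concretely, I would proceed as follows. By the fractional $K_q$-decomposition result announced in the abstract, \aas $G_1$ admits a fractional $K_q$-decomposition. Combined with a standard nibble argument (which requires only pseudorandomness of $G_1$, and thus holds comfortably at the given density), this yields an integer $K_q$-packing of $G_1$ whose leave $L$ is quasi-random and of size $o(n^2 p)$. Inside $G_2$, I would construct, for every edge that could appear in $L$, a sparse refined $K_q$-absorber $A_e$ with the property that both $A_e$ and $A_e \cup \{e\}$ admit a $K_q$-decomposition, and arrange the family $\{A_e\}$ to be jointly compatible (possibly by reserving a third subgraph $G_3$ for final parity corrections). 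After absorbing $L$, the only uncovered edges are those incident to vertices $v$ with $d_G(v) \not\equiv 0 \pmod{q-1}$; since \aas there are $(1+o(1))\tfrac{q-2}{q-1} n$ such vertices, each contributing at most $q-2$ leftover half-edges, a careful accounting yields a leave of at most $(q-2)n + O(1)$ edges.

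The main obstacle is constructing the refined $K_q$-absorbers inside the sparse random reservoir $G_2$. The exponent $1/(q+0.5)$ in the theorem is determined exactly by the density at which the smallest known refined $K_q$-absorber gadget from \cite{DPI} first appears as a subgraph of $G(n,p)$: a first-moment computation on that gadget shows that $p \geq n^{-1/(q+0.5)+\beta}$ guarantees every potential leave edge has $n^{\Omega(\beta)}$ available absorbers in $G_2$, with the exponent $q + 0.5$ (rather than the naive $q+1$ coming from pure counting of $K_q$'s) reflecting the extra density needed to support the flexibility of the absorber. A secondary technical difficulty is to match the nibble leave to these pre-constructed absorbers; as in~\cite{DPI}, this is handled by setting up an auxiliary hypergraph whose vertices are leave-edges and whose hyperedges are absorber choices, and extracting a near-perfect matching in it via a Pippenger--Spencer-type nibble. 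All remaining ingredients---the fractional decomposition, concentration for subgraph counts, and the divisibility bookkeeping---are quantitative adaptations of the corresponding triangle-case tools and should go through with only routine modifications for $K_q$.
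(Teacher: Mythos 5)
Your central absorption step relies on a gadget that cannot exist. You ask, for each potential leave edge $e$, for a graph $A_e$ such that both $A_e$ and $A_e \cup \{e\}$ admit $K_q$-decompositions. This is impossible: any graph admitting a $K_q$-decomposition has its number of edges divisible by $\binom{q}{2}$ and all degrees divisible by $q-1$, so $e(A_e) \equiv 0$ and $e(A_e)+1 \equiv 0 \pmod{\binom{q}{2}}$ cannot both hold (and likewise the ends of $e$ would need degree both $\equiv 0$ and $\equiv 1 \pmod{q-1}$). Absorbers exist only for $K_q$-\emph{divisible} leftovers, so absorption cannot be performed edge-by-edge, and your plan has no mechanism for handling divisibility at all beyond "careful accounting." The paper resolves exactly this pair of issues with two structures your proposal is missing: a $K_q$-\emph{divisibility fixer} $F$ with at most $(q-2)n+O(1)$ edges embedded in a small slice (Lemma~\ref{lem:DivFixer}), whose deletion of a suitable $F'\subseteq F$ makes the remaining graph $K_q$-divisible and is the sole source of the stated leave, and a $C$-refined \emph{omni-absorber} $A$ for a reserved random subgraph $X$ (Theorem~\ref{thm:RandomOmniAbsorber}), which can absorb \emph{every} $K_q$-divisible $L\subseteq X$. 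Crucially, the nibble is not a plain nibble with a quasirandom leave: one needs the "nibble with reserves" theorem (Theorem~\ref{thm:NibbleReserves}), preceded by a delicate regularity-boosting step, to force the uncovered edges to lie \emph{inside} $X$, where the omni-absorber is guaranteed to work; a leave that is merely small and quasirandom, as in your sketch, cannot be absorbed because no family of gadgets is prepared for it.

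Two secondary points. First, your explanation of the exponent is off: $q+0.5$ does not come from a first-moment appearance threshold of a gadget from~\cite{DPI}, but from new absorber constructions in this paper (star transformers, Section~\ref{s:Construction}) giving $d_{\rm abs}(K_q)\le q+0.5$ (Theorem~\ref{thm:GeneralQAbsorberThreshold}), combined with an embedding theorem (Theorem~\ref{thm:Embed}) whose probability requirement is governed by the maximum \emph{rooted} $2$-density $m_2(A,V(L))$ (needed because the roots are prescribed and codegrees must be controlled), not by the unrooted density relevant to subgraph appearance; the previously known absorbers of~\cite{BKLO16} would only give an exponent around $3q-3$. Second, the fractional $K_q$-decomposition of $G(n,p)$ you invoke as an input is, in this paper, a \emph{corollary} of the same omni-absorber machinery (Theorem~\ref{thm:YusterRandomFractionalGeneral}), so it is not available as an independent black box at these densities; in any case a fractional decomposition plus nibble only yields an unstructured small leave, which returns you to the gap above.
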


Similarly, here is our result for general $q$ for random regular graphs.

\begin{thm}\label{thm:YusterRandomRegular}
Let $q \ge 4$ be an integer. If $d\ge n^{1-\frac{1}{q+0.5}+\beta}$ for some $\beta > 0$ and $(q-1)~|~d$, then asymptotically almost surely $G_{n,d}$ has a $K_q$-decomposition provided $q \mid d \cdot n$. 
\end{thm}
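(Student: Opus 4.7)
The plan is to deduce Theorem~\ref{thm:YusterRandomRegular} from a strengthening of Theorem~\ref{thm:YusterRandom} by combining a coupling between $G_{n,d}$ and $G(n,p)$ with the refined absorption framework of~\cite{DPI}. Since the hypotheses $(q-1)\mid d$ and $q\mid dn$ imply that $G_{n,d}$ is $K_q$-divisible, and the edge density $d/n \ge n^{-\frac{1}{q+0.5}+\beta}$ matches the range of $p$ in Theorem~\ref{thm:YusterRandom}, we expect every edge of $G_{n,d}$ to lie in many copies of $K_q$ and a full $K_q$-decomposition (not merely a packing with small leave) to be achievable.

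First I would invoke a sandwich / contiguity theorem between $G_{n,d}$ and the binomial random graph --- for instance the Gao--Isaev--McKay coupling, which aas gives $G(n,p_-)\subseteq G_{n,d}\subseteq G(n,p_+)$ with $p_\pm=(d/n)(1\pm o(1))$ throughout the density regime in question --- in order to transfer to $G_{n,d}$ the pseudorandom properties (co-degree counts, uniform neighbourhood distribution, many $K_q$'s through every edge, etc.) used in the proof of Theorem~\ref{thm:YusterRandom}. This ensures that the nibble-plus-absorber machinery that underlies Theorem~\ref{thm:YusterRandom} can be run almost verbatim inside $G_{n,d}$.

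Next I would reserve a small, random, sparse subgraph $H\subseteq G_{n,d}$ to serve as a \emph{universal refined absorber} in the sense of~\cite{DPI}: for any $K_q$-divisible graph $L$ disjoint from $H$ of bounded maximum degree, $H\cup L$ admits a $K_q$-decomposition. The existence of such an $H$ at this density is a core output of the refined absorber construction of~\cite{DPI}, and the coupling above lets this construction be carried out inside $G_{n,d}$. I would then apply the packing phase of (the proof of) Theorem~\ref{thm:YusterRandom} to $G_{n,d}\setminus H$ to obtain a $K_q$-packing whose leave $L_0$ has at most $(q-2)n+O(1)$ edges and bounded maximum degree. Because $G_{n,d}$, the removed copies of $K_q$, and $H$ are each $K_q$-divisible, the residue $L_0\cup H$ is automatically $K_q$-divisible, and the absorber property of $H$ supplies a $K_q$-decomposition of $L_0\cup H$, completing a $K_q$-decomposition of all of $G_{n,d}$.

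The main obstacle will be executing the absorber construction and the nibble-style packing argument directly inside $G_{n,d}$, since the sandwich coupling loses some precision in the sparse regime and the $d$-regularity constraint restricts how freely one can reserve random subgraphs without destroying the delicate divisibility needed for the final absorption step. A secondary subtlety is arranging that the leave $L_0$ be disjoint from $H$ and have bounded maximum degree: the latter is essential because the refined absorbers of~\cite{DPI} only absorb leaves of controlled maximum degree, and can typically be ensured by interleaving a short vertex-degree-regularization step after the nibble, at the cost of slightly enlarging $H$.
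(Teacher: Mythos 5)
The high-level ideas (sandwich $G(n,p_-)\subseteq G_{n,d}\subseteq G(n,p_+)$, run the refined--absorber machinery inside, absorb the residue) are aligned with the paper, but your proposal glosses over the two points that actually require new work, and as written it would fail.

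First, and most significantly, you cannot ``run the nibble-plus-absorber machinery almost verbatim inside $G_{n,d}$'' by appealing to pseudorandomness transferred from the sandwich. The regularity-boosting step that feeds the nibble (Theorem~\ref{thm:RegularCliques}) is a genuinely $G(n,p)$ argument: it constructs a near-regular family of cliques in $(K_n\setminus S)_p\cup S'$ by exploiting the precise product structure of the last random slice, and then one samples from an associated fractional weighting. The sandwich only gives $G(n,p_*)\subseteq G_{n,d}$ with a multiplicative $1\pm o(1)$ slack; the extra edges in $G_{n,d}\setminus G(n,p_*)$ have no clean conditional distribution and do enter the design hypergraph. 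Including them ruins the $(1\pm n^{-\beta/8})$-regularity the nibble requires --- exactly the issue flagged in Section~\ref{s:Overview}. The paper's fix is structural, not just quantitative: it runs the whole reveal/boost/nibble pipeline entirely inside the lower sandwich $G_*$, and then covers down the leftover edges $G_{n,d}\setminus G_*$ onto a second reserve $X'\setminus X$ with a separate bipartite-matching application of Theorem~\ref{thm:EasyPerfectMatching} (the hypergraph $\cG$ in item~\ref{what-appears-in-Gnp:reserves2} of Lemma~\ref{lem:what-appears-in-Gnp}). Your outline has no analogue of this cover-down step, and without it the residue after the nibble is not contained in the reserve $X'$, so the omni-absorber cannot touch it.

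Second, ``apply the packing phase of (the proof of) Theorem~\ref{thm:YusterRandom} to $G_{n,d}\setminus H$'' imports a divisibility fixer, which is exactly what you \emph{must not} do here. Theorem~\ref{thm:YusterRandom} ends with an unabsorbed leave $F'$ of size $(q-2)n+O(1)$; carried over to $G_{n,d}$, that $F'$ would remain uncovered and you would not obtain a decomposition. Since $G_{n,d}$ is already $K_q$-divisible, the correct move --- taken in the paper --- is to drop the divisibility fixer entirely and arrange for every edge outside $X'\cup A$ to be covered by the nibble $\cQ_1$ or the cover-down matching $\cQ_2$; then $L:=X'\setminus(\bigcup\cQ_1\cup\bigcup\cQ_2)$ is automatically $K_q$-divisible and lies in $X'$, so the omni-absorber finishes the job.

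A smaller point: your worry that the absorber only handles leaves of ``controlled maximum degree'' and hence needs a post-nibble degree-regularization step is based on a misunderstanding of refined omni-absorbers. The absorber $A$ of Theorem~\ref{thm:omni-absorber}/\ref{thm:RandomOmniAbsorber} absorbs \emph{every} $K_q$-divisible subgraph of the pre-chosen reserve $X'$, with no per-leave degree hypothesis; what is controlled is $\Delta(X')$, and the nibble-with-reserves machinery ensures the residue sits inside $X'$ by construction, not by an ad hoc regularization.
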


It is worth noting that Theorem~\ref{thm:YusterRandomRegular} in the case of $d=n-1$ implies Wilson's~\cite{WiI, WiII, WiIII} result that the Existence Conjecture holds for $r = 2$.


\subsection{Absorber Density Thresholds}

In fact, we derive these theorems as corollaries of more general results where the thresholds are in terms of the maximum density of absorber constructions. To that end, we recall the definition of a $K_q$-absorber as follows.

\begin{definition}[Absorber]
Let $L$ be a $K_q$-divisible hypergraph. A hypergraph $A$ is a \emph{$K_q$-absorber} for $L$ if $V(L)\subseteq V(A)$ is independent in $A$ and both $A$ and $L\cup A$ admit $K_q$-decompositions.
\end{definition}

Next we need a notion of density for absorbers. Here it is natural to define a rooted density since the absorber $A$ will be `rooted at' $V(L)$. In particular, when we seek to embed private absorbers, the roots $V(L)$ will already be fixed. 

\begin{definition}[Maximum rooted density]\label{def:MaxRootedDensity}
Let $H$ be a graph and $R\subseteq V(H)$ such that $R$ is an independent set of $H$. We define the \emph{maximum rooted density} of $H$ at $R$ as
$$m(H,R):= \max\left\{ \frac{e(H')}{|V(H')\setminus R|}:~H' \subseteq H,~V(H')\setminus R \ne \emptyset\right\}.$$
\end{definition}

An important component of our proof is an embedding theorem (Theorem~\ref{thm:Embed}), in which we embed $K_q$-absorbers in $G(n,p)$.  For this result, we need to control the codegrees, that is the number of absorbers containing some fixed set of edges. For this, it is also necessary to have an upper bound on the maximum $2$-density which is defined as follows.

\begin{definition}[Maximum $2$-density]
 Let $H$ be a graph. The \emph{maximum $2$-density} of $H$ is
 $$m_2(H):= \max \left\{\frac{e(H')-1}{v(H')-2}:~H'\subseteq H,~v(H')\ge 3\right\}.$$
 Let $R\subseteq V(H)$ such that $R$ is an independent set of $H$. 
We define the \emph{maximum rooted $2$-density} of $H$ at $R$ as
$$m_2(H,R):= \max\left\{~m(H,R),~m_2(H)~\right\}.$$
\end{definition}

Note then that we defined a maximum rooted $2$-density as the maximum of these required densities. Moreover, it turns out this is the natural rooted notion of $2$-density as it in fact concatenates in a rooted sense (see Lemma~\ref{lem:2DensityConcatenate}). Now we may define a density threshold for absorbers as follows.

\begin{definition}[Absorber rooted $2$-density threshold]
The \emph{$K_q$-absorber rooted $2$-density threshold}, denoted $d_{{\rm abs}}(K_q)$, is the infimum over all positive real numbers $r$ such that for all positive integers $C_1$, there exists a positive integer $C_2$ such that for every $K_q$-divisible graph $L$ on at most $C_1$ vertices, there exists a $K_q$-absorber $A$ of $L$ on at most $C_2$ vertices where $m_2(A,V(L)) \le r$.  
\end{definition}

Here then are our general theorems.

\begin{thm}\label{thm:YusterRandomGeneral}
Let $q \ge 3$ be an integer, and let $a:= \max\left\{ d_{{\rm abs}}(K_q),~ \frac{q+1}{2}\right\}$. If $p\ge n^{-\frac{1}{a}+\beta}$ for some $\beta > 0$, then asymptotically almost surely $G(n,p)$ has a $K_q$-packing containing all but at most $(q-2)n+O(1)$ edges.
\end{thm}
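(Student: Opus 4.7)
The plan is to combine the refined absorption framework of~\cite{DPI} with a nibble-style random $K_q$-packing adapted to a sparse random host. The argument naturally splits into four stages: (i) choose a leave $L \subseteq E(G)$ of size at most $(q-2)n + O(1)$ such that $G' := G - L$ is $K_q$-divisible; (ii) pre-embed into $G'$ a ``universal'' family $\mathcal{A}$ of $K_q$-absorbers, each of rooted $2$-density at most $a$ and of bounded size, collectively covering only $o(e(G'))$ edges; (iii) on $G' \setminus E(\mathcal{A})$ run an iterative-absorption / vortex nibble reducing the uncovered edges to a $K_q$-divisible graph supported on $O(1)$ vertices; (iv) absorb this residue using $\mathcal{A}$.

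Stage (i) is a short degree-sequence argument using standard a.a.s.\ concentration in $G(n,p)$: at each of the at most $n$ vertices with $d(v) \not\equiv 0 \pmod{q-1}$ we may drop at most $q-2$ incident edges to restore vertex divisibility, and at most $O(1)$ further edges are removed to ensure $\binom{q}{2} \mid e(G')$. Stage (ii) exploits the hypothesis $a \ge d_{{\rm abs}}(K_q)$: because $p \ge n^{-1/a + \beta}$, the embedding result Theorem~\ref{thm:Embed} guarantees that each fixed absorber of rooted $2$-density at most $a$ appears with the expected count rooted at any prescribed small vertex set of $G'$, and a random/greedy selection then produces an edge-disjoint family $\mathcal{A}$ flexible enough to absorb any sufficiently small $K_q$-divisible leftover on any prescribed vertex subset. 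Stage (iii) sets up a vortex $V(G) = V_0 \supsetneq V_1 \supsetneq \cdots \supsetneq V_k$ with $|V_k| = O(1)$ of geometrically decreasing sizes; at each level one applies a random greedy $K_q$-packing (Rödl nibble) to cover most edges, then a cover-down step to push the remaining uncovered edges into the next vortex level while preserving $K_q$-divisibility. The nibble at each level requires $p$ to exceed the $K_q$-containment threshold $n^{-2/(q+1)}$, which is exactly the bound coming from $a \ge (q+1)/2$ via $2/(q+1) = 1/((q+1)/2)$. After $k = O(\log n)$ levels the residue is $K_q$-divisible and supported on $V_k$, and stage (iv) absorbs it using $\mathcal{A}$.

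The main obstacle is the compatibility between the pre-committed family $\mathcal{A}$ and the nibble residue: since the residue is revealed only at the very end, $\mathcal{A}$ has to be arranged in advance so that every sufficiently small $K_q$-divisible graph on $V_k$ is absorbable. This is precisely the technical heart of the refined absorption method of~\cite{DPI}, in which absorbers are organised into a bounded collection of ``types'' indexed by local divisibility classes, together with a boosting step that writes an arbitrary small $K_q$-divisible graph as a combination of those types. A secondary difficulty is bookkeeping: the reserved absorbers, the leave $L$, the nibble, and the cover-down steps must all live on edge-disjoint subgraphs while each stays above its own density threshold; this is controlled by reserving only a vanishing fraction of edges for absorbers and by conditioning on the standard pseudorandom behaviour of $G(n,p)$ (in particular, concentration of the number of $K_q$-copies through each edge) in the regime $p \ge n^{-1/a + \beta}$.
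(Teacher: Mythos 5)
There is a genuine gap, and it sits at the heart of your stage (iii)/(iv). A vortex $V_0\supsetneq V_1\supsetneq\cdots\supsetneq V_k$ with $|V_k|=O(1)$ cannot be run in this sparse regime: every nibble and cover-down step at level $i$ needs each relevant edge to lie in many copies of $K_q$ whose remaining vertices are inside $V_i$, and the number of such copies is of order $|V_i|^{q-2}p^{\binom{q}{2}-1}$. For $p\le n^{-1/a+\beta}$ this forces $|V_i|\gtrsim p^{-(q+1)/2}$, i.e.\ the vortex must stop at a set of polynomial size (and for $|V_k|=O(1)$ the graph $G[V_k]$ is a.a.s.\ empty, so there is nothing to cover down into). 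Your appeal to ``$a\ge (q+1)/2$'' to justify the nibble is misapplied: that inequality guarantees $p$ is above the containment threshold in the \emph{whole} graph, not inside a shrunken vortex set, which is exactly why the paper avoids iterative absorption altogether (it remarks explicitly that iterative absorption would only give a density depending on the vortex efficiency). Once the residue lives on polynomially many vertices and can have polynomially many edges, a pre-committed family of bounded-size absorbers organised by ``divisibility types'' cannot absorb it; handling \emph{all} possible leftovers of a linear-degree reserve graph is precisely what the omni-absorber of Theorem~\ref{thm:omni-absorber} does, and the real work of the paper (Theorem~\ref{thm:RandomOmniAbsorber}) is to realise such an omni-absorber \emph{inside} $G(n,p)$ by replacing its edges by fake edges and its decomposition family by private absorbers via the embedding theorem (Theorem~\ref{thm:Embed}); this is where $d_{\rm abs}(K_q)$ and the rooted $2$-density $(q+1)/2$ of fake edges actually enter the threshold. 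Your stage (ii), which treats this as a black box that ``can be arranged'' for bounded-size absorbers, skips the step that makes the claimed value of $p$ possible.

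Your stage (i) is also flawed as stated: you cannot simply ``drop at most $q-2$ incident edges'' at each vertex with bad degree, because each deletion changes the residue of both endpoints, and one must simultaneously control $e(G')\bmod\binom{q}{2}$; moreover the deleted set must be coordinated with the rest of the argument so that whatever is \emph{not} deleted can still be decomposed. The paper handles this with a $K_q$-divisibility fixer (Lemma~\ref{lem:DivFixer}): a structured graph (a power of a Hamilton path plus fake edges, embedded via Park--Pham and Theorem~\ref{thm:Embed}) placed inside a designated slice of $G(n,p)$, whose unused edges are then absorbed because the omni-absorber is taken for a reserve containing that slice. The paper's actual architecture is: reserve $X$, embed the omni-absorber $A$ for $X$ together with the divisibility fixer, boost regularity of the cliques of the remaining graph (Theorem~\ref{thm:RegularCliques}, which has to cope with the asymmetry between the already-revealed slice and the unrevealed one), and finish with a single application of nibble-with-reserves (Theorem~\ref{thm:NibbleReserves}) followed by one global absorption --- no vortex and no cover-down into small sets.
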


\begin{thm}\label{thm:YusterRandomRegularGeneral}
Let $q \ge 3$ be an integer, and let $a:= \max\left\{ d_{{\rm abs}}(K_q),~ \frac{q+1}{2}\right\}$. If $d\ge n^{1-\frac{1}{a}+\beta}$ for some $\beta > 0$, and $q-1~|~d$, then asymptotically almost surely $G_{n,d}$ has a $K_q$-decomposition provided $q \mid d \cdot n$. 
\end{thm}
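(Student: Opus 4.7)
The plan is to mirror the proof of Theorem~\ref{thm:YusterRandomGeneral} in the random $d$-regular setting, exploiting the divisibility hypotheses $(q-1) \mid d$ and $q \mid d\cdot n$ (which together imply that $G_{n,d}$ is $K_q$-divisible) to convert the $(q-2)n+O(1)$-edge leave into zero. The broad framework is the standard absorb-then-nibble scheme instantiated with the refined absorbers of~\cite{DPI}: embed a bounded collection of edge-disjoint absorbers in a reservoir, near-decompose the rest, then absorb the resulting leave.

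First I would establish the pseudorandomness of $G_{n,d}$ required to run an analogue of the embedding theorem (Theorem~\ref{thm:Embed}) in this setting. In the regime $d \ge n^{1-1/a+\beta}$, this amounts to controlling, for every $O(1)$-vertex graph $H$ with $m_2(H,R) \le a$ and every fixed rooting $R \subseteq V(H)$, the number of root-extending copies of $H$ in $G_{n,d}$. Such embedding statistics are standard in $G(n,p)$ via second-moment/Janson-type arguments; in $G_{n,d}$ they can be imported by a sandwich/contiguity result between $G_{n,d}$ and $G(n,d/n)$ (e.g.\ Gao--Isaev--McKay), or directly via switching. I would also need approximate regularity of edge and codegree counts in linear-sized vertex subsets, which is well known in this range of $d$.

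Next I would randomly split $E(G_{n,d})$ into a small reservoir $R$ of relative density $\varepsilon$ and a main part $M = E(G_{n,d}) \setminus R$, so that both inherit the pseudorandom properties. By the definition of $d_{\mathrm{abs}}(K_q)$, every bounded-size $K_q$-divisible graph admits a $K_q$-absorber of rooted $2$-density at most $a$. Applying the embedding theorem inside $R$ would then produce an edge-disjoint family of refined absorbers capable of absorbing any bounded $K_q$-divisible graph on the vertex set. A nibble-based near-$K_q$-decomposition of the remaining edges yields a $K_q$-packing whose leave $L^\ast$ is supported on few vertices; the divisibility hypotheses force $L^\ast$ to be $K_q$-divisible, and it is then swallowed by the prepared absorbers, producing the desired $K_q$-decomposition.

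The hardest part is the embedding step. Controlling joint embedding counts of the absorber structures in $G_{n,d}$ at the tight density threshold $a$ is nontrivial because the edges of $G_{n,d}$ are dependent; in the sparse regime $d = n^{1-1/a+\beta}$ one must invoke the strongest available sandwich/switching results to transfer the $G(n,p)$ concentration statements without losing the exponent. A secondary difficulty is arranging that the leave $L^\ast$ from the near-decomposition step lies within the set of leaves that the prepared absorbers are designed to handle; as in Theorem~\ref{thm:YusterRandomGeneral}, this is managed by reserving enough absorbers so that the local structure of $L^\ast$ can always be matched. Once these two issues are resolved, the remainder of the argument essentially duplicates the proof of Theorem~\ref{thm:YusterRandomGeneral}, with the simplification that global divisibility is already guaranteed and no residual leave must be tolerated.
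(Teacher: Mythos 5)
There is a genuine gap, in two places. First, your absorption step misdescribes what must be absorbed. After the nibble, the uncovered edges do not form a graph ``supported on few vertices'': the leave is a sparse but spread-out graph with far more than $O(1)$ edges, so a family of private absorbers for bounded-size $K_q$-divisible graphs cannot swallow it. The actual mechanism is different: one must run the nibble \emph{with reserves} (Theorem~\ref{thm:NibbleReserves}) so that every edge of the main part is covered by a clique whose remaining edges lie in a reserved set $X$, forcing the final leave to be a $K_q$-divisible \emph{subgraph of $X$}; and one needs a single $K_q$-omni-absorber for $X$ -- an object that can absorb \emph{every} $K_q$-divisible subgraph of $X$ -- embedded inside the random graph. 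Constructing that omni-absorber inside the sparse random graph (Theorem~\ref{thm:RandomOmniAbsorber}, built from Theorem~\ref{thm:omni-absorber} as a template by embedding fake edges and private absorbers via Theorem~\ref{thm:Embed}) is the central technical content; your proposal collapses it into ``embed absorbers for bounded leaves in a reservoir,'' which does not suffice.

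Second, your treatment of the $G_{n,d}$-specific difficulties does not work as stated. Proving the needed embedding/concentration statistics directly in $G_{n,d}$ from pseudorandomness or typicality is exactly what the authors say they could not do (see the concluding remarks: concentration of rooted absorber counts does not seem to follow from typicality), so ``import by contiguity or switching'' is not a safe black box at this threshold. The paper instead runs the entire $G(n,p)$ argument inside the lower sandwich graph $G(n,p_*)\subseteq G_{n,d}$ -- but then the edges of $G_{n,d}\setminus G(n,p_*)$ remain, and your proposal never accounts for them. The paper handles them by reserving a second random set $X'\setminus X$, using the upper sandwich $G(n,p^*)$ to upper-bound the relevant clique degrees, and ``covering down'' $G_{n,d}\setminus G(n,p_*)$ into $X'\setminus X$ via the Lov\'asz-Local-Lemma-based matching theorem (Theorem~\ref{thm:EasyPerfectMatching}); the resulting leftover again lands in $X'$ and is absorbed by the omni-absorber. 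This extra cover-down step, and the fact that it is feasible only because the omni-absorbers are linearly efficient (so $X$ can be chosen a constant factor denser than the sandwich filling), is missing from your argument. On the other hand, your use of the divisibility hypotheses to conclude that the final leave is $K_q$-divisible is correct and matches the paper.
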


In light of Theorems~\ref{thm:YusterRandomGeneral} and~\ref{thm:YusterRandomRegularGeneral}, it is natural then to wonder what the value of $d_{\rm abs}(K_q)$ is for each each integer $q\ge 3$. One avenue to approach this is to upper bound $2$-density by the notion of rooted degeneracy which we now recall. 

\begin{definition}[Rooted degeneracy]
For a graph $H$ and $U\subseteq V(H)$, the \emph{degeneracy of $H$ rooted at $U$} is the smallest nonnegative integer $d$ such that there exists an ordering $v_1,\ldots, v_{v(H)-|U|}$ of the vertices $V(H)\setminus U$ such that for all $i\in [v(H)-|U|]$, we have $|N_H(v_i)\cap (U\cup \{v_j: 1\le j < i\})| \le d$.  
\end{definition}

The two notions are related as one upper bounds the other as follows: If the degeneracy of $H$ rooted at $R$ is at most $d$, then $m_2(H,R)\le d$. Thus one might study the related \emph{$K_q$-absorber rooted degeneracy threshold} wherein rooted degeneracy replaces maximum $2$-density. Finding $K_q$-absorbers with small rooted degeneracy has already been studied by Barber, K\"uhn, Lo, and Osthus \cite{BKLO16} for use in proving that $K_q$-decompositions of $K_q$-divisible graphs of large minimum degree exist. They (\cite[Corollary 8.10, Lemma 8.11, Lemma 12.3]{BKLO16}) proved that $K_3$-absorbers of rooted degeneracy at most $4$ exist, which thus implies that $d_{\rm abs}(K_3)\le 4$; similarly for each integer $q\ge 4$, they proved that $K_q$-absorbers of rooted degeneracy at most $3q-3$ exist, which implies that $d_{\rm abs}(K_q)\le 3q-3$. 

Rooted degeneracy is the natural parameter to consider for an absorption-based approach to finding $K_q$-decompositions of graphs of large minimum degree.  In a similar way, maximum rooted $2$-density seems to be the natural parameter to consider for an absorption-based approach to finding $K_q$-decompositions of random graphs.

In this paper (see Section~\ref{s:Construction}), we also provide new constructions of $K_q$-absorbers which have smaller maximum rooted $2$-density as follows.

\begin{thm}\label{thm:GeneralTriangleAbsorberThreshold}
$d_{{\rm abs}}(K_3) \le 3$.    
\end{thm}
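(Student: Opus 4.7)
The plan is to reduce the construction of $K_3$-absorbers for an arbitrary $K_3$-divisible graph $L$ to absorbers for a short list of canonical ``atoms'', and then to exhibit explicit atomic absorbers of maximum rooted $2$-density at most $3$. The assembly step uses Lemma~\ref{lem:2DensityConcatenate}, which says that the rooted $2$-density of a union of rooted gadgets (glued only along their common root) is the maximum of their individual rooted $2$-densities. Thus it suffices to construct the atomic pieces with $m_2(A',V(L')) \le 3$; no further density is lost in combining them.

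\textbf{Step 1: Atomic decomposition of $L$.} Since $L$ is $K_3$-divisible, every vertex has even degree and $3 \mid e(L)$. In particular $L$ is Eulerian, so its edge set decomposes into edge-disjoint cycles. By chopping long cycles at vertices of $L$ and regrouping edges while preserving $K_3$-divisibility, we write $E(L)=\bigsqcup_i E(L_i)$, where each $L_i$ is a $K_3$-divisible graph drawn from a short list of bounded-size atoms (for example: a triangle, two triangles sharing an edge, or a short even closed walk giving a $6$-cycle-like graph). Each $L_i$ has its own root $V(L_i)\subseteq V(L)$. The final absorber is $A=\bigcup_i A_i$, where $A_i$ is the atomic absorber for $L_i$ and the $A_i$ share no vertices off the common root $V(L)$.

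\textbf{Step 2: Atomic absorber constructions.} For each atomic $L'$ we build an absorber $A'$ on a bounded number of new vertices such that (i) $V(L')$ is independent in $A'$, (ii) $A'$ has a $K_3$-decomposition, and (iii) $A'\cup L'$ has a $K_3$-decomposition. The construction is a ``swap gadget'': two $K_3$-decompositions of closely related graphs whose symmetric difference is exactly $L'$ together with the triangles of $A'$ that get swapped. The idea to beat the degeneracy bound $4$ of \cite{BKLO16} is to spread the absorber across slightly more auxiliary vertices than a degeneracy ordering would require, so that every edge of $L'$ is ``replaced'' by a path through internal vertices that does not concentrate edges in any small substructure.

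\textbf{Step 3: Verifying $m_2(A',V(L'))\le 3$.} For each atomic absorber we must check that every subgraph $H'\subseteq A'$ satisfies both $e(H') \le 3(v(H')-2)+1$ (the $m_2$ bound) and $e(H') \le 3|V(H')\setminus V(L')|$ (the rooted bound). Because each atomic $A'$ is of constant size, this is a finite case check. Combined with Lemma~\ref{lem:2DensityConcatenate}, the bound on the atoms upgrades to $m_2(A,V(L))\le 3$ for the assembled absorber.

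The main obstacle is Step 2: designing the atomic absorbers so that no small subgraph witnesses $m_2>3$. The rigid constraints --- $V(L')$ must be independent in $A'$, both $A'$ and $A'\cup L'$ must be $K_3$-decomposable, and each ``replacement path'' for an edge of $L'$ uses at least two new vertices and three new edges --- make the edge-to-vertex ratio live right at the threshold $3$, so any clustering of two swap gadgets on a shared internal vertex can push the local density above $3$. The construction must therefore disperse the gadgets carefully, and the proof reduces to exhibiting such a dispersed family of atomic gadgets and exhaustively checking the density inequalities on them.
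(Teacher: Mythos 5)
There is a genuine gap: your proposal is a plan rather than a proof, and both of its pillars fail. First, the ``atomization'' in Step 1 cannot be pushed down to a short fixed list of atoms. A $K_3$-divisible graph does decompose into edge-disjoint cycles, and cycles can be grouped so that each group has size divisible by $3$, but the resulting atoms include single cycles $C_{3k}$ of arbitrary length (a long cycle is irreducible: any proper nonempty subset of its edges has a vertex of odd degree, hence is not $K_3$-divisible), as well as unions of two or three long cycles. So the family of atoms grows with the bound $C_1$ on $v(L)$, the promised ``finite case check'' in Step 3 is not finite, and the reduction leaves you facing essentially the original problem: a uniform construction of absorbers with $m_2(\cdot,V(L))\le 3$ for an unbounded family of graphs. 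Second, and more importantly, Step 2 --- which you yourself identify as the main obstacle --- contains no construction at all; you describe the constraints a ``swap gadget'' would have to satisfy and the danger of local density exceeding $3$, but you never exhibit gadgets or verify the inequalities. Since the entire content of the theorem is such a construction, nothing has been proved. Note also that achieving exactly $3$ per atom is not required (and may not be achievable by your gadget style): since $d_{\rm abs}$ is an infimum, bounds of the form $3+\eps$ for every $\eps>0$ suffice.

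For comparison, the paper's route avoids your reduction problem entirely by a transference trick: starting from \emph{any} known $K_3$-absorber $A$ for $L$ (e.g.\ the dense ones of Barber--K\"uhn--Lo--Osthus), Lemma~\ref{lem:NablaConstruction} replaces every edge of $L\cup A$ by an anti-edge via the $\nabla_3$ operation (rooted $2$-density only $2$) and attaches, to each triangle of the two decompositions of $A$ and $L\cup A$, a copy of a single bespoke absorber $B$ for the fixed graph $\nabla_3 K_3\cong C_6$; the density of the original $A$ then becomes irrelevant, and $m_2(A',V(L))\le\max\{2,\,m_2(B,V(\nabla_3K_3))\}$. The only construction needed is $B$, which Lemma~\ref{lem:SparseAntiCliqueAbsorber} builds from two nested $\nabla$-layers, a mirror copy, single-vertex connectors, and the long ``star transformers'' of Lemma~\ref{lem:StarTransformerTriangle} (a path whose internal vertices are joined to two roots, giving density $3+\eps$ for $k\ge 1/\eps$), with the density verification done by Lemma~\ref{lem:2DensityConcatenate} plus a short discharging argument. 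If you want to salvage your approach, you would need either a genuinely uniform sparse construction for all cycles $C_{3k}$ and cycle-pairs/triples, or an idea playing the role of the $\nabla$-transference that confines the hard construction to one fixed graph.
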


\begin{thm}\label{thm:GeneralQAbsorberThreshold}
For $q\ge 4$, $d_{{\rm abs}}(K_q) \le q+ 0.5$.    
\end{thm}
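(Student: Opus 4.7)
The plan is to construct, for each $K_q$-divisible graph $L$ with $|V(L)| \le C_1$, a $K_q$-absorber $A$ of bounded size with $m_2(A, V(L)) \le q + \tfrac{1}{2}$, by assembling $A$ out of small gadgets built from $K_q$'s glued in a near-tree-like pattern. The baseline observation is that $m_2(K_q) = \tfrac{q+1}{2}$, so the target $q + \tfrac{1}{2} = \tfrac{q+1}{2} + \tfrac{1}{2}$ leaves only a half-unit of density budget above a single clique; this is the natural ceiling for gadgets that append roughly one extra $K_q$-shaped piece per two new vertices, and it dictates the whole design.

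I would use two building blocks. The first is a \emph{swap gadget}: given an ordered pair $K, K'$ of $K_q$-copies on overlapping vertex sets, a swap gadget $S$ is a small graph on $V(K)\cup V(K')$ together with $O(1)$ fresh vertices, independent from $V(L)$, such that $S$, $S\cup K$, and $S\cup K'$ are all $K_q$-decomposable. The second is a \emph{transformer} that chains swap gadgets, implementing composition of elementary swaps. Using the classical fact that every $K_q$-divisible graph is an integer combination of edge-disjoint copies of $K_q$, I would reduce absorption of an arbitrary $L$ to absorption of a bounded sequence of elementary swaps, realize each one by a swap gadget, and take $A$ to be the union of these gadgets, pairwise intersecting only at their designated roots. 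A natural candidate for a swap gadget is a short ``$K_q$-path'' in which consecutive cliques share exactly one edge, with the two end-cliques attached to the roots of $K$ and $K'$ and a few helper cliques inserted to repair parity and divisibility. Because the gadgets meet only along root sets, Lemma~\ref{lem:2DensityConcatenate} reduces the maximum rooted $2$-density of $A$ to the worst rooted $2$-density of an individual gadget.

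The main obstacle is therefore to show that each gadget has rooted $2$-density at most $q + \tfrac{1}{2}$. Because the margin above $\tfrac{q+1}{2}$ is only $\tfrac{1}{2}$, the construction is forced to be quite sparse: roughly, cliques in the gadget must overlap only in a controlled way, and the root set $V(L)$ may touch each clique in at most $\lfloor q(q+2)/(2q+1)\rfloor$ vertices (since beyond this a single $K_q$ with that many root vertices already exceeds the density budget at $R$). A direct subgraph calculation shows that for a $K_q$-path of $t$ cliques one has $e = t\tbinom{q}{2} - (t-1)$ and $v = t(q-2)+2$, giving $(e-1)/(v-2) = (q+1)/2$, safely within the budget; the density is instead dominated by the small ``T-junction'' pieces used to repair divisibility, whose rooted $2$-density one must check case-by-case and cap at $q+\tfrac{1}{2}$. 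The delicate combinatorial step is engineering the swap gadgets (and transformers) so that three distinct $K_q$-decompositions coexist without violating these structural constraints; once this local construction is in place, the concatenation lemma delivers the global density bound, yielding $d_{{\rm abs}}(K_q) \le q + \tfrac{1}{2}$.
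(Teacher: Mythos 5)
Your proposal is a plan rather than a proof: the entire content of the theorem is the explicit construction of gadgets with rooted $2$-density at most $q+\tfrac{1}{2}$, and that is exactly the step you defer (``the delicate combinatorial step is engineering the swap gadgets\dots''). Nothing in the proposal constructs a swap gadget with the three required $K_q$-decompositions, nor verifies that such a gadget can have rooted $2$-density at most $q+\tfrac{1}{2}$; the only density computation given is for a bare $K_q$-path, which is not itself a swap gadget. Moreover, the reduction of a general $K_q$-divisible $L$ to ``a bounded sequence of elementary swaps'' is not carried out, and the fact you invoke is misstated: a $K_q$-divisible graph is an \emph{integer} (signed, overlapping) combination of copies of $K_q$, not an edge-disjoint one, and converting that lattice statement into an absorber is precisely the transformer machinery of Barber--K\"uhn--Lo--Osthus, which as executed there only yields rooted degeneracy $3q-3$, far above the target. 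The obstruction you yourself identify --- a clique of the gadget may meet the roots in at most roughly $q(q+2)/(2q+1)\approx q/2$ vertices --- is exactly what makes a direct swap-gadget construction hard when $L$ has vertices of large degree, and the proposal offers no mechanism to get around it.

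The paper's route supplies the missing mechanism: it first \emph{sparsifies}. Given any (possibly dense) absorber $A$ for $L$ from the known constructions, Lemma~\ref{lem:NablaConstruction} replaces every edge of $L\cup A$ by an anti-edge (the operator $\nabla_q$, whose rooted $2$-density is $\tfrac{q+1}{2}$ by Proposition~\ref{prop:FakeEdge}), so that the only objects left to absorb are copies of the anti-clique $\nabla_q\,K_q$, one for each clique in the two decompositions of $A$ and of $L\cup A$; this gives $m_2(A',V(L))\le \max\{\tfrac{q+1}{2},\,m_2(B,V(\nabla_q K_q))\}$. A sparse absorber $B$ for $\nabla_q\,K_q$ is then built explicitly in Lemma~\ref{lem:SparseAntiCliqueAbsorber} from two ingredients: star transformers between copies of $K_{1,q-1}$ (a $(q-1)\times(q-1)$ grid construction, Lemma~\ref{lem:StarTransformerGeneral}, verified to have rooted $2$-density at most $q+\tfrac{1}{2}$) and small junction pieces ($K_{q-2}$'s joined to corresponding pairs of edges), with the density of the whole assembly controlled via Lemma~\ref{lem:2DensityConcatenate}. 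If you want to salvage your approach, the essential idea to add is this sparsification step: do not attempt to realize swaps on the cliques of $L$ directly, but pass to $\nabla_q$ of a known absorber so that every gadget is rooted at stars or anti-edges with tiny pairwise intersections; only then do ``transformers between stars'' become constructible within the $q+\tfrac{1}{2}$ budget.
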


Then Theorems~\ref{thm:YusterTriangle},~\ref{thm:YusterTriangleRegular},~\ref{thm:YusterRandom} and~\ref{thm:YusterRandomRegular} follow immediately from Theorems~\ref{thm:YusterRandomGeneral} and~\ref{thm:YusterRandomRegularGeneral} when combined with the above results. In the conclusion, we provide a lower bound for $d_{\rm abs}(K_q)$ of $q-1$ and conjecture the value is $q$; hence our results in Theorems~\ref{thm:GeneralTriangleAbsorberThreshold} and~\ref{thm:GeneralQAbsorberThreshold} are nearly optimal.

\subsection{Fractional Decompositions}

Yuster~\cite[Problem 4.10]{Yu07} also posed the natural question of determining the sharp threshold for $G(n,p)$ to have a \textit{fractional} triangle decomposition (see Definition~\ref{def:fractional-decomposition}). 
Again $p$ needs to be at least $\sqrt{(\log n)/n}$, the sharp threshold for every edge being in a triangle. This question can be viewed as a precursor question to those in Conjectures~\ref{conj:YusterGnp} and~\ref{conj:YusterGnd} since the existence of a fractional decomposition is a necessary precursor to the existence of an integral decomposition. In addition, for fractional decompositions there is no need for divisibility assumptions or the existence of a leave. That means however though that the corresponding fractional version does not follow from Theorems~\ref{thm:YusterTriangle} and~\ref{thm:YusterTriangleRegular}.

With a small bit of extra work, the proof of Theorem~\ref{thm:YusterRandomGeneral} can be adapted to the fractional setting as follows.

\begin{thm}\label{thm:YusterRandomFractionalGeneral}
Let $q \ge 3$ be an integer, and let $a:= \max\left\{ d_{{\rm abs}}(K_q),~ \frac{q+1}{2}\right\}$. If $p\ge n^{-\frac{1}{a}+\beta}$ for some $\beta > 0$, then asymptotically almost surely $G(n,p)$ has a fractional $K_q$-decomposition.
\end{thm}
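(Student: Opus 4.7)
The plan is to follow the proof of Theorem~\ref{thm:YusterRandomGeneral} almost verbatim, modifying only the endgame. First, I would invoke the embedding theorem (Theorem~\ref{thm:Embed}) to reserve, a.a.s., a family $\mathcal{A}$ of pairwise edge-disjoint refined $K_q$-absorbers inside $G \sim G(n,p)$, using that their maximum rooted $2$-density is at most $a$ together with the hypothesis $p \ge n^{-1/a+\beta}$. Next, I would apply the same iterative near-decomposition argument used in the proof of Theorem~\ref{thm:YusterRandomGeneral} to $G \setminus E(\mathcal{A})$, producing an integer $K_q$-packing $\mathcal{P}$ with a small leave $L_0$ whose edges are distributed in the usual refined fashion among the reserved absorbers.

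The key observation is that the final absorbing step, which in the integer setting is forced to leave $(q-2)n+O(1)$ edges uncovered due to the $K_q$-divisibility obstruction at vertices of $G$, carries no such obstruction in the fractional setting. Specifically, if $A$ is any $K_q$-absorber with associated $K_q$-divisible leave $L$, equipped with integer $K_q$-decompositions $\mathcal{D}_A$ of $A$ and $\mathcal{D}_{A \cup L}$ of $A \cup L$, then for every $\lambda \in [0,1]$ the convex combination $(1-\lambda)\mathcal{D}_A + \lambda\mathcal{D}_{A \cup L}$ is a fractional $K_q$-assignment covering every edge of $A$ with total weight exactly $1$ and every edge of $L$ with total weight exactly $\lambda$. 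Hence the same reserved absorbers that absorb a divisible leave integrally can absorb an arbitrary leave fractionally, simply by dialing $\lambda$.

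To complete the fractional decomposition, I would tune the per-absorber weights $\lambda_A$ so that each edge of $L_0$ ends up with total incident weight exactly $1$, while each edge of $\mathcal{A}$ and each edge covered by $\mathcal{P}$ retains total weight $1$. This can be arranged either by preparing multiple absorbers per leave edge and solving the resulting small linear system, or by slightly strengthening the refined absorption step to produce absorbers that allow per-edge fractional control. The copies of $\mathcal{P}$ retain integer weight $1$, and the absorbers handle the remaining edges fractionally, yielding a fractional $K_q$-decomposition of $G$.

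The main obstacle I anticipate is precisely this bookkeeping step: orchestrating the per-absorber weights $\lambda_A$ so that all per-edge constraints are simultaneously satisfied, in particular ensuring no edge is over-covered. This is the ``small bit of extra work'' the excerpt alludes to; it adapts the refined absorption accounting to accept non-integer targets. No new probabilistic estimates are needed beyond those in the integer proof, since the embedding, near-decomposition, and density-threshold arguments are inherited directly from Theorem~\ref{thm:YusterRandomGeneral}.
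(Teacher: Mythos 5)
Your convex-combination intuition is pointed in the right direction, but there is a genuine gap in the key step, and it is exactly the one you flag as the ``main obstacle'' without resolving.

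In the fractional setting we discard the divisibility fixer, so the leave $L_0 \subseteq X'$ that survives the nibble step is \emph{not} $K_q$-divisible. Your central trick --- taking $(1-\lambda)\mathcal{D}_A + \lambda \mathcal{D}_{A\cup L}$ --- presupposes that an integer $K_q$-decomposition $\mathcal{D}_{A\cup L}$ of $A\cup L$ exists, which requires $L$ to be $K_q$-divisible. You write ``$K_q$-absorber with associated $K_q$-divisible leave $L$'', so this assumption is implicit in the wording; but it is precisely what fails. Worse, the failure cannot be patched by mixing several absorbers or solving a linear system of the kind you gesture at: if $v$ is a vertex of $L_0$ with $d_{L_0}(v)\not\equiv 0\pmod{q-1}$, then every $K_q$-divisible subgraph $L'\subseteq L_0$ must avoid all edges at $v$, so no nonnegative combination of indicator functions of $K_q$-divisible subgraphs of $L_0$ can put positive weight on those edges, let alone weight exactly $1$. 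The obstruction is local and unavoidable in the simple-graph world.

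The paper's route around this is to pass to multigraphs. It defines a \emph{fractional $K_q$-omni-absorber} and shows that if $A$ is an (ordinary, integer) $K_q$-omni-absorber for the multigraph $q(q-1)*X$, then $A$ is a fractional $K_q$-omni-absorber for $X$. The point is that $q(q-1)*L$ \emph{is} $K_q$-divisible for every simple $L\subseteq X$, since $q(q-1)=2\binom{q}{2}$ is divisible by both $q-1$ and $\binom{q}{2}$. Given an arbitrary target $\phi:X\to[0,1]$, one sorts the edges by $\phi$, forms the ``layer-cake'' subgraphs $L_i$, and takes a convex combination with weights $w_i$ proportional to the increments $\phi(e_i)-\phi(e_{i-1})$, divided by $q(q-1)$, of the integer decompositions of $(q(q-1)*L_i)\cup A$; the remaining mass $1-\phi(e_m)/(q(q-1))$ goes on a decomposition of $A$ alone. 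In the special case at hand ($\phi\equiv 1$ on $L_0$, $\phi\equiv 0$ elsewhere on $X$), this collapses to putting weight $\frac{1}{q(q-1)}$ on a $K_q$-decomposition of $(q(q-1)*L_0)\cup A$ and weight $1-\frac{1}{q(q-1)}$ on a $K_q$-decomposition of $A$, which is the precise realization of your second alternative (``absorbers that allow per-edge fractional control''). The rest of the proof then does follow the integer proof of Theorem~\ref{thm:YusterRandomGeneral} essentially verbatim with the divisibility fixer removed, as you intended; the multigraph omni-absorber is the one new ingredient you would need to make your sketch work.
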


When combined with Theorems~\ref{thm:GeneralTriangleAbsorberThreshold} and~\ref{thm:GeneralQAbsorberThreshold}, this yields the following theorems.

\begin{thm}\label{thm:YusterTriangleFractional}
If $p:=n^{-\frac{1}{3}+\beta}$ for some $\beta > 0$, then asymptotically almost surely $G(n,p)$ has a fractional $K_3$-decomposition.
\end{thm}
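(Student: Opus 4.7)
The plan is to obtain Theorem~\ref{thm:YusterTriangleFractional} as an immediate corollary by specialising Theorem~\ref{thm:YusterRandomFractionalGeneral} to $q=3$ and invoking Theorem~\ref{thm:GeneralTriangleAbsorberThreshold} to bound the absorber density threshold. First, I would set $q=3$, so that the parameter
\[ a = \max\left\{d_{{\rm abs}}(K_3),\ \tfrac{q+1}{2}\right\} = \max\left\{d_{{\rm abs}}(K_3),\ 2\right\} \]
is the exponent controlling the required lower bound on $p$ in Theorem~\ref{thm:YusterRandomFractionalGeneral}.

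Next, Theorem~\ref{thm:GeneralTriangleAbsorberThreshold} supplies $d_{{\rm abs}}(K_3)\le 3$, whence $a \le 3$. Consequently $-1/a \le -1/3$, so for the hypothesised $p = n^{-1/3+\beta}$ with $\beta > 0$ we have
\[ p = n^{-1/3+\beta} \ge n^{-1/a+\beta}. \]
Thus the hypothesis of Theorem~\ref{thm:YusterRandomFractionalGeneral} is satisfied (with the same $\beta$), and applying that theorem yields that $G(n,p)$ asymptotically almost surely admits a fractional $K_3$-decomposition, as required.

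Since the content of the statement is entirely subsumed by the general fractional result together with the explicit triangle-absorber construction, there is no substantive new obstacle. The only routine check is the monotonicity $a \mapsto -1/a$ in the exponent, which is immediate. The real work sits upstream, in the proof of Theorem~\ref{thm:YusterRandomFractionalGeneral}, where the refined absorption framework is adapted to the fractional setting, and in the absorber construction behind Theorem~\ref{thm:GeneralTriangleAbsorberThreshold}.
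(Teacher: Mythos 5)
Your proposal is correct and matches the paper's own derivation exactly: the paper states immediately after Theorem~\ref{thm:YusterRandomFractionalGeneral} that Theorems~\ref{thm:YusterTriangleFractional} and~\ref{thm:YusterFractional} follow by combining it with Theorems~\ref{thm:GeneralTriangleAbsorberThreshold} and~\ref{thm:GeneralQAbsorberThreshold}, which is precisely the specialisation $q=3$, $d_{\rm abs}(K_3)\le 3$, $a\le 3$ that you carry out.
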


\begin{thm}\label{thm:YusterFractional}
Let $q \ge 4$ be an integer. If $p\ge n^{-\frac{1}{q+0.5}+\beta}$ for some $\beta > 0$, then asymptotically almost surely $G(n,p)$ has a fractional $K_q$-decomposition. 
\end{thm}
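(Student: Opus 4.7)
The plan is to obtain the theorem as an immediate corollary of Theorem~\ref{thm:YusterRandomFractionalGeneral} combined with the absorber density bound of Theorem~\ref{thm:GeneralQAbsorberThreshold}. Concretely, I would first invoke Theorem~\ref{thm:GeneralQAbsorberThreshold} to record that $d_{\rm abs}(K_q) \le q + 0.5$ for every integer $q \ge 4$. Since $q + 0.5 \ge (q+1)/2$ whenever $q \ge 1$, the parameter
\[
a := \max\left\{d_{\rm abs}(K_q),\, \tfrac{q+1}{2}\right\}
\]
appearing in the hypothesis of Theorem~\ref{thm:YusterRandomFractionalGeneral} then satisfies $a \le q + 0.5$.

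Next I would verify the exponent inequality: because $a \le q + 0.5$, we have $-1/a \le -1/(q+0.5)$, so
\[
n^{-1/a + \beta} \;\le\; n^{-1/(q+0.5) + \beta} \;\le\; p.
\]
Thus $p$ satisfies the hypothesis of Theorem~\ref{thm:YusterRandomFractionalGeneral}, and applying that theorem directly yields that asymptotically almost surely $G(n,p)$ admits a fractional $K_q$-decomposition, as required.

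There is no genuine obstacle at this step: the substantive work is packaged inside Theorem~\ref{thm:YusterRandomFractionalGeneral} (which adapts the refined-absorption framework to the fractional setting, where neither divisibility conditions nor a leave need to be handled) and inside Theorem~\ref{thm:GeneralQAbsorberThreshold} (which supplies the explicit low-maximum-rooted-$2$-density $K_q$-absorber constructions from Section~\ref{s:Construction}). The only point one must check is that the absorber bound $q + 0.5$ indeed dominates the trivial baseline $(q+1)/2$, so that the absorber density is what controls the parameter $a$; this inequality is immediate for every $q \ge 4$, completing the deduction.
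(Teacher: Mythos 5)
Your proposal matches the paper's own derivation exactly: the paper states that Theorem~\ref{thm:YusterFractional} follows by combining Theorem~\ref{thm:YusterRandomFractionalGeneral} with the absorber bound $d_{\rm abs}(K_q)\le q+0.5$ from Theorem~\ref{thm:GeneralQAbsorberThreshold}. Your verification that $q+0.5\ge (q+1)/2$ (so $a\le q+0.5$) and the resulting exponent comparison are precisely the intended deduction.
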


In the next section, we overview the proofs of Theorems~\ref{thm:YusterRandomGeneral} and~\ref{thm:YusterRandomRegularGeneral} and then outline the remainder of the paper.  See Section~\ref{s:notation} for notation and definitions used throughout the paper.

\section{Proof Overview}\label{s:Overview}

Theorems~\ref{thm:YusterRandom} and~\ref{thm:YusterRandomGeneral} assert the existence of $K_q$-packings in $G(n,p)$ with only linear leave for quite small values of $p$. Similarly, Theorems~\ref{thm:YusterRandomRegular} and~\ref{thm:YusterRandomRegularGeneral} assert the existence of $K_q$-decompositions of $G_{n,d}$ for quite small $d$ (provided $d$ and $n$ satisfy the necessary divisibility conditions). It is natural to inquire how the existential proofs for $(n,q,2)$-Steiner systems fare in these settings.
R\"odl's nibble method \cite{AKS97, Vu00} yields a $K_q$-packing with leave
$\max\left\{(pn^2)(np^2)^{-1/\left(\binom{q}{2}-2\right)},~n^{2 - 2/(q + 1)}\right\}\cdot \log^{O(q)}n$\COMMENT{
Let $G\sim G(n,p)$ and $\cD = {\rm Design}(G, K_q)$, and note that \aas have:
\begin{itemize}
    \item $v(\cD) = e(G)$ is roughly $pn^2$, 
    \item $\cD$ is roughly $p^{\binom{q}{2} - 1}\binom{n - 2}{q - 2}$-regular, since each edge of $G$ is in roughly $p^{\binom{q}{2} - 1}\binom{n - 2}{q - 2}$ $K_q$s,
    \item $\Delta_2(\cD) \leq p^{\binom{q}{2}-3}\binom{n - 3}{q - 3} \log^{O(q)}n$, since every set of three vertices of $G$ expects to be in at most $p^{\binom{q}{2}-3}\binom{n - 3}{q - 3}$ $K_q$s.
\end{itemize}
Since $\cD$ is $\binom{q}{2}$-uniform, Vu's~\cite{Vu00} matching result (for $q > 3$ applied with $k = \binom{q}{2} - 1$) thus leaves (ignoring polylog factors) at most $v(\cD)(p^{\binom{q}{2}-3}\binom{n - 3}{q - 3} / p^{\binom{q}{2} - 1}\binom{n - 2}{q - 2})^{1/(\binom{q}{2}-1)} \leq (pn^2)(np^2)^{-1/(\binom{q}{2}-1)}$ edges uncovered.\\
\indent As a sanity check, we should have roughly $\rho n^2$ edges uncovered where $\rho$ is chosen so that every pair of vertices of $G' \sim G(n, \rho)$ are in $p^{\binom{q}{2}-3}\binom{n - 3}{q - 3}$ $K_q$s, so $\rho^{\binom{q}{2}-1}n^{q - 2} = p^{\binom{q}{2} - 3}n^{q - 3}$ and $\rho = (p^{\binom{q}{2} - 3} / n)^{1/(\binom{q}{2} - 1)}$, and indeed,
\begin{equation*}
    (p^{\binom{q}{2} - 3} / n)^{1/(\binom{q}{2} - 1)}n^2 = (pn^2)(np^2)^{-1/(\binom{q}{2}-1)}.
\end{equation*}
\indent For $q > 3$, the Kang--K\"uhn--Methuku--Osthus~\cite{KKMO23} bound works similarly and gives a slightly better bound.  For $q = 3$, $\cD$ is roughly $p^2n$-regular, so we can use Alon--Kim--Spencer~\cite{AKS97} to obtain a packing leaving at most $v(\cD) / \sqrt{p^2n}$ which matches Vu.\\
\indent We need the maximum because we will never be able to nibble past $\rho n^2$ where $\rho$ is chosen so that every pair of vertices of $G(n, \rho)$ are in at least one $K_q$. When applying the matching results, if $p < n^{-2/(q + 2)}$ (using $(q - 3)/(\binom{q}{2}-3) = 2/(q + 2)$) then we can only say the codegrees are polylogarithmic, but then we get a worse bound and it will never beat $n^{2 - 2/(q + 1)}$.
}
and \cite{KKMO23} does slightly better, but these bounds are far from linear in $n$.
Wilson's proof~\cite{WiI,WiII,WiIII} uses a recursive algebraic construction and hence is not amenable due to its lack of randomness. 
Keevash's proof~\cite{K14} utilizes ``random algebraic constructions''; indeed, he proved a typicality version of the Existence Conjecture and since random graphs are also typical, his work yields a $K_q$-decomposition of $G_{n,d}$ with high probability (with $n$ and $d$ satisfying the necessary divisibility conditions) but only when $d\ge n^{1-\varepsilon}$ for some $\varepsilon > 0$ which is far from optimal (namely $\varepsilon$ is at most $2^{-\Omega(q)}$ say). The iterative absorption proof of Glock, K\"uhn, Lo, and Osthus~\cite{GKLO16} also yields a typicality version but only for $d$ linear in $n$.

Thus the key to the proof of our main results is to invoke the method of ``refined absorption'' developed by the first and third authors~\cite{DPI} in their new proof of the Existence Conjecture. While the proof does not immediately carry over to our random settings, we are fortunately able to use the key technical theorem from~\cite{DPI} (see Theorem~\ref{thm:omni-absorber} below) as a black box. The novel (and certainly still non-trivial) work of this paper then is how to utilize the template provided by refined absorption and apply it in these random settings. Important to that is that the parameters in the black box theorem work for our small values of $p$ and $d$.  

\subsection{Proof of Existence via Refined Absorption}

To that end, we first outline the proof of Existence from~\cite{DPI} as well as the key technical definition and black box theorem before discussing the modifications that allow us to extract the true power of this new proof. The first key concept from~\cite{DPI} is that of an `omni-absorber', an object that absorbs \emph{all} possible leftovers.

\begin{definition}[Omni-Absorber]
Let $q \geq 3$ be an integer. Let $X$ be a graph. We say a graph $A$ is a \textit{$K_q$-omni-absorber} for $X$ with \emph{decomposition family} $\mathcal{H}$ and \emph{decomposition function} $\mathcal{Q}_A$ if $V(X)=V(A)$, $X$ and $A$ are edge-disjoint, $\mathcal{H}$ is a family of subgraphs of $X\cup A$ each isomorphic to $K_q$ such that $|E(H)\cap E(X)|\le 1$ for all $H\in\mathcal{H}$, and for every $K_q$-divisible subgraph $L$ of $X$, there exists $\mathcal{Q}_A(L)\subseteq \mathcal{H}$ that are pairwise edge-disjoint and such that $\bigcup \mathcal{Q}_A(L)=L\cup A$. 
\end{definition}

The next key concept is that of \emph{refinement} of an omni-absorber, that is that every edge of $X\cup A$ is in only constantly many cliques of the decomposition family as follows.

\begin{definition}[Refined Omni-Absorber]
Let $C\ge 1$ be real. We say a $K_q$-omni-absorber $A$ for a graph $X$ with decomposition family $\mathcal{H}$ is \emph{$C$-refined} if $|\{H\in \mathcal{H} : e\in E(H) \}| \le C$ for every edge $e\in X\cup A$.
\end{definition}

Here is the graph version of the main omni-absorber theorem from~\cite{DPI} (which was more generally proved for hypergraphs).

\begin{thm}\label{thm:omni-absorber}
    For each integer $q \ge 3$, there exists a real $C\ge 1$ such that the following holds: If $X$ is a spanning subgraph of $K_n$ with $\Delta(X) \le \frac{n}{C}$ and we let $\Delta:= \max\left\{\Delta(X),\sqrt{n}\cdot \log n\right\}$, then there exists a $C$-refined $K_q$-omni-absorber $A \subseteq K_n$ for $X$ such that $\Delta(A)\le C \cdot \Delta$.
\end{thm}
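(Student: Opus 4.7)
The plan is to build $A$ as the union of an efficient random near-$K_q$-decomposition of $X$ together with coordinated private gadgets attached via an iterative-absorption vortex. Since $\Delta(X)\le n/C$, a R\"odl-nibble (or random greedy) procedure inside $K_n$ will produce a family $\mathcal{F}_0$ of copies of $K_q$ with (i) each member meeting $X$ in exactly one edge, (ii) the members edge-disjoint on their non-$X$ edges, and (iii) those non-$X$ edges having maximum degree $O(\Delta(X))$. These $K_q$'s will form the ``default'' part of $\mathcal{H}$: whenever $e\in X\setminus L$, the decomposition $\mathcal{Q}_A(L)$ takes the clique of $\mathcal{F}_0$ through $e$.

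Edges of $L$ must instead be routed through private gadgets. For each $e\in X$, I would attach a small gadget $B_e$ on $O(1)$ fresh vertices of $K_n$ admitting both a $K_q$-decomposition of $B_e$ and a $K_q$-decomposition of $B_e\cup\{e\}$; the two decompositions together contribute only $O(1)$ cliques to $\mathcal{H}$. However, a single edge is not $K_q$-divisible, so gadgets cannot be toggled independently. To address this, the plan is to work inside a vortex $V(K_n)=V_0\supseteq V_1\supseteq\cdots\supseteq V_k$ with $|V_k|$ constant: at level $i$, run a cover-down step that $K_q$-decomposes every edge of $X$ meeting $V_i\setminus V_{i+1}$, pushing the unabsorbed ``hole'' into $V_{i+1}$ while installing gadgets that make each hole edge freely toggleable modulo the next level. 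At the constant-size base $V_k$, a single universal refined absorber of the sort supplied by Theorems~\ref{thm:GeneralTriangleAbsorberThreshold} and~\ref{thm:GeneralQAbsorberThreshold} will handle every $K_q$-divisible configuration on $V_k$ at once.

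Max-degree bookkeeping is layer-by-layer: at level $i$ the cover-down contributes $O(\Delta)$ to $\Delta(A)$, the private gadgets contribute $O(1)$ per $X$-edge, and the geometric shrinkage $|V_{i+1}|\ll |V_i|$ keeps the total at $O(\Delta)$. For refinement, the random choices at each level will be analyzed via concentration and Lov\'asz-local-lemma arguments to guarantee that every edge of $K_n$ lies in only $O(1)$ default $K_q$'s across all levels and in only $O(1)$ gadgets, establishing the $C$-refined bound.

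The main obstacle is reconciling the three demands simultaneously: $C$-refinement, the sharp $\Delta(A)\le C\Delta$ bound, and decomposability for \emph{every} $K_q$-divisible $L$. Naively increasing gadget flexibility inflates $|\{H\in\mathcal{H}: e\in H\}|$ and destroys refinement, while skimping on gadgets leaves some divisible leaves unabsorbable. The resolution is to use low rooted-$2$-density gadgets at the base of the vortex and to strictly separate the default cover (which supplies exactly one clique per $X$-edge) from the toggling mechanism (which supplies $O(1)$ additional cliques per edge), so that the two contributions to $|\{H\in\mathcal{H}: e\in H\}|$ can be bounded independently.
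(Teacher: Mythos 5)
Note first that this statement is not proved in the present paper at all: it is quoted verbatim as the main technical result of the companion paper~\cite{DPI} and used strictly as a black box, with the text around it supplying only an overview. What that overview reveals is that the~\cite{DPI} proof is a one-pass ``refined absorption'' construction, deliberately \emph{not} an iterative-absorption vortex: a Partial Clique Embedding Lemma plants fake edges (Definition~\ref{def:BasicGadget}) to build a $C$-refined \emph{refiner} in which each edge of $X$ is the unique $X$-edge of a bounded number of cliques; replacing the non-$X$ edges of such a clique with their corresponding fake edges yields a $K_q$-divisible graph (Fact~\ref{fact:Div}), to which a private constant-size $K_q$-absorber is attached, and the decomposition function toggles each private absorber on or off according to whether its clique's $X$-edge lies in $L$. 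The proof of Theorem~\ref{thm:RandomOmniAbsorber} in Section~\ref{s:RandomOmniAbsorber} reproduces exactly this two-stage fake-edge-then-private-absorber scheme inside $G(n,p)$.

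Your vortex proposal is therefore a genuinely different route, and it has a gap that you acknowledge but do not close. An omni-absorber must come with a single \emph{fixed} $C$-refined decomposition family $\cH$ that works for \emph{every} $K_q$-divisible $L\subseteq X$ simultaneously. In iterative absorption, the cover-down at level $i$ is chosen adaptively after seeing the leave produced at level $i-1$, hence after seeing $L$; to obtain a fixed $\cH$ you must instead lay down in advance, at each level, a structure that ``makes each hole edge freely toggleable modulo the next level.'' But that toggling property, holding uniformly over every leave that level $i-1$ could push forward, is precisely the omni-absorber property now posed on $V_{i+1}$: you have replaced the problem by a recursion whose inductive step is unspecified, and the constant base case $|V_k|=O(1)$ does not discharge it, since constructing the intermediate-level gadgets is the content of the theorem rather than a lemma to be deferred. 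The observation that fake edges make each refiner clique individually $K_q$-divisible --- so that one round of mutually independent private absorbers suffices, with no inter-level coordination of holes --- is exactly the mechanism by which~\cite{DPI} sidesteps this circularity. A smaller slip: your default rule for $\cF_0$ is backwards. The clique of $\cF_0$ through $e$ should be placed in $\cQ_A(L)$ precisely when $e\in L$, not when $e\in X\setminus L$; otherwise $\bigcup\cQ_A(L)$ would contain an edge of $X\setminus L$, contradicting $\bigcup\cQ_A(L)=L\cup A$. And when $e\notin L$ the non-$X$ edges of that clique, which lie in $A$, still require covering by an alternative decomposition, which is exactly what a private absorber for the fake-edge clique supplies.
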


These linearly efficient omni-absorbers permitted a more streamlined proof of the Existence Conjecture in~\cite{DPI} whose outline we now recall but restricted to the graph case for simplicity:

\begin{enumerate}
    \item[(1)] `Reserve' a random subset $X$ of $E(K_n)$.
    \item[(2)] Construct an omni-absorber $A$ of $X$.
    \item[(3)] ``Regularity boost'' $K_n\setminus (A\cup X)$.
    \item[(4)] Apply ``nibble with reserves'' theorem to find a $K_q$-packing of $K_n\setminus A$ covering $K_n\setminus (A\cup X)$ and then extend this to a $K_q$-decomposition of $K_n$ by definition of omni-absorber.  
\end{enumerate}


We now discuss the difficulties in adapting this proof to our random settings.  
Indeed, there are numerous difficulties in adapting this proof to both the $G(n,p)$ and $G_{n,d}$ setting. Probably the most significant difficulty lies in Step (2); namely, we need to embed an omni-absorber inside $G(n,p)$ or $G_{n,d}$. Step (1) is more challenging than before as the random subset is taken inside $G(n,p)$. For Step (3), we actually regularity boost in $K_n$ and then show this boosting carries down to $G(n,p)$.  This step is quite intricate, involving a more general version of the Boost Lemma from~\cite{GKLO16} and concentrating how this carries down to $G(n,p)$. We still finish by applying Step (4), nibble with reserves, as before to the set of cliques given by regularity boosting.
However, in $G(n,p)$ we also have to fix the divisibility in a manner that deletes few edges, and in $G_{n,d}$, although we can perform the aforementioned steps inside some $G(n,p_*) \subseteq G_{n,d}$ for $p_* \geq (1-o(1))d/n$ using results on the Sandwich Conjecture \cite{KV04, GIM20, GIM22} (see Conjecture~\ref{conj:SandwichConj} and Theorem~\ref{thm:SandwichResult}), we still need to cover $G_{n,d}\setminus G(n,p_*)$.  Since the most complicated issue is how to find an omni-absorber as in Theorem~\ref{thm:omni-absorber} \emph{inside} $G(n,p)$, we discuss this first.  

\subsection{Finding an Omni-Absorber inside $G(n,p)$}

Here is our main theorem for finding an omni-absorber inside $G(n,p)$. 

\begin{thm}\label{thm:RandomOmniAbsorber}
    For each integer $q \ge 3$ there exists a real $C\ge 1$ such that the following holds for all $\beta > 0$.  Let $a \coloneqq \max\{d_{\rm abs}(K_q), \frac{q + 1}{2}\}$, and let $X$ be a spanning subgraph of $K_n$ with $\Delta(X)\le \frac{n}{C}$. If $p\ge n^{-\frac{1}{a} + \beta}$ and $\Delta(X) \le \frac{pn}{C}$, then asymptotically almost surely there exists a $C$-refined $K_q$-omni-absorber $A$ for $X$ such that $A \subseteq (K_n\setminus X)_p$. 
\end{thm}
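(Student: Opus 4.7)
\medskip
\noindent\textbf{Plan.} The strategy is to realise, inside the random graph $(K_n\setminus X)_p$, a template of the omni-absorber supplied by Theorem~\ref{thm:omni-absorber} applied in $K_n$. In the construction of \cite{DPI} the omni-absorber is assembled as an essentially edge-disjoint union of (i) constantly many rooted $K_q$-absorbers $A_1,A_2,\ldots$, one per piece $L_1,L_2,\ldots$ of a fixed partition-like decomposition of $X$ into $K_q$-divisible fragments of bounded size, together with (ii) single copies of $K_q$ used to ``clean up'' the remaining edges of $X$. Since the $A_i$ are not prescribed by the template, I would pick each $A_i$ using the definition of $d_{{\rm abs}}(K_q)$ to satisfy $m_2(A_i,V(L_i))\le d_{{\rm abs}}(K_q)+o(1)\le a$, and the cleanup pieces, rooted at a single edge, automatically have rooted $2$-density $(q+1)/2\le a$.

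\medskip
\noindent\textbf{Embedding the rooted pieces.} For each piece I then need many rooted embeddings into $(K_n\setminus X)_p$. Since $p\ge n^{-1/a+\beta}$ and each rooted piece $(A_i,V(L_i))$ (or cleanup $(K_q,e)$) has maximum rooted $2$-density at most $a$, the expected number of rooted embeddings in $(K_n\setminus X)_p$ on any given root placement is at least $n^{\beta\cdot(v-|R|)}$, where $v-|R|$ is the number of non-root vertices. The rooted $2$-density bound also controls pairwise codegrees: by the concatenation property of $m_2$ (Lemma~\ref{lem:2DensityConcatenate}), two rooted embeddings that collide on an edge combine into a rooted graph whose density still exceeds $a$, so the contribution of colliding pairs to the second moment is negligible. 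A Janson / second-moment argument, together with the embedding machinery of Theorem~\ref{thm:Embed}, then yields that \aas each rooted piece has a large, concentrated number of valid embeddings in $(K_n\setminus X)_p$, even after excluding any small prescribed set of already-used edges.

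\medskip
\noindent\textbf{Assembly.} With the local embedding estimate in hand, I would build the absorber piece by piece, processing the $L_i$ in the order dictated by the template of Theorem~\ref{thm:omni-absorber} and, at each step, selecting at random a valid rooted embedding that is edge-disjoint from all previously chosen absorbers. The degree bound $\Delta(A)\le C\Delta$ of the template, combined with the hypothesis $\Delta(X)\le pn/C$, ensures that only boundedly many pieces are ever rooted at a common vertex, so at each step only $O(1)$ edges of the current partial absorber lie near the roots of the next piece; the embedding estimate above then still produces many candidates. The cleanup $K_q$'s are handled analogously with $(q+1)/2$ playing the role of the rooted $2$-density. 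The final omni-absorber inherits $C$-refinement from the template, since distinct $A_i$ are supported on essentially disjoint non-root vertex sets and each individual $A_i$ and cleanup $K_q$ is refined by construction.

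\medskip
\noindent\textbf{Main obstacle.} The crux is the simultaneous embedding step: although the existence of a single rooted embedding of each piece follows from rooted density alone, obtaining \aas \emph{many} embeddings that survive the exclusion of small sets of already-used edges requires concentration, and this is precisely where rooted $2$-density (rather than mere rooted density) is critical. The parameter $d_{{\rm abs}}(K_q)$ is engineered to let a Janson-type second moment go through, and Lemma~\ref{lem:2DensityConcatenate} is what guarantees that the codegree terms stay well behaved under iterated gluing. Making this estimate uniform in root placement and tight enough to union-bound over all pieces, while simultaneously respecting edge-disjointness and refinement, is the bulk of the new technical work beyond the black box Theorem~\ref{thm:omni-absorber}.
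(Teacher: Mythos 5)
Your outline gets some of the high-level shape right --- use Theorem~\ref{thm:omni-absorber} in $K_n$ as a template, pick absorbers using $d_{{\rm abs}}(K_q)$, control embeddings via rooted $2$-density and Lemma~\ref{lem:2DensityConcatenate}, and invoke Theorem~\ref{thm:Embed} --- but it contains two genuine gaps that the paper's own argument is specifically engineered to close.

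First, the structural description of the template is wrong. The omni-absorber $A_0$ provided by Theorem~\ref{thm:omni-absorber} is \emph{not} an edge-disjoint union of absorbers rooted at pieces of a fixed partition of $X$ into $K_q$-divisible fragments: an omni-absorber must decompose $L\cup A_0$ for \emph{every} $K_q$-divisible $L\subseteq X$, and this is realized via a decomposition family $\cH_0$ whose elements contain at most one edge of $X$ each, a structure that no single splitting of $X$ produces. More to the point, $A_0$ is edge-disjoint from $X$ but lives on the same vertex set, on all of $K_n$; it is not a ``rooted'' object waiting to be embedded. There is no rooted density you can assign to $A_0$ itself, so the plan of ``realising the template inside $(K_n\setminus X)_p$'' by repeated rooted embedding does not get off the ground.

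Second, and consequently, your proposal misses the fake-edge step, which is the bridge the paper uses to turn the $K_n$-template into something realizable inside $(K_n\setminus X)_p$. The paper first replaces every edge $e\in E(A_0)$ by a copy of ${\rm FakeEdge}_q(e)$, which is a bounded-size graph rooted at the two endpoints of $e$ with rooted $2$-density exactly $(q+1)/2$ (Proposition~\ref{prop:FakeEdge}) and with the same divisibility signature as the edge $e$ (Fact~\ref{fact:Div}). These fake edges are embedded simultaneously in a first slice $G(n,p_1)$ via Theorem~\ref{thm:Embed}. Only after this replacement does one have, for each $J\in\cH_0$, a $K_q$-divisible graph $J'$ of bounded size ($\leq q^5$ vertices) actually sitting inside $G(n,p)$ up to its edges in $X$, and \emph{then} one embeds a private $K_q$-absorber $A_{J'}$ with $m_2(A_{J'},V(J'))\le d_{{\rm abs}}(K_q)+o(1)$ in a second slice. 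The ``moreover'' clause of Theorem~\ref{thm:Embed} --- which you don't invoke --- is exactly what controls how the fake-edge embeddings interact with the decomposition family $\cH_0$, so that two distinct pieces $J'$ share at most $C'+1$ common pairs of vertices and can be treated as bridges in the second embedding step. Without the fake-edge layer, the decomposition family of $A_0$ consists of $K_q$'s with $\binom{q}{2}-1$ edges in $A_0\subseteq K_n\setminus G(n,p)$ that simply are not there; and without the simultaneous matching framework of Theorem~\ref{thm:Embed}/Corollary~\ref{cor:EasyPerfectMatching} (rather than your sequential greedy selection), the $C$-refinement guarantee is not established.
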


With the ideas from this paper, it may be possible to amend every step of the proof of Theorem~\ref{thm:omni-absorber} to be done inside $G(n,p)$ so as to prove Theorem~\ref{thm:RandomOmniAbsorber}.
However, part of the power of Theorem~\ref{thm:omni-absorber} is that it may be used as a black box.
For our proof of Theorem~\ref{thm:RandomOmniAbsorber}, we will view the omni-absorber provided by Theorem~\ref{thm:omni-absorber} as a template wherein we replace each element of its decomposition family with gadgets that we embed inside $G(n,p)$. To that end, we follow a similar strategy as developed in~\cite{DPI} for embeddings. We start by, for each edge of $A$, embedding a gadget we call a `fake edge' (see Definition~\ref{def:BasicGadget}) which has the same divisibility properties as an actual edge. We only use `fake edges' that lie completely inside $G(n,p)$, and we make the choice randomly so as to ensure there is a choice of disjoint fake edges via the Lov\'asz Local Lemma. 
In fact, we actually only use ones inside a small slice $G(n,p_1)$ of $G(n,p)$ for some well-chosen $p_1 < p$.  
This ensures the overall degree is still small (roughly at most $p_1 \cdot n$).  
At this point, the collection of fake edges can be viewed as a \textit{refiner}, a key concept in the proof of Theorem~\ref{thm:omni-absorber} from \cite{DPI}.  An omni-absorber is itself already special type of refiner, but the difference here is that the fake edges form a refiner which is embedded in $G(n,p)$, while the omni-absorber from Theorem~\ref{thm:omni-absorber} is not.
With that accomplished, we then embed private absorbers for each member of the new decomposition family (where edges are replaced by their corresponding fake edges). 

For both of these embedding problems, we use a general embedding theorem we developed (Theorem~\ref{thm:Embed}). The needed $p$ value for said theorem is related (up to polylogarithmic terms) to the maximum rooted 2-density (see Definition~\ref{def:MaxRootedDensity}) of the objects to embed. For fake edges, this density is $\frac{q+1}{2}$ and hence requires $p_1 > n^{-2/(q+1)}$, which is also (up to polylogarithmic terms) the threshold from Yuster for a given edge to be in a clique. For absorbers, the optimal density is unclear, but we show via new absorber constructions that it is at most $3$ for $K_3$-absorbers and $q+\frac{1}{2}$ for $K_q$-absorbers, yielding values of $p$ of $n^{-1/3}$ and $n^{-1/\left(q+\frac{1}{2}\right)}$ (with some small additional error terms). 

Here it is crucial that the omni-absorber is $C$-refined so as to manage how many private absorbers and fake edges we have to embed. It is also important how efficient the omni-absorber is in terms of maximum degree. 
Note that if $\cH$ is the decomposition family of a $K_q$-omni-absorber $A$ for $X \subseteq K_n$ where $\Delta(X \cup A) \leq \Delta$, then each vertex is in at most $\Delta \cdot C$ elements of $\cH$ and $|\cH| \leq n\cdot \Delta\cdot C$.
Any decrease in efficiency will result in a corresponding increase in the value of $p$ (since the value of $p$ is tied to the maximum degree of $A$ rather than to $X$). The omni-absorbers of Theorem~\ref{thm:omni-absorber} are linearly efficient, that is $\Delta(A)\le O(\Delta(X))$, but only when $\Delta(X) \ge \sqrt{n}\cdot \log n$. This suffices for our purpose since we will have $\Delta(X) \sim pn$ and $pn \ge n^{1-\frac{2}{q+1} + \beta} \gg \sqrt{n}\cdot \log n$.
Note that in Theorem~\ref{thm:RandomOmniAbsorber}, we do not include a bound on the maximum degree of $A$ is in Theorem~\ref{thm:omni-absorber}.  We could prove such a bound by using a slightly more involved Lov\'asz Local Lemma-based strategy from \cite{DKPIV}; however, we do not need it here because we can simply use the simple fact that \aas $\Delta((K_n\setminus X)_p) \leq 2pn$.

Again, previous proof methods would only yield values of $p=n^{-\varepsilon}$ for some small undetermined $\varepsilon > 0$; even with high efficiency omni-absorbers (which were previously not known to exist other than for triangle decompositions), iterative absorption would only yield a value depending on that efficiency. With our extremely efficient omni-absorbers, our values are now much closer to the conjectured ones. Refined absorption though was key here to actually embed these new efficient omni-absorbers inside $G(n,p)$. In fact, the new bottleneck for this approach now only comes from embedding absorbers into $G(n,p)$. 

\subsection{Regularity Boosting}

Technically, there is a subtlety in the regularity boosting in Step (3). 
To prove Theorem~\ref{thm:YusterRandomGeneral}, we first reveal a small slice $S \sim G(n,p') \subseteq G(n,p)$ for a carefully chosen $p' < p$.  Within $S$, we can find the reserve $X$ and embed an omni-absorber $A$ for $X$ in $S$ using Theorem~\ref{thm:RandomOmniAbsorber}.
Finally, we reveal the remainder $G(n,p)\setminus S$ which is distributed as $(K_n \setminus S)_{p''}$ for $p'' \coloneqq (p - p')/(1 - p')$. 
To apply the ``nibble with reserves'' theorem (see Theorem~\ref{thm:NibbleReserves} and Definitions~\ref{def:design-hypergraph} and \ref{def:reserve-design}), we need to find a very regular set of cliques of $G(n,p)\setminus (X \cup A)$.  
If we only needed to find a regular set of cliques of $(K_n\setminus S)_{p''}$, this would follow by finding a very regular set of cliques of $K_n\setminus S$ (which is possible by the Boost Lemma from \cite{GKLO16}) and showing that the subset contained in $(K_n\setminus S)_{p''}$ is still very regular (which would follow from standard probabilistic arguments).
However, it is necessary to include $S':=S\setminus (X \cup A)$ with the last slice $(K_n\setminus S)_{p''}$ and find a very regular set of cliques of $S'\cup (K_n\setminus S)_{p''}$ so as to apply nibble.

The difficulty for us lies in the asymmetry between $S'$ and $(K_n\setminus S)_{p''}$; that is, $S'$ has been revealed and hence is considered fixed while the latter is random. If one builds a very regular set of cliques of $(K_n\setminus S)\cup S'$, then this asymmetry causes the set to become too irregular after revealing the last slice. One possible solution is to apply the ideas of the Boost Lemma from~\cite{GKLO16} directly inside $(K_n\setminus S)_{p''}\cup S'$; however this seemed to require that $p\geq n^{-\frac{1}{q+.5}+\beta}$ which would be acceptable for the proof of Theorem~\ref{thm:YusterRandom} but not for Theorem~\ref{thm:YusterTriangle}. 

Our solution then is to use the ideas of the Boost Lemma proof to find a fractional $K_q$-packing of $(K_n\setminus S)\cup S'$ with a special property: instead of finding a fractional $K_q$-decomposition (i.e. where all edges have the same weight), we find a $K_q$-packing such that the \emph{expected} weight of each edge in $(K_n\setminus S)_{p''}\cup S'$ is the same. Then we show the actual weight in $(K_n\setminus S)_{p''}\cup S'$ while not the exact same is highly concentrated. Finally we sample from the associated distribution to find our desired very regular set of cliques. 

\subsection{The Leave and Random Regular Graphs}

Technically, we also have to be more careful in Step (4) for $G(n,p)$ since $G(n,p)$ is highly unlikely to be $K_q$-divisible. To fix this, we first set aside a \emph{$K_q$-divisibility fixer} (see Definition~\ref{def:DivFixer}) which has the property that for any graph $G$ for which $F$ is a spanning subgraph, there exists $F'\subseteq F$ such that $G-F'$ is $K_q$-divisible (see Proposition~\ref{prop:DivFixer}). We show in Lemma~\ref{lem:DivFixer} that such a divisibility fixer $F$ exists with at most $(q - 2)n + O(1)$ edges and in addition may be found inside $G(n,p_0)$ for some small $p_0 \ll p$. To decompose whatever edges of $F$ are not deleted to fix divisibility, we in fact take $A$ to be an omni-absorber of $G(n,p_0)\cup X$. 

Similarly, random regular graphs require more care in Step (4) despite being $K_q$-divisible. While the results on the Sandwich Conjecture allow us to find $G(n,p_*)$ inside $G_{n,d}$ with $p_* = (1-o(1))\cdot \frac{d}{n}$, the sandwich is not tight enough to guarantee that the regularity boosted cliques are still regular enough to apply nibble if we include the edges of $G_{n,d}\setminus G(n,p_*)$. Instead, we divide the reserve $X$ randomly into two sets $X_1$ and $X_2$. We use $X_1$ as the reserve for nibble as before but we now use $X_2$ to decompose the edges of the `lower filling' of the sandwich, that is $G_{n,d} - G(n,p_*)$. This step may be viewed as ``covering down'' $G_{n,d}\setminus G(n,p_*)$ to $X_2$, the remainder of which can then be absorbed.  Note this is only possible since we may choose $X$ to be a constant proportion larger than said filling, which is only possible since the omni-absorbers of Theorem~\ref{thm:omni-absorber} have linear maximum degree.

\subsection{Outline of Paper}\label{ss:Outline}


In Section~\ref{s:Previous}, we recall various necessary previous results for our main proofs. In Section~\ref{s:Embed}, we prove an Embedding Theorem (Theorem~\ref{thm:Embed}). In Section~\ref{s:RandomOmniAbsorber}, we prove that refined omni-absorbers exist inside $G(n,p)$ using said Embedding Theorem. In Section~\ref{s:DivFixer}, we prove the existence of divisibility fixers and that they exist inside $G(n,p)$ for large enough $p$ (which also requires the Embedding Theorem). In Section~\ref{s:Regularity}, we prove the required regularity boosting theorem. In Section~\ref{s:Random-Reserve}, we prove a random reserve lemma for Step (1) above.
In Section~\ref{s:main-proof}, we prove Theorems~\ref{thm:YusterRandomGeneral} and \ref{thm:YusterRandomRegularGeneral} about $K_q$-packings and decompositions of $G(n,p)$ and $G_{n,d}$, respectively, modulo sparse absorber constructions; these constructions are provided in Section~\ref{s:Construction}.

In Section~\ref{s:Conclusion}, we provide concluding remarks, in particular we discuss the optimal leave for $G(n,p)$ and mention some future directions.

\section{Preliminaries}\label{s:Previous}

In this section, we introduce some definitions and collect the other previous results we need.

\subsection{Notation and Definitions}\label{s:notation}

A hypergraph $\cH$  consists of a set $V(\cH)$ whose elements are called \textit{vertices} and a set $E(\cH)$ of subsets of $V(\cH)$ called \textit{edges}; for brevity, we also sometimes write $\cH$ for its set of edges $E(\cH)$ and for a set $U \subseteq E(\cH)$, we sometimes write $V(U)$ for the set of vertices $\bigcup_{e\in U}e$. Similarly, we write
$v(\cH)$ for the number of vertices of $\cH$ and either $e(\cH)$ or alternatively $|\cH|$ for the number of edges of $\cH$.
A \textit{multi-hypergraph} consists of a set of vertices $V(\cH)$ and a multi-set $E(\cH)$ of subsets of $V$. 
For an integer $r \geq 1$, a \textit{(multi-)hypergraph} $\cH$ is said to be \textit{$r$-bounded} if every edge of $\cH$ has size at most $r$ and \textit{$r$-uniform} if every edge has size exactly $r$.
For a set $S \subseteq V(\cH)$, we let $\cH(S) \coloneqq \{e \in E(\cH) : e \supseteq S\}$.  Note that $\cH(S)$ is commonly used to refer to the \textit{link hypergraph}, which would be $\{e \setminus S : e \in E(\cH), e \supseteq S\}$, but we found our notation to be more convenient.  
We denote the \textit{degree} of a set $S \subseteq V(\cH)$ as $d_\cH(S) \coloneqq |\cH(S)|$, and for $i \in \mathbb N$, we let $\Delta_i(\cH) \coloneqq \max_{S \in \binom{V(\cH)}{i}}d_\cH(S)$.
For a graph $G$, we let $\Delta(G) \coloneqq \Delta_1(G)$.
We also denote the \textit{degree} of a vertex $v \in V(\cH)$ as $d_\cH(v) \coloneqq d_\cH(\{v\})$.  A hypergraph $\cH$ is \textit{$D$-regular} if $d_\cH(v) = D$ for all $v \in V(\cH)$ and \textit{$(1 \pm \eps)D$-regular} if $d_\cH(v) = (1 \pm \eps)D$ for all $v \in V(\cH)$ (for reals $a,b,c$, we write $a = b\pm c$ to mean $a \in [b - c, b + c]$).
We let $\cH[S]$ denote the hypergraph of $\cH$ \textit{induced by} $S$ where $V(\cH[S]) \coloneqq S$ and $E(\cH[S]) \coloneqq \{e \in \cH : e \subseteq S\}$.

An $r$-uniform hypergraph $\cH$ is \textit{complete} if $E(\cH) = \binom{V(\cH)}{r}$, and a \textit{clique} in $\cH$ is a set $S \subseteq V(\cH)$ such that $\cH[S]$ is complete.  We call a clique $S$ of $\cH$ a $q$-clique if $|S| = q$.
We let $K^r_q$ denote the complete $r$-uniform hypergraph with vertex set $[q]$.  

We assume $G(n,p)$ has vertex set $[n]$.  For a graph $G$, we let $G_p$ denote the subgraph of $G$ obtained by choosing the edges of $G$ independently with probability $p$, so $G(n,p) = (K_n)_p$.

\subsection{Hypergraph Matchings}

A \textit{matching} in a hypergraph $\cH$ is a set of pairwise disjoint edges of $\cH$.  Here we collect several results guaranteeing certain types of matchings in hypergraphs.  
First we need the following definition.

\begin{definition}[Bipartite hypergraph]
We say a hypergraph $\cG=(A,B)$ is \emph{bipartite with parts $A$ and $B$} if $V(\cG)=A\cup B$ and  every edge of $\cG$ contains exactly one vertex from $A$. We say a matching of $\cG$ is \emph{$A$-perfect} if every vertex of $A$ is in an edge of the matching.
\end{definition}

For Step (4), we require the ``nibble with reserves'' theorem from~\cite{DP22}, as follows.

\begin{thm}\label{thm:NibbleReserves}
For each integer $r \ge 2$ and real $\beta \in (0,1)$, there exist an integer $D_{\beta}\ge 0$ and real $\alpha > 0$ such that following holds for all $D\ge D_{\beta}$: 
\vskip.05in
Let $\cG$ be an $r$-uniform (multi)-hypergraph with $\Delta_2(\cG) \leq D^{1-\beta}$ such that $\cG$ is the edge-disjoint union of $\cG_1$ and $\cG_2$ where $\cG_2=(A,B)$ is a bipartite hypergraph such that every vertex of $B$ has degree at most $D$ in $\cG_2$ and every vertex of $A$ has degree at least $D^{1-\alpha}$ in $\cG_2$, and $\cG_1$ is a hypergraph with $V(\cG_1)\cap V(\cG_2) = A$ such that every vertex of $\cG_1$ has degree at most $D$ in $\cG_1$ and every vertex of $A$ has degree at least $D\left(1- D^{-\beta}\right)$ in $\cG_1$.

Then there exists an $A$-perfect matching of $\cG$. 
\end{thm}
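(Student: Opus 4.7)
The proof will follow the semi-random / R\"odl nibble paradigm, with $\cG_1$ supplying most of the matching and $\cG_2$ playing the role of a ``reserve'' that absorbs the leftover. The plan is to set $\alpha := \alpha(\beta,r)$ much smaller than $\beta$ (for instance $\alpha = \beta/10$), take $D_\beta$ large in terms of $r$ and $\beta$, and proceed in two stages: first use $\cG_1$ to cover nearly all of $A$ via a standard nibble, then use $\cG_2$ to finish.

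\textbf{Stage 1 (nibble inside $\cG_1$).} Apply a Pippenger--Spencer / R\"odl nibble (in the sharper forms due to Alon--Kim--Spencer, Vu, or Ehard--Glock--Joos) to $\cG_1$. Every vertex of $\cG_1$ has degree at most $D$, every vertex of $A$ has degree in $[(1-D^{-\beta})D,\,D]$, and the codegree satisfies $\Delta_2(\cG_1)\le\Delta_2(\cG)\le D^{1-\beta}$. These hypotheses yield a matching $M_1\subseteq\cG_1$ such that the set $A' := A\setminus V(M_1)$ of uncovered $A$-vertices has size $|A'|\le |A|\cdot D^{-\gamma}$ for some $\gamma=\gamma(\beta)>0$. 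This step can be invoked essentially as a black box.

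\textbf{Stage 2 (absorb $A'$ using $\cG_2$).} The key structural observation is that since $V(\cG_1)\cap V(\cG_2)=A$ and $\cG_2$ is bipartite with parts $A$ and $B$, the matching $M_1$ touches no vertex of $B$, and every edge of $\cG_2$ containing a vertex $v\in A'$ has $v$ as its unique $A$-vertex with all other endpoints in $B$. So we only need a matching $M_2\subseteq\cG_2$ that saturates $A'$ and whose edges are pairwise disjoint on $B$. In $\cG_2[A'\cup B]$, every $v\in A'$ still has degree at least $D^{1-\alpha}$ and the codegree is still at most $D^{1-\beta}$, so one more (bipartite) nibble on this residual hypergraph covers all but a $D^{-\Omega(\beta-\alpha)}$ fraction of $A'$. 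Iterating $O(1/(\beta-\alpha))$ such bipartite nibbles shrinks the leftover to a set $A^*$ of size at most, say, $|A|\cdot D^{-10/\alpha}$.

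To close out, for each $v\in A^*$ independently pick a random edge $e_v$ among the $\Omega(D^{1-\alpha})$ edges of $\cG_2$ containing $v$ that survive all previous nibbles; let $B_{u,v}$ be the bad event that $e_u$ and $e_v$ share a vertex in $B$. The codegree bound gives $\Pr[B_{u,v}]\le (r-1)D^{\alpha-\beta}$, each $B_{u,v}$ is mutually independent of all events on disjoint pairs, so it has at most $2|A^*|$ neighbors in the dependency graph, and $|A^*|\cdot D^{\alpha-\beta}\ll 1$ by construction. The Lov\'asz Local Lemma then produces the desired $A^*$-perfect matching, and combining this with $M_1$ and the partial matchings from the iterated Stage 2 nibbles yields an $A$-perfect matching of $\cG$.

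\textbf{Main obstacle.} The delicate point is not Stage 1 itself but ensuring that the degree and codegree estimates in $\cG_2$ survive the conditioning imposed by Stage 1 and the intermediate Stage 2 nibbles; this is why one must choose $\alpha$ strictly smaller than $\beta$, so that the per-round ``loss factor'' $D^{-\Omega(\beta-\alpha)}$ dominates the accumulated codegree errors and still leaves enough margin for the final LLL step. The separate exponent $\alpha$ in the hypothesis on $\cG_2$, as opposed to reusing $\beta$, exists precisely to supply this safety margin.
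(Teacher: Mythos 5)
There is a genuine gap, and it sits exactly at the point the theorem is designed to handle. In Stage 2 you argue that, in the residual hypergraph $\cG_2[A'\cup B]$, the facts ``every $v\in A'$ has degree at least $D^{1-\alpha}$'' and ``$\Delta_2\le D^{1-\beta}$'' let you run a bipartite nibble covering all but a $D^{-\Omega(\beta-\alpha)}$ fraction of $A'$. Those two hypotheses alone do not imply anything of the sort, because vertices of $B$ may have $\cG_2$-degree as large as $D\gg D^{1-\alpha}$: for $r=2$, if every vertex of a leftover set $A'$ of size $D$ sends all of its $D^{1-\alpha}$ reserve edges into one fixed set $B_0$ of $D^{1-\alpha}$ vertices of $B$ (degrees in $B_0$ are $D$, codegrees are $1$, so all your stated hypotheses hold), then \emph{no} matching covers more than a $D^{-\alpha}$ fraction of $A'$, let alone a nibble. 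Making $|A'|$ small via Stage 1 does not exclude this: what is needed is that the Stage-1 matching is chosen so that, for every $b\in B$, only about $D^{1-\eps}$ of $b$'s $\cG_2$-edges have an uncovered $A$-endpoint, i.e.\ the leftover must be \emph{spread} relative to the reserve. Your Stage 1 is a generic nibble in $\cG_1$ and is never coupled to $\cG_2$, so nothing prevents the leftover from concentrating in the $\cG_2$-neighbourhoods of few $B$-vertices. This coupling is precisely how the paper's argument goes (the theorem is quoted from \cite{DP22}, and the paper's sketched reduction to pseudorandom hypergraph matchings runs the nibble in $\cG_1$ with one weight function $w_b$ per high-degree $b\in B$, tracking how much of $b$'s reserve degree the matching destroys, after which a single application of the Lov\'asz-Local-Lemma bipartite matching theorem, Theorem~\ref{thm:EasyPerfectMatching}, finishes); the full proof in \cite{DP22} achieves the same control via forbidden submatchings.

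Two further quantitative problems: your endgame needs $|A^*|\cdot D^{\alpha-\beta}\ll 1$, but you only get $|A^*|\le |A|\cdot D^{-10/\alpha}$, and the theorem imposes no bound on $|A|$ (or $v(\cG)$) in terms of $D$ — the paper's shortcut derivation explicitly adds the extra hypothesis $D\ge v(\cG)^{\beta}$, which you neither state nor use. And in the iterated Stage-2 nibbles, after the first round the uncovered $A$-vertices can lose most of their $\cG_2$-degree, since previously used $B$-vertices delete their edges; the claim that degrees stay near $D^{1-\alpha}$ is only justified for the first round. Both issues are symptoms of the same missing idea: the reserve can only absorb the leftover if the main nibble is run so as to protect the reserve degrees at $B$, and that protection is the content of the theorem.
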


We remark that Theorem~\ref{thm:NibbleReserves} is only proved in the latest version of \cite{DP22}, where it is more generally proved  with forbidden submatchings (see Definition~\ref{def:configuration-hypergraph}).  However, the version without forbidden submatchings, as stated in Theorem~\ref{thm:NibbleReserves}, follows from previously known results on ``pseudorandom hypergraph matchings'' (see \cite{KKKMOnibble, EGJ20}) assuming $D \geq v(\cG)^\beta$, which is the case for all of our applications in this paper and in \cite{DKPIV}.\COMMENT{
We will apply the following, \cite[Theorem 1.2]{EGJ20}. \textit{For every $k\geq 2$ and $\delta \in (0, 1)$, there exists $D_0$ such that the following holds for all $D\geq D_0$ and $\eps \coloneqq \delta / (50 k^2)$.  Suppose that $\cH$ is a $k$-uniform hypergraph and $\cW$ is a set of at most $\exp(D^{\eps^2})$ weight functions on $\cH$ such that $w(\cH) \geq \max_{e\in\cH}w(e) D^{1 + \delta}$ for every $w \in \cW$.  If $\cH$ has maximum degree at most $D$, codegree at most $D^{1 - \delta}$, and $e(\cH) \leq \exp(D^{\eps^2})$, then there exists a matching $M$ of $\cH$ such that every $w \in \cW$ satisfies $w(M) = (1 \pm D^{-\eps})w(\cH) / D$.}\\
\indent Define $\cW$ as follows.  Let $\eps \coloneqq \beta / (100k^2)$, and let $B' \coloneqq \{b \in B : d_{\cG_2}(b) \geq D^{1-\eps}\}$.
For each $b \in B'$, include $w_b \in \cW$ where $w_b$ is the weight function on $\cG_1$ where $w_b(e)$ is equal to the number of edges of $\cG_2(\{b\})$ that intersect $e$.  
Since $\cG_2$ is bipartite and every vertex in $A$ has degree at least $D(1 - D^{\beta})$, we have $w_b(\cH) \geq d_{\cG_2}(b)\cdot D(1 - D^{-\beta})$ for every $b\in B'$.  
After the nibble in $\cG_1$ produces a matching $M$, the degrees of $A$-vertices remains the same, and the degree of a $B'$-vertex $b$ will be $d_{\cG_2}(b) - w_b(M)$ in $\cG_2\setminus V(M)$.  To that end, apply the above with $\cG_1$ playing the role of $\cH$ and $\beta / 2$ playing the role of $\delta$, for which we require 
\begin{itemize}
    \item $|\cW| \leq \exp(D^{\eps^2})$,
    \item $w_b(\cH) \geq \max_{e\in \cH}w_b(e) \cdot D^{1 + \beta/2}$ for every $b \in B'$, and
    \item $e(\cG_1) \leq \exp (D^{\eps^2})$.
\end{itemize}
Indeed, the first bullet holds since $|\cW| \leq |B|$ and we assume $D \geq v(G)^\beta$.
The second bullet point holds since $w_b(\cH) \geq d_{\cG_2}(b)\cdot D / 2 \geq D^{2 - \eps}/2$ for every $b\in B'$ and $\max_{e\in\cH}w_b(e)\cdot D^{1+\beta/2} \leq k\Delta_2(\cG_2)\cdot D^{1 + \beta/2} \leq kD^{2-\beta/2}$.
The third bullet holds since $e(\cG_1) \leq D\cdot v(\cG_1)$ and $D \geq v(G)^\beta$.\\
\indent Therefore we have a matching $M$ of $\cG_1$ such that every $b\in B'$ satisfies $w_b(M) \geq (1 - D^{-\eps})w_b(\cH) / D \geq (1 - 2D^{-\eps})d_{\cG_2}(b)$.  Therefore $d_{\cG_2\setminus V(M)}(b) \leq 2D^{-\eps}\cdot d_{\cG_2}(b) \leq 2D^{1- 
 \eps}$ for $b\in B'$ and $d_{\cG_2\setminus V(M)}(b) \leq d_{\cG_2}(b) \leq D^{1-\eps}$ for $b \in B \setminus B'$.  However, $d_{\cG_2}(a) \geq D^{1 - \alpha} \geq 8k\cdot 2D^{1-\eps}$ for every $a \in A\setminus V(M)$ if we let $\alpha = \eps / 2$.  Now Theorem~\ref{thm:EasyPerfectMatching} implies there is an $(A \setminus V(M))$-perfect matching $M'$ of $\cG_2 \setminus V(M)$, and $M \cup M'$ is an $A$-perfect matching of $\cG$, as desired.
}

For the proof of our embedding theorem (Theorem~\ref{thm:Embed}), we also need a simpler version which requires a larger factor between the degrees of vertices in $A$ and those in $B$ but that does not require any codegree assumption. We also use this version to deal with the edges of $G_{n,d}$ not contained in the subgraph $G(n,p_*)$ coming from the Sandwich Conjecture in the proof of Theorem~\ref{thm:YusterRandomRegularGeneral}.

The following is a consequence of a more general theorem from~\cite{DP22}, which follows fairly easily from the Lov\'asz Local Lemma.

\begin{thm}\label{thm:EasyPerfectMatching}
Let $\cG=(A,B)$ be a bipartite $r$-bounded multi-hypergraph. If there exists a real $D\ge 1$ such that every vertex of $A$ has degree at least $8rD$ in $\cG$ and every vertex of $B$ has degree at most $D$ in $\cG$, then there exists an $A$-perfect matching of $\cG$.
\end{thm}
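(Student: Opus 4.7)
The plan is to apply the symmetric Lov\'asz Local Lemma to a natural random assignment. First, since $d_\cG(a) \ge 8rD$ for every $a \in A$, I would pre-select for each $a \in A$ a subfamily $\Fc_a$ of exactly $8rD$ edges of $\cG$ containing $a$ (breaking ties arbitrarily, and allowing the reduction to trivially include any edges of size $1$ in the output matching). Because $\cG$ is bipartite with parts $A$ and $B$, each edge $e$ has a unique $A$-vertex $a_e$, so $e$ belongs to $\Fc_a$ only if $a = a_e$; in particular the $\Fc_a$ are pairwise disjoint as edge sets. Then, independently for each $a \in A$, I would select $f(a)$ uniformly at random from $\Fc_a$. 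The goal is to show that with positive probability the collection $\{f(a) : a \in A\}$ is pairwise disjoint on $B$, which, combined with the bipartiteness, yields an $A$-perfect matching.

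For the bad events, I would define, for each unordered pair $\{e_1, e_2\}$ of distinct edges with $a_{e_1} \ne a_{e_2}$, $e_1 \in \Fc_{a_{e_1}}$, $e_2 \in \Fc_{a_{e_2}}$, and $e_1 \cap e_2 \cap B \ne \emptyset$, the bad event $X_{\{e_1,e_2\}}$ that $f(a_{e_1}) = e_1$ and $f(a_{e_2}) = e_2$. If none of these events occur, then by construction the chosen edges are pairwise disjoint on $B$, so $\{f(a) : a \in A\}$ is an $A$-perfect matching. By independence of the choices,
\[
\Prob{X_{\{e_1,e_2\}}} \;=\; \frac{1}{|\Fc_{a_{e_1}}|}\cdot \frac{1}{|\Fc_{a_{e_2}}|} \;=\; \frac{1}{(8rD)^2}.
\]

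Next I would bound the dependency degree. The event $X_{\{e_1,e_2\}}$ depends only on $f(a_{e_1})$ and $f(a_{e_2})$, so it is mutually independent from $X_{\{e_1',e_2'\}}$ whenever $\{a_{e_1'},a_{e_2'}\}\cap \{a_{e_1},a_{e_2}\} = \emptyset$. To count dependent events, fix one endpoint, say $a_{e_1'} = a_{e_1}$: there are at most $|\Fc_{a_{e_1}}| = 8rD$ choices for $e_1'$, each having at most $r-1$ vertices in $B$, each $B$-vertex lying in at most $D$ edges of $\cG$ (and thus in at most $D$ edges of $\bigcup_a \Fc_a$); so there are at most $8rD \cdot (r-1) \cdot D$ valid $e_2'$. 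Accounting for the four ways the two endpoints of $X_{\{e_1',e_2'\}}$ can match $a_{e_1}$ or $a_{e_2}$, the dependency degree is at most $\Delta \le 32 r(r-1) D^2 \le 32 r^2 D^2$.

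Finally, the symmetric Lov\'asz Local Lemma applies since
\[
e \cdot \Prob{X_{\{e_1,e_2\}}} \cdot (\Delta + 1) \;\le\; e \cdot \frac{1}{(8rD)^2}\cdot (32 r^2 D^2 + 1) \;\le\; \frac{e}{2} \;<\; 1,
\]
so with positive probability no $X_{\{e_1,e_2\}}$ occurs, and the resulting $\{f(a) : a \in A\}$ is the desired $A$-perfect matching. The main obstacle is conceptual rather than computational: one has to artificially cap the degrees via the subfamilies $\Fc_a$, because the hypothesis provides only a lower bound on $d_\cG(a)$, and an uncapped random choice would blow up the dependency degree beyond what symmetric LLL can handle. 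Once this truncation is made, the slack factor of $8r$ in the assumed lower bound is exactly what makes the LLL inequality close with room to spare.
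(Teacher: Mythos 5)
Your overall strategy---capping each $A$-vertex at a bounded family $\Fc_a$ of candidate edges, choosing one uniformly and independently for each $a\in A$, and applying the symmetric Lov\'asz Local Lemma to the pairwise $B$-conflict events---is exactly the route the paper intends: the paper does not prove this statement itself but cites a more general theorem of~\cite{DP22} that ``follows fairly easily from the Lov\'asz Local Lemma.'' However, your final verification is false as written. With $\Prob{X_{\{e_1,e_2\}}}=(8rD)^{-2}=1/(64r^2D^2)$ and your dependency bound $\Delta\le 32r^2D^2$, the quantity $e\cdot\Prob{X_{\{e_1,e_2\}}}\cdot(\Delta+1)$ is approximately $e/2\approx 1.36$, and the displayed claim ``$\le e/2<1$'' is simply wrong, so the symmetric LLL condition $e\,p\,(\Delta+1)\le 1$ does not hold with the numbers you state.

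The defect is only in the counting and is repairable. An event $X_{\{e_1',e_2'\}}$ shares a random variable with $X_{\{e_1,e_2\}}$ if and only if one of its two $A$-roots lies in $\{a_{e_1},a_{e_2}\}$, and such an unordered pair is determined by: which of the two roots $a_{e_1},a_{e_2}$ is shared ($2$ ways), which edge of the primed pair sits at that shared root (at most $8rD$ ways, since it must lie in the corresponding $\Fc$), and which edge is its partner meeting it in $B$ (at most $(r-1)D$ ways, since that edge has at most $r-1$ vertices in $B$, each in at most $D$ edges). Your factor of $4$ double-counts, because once the shared root is fixed there is no further choice of ``which endpoint of the primed pair matches it.'' Hence $\Delta\le 16r(r-1)D^2\le 16r^2D^2$, and then
\begin{equation*}
e\cdot\frac{16r^2D^2+1}{64r^2D^2}\le e\left(\frac14+\frac{1}{64}\right)<1,
\end{equation*}
so the LLL applies and the argument closes. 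Two cosmetic points: since $D$ is a real, $8rD$ need not be an integer, so take $|\Fc_a|=\lceil 8rD\rceil$ (the probability bound only improves); and edges of size one never participate in any conflict, so they need no separate treatment.
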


We require a simple but useful generalization of the above theorem (at least for large $D$), which is in fact itself a corollary of the above as follows.

\begin{cor}\label{cor:EasyPerfectMatching}
For every integer $r \geq 1$, there exists $D_0$ such that the following holds.
Let $\cG=(A,B)$ be a bipartite $r$-bounded multi-hypergraph. Let $A_1,\ldots, A_k$ be a partition of $A$, and let $\cG_i=\cG[A_i\cup B]$ for all $i\in [k]$. If there exist reals $D_i\ge D_0$ for all $i\in [k]$ such that every vertex of $A_i$ has degree at least $8rkD_i$ in $\cG_i$ and every vertex of $B$ has degree at most $D_i$ in $\cG_i$, then there exists an $A$-perfect matching of $\cG$.
\end{cor}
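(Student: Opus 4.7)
The plan is to reduce Corollary~\ref{cor:EasyPerfectMatching} to Theorem~\ref{thm:EasyPerfectMatching} via an edge-duplication construction. First, I would reduce to the case where each $D_i$ is a positive integer: since $d_{\cG_i}(b)$ is a non-negative integer with $d_{\cG_i}(b) \leq D_i$, we in fact have $d_{\cG_i}(b) \leq \lfloor D_i\rfloor$, and of course $d_{\cG_i}(a) \geq 8rkD_i \geq 8rk\lfloor D_i\rfloor$; so replacing each $D_i$ by $\lfloor D_i\rfloor$ preserves the hypothesis, and I can take $D_0 = 1$ to ensure $\lfloor D_i\rfloor \geq 1$.

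Next, I would pick any positive integer $N$ divisible by every $D_i$ (for instance $N := \prod_i D_i$) and set $m_i := N/D_i$, which is a positive integer. Let $\cG'$ be the bipartite $r$-bounded multi-hypergraph on $(A,B)$ obtained from $\cG$ by replacing every edge $e \in \cG_i$ with $m_i$ parallel copies. Then for $a \in A_i$,
\[
d_{\cG'}(a) \;=\; m_i \cdot d_{\cG_i}(a) \;\geq\; m_i \cdot 8rkD_i \;=\; 8rkN,
\]
and for $b \in B$,
\[
d_{\cG'}(b) \;=\; \sum_{i} m_i \cdot d_{\cG_i}(b) \;\leq\; \sum_{i} m_i\cdot D_i \;=\; kN.
\]
Applying Theorem~\ref{thm:EasyPerfectMatching} to $\cG'$ with $D := kN \geq 1$ then yields an $A$-perfect matching $M$ of $\cG'$.

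Finally, because any two distinct copies of the same underlying edge share all their vertices, the matching $M$ contains at most one copy of each underlying edge of $\cG$, so projecting the edges of $M$ down to their underlying edges in $\cG$ yields a matching $M'$ of $\cG$ with $|M'| = |M| = |A|$---that is, an $A$-perfect matching of $\cG$, as desired.

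I do not anticipate a serious obstacle: the factor $k$ in the hypothesis $d_{\cG_i}(a) \geq 8rkD_i$ is precisely what is needed to absorb the summation $\sum_i m_i D_i = kN$ on the $B$-side, so after the duplications equalize the products $m_i D_i$ to the common value $N$, the ratio of $A$-degrees to the $B$-degree upper bound in $\cG'$ is exactly $8r$, and Theorem~\ref{thm:EasyPerfectMatching} applies with no slack to spare.
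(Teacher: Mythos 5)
Your proposal is correct and is essentially the paper's own argument: the paper likewise reduces to integer $D_i$ by taking floors, duplicates each edge of $\cG_i$ so that it appears $D/D_i$ times with $D=\prod_i D_i$ (your $m_i = N/D_i$), applies Theorem~\ref{thm:EasyPerfectMatching} to the resulting multi-hypergraph, and projects the matching back to $\cG$. Your write-up just spells out the degree bookkeeping and the $D:=kN$ choice a bit more explicitly.
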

\begin{proof}
We assume without loss of generality that all the $D_i$ are integer (say by setting $D_i$ to be its floor). Let $D:= \prod_{i\in [k]} D_i$. Let $\cG'$ be obtained from $\cG$ by for each $i\in [k]$ and edge $e\in \cG_i$, adding $\frac{D}{D_i} - 1$ copies of $e$. Now $\cG'$ is a bipartite $r$-uniform multi-hypergraph where every vertex of $A$ has degree at least $8rkD$ in $\cG'$ and every vertex of $B$ has degree at most $kD$ in $\cG'$. 
Hence by Theorem~\ref{thm:EasyPerfectMatching}, there exists an $A$-perfect matching $M$ of $\cG'$. But then $M$ corresponds to an $A$-perfect matching of $\cG$ as desired (by replacing any copy edges with their original).   
\end{proof}

We also need the definition of configuration hypergraph as follows.

\begin{definition}[Configuration hypergraph]\label{def:configuration-hypergraph}
    Let $\cG$ be a (multi)-hypergraph. We say a hypergraph $\cC$ is
a \textit{configuration hypergraph} for $\cG$ if $V(\cC) = E(\cG)$ and $E(\cC)$ consists of a set of matchings of $\cG$ of size at
least two. We say a matching of $\cG$ is \textit{$\cC$-avoiding} if it spans no edge of $\cC$.
\end{definition}

\begin{definition}[$D$-weighted degree]
    Let $\cH$ be a hypergraph and let $D \geq 1$ be a real number. We define the \textit{$D$-weighted degree} of a vertex $v \in V(\cH)$, denoted 
    \begin{equation*}
        w_D(\cH, v) \coloneqq \sum_{i\geq 2}\frac{i-1}{D^{i-1}}\cdot |\{e \in \cH(\{v\}) : |e| = i\}|.
    \end{equation*}
We define the \textit{maximum $D$-weighted degree of $\cH$}, denoted
    \begin{equation*}
        w_D(\cH) \coloneqq \max_{v\in V(\cH)}w_D(\cH, v).
    \end{equation*}
\end{definition}
We will also need \cite[Lemma 3.4]{DP22} which generalizes Theorem~\ref{thm:EasyPerfectMatching} to include a configuration hypergraph of small maximum weighted degree.  We note that the version stated here is stronger in that we have $r$-bounded instead of $r$-uniform and we allow $\cG$ to be a multi-hypergraph, but the same proof also yields this version.  Note we may assume without loss of generality that $\cG$ is $r$-uniform by adding dummy vertices to $B$.
\begin{thm}\label{thm:BipartitePerfectMatching}
Let $r \geq 1$ be an integer. Let $\cG=(A,B)$ be a bipartite $r$-bounded multi-hypergraph such that $d_\cG(a) \geq 300rD$ for every $a \in A$ and $d_\cG(b) \leq D$ for every $b \in B$. 
If $\cC$ is a configuration hypergraph of $\cG$ with $w_D(\cC) \leq 1$, then there exists a $\cC$-avoiding $A$-perfect matching of $\cG$.
\end{thm}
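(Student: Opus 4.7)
The plan is to apply the asymmetric Lov\'asz Local Lemma (LLL) to the following random process: for each $a \in A$, independently choose an edge $E_a$ uniformly at random from $\cG(\{a\})$. Because $\cG$ is bipartite with parts $A$ and $B$, each edge contains exactly one vertex in $A$, so $M \coloneqq \{E_a : a \in A\}$ is automatically $A$-covering. It suffices to show that with positive probability $M$ is a matching (no two chosen edges share a $B$-vertex) and is $\cC$-avoiding, which will follow from the LLL once appropriate bad events are set up.

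I would introduce two families of bad events. \emph{Type~I:} for each $v \in B$ and each pair of edges $e_1,e_2 \in \cG(\{v\})$ with distinct $A$-vertices $a_1,a_2$, the event $X_{v,e_1,e_2}$ that $E_{a_1} = e_1$ and $E_{a_2} = e_2$, which has probability $(d_\cG(a_1)d_\cG(a_2))^{-1} \le (300rD)^{-2}$. \emph{Type~II:} for each $C \in E(\cC)$ of size $k$, with $A$-vertices $a_1,\dots,a_k$ (distinct because $C$ is a matching of $\cG$), the event $Y_C$ that $E_{a_i}$ coincides with the unique edge of $C$ incident to $a_i$ for every $i$, which has probability $\prod_i d_\cG(a_i)^{-1} \le (300rD)^{-k}$. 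Two bad events are dependent iff they share at least one $A$-vertex.

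The heart of the argument is to bound, for each fixed $a\in A$, the total probability mass of bad events through $a$. For Type~I events, a direct computation using $d_\cG(v)\le D$, $d_\cG(a')\ge 300rD$ for all $a'\in A$, and $\sum_{v \in B} |\cG(\{a,v\})| \le (r-1)d_\cG(a)$ (since each edge through $a$ meets at most $r-1$ vertices in $B$) yields a bound of $(r-1)/(300r) \le 1/300$; the factors of $d_\cG(a)$ cancel. For Type~II events through $a$, one first singles out the unique edge $e \in C$ with $a_e = a$, extracts the factor $1/d_\cG(a)$, and bounds the remaining product of $|C|-1$ reciprocal degrees by $(300rD)^{-(|C|-1)}$. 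The resulting sum $\sum_{k \ge 2} |\{C \in \cC : e \in C,\ |C|=k\}| \cdot (300rD)^{-(k-1)}$ is at most $w_D(\cC,e)/(300r) \le 1/(300r)$ via the elementary inequality $(300r)^{k-1} \ge 300r(k-1)$ for $k\ge 2$, which matches the $(k-1)/D^{k-1}$ calibration in the definition of $w_D$. Summing over $e \in \cG(\{a\})$ cancels another factor of $d_\cG(a)$ and yields $1/300$.

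I would finish with the asymmetric LLL, choosing weights $x_X = 4\,\Prob{X}$ for Type~I events and $x_{Y_C} = 2^{|C|}\,\Prob{Y_C}$ for Type~II events. The geometric blow-up in the latter is absorbed by the decay $\Prob{Y_C} \le (300rD)^{-|C|}$: summing $x_{Y_C}$ over Type~II events through a fixed $a$ gives the same telescoping estimate with $150r$ in place of $300r$, still at most $1/75$. Since a Type~II event $Y_C$ touches $|C|$ vertices of $A$, the total weight $\sum_{B' \sim Y_C} x_{B'}$ is $O(|C|/75)$, which is comfortably below $\ln(x_{Y_C}/\Prob{Y_C}) = |C|\ln 2$, verifying the LLL inequality uniformly in $|C|$; the analogous check for Type~I is easier. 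The main obstacle is precisely this need to treat Type~II configurations of unbounded size $|C|$: a flat reweighting $x_{Y_C} = \alpha\,\Prob{Y_C}$ fails because each $Y_C$ sits in $|C|$ dependency lists, yet the $(k-1)/D^{k-1}$ factor in $w_D$ is tuned exactly so that the exponential-in-$|C|$ weight $2^{|C|}$ can be paid for. The passage from $r$-uniform to $r$-bounded and from hypergraph to multi-hypergraph is routine, handled by padding $B$ with dummy vertices and treating parallel edges as distinct elements of $V(\cC)$.
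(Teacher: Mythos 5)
Your proposal is correct, and it is essentially the intended argument: the paper does not prove this statement itself but imports it as \cite[Lemma 3.4]{DP22}, whose proof is exactly this Lov\'asz Local Lemma scheme — each $a \in A$ picks a uniformly random incident edge, with bad events for collisions at $B$-vertices and for fully-selected configurations, the degree hypotheses $d_\cG(a) \geq 300rD$, $d_\cG(b) \leq D$ controlling the Type~I mass and the $\frac{i-1}{D^{i-1}}$ calibration in $w_D$ paying for the exponential weights on large configurations. Your computations (the $(300r)^{k-1} \geq 300r(k-1)$ telescoping, the $2^{|C|}$ reweighting, and the final asymmetric LLL check) are sound, and the multi-hypergraph/$r$-bounded generality indeed goes through verbatim.
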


\subsection{Kim-Vu Corollaries}

To prove our embedding theorem (Theorem~\ref{thm:Embed}) that will be used to embed fake edges and private absorbers in $G(n,p)$, we need a certain concentration inequality, namely a corollary of the Kim-Vu~\cite{KV00} polynomial concentration inequality, but first we need a few definitions.

\begin{definition}[Random sparsifications] 
Let $p \in [0,1]$.  
For a set $X$, we let $X_p$ denote the random subset of $X$ obtained by choosing each element of $X$ independently with probability $p$.
For a graph $G$, we let $G_p$ denote the subgraph of $G$ obtained by choosing the edges of $G$ independently with probability $p$. 
\end{definition}

\begin{definition}[Weighted hypergraph]
A \textit{weighted hypergraph} is a pair $(\cF, w)$ where $\cF$ is a hypergraph and $w : E(\cF) \rightarrow \mathbb R_{\geq0}$ is a weight function.  We denote the \textit{weighted degree} of a set $S \subseteq V(\cF)$ in $(\cF, w)$ by $d_{\cF,w}(S) \coloneqq \sum_{e \in \cF(S)}w(e)$, and for $i \in \mathbb N$, we let $\Delta_i(\cF, w) \coloneqq \max_{S \in \binom{V(\cF)}{i}}d_{\cF,w}(S)$.  We let $w(\cF) \coloneqq \sum_{e \in \cF}w(e)$.
\end{definition}

The following theorem follows immediately from the Kim--Vu~\cite{KV00} polynomial concentration inequality.

\begin{thm}\label{thm:KimVu}
    For all $k \geq 1$, there exists $C \geq 1$ such that the following holds for all $p \in (0, 1]$ and $n, \lambda > 1$.  
    If $(\cF, w)$ is a weighted hypergraph such that $\cF$ is $k$-uniform and has at most $n$ vertices and $\cF' \coloneqq \cF[V(\cF)_p]$, then
    \begin{equation*}
        \Prob{|w(\cF') - \Expect{w(\cF')}| > C\lambda^k\sqrt{E_0E_1}} < C\exp(-\lambda + (k - 1)\log n),
    \end{equation*}
    where $E_0 \coloneqq \max_{i\geq 0}\Delta_i(\cF, w) p^{k - i}$ and $E_1 \coloneqq \max_{i\geq 1}\Delta_i(\cF, w) p^{k-i}$.
\end{thm}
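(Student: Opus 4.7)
The plan is to recognize $w(\cF')$ as a homogeneous multilinear polynomial of degree $k$ in independent Bernoulli random variables and then invoke the Kim--Vu polynomial concentration inequality directly.

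For each $v \in V(\cF)$ let $t_v \coloneqq \mathbf{1}[v \in V(\cF)_p]$, so the $t_v$ are mutually independent Bernoulli$(p)$ random variables. Then $\cF' = \cF[V(\cF)_p]$ consists precisely of those edges $e \in \cF$ with $\prod_{v \in e} t_v = 1$, so we can write
\begin{equation*}
    Y \coloneqq w(\cF') = \sum_{e \in E(\cF)} w(e) \prod_{v \in e} t_v,
\end{equation*}
a multilinear polynomial of degree $k$ in the $t_v$ with nonnegative coefficients.

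Next I would compute the partial-derivative expectations that appear in Kim--Vu. For any set $A \subseteq V(\cF)$ with $|A| = i \leq k$, the $A$-partial of $Y$ is $\partial_A Y = \sum_{e \supseteq A} w(e) \prod_{v \in e \setminus A} t_v$, and since the $t_v$ are independent Bernoulli$(p)$,
\begin{equation*}
    \mathbb{E}\bigl[\partial_A Y\bigr] = p^{k - i}\sum_{e \in \cF(A)} w(e) = p^{k - i}\, d_{\cF, w}(A).
\end{equation*}
Maximizing over $A$ with $|A| = i$ gives $\mathbb{E}_i(Y) = p^{k - i}\Delta_i(\cF, w)$. Thus the Kim--Vu quantities $\mathbb{E}'(Y) = \max_{i \geq 1}\mathbb{E}_i(Y)$ and $\mathbb{E}(Y) = \max_{i \geq 0}\mathbb{E}_i(Y)$ coincide exactly with $E_1$ and $E_0$ as defined in the statement.

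Finally I would apply the standard form of the Kim--Vu inequality for a nonnegative multilinear polynomial of degree $k$ on at most $n$ Bernoulli variables, which asserts the existence of constants $c_k, d_k$ (depending only on $k$) with
\begin{equation*}
    \Pr\!\left[|Y - \mathbb{E}[Y]| > c_k \lambda^k \sqrt{\mathbb{E}(Y)\cdot \mathbb{E}'(Y)}\right] \leq d_k \exp(-\lambda + (k-1)\log n).
\end{equation*}
Taking $C \coloneqq \max\{c_k, d_k\}$ and substituting our identifications $\mathbb{E}(Y) = E_0$, $\mathbb{E}'(Y) = E_1$ yields the claimed bound. There is no real obstacle here; the only subtlety is to confirm that the variables in the Kim--Vu polynomial are indexed by \emph{vertices} of $\cF$ (not edges), which is precisely how we have set things up, so the bound $v(\cF) \leq n$ gives the $(k-1)\log n$ term in the exponent.
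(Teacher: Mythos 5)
Your proposal is correct and follows essentially the same route as the paper: it identifies $w(\cF')$ with the degree-$k$ multilinear polynomial $\sum_{e\in\cF}w(e)\prod_{v\in e}t_v$ in vertex-indexed Bernoulli variables, computes $\mathbb{E}[\partial_A Y]=p^{k-|A|}d_{\cF,w}(A)$ so that the Kim--Vu quantities match $E_0$ and $E_1$, and then applies the Kim--Vu main theorem directly. No gaps.
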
\COMMENT{%
    We apply Kim--Vu~\cite[Main Theorem]{KV00} to the hypergraph $\cF$, where $t_v$ for each $v\in V(\cF)$ is a $\{0,1\}$ random variable with expected value $p$, and $w(\cF')$ plays the role of $Y_\cF$. For each $A \subseteq \cF$, the (weighted) link hypergraph $\cF(A)$ plays the role of the ``truncated hypergraph'' $\cF_A$, so we have $\Expect{Y_{\cF_A}} = p^{k-i}d_{\cF,w}(A) \leq p^{k-i}\Delta_i(\cF,w)$ for $A$ of size $i$.  Hence, $E'(\cF) = E_1 \coloneqq \max_{i\geq1}\Delta_i(\cF, w)p^{k-i}$ and $E(\cF) = \max_{i\geq0}\Delta_i(\cF, w)p^{k-i}$.
}  
We will use the following two corollaries of Theorem~\ref{thm:KimVu}, the first of which ensures the weight of $\cF'$ is within a polynomial factor of its expectation with high probability, and the second of which ensures the weight of $\cF'$ is within a logarithmic or constant factor of its expectation with high probability, under more relaxed assumptions.

\begin{cor}\label{cor:KimVu-poly}
    For all $k \geq 1$ and $\beta > 0$, there exists $n_0$ such that the following holds for all $p \in (0, 1]$ and $n\ge n_0$.
    Let $(\cF, w)$ be a weighted hypergraph such that $\cF$ is $k$-uniform and has at most $n$ vertices, and let $\cF' \coloneqq \cF[V(\cF)_p]$.
    If $K > 0$ satisfies
    \begin{equation*}
        \Delta_i(\cF, w) \leq {K\cdot p^i}\cdot n^{-\beta}
    \end{equation*}
    for all $i \in [k]$, then with probability at least $1 - n^{-\log n}$, we have
    \begin{equation*}
        w(\cF') = w(\cF)\cdot p^k\left(1 \pm n^{-\frac{\beta}{3}}\cdot \max\left\{\sqrt{\frac{K}{w(\cF)}}, \frac{K}{w(\cF)}\right\}\right).
    \end{equation*}
\end{cor}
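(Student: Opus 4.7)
The plan is to deduce this directly from Theorem~\ref{thm:KimVu} by choosing $\lambda$ of order $(\log n)^2$ and then matching the Kim--Vu error term $C\lambda^k\sqrt{E_0E_1}$ to the claimed error by unpacking $E_0$ and $E_1$ with the hypothesis $\Delta_i(\cF,w)\le Kp^i n^{-\beta}$.

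First I would record that since $\cF$ is $k$-uniform, each edge survives in $\cF'$ independently with probability $p^k$, so $\Expect{w(\cF')}=w(\cF)\,p^k$, which matches the centering in the claim. Next I would bound the two quantities that appear in Theorem~\ref{thm:KimVu}. For $i\ge 1$, the hypothesis gives $\Delta_i(\cF,w)\,p^{k-i}\le Kp^k n^{-\beta}$, so
\[
E_1\;\le\; Kp^k n^{-\beta}.
\]
For $E_0$, the case $i=0$ contributes $\Delta_0(\cF,w)\,p^k=w(\cF)\,p^k$, so
\[
E_0\;\le\;p^k\max\{w(\cF),\,Kn^{-\beta}\}.
\]
Multiplying and taking square roots,
\[
\sqrt{E_0E_1}\;\le\;p^k n^{-\beta/2}\sqrt{K}\cdot\sqrt{\max\{w(\cF),Kn^{-\beta}\}}\;\le\;p^k n^{-\beta/2}\max\bigl\{\sqrt{Kw(\cF)},\,K\bigr\},
\]
where the last inequality absorbs the factor $n^{-\beta/2}\le 1$ inside the max. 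Note that the target error in the statement is precisely
\[
w(\cF)p^k\cdot n^{-\beta/3}\cdot\max\!\left\{\sqrt{K/w(\cF)},\,K/w(\cF)\right\}\;=\;p^k n^{-\beta/3}\max\bigl\{\sqrt{Kw(\cF)},\,K\bigr\},
\]
so the Kim--Vu bound already has the correct shape, and only the gap between $n^{-\beta/2}$ and $n^{-\beta/3}$ must be filled.

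To close that gap and simultaneously make the failure probability as small as $n^{-\log n}$, I would take $\lambda:=2(\log n)^2$ in Theorem~\ref{thm:KimVu}. Then, choosing $n_0=n_0(k,\beta)$ large enough,
\[
C\lambda^k\;=\;C\,2^k(\log n)^{2k}\;\le\;n^{\beta/6},
\]
so that $C\lambda^k\sqrt{E_0E_1}\le p^k n^{-\beta/2+\beta/6}\max\{\sqrt{Kw(\cF)},K\}=p^k n^{-\beta/3}\max\{\sqrt{Kw(\cF)},K\}$, exactly the target. On the probability side, $C\exp(-\lambda+(k-1)\log n)\le C\exp(-(\log n)^2)\le n^{-\log n}$ for $n\ge n_0$ (enlarging $n_0$ if necessary), which concludes the proof.

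There is no real obstacle here — the argument is a bookkeeping exercise. The only mildly delicate point is recognizing that $\Delta_0(\cF,w)=w(\cF)$ (which can make $E_0$ larger than $E_1$ when $w(\cF)\gg Kn^{-\beta}$), and that this is precisely why the maximum over $\sqrt{K/w(\cF)}$ and $K/w(\cF)$ appears on the right-hand side: the two regimes $w(\cF)\ge K$ and $w(\cF)<K$ correspond respectively to whether $\sqrt{Kw(\cF)}$ or $K$ dominates $\max\{\sqrt{Kw(\cF)},K\}$.
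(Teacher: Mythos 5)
Your proposal is correct and follows essentially the same route as the paper: apply Theorem~\ref{thm:KimVu} with $\lambda = 2\log^2 n$, use the hypothesis to bound $E_1 \le Kp^kn^{-\beta}$ and $E_0 \le p^k\max\{w(\cF), Kn^{-\beta}\}$, and absorb $C\lambda^k$ into a factor $n^{\beta/6}$ to land on the $n^{-\beta/3}$ error term. The only cosmetic slip is the intermediate chain $C\exp(-(\log n)^2)\le n^{-\log n}$, which needs $\log C$ and $(k-1)\log n$ absorbed together into the ``$n$ sufficiently large'' step (as the paper does in one shot); this is immediate and does not affect correctness.
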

\begin{proof}
    Apply Theorem~\ref{thm:KimVu} with $\lambda = 2\log^2 n$.  For sufficiently large $n$ we have
    \begin{equation*}
        C\exp(-\lambda + (k - 1)\log n) < \exp(-\log^2 n) = n^{-\log n}.
    \end{equation*}
    We have
    \begin{equation*}
        E_1 \coloneqq \max_{i\geq 1}\Delta_i(\cF, w)p^{k - i} \leq Kp^kn^{-\beta}
    \end{equation*}
    and 
    \begin{equation*}
        E_0 \coloneqq \max_{i\geq0}\Delta_i(\cF, w)p^{k - i} \leq \max\{\Expect{X}, Kp^kn^{-\beta}\},
    \end{equation*}
    so
    \begin{equation*}
        C\lambda^k\sqrt{E_0E_1} \leq C\lambda^k\Expect{w(\cF')}\max\left\{\sqrt{\frac{Kp^kn^{-\beta}}{\Expect{w(\cF')}}}, \frac{Kp^kn^{-\beta}}{\Expect{w(\cF')}}\right\}
        \leq \Expect{w(\cF')}n^{-\frac{\beta}{3}}\max\left\{\sqrt{\frac{K}{w(\cF)}}, \frac{K}{w(\cF)}\right\}.
    \end{equation*}
    The result follows since $\Expect{w(\cF')} = \sum_{e\in \cF}p^kw(e) = p^kw(\cF)$.
\end{proof}

Note that a multi-hypergraph $\cF$ can be viewed as a weighted hypergraph $(\tilde\cF, w)$ where $w : E(\tilde\cF)\rightarrow \mathbb N$ is defined such that $w(e)$ is the multiplicity of $e$ in $\cF$.     
In this case, $\Delta_i(\cF) = \Delta_i(\tilde\cF, w)$ and $e(\cF) = w(\tilde\cF)$, so we obtain the following.

\begin{cor}\label{cor:KimVu} 
For all $k \geq 1$, there exists $n_0$ such that the following holds for all $p \in (0, 1]$ and $n\ge n_0$.
Let $\cF$ be a $k$-uniform multi-hypergraph on at most $n$ vertices, and let $\cF' \coloneqq \cF[V(\cF)_p]$.
If $K > 0$ satisfies
\begin{equation*}
    \Delta_i(\cF) \leq \frac{K\cdot p^i}{\log^{4k+2}n}
\end{equation*}
for all $i \in [k]$, then with probability at least $1 - n^{-\log n}$, we have
\begin{equation*}
    e(\cF') = e(\cF)\cdot p^k\left(1 \pm \frac{1}{\sqrt{\log n}}\cdot \max\left\{\sqrt{\frac{K}{e(\cF)}},~\frac{K}{e(\cF)}\right\}\right).
\end{equation*}
In particular, if $e(\cF)\ge K$, then with probability at least $1-n^{-\log n}$,
$$e(\cF')\ \ge p^k \cdot \frac{K}{2},$$
and if $e(\cF)\le K$, then with probability at least $1-n^{-\log n}$,
$$e(\cF')\le p^k \cdot 2K.$$
\end{cor}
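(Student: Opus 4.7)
The plan is to reduce to Theorem~\ref{thm:KimVu} by viewing the multi-hypergraph $\cF$ as the weighted hypergraph $(\tilde\cF, w)$ with $w$ equal to edge-multiplicity, as already flagged in the paragraph preceding the statement; then $\Delta_i(\cF) = \Delta_i(\tilde\cF, w)$, $e(\cF) = w(\tilde\cF)$ and $e(\cF') = w(\tilde\cF')$. Unlike Corollary~\ref{cor:KimVu-poly}, the codegree slack is only polylogarithmic, so we cannot afford $\lambda = 2\log^2 n$ giving a $n^{-\beta/3}$-type saving; instead I will apply Theorem~\ref{thm:KimVu} with $\lambda \coloneqq \log^2 n$ and extract a $(\log n)^{-1/2}$ saving, which is exactly what is being claimed.

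First I bound the quantities $E_0$ and $E_1$ of Theorem~\ref{thm:KimVu}. By hypothesis, $\Delta_i(\tilde\cF, w) p^{k-i} \le Kp^k / \log^{4k+2} n$ for every $i \in [k]$, and the $i=0$ term contributes exactly $\Delta_0(\tilde\cF, w) p^k = e(\cF) p^k$. Hence
\begin{equation*}
E_1 \le \frac{Kp^k}{\log^{4k+2} n}, \qquad E_0 \le \max\!\left\{e(\cF)\, p^k,\; \frac{Kp^k}{\log^{4k+2} n}\right\},
\end{equation*}
so
\begin{equation*}
\sqrt{E_0 E_1} \le p^k \max\!\left\{\frac{\sqrt{K\, e(\cF)}}{\log^{2k+1} n},\; \frac{K}{\log^{4k+2} n}\right\}.
\end{equation*}
With $\lambda = \log^2 n$ we have $\lambda^k = \log^{2k} n$, and the failure probability $C\exp(-\lambda + (k-1)\log n) \le n^{-\log n}$ once $n$ is large enough. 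The error bound then becomes
\begin{equation*}
C\lambda^k \sqrt{E_0 E_1} \le p^k \max\!\left\{\frac{C\sqrt{K\, e(\cF)}}{\log n},\; \frac{CK}{\log^{2k+2} n}\right\} \le \frac{p^k}{\sqrt{\log n}} \max\!\left\{\sqrt{K\, e(\cF)},\; K\right\}
\end{equation*}
for $n$ large, since both $C/\log n \le 1/\sqrt{\log n}$ and $C/\log^{2k+2} n \le 1/\sqrt{\log n}$. Dividing through by $\Expect{e(\cF')} = e(\cF)\, p^k$ converts the above into the claimed multiplicative form $1 \pm (\log n)^{-1/2}\max\{\sqrt{K/e(\cF)}, K/e(\cF)\}$.

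The two ``in particular'' consequences follow by a quick case analysis on the same event. If $e(\cF) \ge K$, then the max in the error term equals $\sqrt{K/e(\cF)} \le 1$, so $e(\cF') \ge e(\cF)\, p^k (1 - (\log n)^{-1/2}) \ge p^k e(\cF)/2 \ge p^k K / 2$ once $n$ is large. If $e(\cF) \le K$, then the absolute error is at most $p^k K / \sqrt{\log n}$, giving $e(\cF') \le e(\cF)\, p^k + p^k K / \sqrt{\log n} \le 2 p^k K$ for $n$ large. I do not foresee any real obstacle; the only thing to be mindful of is choosing $\lambda$ on the right (polylogarithmic) scale so that $\lambda^k$ is absorbed by the $\log^{4k+2} n$ slack in the codegree hypothesis while still guaranteeing an $n^{-\log n}$ failure probability, which is precisely why the exponent $4k+2$ appears in the statement.
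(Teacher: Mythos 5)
Your argument follows the same route as the paper: interpret the multi-hypergraph as the weighted hypergraph $(\tilde\cF, w)$ with $w$ the multiplicity, apply Theorem~\ref{thm:KimVu}, bound $E_0,E_1$, and then absorb $C\lambda^k$ into the $\log^{4k+2}n$ slack to extract a $(\log n)^{-1/2}$ relative error; the ``in particular'' cases are handled identically. One small slip needs fixing: with $\lambda = \log^2 n$ the failure probability is $C\exp(-\log^2 n + (k-1)\log n) = C\,n^{k-1}\,n^{-\log n}$, which exceeds $n^{-\log n}$ for every $k\ge 2$ and large $n$, so your claim that it is at most $n^{-\log n}$ fails. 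The paper takes $\lambda = 2\log^2 n$ instead, whence the failure probability is $C\,n^{k-1}\,n^{-2\log n} < n^{-\log n}$ for $n$ large; then $\lambda^k = 2^k\log^{2k} n$, and the extra $2^k$ is still absorbed since $C2^k/\log n$ and $C2^k/\log^{2k+2}n$ are both at most $1/\sqrt{\log n}$ for $n$ large. With that one change your computation goes through exactly.
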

\begin{proof}
    Apply Theorem~\ref{thm:KimVu} with $\lambda = 2\log^2 n$ where $w(e)$ is the multiplicity of the edge $e$.  For sufficiently large $n$ we have
    \begin{equation*}
        C\exp(-\lambda + (k - 1)\log n) < \exp(-\log^2 n) = n^{-\log n}.
    \end{equation*}
    We have
    \begin{equation*}
        E_1 \coloneqq \max_{i\geq 1}\Delta_i(\cF, w)p^{k - i} \leq Kp^k / \log^{4k + 2}n
    \end{equation*}
    and 
    \begin{equation*}
        E_0 \coloneqq \max_{i\geq0}\Delta_i(\cF, w)p^{k - i} \leq \max\{e(\cF)p^k, Kp^k / \log^{4k+2} n\},
    \end{equation*}
    so
    \begin{align*}
        C\lambda^k\sqrt{E_0E_1} &\leq C 2^k p^k\log^{2k}n\sqrt{\max\left\{\frac{K^2}{\log^{8k+2}n}, \frac{Ke(\cF)}{\log^{4k+2}n}\right\}} \\
        &\leq p^k\max\left\{\frac{K}{\log^{2k}n}, \sqrt{\frac{Ke(\cF)}{\log n}}\right\} \leq \Expect{e(\cF')}\cdot\frac{1}{\sqrt{\log n}}\cdot\max\left\{\frac{K}{e(\cF)}, \sqrt{\frac{K}{e(\cF)}}\right\}.
    \end{align*}
 To prove the upper bound on $e(\cF_p)$ when $e(\cF) \leq K$, we have 
 \begin{equation*}
     e(\cF)p^k\left(1 + \frac{1}{\sqrt{\log n}}\max\left\{\sqrt{\frac{K}{e(\cF)}}, \frac{K}{e(\cF)}\right\}\right) \leq p^k\left(e(\cF) + \frac{K}{\sqrt{\log n}}\right) \leq p^k K\left(1 + \frac{1}{\sqrt{\log n}}\right) \leq p^k 2K,
 \end{equation*} as desired.    
 To prove the lower bound on $e(\cF_p)$ when $e(\cF) \geq K$, we have 
 \begin{equation*}
     e(\cF)p^k\left(1 - \frac{1}{\sqrt{\log n}}\max\left\{\sqrt{\frac{K}{e(\cF)}}, \frac{K}{ e(\cF)}\right\}\right) \geq e(\cF)p^k\left(1 - \frac{1}{\sqrt{\log n}}\right) \geq p^k \frac{K}{2},
 \end{equation*} as desired.  
\end{proof}



\section{An Embedding Theorem}\label{s:Embed}

In this section, we prove a general embedding theorem (Theorem~\ref{thm:Embed}). 
The setting of the Embedding Theorem is mapping some rooted graph $H$ into a random subgraph of another graph $G$ while preserving edges and the roots. To that end, we need the following definitions.

\begin{definition}
    An \textit{embedding} $\phi : H \hookrightarrow G$ of a graph $H$ into a graph $G$ is an injective map $\phi : V(H) \rightarrow V(G)$ that preserves adjacency.  We let $\phi(H)$ denote the graph with vertex set $\phi(V(H))$ where $uv\in E(\phi(H))$ if and only if $\phi^{-1}(u)\phi^{-1}(v) \in E(H)$.
\end{definition}
\begin{definition}
Let $H$ be a graph and $R\subseteq V(H)$ be an independent set in $H$. 
An \emph{$R$-bridge of $H$} consists of a component $J$ of $H\setminus R$ together with all edges from $V(J)$ to $R$ and all those edges' ends.  We call $V(S) \cap R$ the \textit{roots} of an $R$-bridge $S$.
We let $\cB(H, R)$ denote the set of $R$-bridges of $H$.  
Given another graph $G$ and an injective map $\psi : R \rightarrow V(G)$, we say an embedding $\phi : S \hookrightarrow G$ of an $R$-bridge $S \in \cB(H, R)$ is \textit{$\psi$-respecting} if $\phi|_{V(S)\cap R} = \psi|_{V(S) \cap R}$.
\end{definition}

The maximum rooted $2$-density of the bridges will control the value of $p$ we are allowed.  For $p$ above this threshold, under some natural assumptions about $H$, we can embed every $R$-bridge $S \in \cB(H, R)$ of $H$ while preserving the mapping of the roots and ensuring that embeddings of two distinct bridges do not have a pair of non-root vertices in their overlap.

\begin{thm}\label{thm:Embed}
For every integer $h\ge 1$, there exist $C\ge 1$ and $\varepsilon > 0$ such that the following holds.  Let $H$ be a connected graph and $R\subseteq V(H)$ be an independent set in $H$ such that every $R$-bridge of $H$ has at most $h$ edges and every two distinct vertices of $R$ are in at most $h$ $R$-bridges of $H$. Let $G$ be an $n$-vertex graph with $\delta(G) \ge (1-\varepsilon)n$, and let $\psi : R \rightarrow V(G)$ be an injective map.
If $p \geq n^{-1/m_2(S,V(S)\cap R)} \cdot \log^{8h+3} n$ for every $S \in \cB(H, R)$ and $\Delta(H)\le {pn}/{C}$, then \aas there are $\psi$-respecting embeddings $\phi_S : S \hookrightarrow G_p$ for every $R$-bridge $S \in \cB(H, R)$ of $H$ such that $\binom{\phi_{S_1}(V(S_1)) \cap \phi_{S_2}(V(S_2))}{2} \subseteq \binom{\psi(V(S_1 \cap R) \cap \psi(V(S_2) \cap R)}{2}$ for every pair of distinct $R$-bridges $S_1,S_2 \in \cB(H,R)$ of $H$.

Moreover, for $m,b\in [h]$, if $e(S) = m$ and $|V(S) \setminus R| = b$ for all $S \in \cB(H, R)$ and $\cH$ is a hypergraph with $V(\cH) = R$ such that $\Delta_2(\cH) \leq h$, the roots of every $R$-bridge of $H$ are in at most $h$ edges of $\cH$, and each edge of $\cH$ contains the roots of at most $h$ $R$-bridges, then
\aas there exist such embeddings such that for every pair of distinct $u_1,u_2\in V(G)$, there is at most one edge of $\cH$ containing the roots of $R$-bridges $S_1,S_2$ of $H$ such that $u_i \in \phi_{S_i}(V(S_i))$ for $i \in \{1,2\}$ and $u_i \notin \psi(V(S_i)\cap R)$ for some $i \in \{1,2\}$.

\end{thm}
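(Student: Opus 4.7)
The plan is to reformulate the problem as finding an $A$-perfect matching of an auxiliary bipartite hypergraph $\cG$ whose $A$-side consists of the $R$-bridges of $H$ and whose hyperedges correspond to $\psi$-respecting embeddings of single bridges into $G_p$, then apply Corollary~\ref{cor:EasyPerfectMatching} (respectively Theorem~\ref{thm:BipartitePerfectMatching} for the moreover part). The main input is a polynomial concentration estimate for the number of such embeddings supplied by Corollary~\ref{cor:KimVu-poly}, and the main obstacle will be bookkeeping the codegree bounds needed to verify the matching hypotheses.

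First I would establish, via Corollary~\ref{cor:KimVu-poly}, that for every $R$-bridge $S \in \cB(H, R)$ and every partial $\psi$-respecting embedding fixing up to $h$ additional non-root vertices, the number of completions to a $\psi$-respecting embedding $\phi : S \hookrightarrow G_p$ is concentrated around its expectation $\Theta(p^{e(S')} n^{|V(S')\setminus R|})$, where $S'$ is the sub-bridge yet to be embedded. The lower bound $p \ge n^{-1/m_2(S, V(S) \cap R)} \log^{8h+3} n$ together with the identity $m_2(S, V(S)\cap R) = \max\{m(S, V(S)\cap R), m_2(S)\}$ is precisely what makes all these expectations grow polynomially in $n$: the rooted density $m(S, V(S) \cap R)$ bounds $e(S')/|V(S')\setminus R|$ when only the roots have been fixed, while $m_2(S)$ bounds $(e(S') - 1)/(|V(S')| - 2)$ when two additional non-root vertices are fixed, which is what the codegree estimates require. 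The density hypothesis $\delta(G) \ge (1 - \eps)n$ together with a standard counting argument then shows that subgraph counts inside $G$ agree with those inside $K_n$ up to a $(1 \pm o(1))$ factor.

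For the matching, let $A \coloneqq \cB(H, R)$ and $B \coloneqq \binom{V(G)}{2}$, and for every $\psi$-respecting embedding $\phi : S \hookrightarrow G_p$ with $\phi(V(S) \setminus R) \cap \psi(R) = \emptyset$ introduce the hyperedge
\[
    e_\phi \coloneqq \{S\} \cup \bigl\{\{u,v\} \subseteq \phi(V(S)) : \{u,v\} \not\subseteq \psi(V(S) \cap R)\bigr\}.
\]
A direct check shows that two hyperedges $e_{\phi_1}, e_{\phi_2}$ share an element of $B$ if and only if $\binom{\phi_1(V(S_1)) \cap \phi_2(V(S_2))}{2} \not\subseteq \binom{\psi(V(S_1)\cap R) \cap \psi(V(S_2)\cap R)}{2}$, so an $A$-perfect matching of $\cG$ is precisely a family of embeddings satisfying the theorem's conclusion. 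By the first step, the $A$-degree of each $S \in A$ is $\Omega(p^{e(S)} n^{|V(S) \setminus R|})$. To bound the $B$-degree of $\{u_1, u_2\}$, I would enumerate for each bridge $S$ the $O(h^2)$ position assignments of $u_1, u_2$ within $V(S)$, use the codegree estimate of the first step to bound the number of completions by $O(p^{e(S)} n^{|V(S) \setminus R| - 2})$, and sum over bridges using the assumption $\Delta(H) \le pn/C$, which bounds $|\cB(H, R)|$ by $O(pn^2)$ since each bridge has at least one root. The resulting ratio of $A$-degree to $B$-degree is polynomially large, so after partitioning $A$ by bridge isomorphism type to ensure uniform parameters within each class, Corollary~\ref{cor:EasyPerfectMatching} yields the desired $A$-perfect matching.

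For the moreover part, I would supplement $\cG$ with a configuration hypergraph $\cC$: for every pair $\{u_1, u_2\} \subseteq V(G)$ and every pair of distinct edges $E \ne E'$ of $\cH$, include in $\cC$ every forbidden combination of hyperedges of $\cG$ realizing both $E$ and $E'$ as witnesses to $(u_1, u_2)$ in the sense prescribed by the theorem. The multiplicity bounds on $\cH$ (each bridge's root set lying in at most $h$ edges, each edge containing the root sets of at most $h$ bridges, and $\Delta_2(\cH) \le h$), combined with the codegree estimate of the first step, ensure that $w_D(\cC) \le 1$. Applying Theorem~\ref{thm:BipartitePerfectMatching} in place of Corollary~\ref{cor:EasyPerfectMatching} then produces a $\cC$-avoiding $A$-perfect matching, which translates to a family of embeddings in which no pair $\{u_1, u_2\}$ is witnessed by two distinct edges of $\cH$, completing the proof.
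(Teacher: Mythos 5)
Your plan—set up a bipartite auxiliary hypergraph $\cG=(A,B)$ with $A=\cB(H,R)$ and $B=\binom{V(G)}{2}$, encode each $\psi$-respecting embedding as a hyperedge covering $S$ and the non-root pairs of its image, control degrees via Kim--Vu polynomial concentration, and apply Corollary~\ref{cor:EasyPerfectMatching} (resp.\ Theorem~\ref{thm:BipartitePerfectMatching} for the ``moreover'' part)---is exactly the paper's proof. The concentration step is distilled in the paper into Lemma~\ref{lem:NumberOfEmbeddings} ($A$-degrees), Lemma~\ref{lem:B-degree-bound} ($B$-degrees), and Lemma~\ref{lem:configuration-degree-bound} (configuration weighted degree), all via Corollary~\ref{cor:KimVu}; your proposal compresses these into a single narrative but the decomposition is the same.

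There are, however, two places where your gloss deviates from what actually works.

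First, the side condition $\phi(V(S)\setminus R)\cap\psi(R)=\emptyset$ that you impose on the admissible embeddings is fatal in the intended applications. In the proof of Theorem~\ref{thm:RandomOmniAbsorber}, the theorem is invoked with $R=V(X)=[n]$ and $\psi$ the identity, so $\psi(R)=V(G)$; every bridge has a non-root vertex, so your restricted class of embeddings is empty and the auxiliary hypergraph has no edges at all. The paper avoids this by not imposing any such condition: for each $S$ and each $\psi$-respecting $\phi$, the hyperedge is $\{S\}\cup\bigl(\binom{\phi(V(S))}{2}\setminus\binom{\psi(V(S)\cap R)}{2}\bigr)$, with no exclusion of embeddings hitting $\psi(R\setminus V(S))$. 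You introduced the extra condition in order to make your ``if and only if'' claim about shared $B$-vertices hold; but the implication you actually need is only the forward one (disjointness of chosen hyperedges forces the required intersection pattern), and you do not need an equivalence to run the matching argument. Drop the condition and characterize intersections only in the direction needed.

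Second, your claim that ``the resulting ratio of $A$-degree to $B$-degree is polynomially large'' does not survive inspection. Your heuristic counts the expected number of embeddings hitting a pair $\{u_1,u_2\}$ per bridge as $O(p^m n^{b-2})$ and multiplies by $O(pn^2)$ bridges to get $O(p^{m+1}n^b)$, giving a $\Theta(1/p)$ ratio; but this presumes concentration of codegree counts around their expectations, which fails badly when the expected count per bridge is small (e.g.\ for a bridge with $b=2$ and fixed $u_1,u_2$ the count is an indicator and not concentrated at all). The Kim--Vu-based bound in Lemma~\ref{lem:B-degree-bound} therefore only yields an upper bound $O(n^b p^m / C')$ on the $B$-degree, and since the $A$-degree is $\Theta(n^b p^m)$, the ratio is a constant factor $\Theta(C')$, not polynomial. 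This does not break the argument---Corollary~\ref{cor:EasyPerfectMatching} needs only a constant ratio depending on $r$ and the number of partition classes---but the writeup should not rely on codegree concentration per bridge, and should instead bound codegrees from above by choosing the threshold $K$ in the Kim--Vu corollary generously, exactly as Lemma~\ref{lem:B-degree-bound} does.
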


We remark that  it is conceivable that the ``moreover'' part of Theorem~\ref{thm:Embed} could be strengthened by dropping the assumption that $e(H) = m$ and $|V(S) \setminus R| = b$ for every $S \in \cB(H,R)$.  However, we do not need this stronger version, so we opted to prove the stated version as it is easier to prove.  
We could also improve the $\log^{8h+3}n$ factor in the bound on $p$ in Theorem~\ref{thm:Embed} using a stronger polynomial concentration inequality of Vu~\cite{V02} instead of Theorem~\ref{thm:KimVu}; however, we chose to use Theorem~\ref{thm:KimVu} as it is simpler to apply.

We also remark that each of the papers in this series have some form of an embedding theorem analogous to Theorem~\ref{thm:Embed}. The proof of Theorem~\ref{thm:omni-absorber} requires a ``Partial Clique Embedding Lemma'' \cite[Lemma 5.2]{DPI} to first prove the Refiner Theorem via embedding fake edges and then to prove Theorem~\ref{thm:omni-absorber} from the Refiner Theorem via embedding private absorbers, much like the way we use Theorem~\ref{thm:Embed} to prove Theorem~\ref{thm:RandomOmniAbsorber} to execute these steps in $G(n,p)$.  To obtain our results on the threshold for $(n,q,2)$-Steiner systems, we require a ``spread version'' of the Partial Clique Embedding Lemma \cite[Lemma 4.5]{DKPIV} to prove a ``spread version'' of the Omni-Absorber Theorem via (randomly) embedding ``spread boosters''.  To prove the High Girth Existence Conjecture, a High Girth Omni-Absorber Theorem~\cite[Theorem 2.22]{DPII} is needed. The proof of that theorem proceeds by randomly embedding ``girth boosters''; if it sufficed just to show that the boosters collectively have high girth, then that would follow from a version of Theorem~\ref{thm:Embed} together with an upper bound on the degree of the embedding (as in the Lov\'asz Local Lemma argument in the proof of \cite[Lemma 4.5]{DKPIV}); however, the boosters also need to have high girth with the rest of the decomposition. Thus, we instead proved \cite[Theorem 4.9]{DPII} that a sufficiently sparsified set of the possible embeddings has the desired properties with high probability. 

To prove Theorem~\ref{thm:Embed}, we will set up a bipartite hypergraph matching problem and apply Corollary~\ref{cor:EasyPerfectMatching}.
(To obtain the ``moreover'' part, we will apply Theorem~\ref{thm:BipartitePerfectMatching}.)
We define an auxiliary bipartite multi-hypergraph $\cG=(A,B)$ where $A \coloneqq \cB(H,R)$ is the set of $R$-bridges of $H$ and $B \coloneqq \binom{V(G)}{2}$.  
For every $S \in A$ and $\psi$-respecting $\phi : S \hookrightarrow G_p$, we include a copy of $\{S\} \cup \binom{\phi(V(S))}{2}\setminus \binom{\psi(V(S)\cap R)}{2}$ in $E(\cG)$.  
We use the following lemma, Lemma~\ref{lem:NumberOfEmbeddings}, to lower bound the number of $\psi$-respecting embeddings of each $R$-bridge, which provides a lower bound on the degrees of the vertices of $A$.  We use Lemma~\ref{lem:B-degree-bound} to provide an upper bound on the degrees of the vertices in $B$.  
Both proofs use Corollary~\ref{cor:KimVu}.

\begin{lem}\label{lem:NumberOfEmbeddings}
    For every fixed graph $H$ and $\eps > 0$, there exists $\eps' > 0$ such that the following holds for sufficiently large $n$.  Let $R$ be an independent set in $H$, let $G$ be an $n$-vertex graph with $\delta(G) \geq (1 -\eps')\cdot n$, and let $\psi : R \rightarrow V(G)$ be an injective map.  If $p \geq n^{-1/m(H,R)}\cdot \log^{4\cdot e(H)+3}n$, then with probability at least $1 - n^{-\log n}$, the number of embeddings $\phi : H \hookrightarrow G_p$ satisfying $\phi|_{R} = \psi$ is $(1 \pm \eps)\cdot n^{v(H) - |R|}\cdot p^{e(H)}$.
\end{lem}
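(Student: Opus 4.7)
The plan is to apply Corollary~\ref{cor:KimVu} to a multi-hypergraph $\cF$ that encodes all $\psi$-extending embeddings of $H$ into $G$. Define $\cF$ to have vertex set $V(\cF) \coloneqq E(G)$ and, for each embedding $\phi : H \hookrightarrow G$ with $\phi|_R = \psi$, include the edge $\phi(E(H)) \subseteq E(G)$, counted with multiplicity. Each edge of $\cF$ has size $e(H)$, and $\cF' \coloneqq \cF[E(G)_p]$ has $e(\cF')$ exactly equal to the number of $\psi$-extending embeddings of $H$ into $G_p$, which is what we wish to estimate.

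First I would show $e(\cF) = (1 \pm \eps/2) \cdot n^{v(H) - |R|}$ by a greedy argument: extend $\psi$ to $V(H) \setminus R$ one vertex at a time, at each step imposing at most $v(H)$ adjacency and distinctness constraints on the next image. Since $\delta(G) \geq (1 - \eps')n$, each step has $(1 - O(v(H)\eps')) n$ valid choices, so taking $\eps'$ small in terms of $\eps$ and $v(H)$ gives the claim.

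The main step is bounding $\Delta_i(\cF)$ for $i \in [e(H)]$. For any $S \in \binom{E(G)}{i}$, an embedding $\phi$ counted by $d_\cF(S)$ determines a subgraph $H' \subseteq H$ with $e(H') = i$ together with a bijection between $E(H')$ and $S$; there are $O_H(1)$ such pairs, and each fixes $\phi$ on $V(H') \setminus R$, leaving at most $n^{v(H) - |R| - |V(H') \setminus R|}$ extensions of $\phi$ to the remaining non-root vertices. Since $R$ is independent and $m(H,R) \geq e(H')/|V(H')\setminus R|$ for every $H' \subseteq H$ with $V(H') \setminus R \neq \emptyset$, any such $H'$ with $e(H')=i\geq 1$ satisfies $|V(H') \setminus R| \geq i / m(H, R)$, so
$$\Delta_i(\cF) \le O_H(1) \cdot n^{v(H) - |R| - i / m(H,R)}.$$

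Finally I would invoke Corollary~\ref{cor:KimVu} with $k \coloneqq e(H)$ and $K \coloneqq O_H(1) \cdot n^{v(H) - |R|} / \log n$, the constant chosen to dominate the one in the bound above. The hypothesis $p \geq n^{-1/m(H,R)} \log^{4e(H)+3} n$ gives $p^i \geq n^{-i/m(H,R)} \log^{4k+3} n$ for every $i \geq 1$, which yields $\Delta_i(\cF) \le K p^i / \log^{4k+2} n$, verifying the hypothesis of Corollary~\ref{cor:KimVu}. Since $K/e(\cF) = O(1/\log n)$, its conclusion becomes $e(\cF') = (1 \pm O(1/\log n)) \cdot e(\cF) p^{e(H)}$ with probability at least $1 - n^{-\log n}$; combining with the earlier estimate for $e(\cF)$ yields the claimed bound for $n$ sufficiently large. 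The one substantive obstacle is the codegree bound on $\cF$, which is resolved cleanly by invoking the definition of $m(H,R)$.
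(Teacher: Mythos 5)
Your proposal is correct and follows essentially the same route as the paper's proof: constructing the multi-hypergraph $\cF$ on vertex set $E(G)$ with one edge per $\psi$-extending embedding, bounding $e(\cF)$ via the minimum degree condition, bounding $\Delta_i(\cF)$ via the definition of $m(H,R)$, and applying Corollary~\ref{cor:KimVu} (with $n^2$, the size of $V(\cF)$, playing the role of $n$ — a bookkeeping point you elide but which only shifts constants). The only cosmetic difference is your choice $K = O_H(1)\cdot n^{v(H)-|R|}/\log n$ versus the paper's $K = n^{v(H)-|R|}$; both choices verify the corollary's hypothesis using the $\log^{4e(H)+3}n$ margin in the lower bound on $p$ and yield the stated $(1\pm\eps)$ concentration.
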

\begin{proof}
Let $m \coloneqq e(H)$ and $b \coloneqq v(H) - |R|$.
Choose $\varepsilon' > 0$ small enough and $n$ large enough as needed throughout the proof.  We apply Corollary~\ref{cor:KimVu} to the multi-hypergraph $\cF$  whose vertex set is $E(G)$ where for every embedding $\phi : H \hookrightarrow G$ satisfying $\phi|_R = \psi$, we include a copy of $E(\phi(H))$ as an edge in $\cF$. 
Let $\cF' \coloneqq \cF[E(G_p)]$, and let $K \coloneqq n^b$.
Note that $\cF$ is $m$-uniform and $v(\cF) = e(G) \leq n^2$.
Note also that there is a bijection between the set of embeddings $\phi : H \hookrightarrow G_p$ satisfying $\phi|_R = \psi$ and $E(\cF')$.  Thus, it suffices to show that with probability at least $1 - n^{-\log n}$ we have $e(\cF') = n^b p^m (1 \pm \eps)$.  

First, we claim that
\begin{equation*}
    \left((1 - \eps'\cdot v(H))\cdot n\right)^b \leq e(\cF) \leq n^b.
\end{equation*}
Indeed, the lower bound follows from the fact that $\delta(G) \geq (1 - \eps')n$.  Every set of at most $v(H) - 1$ vertices of $G$ have at least $(1 - \eps'\cdot v(H))\cdot n$ common neighbors in $G$, so $e(\cF) \geq \left((1 - \eps'\cdot v(H))\cdot n\right)^b$, as desired.
For the upper bound, the number of edges of $\cF$ is at most the number of ways to choose a sequence of $b$ distinct vertices of $V(G) \setminus \psi(R)$. The number of such choices is at most $n^b$, as desired.
Hence, $\max\{\sqrt{K / e(\cF)}, K / e(\cF)\} \leq (1 - \eps'\cdot v(H))^{-b}$.

We apply Corollary~\ref{cor:KimVu} to $\cF$ with $n^2$ playing the role of $n$, so we need to show that $\Delta_i(\cF) / K \leq p^i /  \log^{4m+2}(n^2)$ for every $i \in [m]$.  
To that end, let $U \subseteq G$ have $i$ edges, and note that there are at most $b^{|V(U)\setminus \psi(R)|}n^{b - |V(U)\setminus \psi(R)|}$ edges of $\cF$ containing $E(U)$.  Moreover, by the definition of maximum rooted density, if there is at least one such edge, then
\begin{equation*}
    \frac{i}{|V(U)\setminus \psi(R)|} = \frac{e(U)}{|V(U)\setminus \psi(R)|} \le m(H,R).
\end{equation*}
Hence, for every $i \in [m]$, we have
\begin{equation*}
    \frac{\Delta_i(\cF)}{K} \leq \frac{b^{2i}n^{b - i/m(H,R)}}{n^b} = b^{2i} n^{-\frac{i}{m(H,R)}} \leq \frac{p^i}{\log^{4m+2}(n^2)},
\end{equation*}
since $p \geq n^{-1/m(H,R)} \cdot \log^{4m+3} n$.  
By Corollary~\ref{cor:KimVu}, we thus have that with probability at least $1-n^{-4\log n}$ that
\begin{equation*}
    ((1 - \eps'\cdot v(H))\cdot n)^b p^b\left(1 - \frac{(1 - \eps'\cdot v(H))^{-b}}{\sqrt{\log n}}\right) \leq e(\cF') \leq n^bp^b\left(1 + \frac{(1 - \eps'\cdot v(H))^{-b}}{\sqrt{\log n}}\right).
\end{equation*}
Therefore, with probability at least $1 - n^{-\log n}$, since $\eps'$ is sufficiently small and $n$ is sufficiently large, we have 
\begin{equation*}
    e(\cF') = n^bp^b(1 \pm \eps),
\end{equation*}
as desired.
\end{proof}

The proof of the next lemma is similar to the proof of Lemma~\ref{lem:NumberOfEmbeddings} using Corollary~\ref{cor:KimVu}; however, we need the maximum rooted 2-density here to bound the codegrees of the auxiliary hypergraph in Corollary~\ref{cor:KimVu}.

\begin{lem}\label{lem:B-degree-bound}
    For every $h, C'\ge 1$, there exist $C\ge 1$ and $\varepsilon > 0$ such that the following holds for sufficiently large $n$. Let $H$ be a connected graph and $R\subseteq V(H)$ be an independent set in $H$ such that every $R$-bridge of $H$ has at most $h$ edges and every two distinct vertices of $R$ are in at most $h$ $R$-bridges of $H$. Let $G$ be an $n$-vertex graph with $\delta(G) \ge (1-\varepsilon)n$, and let $\psi : R \rightarrow V(G)$ be an injective map.  If $p \geq n^{-1/m_2(S,V(S)\cap R)} \log^{4h+3} n$ for every $S \in \cB(H, R)$ and $\Delta(H)\le {pn}/{C}$, then \aas the following holds.
    For every pair of distinct vertices $u, v \in V(G)$ and every $m,b \in [h]$, the number of $R$-bridges $S$ of $H$ satisfying $e(S) = m$ and $|V(S)\setminus R| = b$ and $\psi$-respecting embeddings $\phi : S \hookrightarrow G_p + uv$ satisfying $\{u,v\} \in \binom{\phi(V(S))}{2}\setminus \binom{\psi(V(S)\cap R)}{2}$ is at most $n^b p^m / C'$.
\end{lem}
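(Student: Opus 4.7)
The plan is to apply Corollary~\ref{cor:KimVu} to suitable auxiliary multi-hypergraphs and then union bound over all choices of $(u,v,m,b)$. Fix such a tuple with $u \neq v$ and split the count as $N = N_1 + N_2$, where $N_1$ counts contributing pairs $(S,\phi)$ with $uv \in E(\phi(S))$ and $N_2$ counts those with $uv \notin E(\phi(S))$. For $i \in \{1,2\}$, define a multi-hypergraph $\cF_i$ with vertex set $E(G)$ by adding, for each such $(S,\phi)$, the edge $E(\phi(S)) \setminus \{uv\}$ (of size $k = m-1$) if $i = 1$, and $E(\phi(S))$ (of size $k = m$) if $i = 2$. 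Since $\phi : S \hookrightarrow G_p + uv$ holds if and only if $E(\phi(S)) \setminus \{uv\} \subseteq E(G)_p = E(G_p)$, we have $N_i = e(\cF_i[E(G)_p])$, exactly the quantity Corollary~\ref{cor:KimVu} controls.

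To apply the corollary with $K_1 \coloneqq n^b p / (8C')$ and $K_2 \coloneqq n^b / (8C')$, I must verify $e(\cF_i) \leq K_i$ and the codegree bound $\Delta_j(\cF_i) \leq K_i p^j / \log^{4k+2} n$ for every $j \in [k]$. For the first bound, the constraint $\{u,v\} \not\subseteq \psi(V(S) \cap R)$ forces at least one of $u,v$ to be a non-root image of $\phi$, fixing a non-root position of $S$; if the other of $\{u,v\}$ lies in $\psi(R)$, then the corresponding root $r$ of $H$ must lie in $V(S)$, and the number of $R$-bridges of $H$ containing a given root $r$ is at most $\deg_H(r) \leq \Delta(H) \leq pn/C$. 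Combined with the assumption that every two roots lie in at most $h$ bridges, a short case analysis yields $e(\cF_i) = O_h(n^{b-1} p^{\mathbf{1}[i = 1]})$, which is within $K_i$ for large $n$. For the codegree bound, fix $j$ edges $e_1,\ldots,e_j$ of $G$ and let $U$ be the graph on $V(\{e_1,\ldots,e_j\}) \cup \{u,v\}$ with edge set $\{e_1,\ldots,e_j\}$ in Case $2$ or $\{e_1,\ldots,e_j,uv\}$ in Case $1$; the preimage $U' \subseteq S$ of $U$ satisfies $e(U') \in \{j, j+1\}$. The number of valid bridges $S$ containing $U'$ with a given root pattern in $V(U)$ is bounded using $\Delta(H) \leq pn/C$ when exactly one vertex of $V(U)$ is a root image (saving a factor of at most $pn$) and using the two-root assumption when at least two are. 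The number of extensions to a full embedding is $n^{b - |V(U') \setminus R|}$, and here the lower bound on $|V(U') \setminus R|$ comes either from the max rooted density $m(S,V(S)\cap R)$ (via $|V(U')\setminus R| \geq e(U')/m$) when $\{u,v\}$ contains a root image, or from the max $2$-density $m_2(S)$ (via $v(U') - 2 \geq (e(U')-1)/m_2$, where the ``$-2$'' corresponds exactly to the two fixed vertices $u,v$) when both are non-root images.

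Combining these bounds with the hypothesis $p \geq n^{-1/m_2(S, V(S) \cap R)} \log^{4h+3} n$ converts the exponent savings into the required $p^j / \log^{4k+2} n$ factor, yielding the codegree bound in every subcase. Corollary~\ref{cor:KimVu} then gives $N_i \leq 2K_i p^k \leq n^b p^m / (4C')$ with probability at least $1 - n^{-\log n}$; summing over $i \in \{1,2\}$ and union bounding over the $O(n^2 h^2)$ choices of $(u,v,m,b)$ asymptotically almost surely gives $N \leq n^b p^m/C'$, as required. The main obstacle is precisely this casework in the codegree bound, where the number of root images among $\{u,v\}$ and the other vertices of $V(U)$ simultaneously determines both the bridge count and the correct density notion to apply. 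This is exactly why the \emph{maximum rooted $2$-density} $m_2(H,R) = \max\{m(H,R), m_2(H)\}$, rather than either notion alone, is the right threshold on $p$: $m(H,R)$ controls the subcase with a forced root, while $m_2(H)$ controls the subcase with two free non-root images of $\{u,v\}$.
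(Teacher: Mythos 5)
Your overall architecture is the same as the paper's: split according to whether $uv \in E(\phi(S))$, encode the two counts as edge counts of auxiliary multi-hypergraphs on vertex set $E(G)$, verify the size and codegree hypotheses of Corollary~\ref{cor:KimVu}, and union bound over $(u,v,m,b)$. The gap is in your rule for which density inequality to use in the codegree verification. You key the choice to whether $\{u,v\}$ contains a root image of $S$, but the correct split (the one forced by the bridge-count bounds) is on $t := |\psi(V(S)\cap R)\cap V(U)|$, the number of roots of $S$ appearing anywhere in $V(U)$. Two subcases break under your rule. First, if $u,v$ are both non-root images but $t\ge 2$ because other vertices of $V(U)$ lie in $\psi(V(S)\cap R)$, the number of pinned non-root vertices is $v(U)-t$, and the $2$-density bound $v(U)-2\ge (e(U')-1)/m_2$ only gives $v(U)-t\ge 2-t+(e(U')-1)/m_2$; with the bridge count $O_h(1)$ the resulting contribution is of order $n^{\,b-v(U)+t}$, which exceeds the required $n^bp^i/\log^{4m+2}(n^2)$ by a factor of order $n^{\,t-2+1/m_2}$ when $p$ is near $n^{-1/m_2}$. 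Here one must use the rooted density, $v(U)-t=|V(U)\setminus\psi(V(S)\cap R)|\ge e(U')/m(S,V(S)\cap R)$, exactly as the paper does in its ``at least two roots in $V(U)$'' case: the ``$-2$'' in the $2$-density does not account for roots beyond $u,v$, which are pinned at no cost to the embedding count. Second, and conversely, if exactly one vertex of $V(U)$ is a root image and it lies in $\{u,v\}$, your rule prescribes the rooted density, giving only $|V(U')\setminus R|\ge e(U')/m_2$; paired with the bridge bound $\Delta(H)\le pn/C$ this yields a contribution of order $p\,n^{\,1+b-e(U')/m_2}$, too large by roughly $n^{\,1-1/m_2}$ in the worst case (a positive power of $n$ whenever $m_2>1$, as in all applications here). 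In that case one needs the $2$-density bound, which pins $v(U)-1\ge 1+(e(U')-1)/m_2$ non-root vertices, as in the paper's ``exactly one root'' case. So the bridge-count cases and the density-inequality cases must be run on the same parameter $t$; as written, the codegree hypothesis of Corollary~\ref{cor:KimVu} is not established.

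Two smaller points. Your claim $e(\cF_i)=O_h(n^{b-1})$ (times $p$ when $i=1$) is not correct: the dominant contribution comes from the up to $n\Delta(H)\le pn^2/C$ bridges with no root image at $u$ or $v$, giving a bound of order $\Delta(H)\,n^{b-1}$, i.e.\ up to order $pn^{b}/C$; the inequality you actually need, $e(\cF_i)\le K_i$, does still hold, but only because $C$ is chosen large in terms of $h$ and $C'$, not ``for large $n$''. Also, the boundary case of a single specified edge ($i=1$) needs the separate observation $v(U)\ge 3$, since the $2$-density bound is vacuous there; the paper treats this case separately.
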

\begin{proof}
Choose $\varepsilon > 0$ small enough and $C$ and $n$ large enough as needed throughout the proof.  Let $m_2 \coloneqq \max\{m_2(S, V(S) \cap R) : S \in \cB(H, R)\}$.
We assume without loss of generality that every $R$-bridge $S$ of $H$ satisfies $e(S) = m$ and $|V(S) \setminus R| = b$.
We apply Corollary~\ref{cor:KimVu} to the following hypergraphs.
\begin{itemize}
    \item For distinct $u,v \in V(G)$, let $\cF_{1,u,v}$ be the multi-hypergraph whose vertex set is $E(G) - uv$ where for every $R$-bridge $S$ of $H$ and $\psi$-respecting embedding $\phi : S \hookrightarrow G + uv$ satisfying $uv \in E(\phi(S))$, we include a copy of $E(\phi(S))-uv$ as an edge in $\cF_{1,u,v}$.  
    Let $\cF'_{1,u,v} \coloneqq \cF_{1,u,v}[E(G_p)]$, and let $K_1 \coloneqq 3pb^2n^{b}/C$.
    Note that $\cF_{1,u,v}$ is $(m-1)$-uniform and $v(\cF_{1,u,v}) \leq n^2$.
    \item For distinct $u,v \in V(G)$, let $\cF_{2,u,v}$ be the multi-hypergraph whose vertex set is $E(G) - uv$ where for every $R$-bridge $S$ of $H$ and $\psi$-respecting embedding $\phi : S \hookrightarrow G - uv$ satisfying  $\{u,v\} \in \binom{\phi(V(S))}{2}\setminus \binom{\psi(V(S)\cap R)}{2}$, we include a copy of $E(\phi(S))$ as an edge in $\cF_{2,u,v}$.
    Let $\cF'_{2,u,v} \coloneqq \cF_{2,u,v}[E(G_p)]$, and let $K_2 \coloneqq 3b^2n^{b} / C$.
    Note that $\cF_{2,u,v}$ is $m$-uniform and $v(\cF_{2,u,v}) \leq n^2$.
\end{itemize}
Note that there is a bijection between $E(\cF'_{1,u,v}) \cup E(\cF'_{2,u,v})$ and the set of $(S, \phi)$ where $S$ is an $R$-bridge of $H$ and $\phi$ is a $\psi$-respecting embedding $\phi : S \hookrightarrow G_p + uv$ satisfying $\{u,v\} \in \binom{\phi(V(S))}{2}\setminus \binom{\psi(V(S)\cap R)}{2}$.
Since there are at most $n^2$ pairs of vertices $u$ and $v$, it suffices to show that for every distinct $u,v \in V(G)$, with probability at least $1 - 2n^{-\log n}$, we have that $e(\cF'_{1,u,v}) + e(\cF'_{2,u,v}) \leq n^b p^m / C'$.

First, we claim that $e(\cF_{i,u,v}) \leq K_1 \leq K_2$ for $i \in \{1,2\}$.
Indeed, the number of $R$-bridges $S$ of $H$ satisfying $\{u,v\}\cap \psi(V(S)\cap R) = \emptyset$ is at most $n\Delta(H)$, and the number of $R$-bridges $S$ of $H$ satisfying $\{u,v\}\cap \psi(V(S)\cap R)\neq \emptyset$ is at most $2\Delta(H)$.  For every $R$-bridge $S$ of $H$ satisfying $\{u,v\}\cap \psi(V(S)\cap R) = \emptyset$, there are at most $b^2n^{b-2}$ 
$\psi$-respecting embeddings $\phi : S \hookrightarrow G$ satisfying $u,v \in \phi(V(S))$.
For every $R$-bridge $S$ of $H$ satisfying $|\{u,v\}\cap \psi(V(S)\cap R)| = 1$, there are at most $bn^{b-1}$ $\psi$-respecting embeddings $\phi : S \hookrightarrow G$ satisfying $u,v \in \phi(V(S))$. 
Therefore, $e(\cF_{1,u,v}), e(\cF_{2,u,v}) \leq 3\Delta(H)b^2 n^{b-1} \leq 3pb^2n^{b} / C = K_1 \leq K_2$, as claimed.

We apply Corollary~\ref{cor:KimVu} with $n^2$ playing the role of $n$, so we need to show that $\Delta_i(\cF_{1,u,v}) / K_1 \leq p^i /  \log^{4m-2}(n^2)$ for every $i \in [m - 1]$ and $\Delta_i(\cF_{2,u,v}) / K_2 \leq p^i /  \log^{4m+2}(n^2)$ for every $i \in [m]$.  
To that end, let $U \subseteq G - uv$ such that $u,v\in V(U)$ have $i$ edges, and let $U' \coloneqq U + uv$.  Note that $v(U) \geq 3$.  We calculate the number of edges of $\cF_{1,u,v}$ and $\cF_{2,u,v}$ containing $U$ based on the intersection of $\psi(V(S) \cap R)$ and $V(U)$, where $S$ is an $R$-bridge of $H$ satisfying $\{u,v\}\setminus \psi(V(S)\cap R) \neq \emptyset$.
\begin{itemize}
    \item There are at most $n\Delta(H)$ $R$-bridges $S$ of $H$ satisfying $\psi(V(S) \cap R) \cap V(U) = \emptyset$, and for each such $S$, there are at most $b^{v(U)}n^{b - v(U)}$ $\psi$-respecting embeddings $\phi : S \hookrightarrow G+uv$ satisfying $U \subseteq \phi(S)$.
    \item There are at most $h\Delta(H)$ $R$-bridges $S$ of $H$ satisfying $|\psi(V(S) \cap R) \cap V(U)| = 1$, and for each such $S$, there are at most $b^{v(U)-1}n^{b + 1 - v(U)}$ $\psi$-respecting embeddings $\phi : S \hookrightarrow G+uv$ satisfying $U \subseteq \phi(S)$. 
    \item Since every pair of vertices of $H$ is in at most $h$ $R$-bridges, there are at most $h^3$ $R$-bridges $S$ of $H$ satisfying $|\psi(V(S) \cap R) \cap V(U)| \geq 2$, and for each such $S$, there are at most $b^{b}n^{b - |V(U)\setminus \psi(V(S)\cap R)|}$ $\psi$-respecting embeddings $\phi : S \hookrightarrow G+uv$ satisfying $U \subseteq \phi(S)$.
\end{itemize}

By the definition of maximum rooted 2-density, if there exists an $R$-bridge $S$ of $H$ and a $\psi$-respecting embedding $\phi : S \hookrightarrow G + uv$ satisfying $U \subseteq \phi(S)$, then
\begin{equation*}
    \frac{i - 1}{v(U) - 2} = \frac{e(U) - 1}{v(U) - 2} \leq m_2(S) \leq m_2
\end{equation*}
and 
\begin{equation*}
    \frac{i}{|V(U)\setminus \psi(V(S)\cap R)|} = \frac{e(U)}{|V(U)\setminus \psi(V(S)\cap R)|} \leq m(S, V(S) \cap R) \leq m_2.
\end{equation*}
Hence, for every $i \in [m]\setminus\{1\}$, since $\Delta(H) \leq pn / C$, we have
\begin{align*}
    \frac{\Delta_i(\cF_{2,u,v})}{K_2} &\leq 
    \frac{n\Delta(H)b^bn^{b - 2 - (i-1)/m_2}}{3b^2n^b/C} + 
    \frac{h\Delta(H)b^bn^{b - 1 - (i-1)/m_2}}{3b^2n^b/C} + 
    \frac{h^3b^b n^{b - i/m_2}}{3b^2n^b/C} \\
    &\leq hb^{b-2}pn^{-\frac{i-1}{m_2}} + {Ch^3b^{b-2}}n^{-\frac{i}{m_2}}
     \leq \frac{p^i}{\log^{4m+2}(n^2)},
\end{align*}
since $p \geq n^{-1/m_2} \cdot \log^{4m+3}n $, and for $i = 1$, since $v(U) \geq 3$, we have
\begin{align*}
    \frac{\Delta_1(\cF_{2,u,v})}{K_2} &\leq 
    \frac{n\Delta(H)b^bn^{b - 3}}{3b^2n^b/C} + 
    \frac{h\Delta(H)b^bn^{b - 2}}{3b^2n^b/C} + 
    \frac{h^3b^b n^{b - i/m_2}}{3b^2n^b/C} \\
    &\leq hb^{b-2}pn^{-1} + {Ch^3b^{b-2}}n^{-\frac{i}{m_2}}
     \leq \frac{p}{\log^{4m+2}(n^2)},
\end{align*}
since $p \geq n^{-1/m_2} \cdot \log^{4m+3}n$.
By Corollary~\ref{cor:KimVu}, we thus have with probability at least $1-n^{-4\log n}$ that $e(\cF'_{2,u,v})\leq 2K_2p^{m} \leq 6b^2n^b p^m / C$. 

Similarly, by the definition of maximum rooted $2$-density, if there exists an $R$-bridge $S$ of $H$ and a $\psi$-respecting embedding $\phi : S \hookrightarrow G + uv$ satisfying $U' \subseteq \phi(S)$, then we have
\begin{equation*}
    \frac{i}{v(U) - 2} = \frac{e(U')-1}{v(U') - 2} \leq m_2(S) \leq m_2
\end{equation*}
and 
\begin{equation*}
    \frac{i + 1}{|V(U)\setminus \psi(V(S)\cap R)|} = \frac{e(U')}{|V(U')\setminus \psi(V(S)\cap R)|} \leq m(S, V(S) \cap R) \leq m_2.
\end{equation*}
Hence, for every $i \in [m - 1]$, since $\Delta(H) \leq pn / C$, we have
\begin{align*}
    \frac{\Delta_i(\cF_{1,u,v})}{K_1} &\leq 
    \frac{n\Delta(H)b^bn^{b - 2 - i/m_2}}{3pb^2n^b/C} + 
    \frac{h\Delta(H)b^bn^{b - 1 - i/m_2}}{3pb^2n^b/C} + 
    \frac{h^3 b^bn^{b - (i+1)/m_2}}{3pb^2n^b/C} \\
    &\leq hb^{b-2}n^{-\frac{i}{m_2}} + \frac{C h^3b^{b-2}}{p}n^{-\frac{i+1}{m_2}}
     \leq \frac{p^i}{\log^{4m+2}(n^2)},
\end{align*}
since $p \geq n^{-1/m_2} \cdot \log^{4m+3}n$.  
By Corollary~\ref{cor:KimVu}, we thus have with probability at least $1-n^{-4\log n}$ that $e(\cF_{1,u,v}) \leq 2K_1p^{m-1} \leq 6b^2 n^b p^m / C$. 

Therefore, for every distinct $u,v\in V(G)$, with probability at least $1 - 2n^{-4\log n}$, we have that $e(\cF'_{1,u,v}) + e(\cF'_{2,u,v}) \leq 12 b^2 n^b p^m / C \leq n^b p^m / C'$, and the result follows by taking a union bound over all pairs of $u,v \in V(G)$.
\end{proof}

To prove the ``moreover'' part of Theorem~\ref{thm:Embed}, we will use Theorem~\ref{thm:BipartitePerfectMatching} with a configuration hypergraph having edges of size four.  To bound the $D$-weighted degree of this configuration hypergraph, we need the following lemma.

\begin{lem}\label{lem:configuration-degree-bound}
    For every $h, C'\ge 1$, there exist $C\ge 1$ and $\varepsilon > 0$ such that the following holds for sufficiently large $n$. Let $H$ be a connected graph and $R\subseteq V(H)$ be an independent set in $H$ such that every $R$-bridge of $H$ has at most $h$ edges and every two distinct vertices of $R$ are in at most $h$ $R$-bridges of $H$. 
    Let $\cH$ be a hypergraph with $V(\cH) = R$ such that $\Delta_2(\cH) \leq h$, the roots of every $R$-bridge of $H$ are in at most $h$ edges of $\cH$, and each edge of $\cH$ contains the roots of at most $h$ edges. 
    Let $G$ be an $n$-vertex graph with $\delta(G) \ge (1-\varepsilon)n$, and let $\psi : R \rightarrow V(G)$ be an injective map.     
    If $p \geq n^{-1/m_2(S,V(S) \cap R)} \cdot \log^{8h+3} n$ for every $S \in \cB(H, R)$ and $\Delta(H)\le {pn}/{C}$, then \aas the following holds.
    For every pair of distinct vertices $u_1, u_2 \in V(G)$ and every $m,b \in [h]$, the number of pairs of distinct $R$-bridges $S_1$ and $S_2$ of $H$ with roots together in an edge of $\cH$ satisfying $e(S_1) = e(S_2) = m$ and $|V(S_1) \setminus R| = |V(S_2) \setminus R| = b$ and $\psi$-respecting embeddings $\phi_1 : S_1 \hookrightarrow G_p$ and $\phi_2 : S_2 \hookrightarrow G_p$ satisfying 
    $E(\phi(S_1)) \cap E(\phi(S_2)) = \emptyset$, $u_i \in \phi_i(V(S_i))$ for $i \in\{1,2\}$, and $u_i \notin \psi (V(S_i)\cap R)$ for some $i \in \{1,2\}$ is at most $n^{2b}p^{2m} / C'$.
\end{lem}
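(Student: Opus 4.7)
The plan is to mirror the proof of Lemma~\ref{lem:B-degree-bound}, but now working with unions of two bridges, applying Corollary~\ref{cor:KimVu} to an appropriate $2m$-uniform multi-hypergraph, and taking a union bound over the $\binom{n}{2}$ pairs of vertices.

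For each fixed pair of distinct $u_1,u_2\in V(G)$ and each $m,b\in[h]$, I introduce an auxiliary multi-hypergraph $\cF_{u_1,u_2}$ with vertex set $E(G)$: for each admissible quadruple $(S_1,S_2,\phi_1,\phi_2)$ satisfying the hypotheses (distinct $R$-bridges whose roots lie together in some edge of $\cH$, $\psi$-respecting embeddings with edge-disjoint images, $u_i\in\phi_i(V(S_i))$, and $u_i\notin\psi(V(S_i)\cap R)$ for some $i$), I include $E(\phi_1(S_1))\cup E(\phi_2(S_2))$ as an edge of $\cF_{u_1,u_2}$. Edge-disjointness makes this $2m$-uniform, and $e(\cF'_{u_1,u_2})$ with $\cF' \coloneqq \cF[E(G_p)]$ is exactly the count to be bounded.

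To bound $e(\cF_{u_1,u_2})$, I pay a factor of $2$ by assuming WLOG $u_2\notin\psi(V(S_2)\cap R)$. The number of $(S_1,\phi_1)$ with $u_1\in\phi_1(V(S_1))$ is at most $2b\Delta(H)n^{b}$, by case analysis on whether $u_1$ is a root image. For each such $S_1$, the roots of $S_1$ lie in at most $h$ edges of $\cH$, each of which contains the roots of at most $h$ bridges, giving at most $h^2$ choices for $S_2$; then at most $bn^{b-1}$ choices of $\phi_2$ place $u_2$ as a non-root image. Using $\Delta(H)\le pn/C$, one obtains $e(\cF_{u_1,u_2})\le K$ for $K:=C_1 b^2 h^2 p\, n^{2b}/C$ with $C_1$ an absolute constant.

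The main work is the codegree bound $\Delta_i(\cF_{u_1,u_2})\le K p^i/\log^{8m+2}(n^2)$ for all $i\in[2m]$. Given $U\subseteq E(G)$ with $|U|=i$, I sum over splits $U=U_1\sqcup U_2$ with $|U_j|=i_j$ and, for each $j$, count pairs $(S_j,\phi_j)$ satisfying $U_j\subseteq E(\phi_j(S_j))$, via the same case analysis as in Lemma~\ref{lem:B-degree-bound} (on the interaction of $V(U_j)$ with $\{u_j\}$ and $\psi(V(S_j)\cap R)$). Setting $m_2^{*}:=\max_{S\in\cB(H,R)} m_2(S,V(S)\cap R)$, the definition of maximum rooted $2$-density gives, whenever the count is nonzero, $(i_j-1)/(v(U_j)-2)\le m_2^{*}$ (for $v(U_j)\ge3$) and $i_j/|V(U_j)\setminus\psi(V(S_j)\cap R)|\le m_2^{*}$. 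Combining these bounds with $\Delta(H)\le pn/C$ and the hypothesis $p\ge n^{-1/m_2^{*}}\log^{8h+3}n$ (so $n^{-i_j/m_2^{*}}\le p^{i_j}/\log^{8h+3}n$) and summing over the $O(1)$ splits gives the required codegree bound once $C$ is chosen large enough.

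Applying Corollary~\ref{cor:KimVu} (with $n^2$ in place of $n$ and $2m$ in place of $k$) yields $e(\cF'_{u_1,u_2})\le 2Kp^{2m}\le n^{2b}p^{2m}/C'$ with probability at least $1-n^{-\log n}$, provided $C$ is taken large enough in terms of $C',b,h$. A union bound over the at most $n^2$ choices of $\{u_1,u_2\}$ finishes the proof. The principal obstacle is the multi-case bookkeeping for $\Delta_i$: several sub-cases arise from the interplay between the split $U=U_1\sqcup U_2$, the location of $u_1,u_2$ in the embeddings, and the possible coincidences between roots of $S_1$ and $S_2$; the rooted $2$-density hypothesis supplies exactly the polynomial savings needed in each sub-case, but one must verify that the polylogarithmic slack from $\log^{8h+3}n$ is not exhausted.
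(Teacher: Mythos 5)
Your overall strategy is the paper's proof: the same auxiliary $2m$-uniform multi-hypergraph on $E(G)$, the same count of $e(\cF_{u_1,u_2})$, the same rooted-$2$-density case analysis for codegrees, Corollary~\ref{cor:KimVu}, and a union bound over pairs. But there is a genuine error in the normalization: you set $K:=C_1b^2h^2\,p\,n^{2b}/C$, building the factor $p$ (which comes from $\Delta(H)\le pn/C$ in the bound on $e(\cF_{u_1,u_2})$) into $K$, and then claim $\Delta_i(\cF_{u_1,u_2})\le Kp^i/\log^{8m+2}(n^2)$ for all $i\in[2m]$. That codegree bound is false near the threshold. The culprit is the sub-cases in which both $S_1$ and $S_2$ meet the fixed edge sets through root images, i.e.\ $\psi(V(S_j)\cap R)\cap V(U_j)\neq\emptyset$ for both $j$: there the number of admissible bridge pairs is bounded only by $O_h(1)$ via $\Delta_2(\cH)\le h$ and the edge-of-$\cH$ conditions, so no factor of $\Delta(H)$ (hence no factor of $p$) is available, and the embedding count together with the rooted $2$-density inequalities yields contributions of order $n^{2b-i/m_2}$. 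With the extra $p$ in $K$, your requirement amounts to $n^{-i/m_2}\lesssim p^{\,i+1}$, which at $p\approx n^{-1/m_2}\log^{8h+3}n$ fails by a polynomial factor of roughly $n^{1/m_2}$ no matter how large $C$ is (enlarging $C$ only shrinks $K$). Indeed, already the trivial codegree $\Delta_{2m}(\cF_{u_1,u_2})\ge 1$ at the edge set of a single admissible quadruple can exceed $Kp^{2m}$ when the bridges have nearly uniform density $m_2\approx m/b$. Equivalently, your claimed conclusion $e(\cF'_{u_1,u_2})\le 2Kp^{2m}\approx p^{2m+1}n^{2b}$ is a factor $p$ stronger than the lemma asserts, which should have been a warning sign.

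The repair is exactly what the paper does: take $K$ without the factor $p$, say $K:=3h^2b^2n^{2b}/C$. Your count gives $e(\cF_{u_1,u_2})\le 3h^2b^2p\,n^{2b}/C\le K$, so you may invoke the ``if $e(\cF)\le K$'' clause of Corollary~\ref{cor:KimVu}; the codegree requirement $\Delta_i(\cF_{u_1,u_2})\le Kp^i/\log^{8m+2}(n^2)$ is now genuinely satisfied, since the dangerous terms of order $n^{2b-i/m_2}$ are beaten by $p^i\ge n^{-i/m_2}\log^{(8h+3)i}n$ (the surplus logarithm absorbs $C$ and the constants), while the terms carrying $\Delta(H)\le pn/C$ are even smaller; and the conclusion $e(\cF'_{u_1,u_2})\le 2Kp^{2m}\le n^{2b}p^{2m}/C'$ follows by choosing $C$ large in terms of $C',h,b$ --- the factor-$p$ saving in $e(\cF_{u_1,u_2})$ is simply not needed. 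For the codegree analysis itself, note you must count with $u_j$ adjoined to $V(U_j)$ (as a possibly isolated vertex) and use the two derived inequalities $|V(U_j)\setminus\psi(V(S_j)\cap R)|\ge i_j/m_2$ and $v(U_j)\ge i_j/m_2+1$, rather than the raw density definitions; with that and the corrected $K$, the rest of your outline matches the paper.
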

\begin{proof}
Choose $\varepsilon > 0$ small enough and $C$ and $n$ large enough as needed throughout the proof.  
Let $m_2 \coloneqq \max\{m_2(S, V(S) \cap R) : S \in \cB(H, R)\}$.
We assume without loss of generality that every $R$-bridge $S$ of $H$ satisfies $e(S) = m$ and $|V(S) \setminus R| = b$.
Let $u_1, u_2 \in V(G)$ be distinct vertices.
Let $\cF$ be the multi-hypergraph whose vertex set is $E(G)$ where for every pair of distinct $R$-bridges $S_1, S_2$ of $H$ with roots together in an edge of $\cH$ and $\psi$-respecting embeddings $\phi_i : S_i \hookrightarrow G$ satisfying $E(\phi(S_1)) \cap E(\phi(S_2)) = \emptyset$, $u_i \in \phi_i(V(S_i))$ for $i \in\{1,2\}$, and $u_i \notin \psi (V(S_i)\cap R)$ for some $i \in \{1,2\}$, we include a copy of $E(\phi(S_1)) \cup E(\phi(S_2))$ as an edge in $\cF$.

We claim that $e(\cF) \leq 3h^2pb^2n^{2b} / C$.
Indeed, the number of $R$-bridges $S_i$ of $H$ satisfying $u_i \notin \psi(V(S_i)\cap R)$ is at most $n\Delta(H)$, and the number of $R$-bridges $S_i$ satisfying $u_i \in \psi(V(S_i) \cap R)$ is at most $\Delta(H)$ for $i \in \{1,2\}$.  For each $R$-bridge $S$ of $H$, there are at most $h^2$ $R$-bridges $S'$ of $H$ such that the roots of $S$ and $S'$ are together in an edge of $\cH$.  
For every pair of distinct $R$-bridges $S_1, S_2$ of $H$ satisfying $u_i \notin \psi(V(S_i)\cap R)$ for $i \in \{1,2\}$, there are at most $b^2n^{2b-2}$ choices of $\psi$-respecting embeddings $\phi_i : S_i \hookrightarrow G$ with $u_i \in \phi(V(S_i))$ for $i \in \{1,2\}$.
For $j \in \{1,2\}$ and $R$-bridges $S_1, S_2$ of $H$ satisfying $u_j \in \psi(V(S_j) \cap R)$ and $u_{3-j} \notin \psi(V(S_{3-j}) \cap R)$, there are at most $bn^{2b-1}$ choices of $\psi$-respecting embeddings $\phi_i : S_i \hookrightarrow G$ with $u_i \in \phi(V(S_i))$ for $i \in \{1,2\}$.
Therefore, $e(\cF) \leq n\Delta(H)h^2b^2n^{2b-2} + 2\Delta(H)h^2bn^{2b-1} \leq 3h^2\Delta(H)n^{2b-1} \leq 3h^2pb^2n^{2b} / C$, as claimed.

Let $\cF' \coloneqq \cF[E(G_p)]$, and note that there is a bijection between $E(\cF')$ and the set of $(S_1, S_2, \phi_1, \phi_2)$ where $S_1$ and $S_2$ are $R$-bridges of $H$ with roots together in an edge of $\cH$ and $\phi_i$ is a $\psi$-respecting embedding $\phi_i : S_i \hookrightarrow G_p$ for $i \in \{1,2\}$ such that $E(\phi(S_1)) \cap E(\phi(S_2)) = \emptyset$, $u_i \in \phi(V(S_i))$ for $i \in \{1,2\}$, and $u_i \notin \psi(V(S_i) \cap R)$ for some $i \in \{1,2\}$. 
Thus, we apply Corollary~\ref{cor:KimVu} to concentrate $e(\cF')$.  
To that end, let $K \coloneqq 3h^2b^2n^{2b}/C$.
Note that $\cF$ is $(2m)$-uniform and $v(\cF) \leq n^2$.

We apply Corollary~\ref{cor:KimVu} with $n^2$ playing the role of $n$, so we need to show that $\Delta_i(\cF) / K \leq p^i /  \log^{8m+2}(n^2)$ for every $i \in [2m]$.  
To that end, let $U_1,U_2 \subseteq G$ such that $u_i \in V(U_i)$ for $i\in\{1,2\}$ have $i_1,i_2$ edges, respectively, where $i_1 + i_2 \in [2m]$.  For $i \in \{1,2\}$, we assume that $V(U_i)$ is minimal subject to $V(U_i) \ni u_i$ and $\binom{V(U_i)}{2}\supseteq E(U_i)$.
We claim that if there exists an $R$-bridge $S_j$ of $H$ and a $\psi$-respecting embedding $\phi : S_j \hookrightarrow G$ satisfying $U_j \subseteq \phi(S_j)$ for $j \in \{1,2\}$, then
\begin{equation}\label{eqn:v(U_j)-bound-rooted-density}
    |V(U_j)\setminus \psi(V(S_j)\cap R)| \geq \frac{i_j}{m_2},
\end{equation}
and 
\begin{equation}\label{eqn:v(U_j)-bound-1-density}
    v(U_j) \geq \frac{i_j}{m_2} + 1.
\end{equation}
Indeed, \eqref{eqn:v(U_j)-bound-rooted-density} holds if $i_j = 0$, and if $i_j \geq 1$, then by the definition of maximum rooted 2-density, 
\begin{equation*}
     \frac{i_j}{|V(U_j)\setminus \psi(V(S_j)\cap R)|} \leq m(S_j, V(S_j) \cap R) \leq m_2,
\end{equation*}
as required for \eqref{eqn:v(U_j)-bound-rooted-density}.
If $v(U_j) = 1$, then \eqref{eqn:v(U_j)-bound-1-density} holds, and if $v(U_j) \geq 2$, then by the definition of maximum rooted 2-density, we also have
\begin{equation*}
     \frac{i_j}{v(U_j) - 1} \leq \frac{\max\{e(U_j), v(U_j)-1\}-1}{v(U_j) - 2} \leq m_2(S_j) \leq m_2,
\end{equation*}
as required for \eqref{eqn:v(U_j)-bound-1-density}.  
Now we calculate the number of edges of $\cF$ containing $E(U_1) \cup E(U_2)$ based on the intersection of $\psi(V(S_i) \cap R)$ and $V(U_i)$, where $S_1$ and $S_2$ are $R$-bridges of $H$ with roots together in an edge of $\cH$ satisfying $u_i \notin \psi(V(S_i) \cap R)$ for some $i \in \{1,2\}$.  

\begin{itemize}
    \item There are at most $n\Delta(H)h^2$ choices for $S_1$ and $S_2$ with roots together in an edge of $\cH$ such that $|\psi(V(S_i) \cap R) \cap V(U_i)| = 0$ for $i \in \{1,2\}$, and for each such choice, there are at most $b^{v(U_1) + v(U_2)}n^{2b - v(U_1) - v(U_2)}$ choices for $\psi$-respecting embeddings $\phi_i : S_i \hookrightarrow G$ satisfying $U_i \subseteq \phi(S_i)$.  By \eqref{eqn:v(U_j)-bound-1-density}, we have at most 
    $h^2b^{2b}\Delta(H)n^{2b - (i_1 + i_2)/m_2 - 1}$ such choices in total.
    \item For $j \in \{1,2\}$, there are at most $h\Delta(H)h^2$ choices for $S_1$ and $S_2$ with roots together in an edge of $\cH$ such that $|\psi(V(S_j) \cap R) \cap V(U_j)| = 1$ and $|\psi(V(S_{3-j}) \cap R) \cap V(U_{3-j})| = 0$, and for each such choice, there are at most $b^{v(U_1) + v(U_2) - 1}n^{2b + 1 - v(U_1) - v(U_2)}$ choices for $\psi$-respecting embeddings $\phi_i : S_i \hookrightarrow G$ satisfying $U_i \subseteq \phi(S_i)$. By \eqref{eqn:v(U_j)-bound-1-density}, we have at most 
    $h^3b^{2b}\Delta(H)n^{2b - (i_1 + i_2)/m_2 - 1}$ such choices in total.
    \item Since $\Delta_2(\cH) \leq h$ and each edge of $\cH$ contains the roots of at most $h$ edges, there are at most $h^5$ choices for $S_1$ and $S_2$ with roots together in an edge of $\cH$ 
    such that $|\psi(V(S_i) \cap R) \cap V(U_i)| = 1$ for $i \in \{1,2\}$, and for each such choice, there are at most $b^{v(U_1) + v(U_2) - 2}n^{2b + 2 - v(U_1) - v(U_2)}$ choices for $\psi$-respecting embeddings $\phi_i : S_i \hookrightarrow G$ satisfying $U_i \subseteq \phi(S_i)$. 
    By \eqref{eqn:v(U_j)-bound-1-density}, we have at most
    $h^5b^{2b}n^{2b - (i_1 + i_2)/m_2}$ such choices in total.
    \item For $j \in \{1,2\}$, there are at most $h^5$ choices for $S_1$ and $S_2$ with roots together in an edge of $\cH$  such that $|\psi(V(S_j) \cap R) \cap V(U_j)| \geq 2$ and $|\psi(V(S_{3-j}) \cap R) \cap V(U_{3-j})| \leq 1$, and for each such choice, there are at most $b^{2b}n^{2b + 1 - |V(U_j)\setminus \psi(V(S_j)\cap R)| - v(U_{3-j})}$ $\psi$-respecting embeddings $\phi_i : S_i \hookrightarrow G$ satisfying $U_i \subseteq \phi(S_i)$.
    By \eqref{eqn:v(U_j)-bound-rooted-density} and \eqref{eqn:v(U_j)-bound-1-density}, we have at most 
    $h^5 b^{2b}n^{2b - (i_1 + i_2)/m_2}$ such choices in total.
    \item There are at most $h^5$ choices for $S_1$ and $S_2$ with roots together in an edge of $\cH$ such that $|\psi(v(S_i) \cap R) \cap V(U_i)| \geq 2$ for $i \in \{1,2\}$, and for each such choice, there are at most $b^{2b}n^{2b - |V(U_1)\setminus \psi(V(S_1)\cap R)| - |V(U_1)\setminus \psi(V(S_1)\cap R)|}$ $\psi$-respecting embeddings $\phi_i : S_i \hookrightarrow G$ satisfying $U_i \subseteq \phi(S_i)$.
    By \eqref{eqn:v(U_j)-bound-rooted-density}, we have at most $h^5b^{2b}n^{2b - (i_1 + i_2)/m_2}$ such choices in total.
\end{itemize}
Hence, for every $i \in [2m]$, since $\Delta(H) \leq pn / C$, we have 
\begin{align*} 
    \Delta_i(\cF) &\leq \sum_{j = 0}^i\binom{i}{j}\left(
    3h^3b^{2b}\Delta(H)n^{2b - 1 - \frac{j + i - j}{m_2} } + 
    4h^5b^{2b}n^{2b - \frac{j + i - j}{m_2}}\right)\\
    & \leq 2^m h^5 b^{2b}\left(3\left(\frac{p}{C}\right)n^{2b-\frac{i}{m_2}} + 4 n^{2b-\frac{i}{m_2}}\right)
     \leq \frac{p^i}{\log^{8m+2}(n^2)}K,
\end{align*}
since $p \geq n^{-1/m_2} \cdot \log^{8m+3}n$.
By Corollary~\ref{cor:KimVu}, we thus have with probability at least $1-n^{-4\log n}$ that $e(\cF) \leq 2Kp^{2m} \leq 6h^2n^{2b} p^{2m} / C$, and the result follows by taking a union bound over all pairs of $u_1,u_2 \in V(G)$.
\end{proof}

Now we can prove Theorem~\ref{thm:Embed}.
\begin{proof}[Proof of Theorem~\ref{thm:Embed}]
Let $C'_1 \coloneqq 600h^4$, and let $C'_2 \coloneqq 3600\cdot 600^2\cdot h^{10}$.
Choose $\eps$ small enough to play the role of  $\eps'$ in Lemma~\ref{lem:NumberOfEmbeddings} with $1/2$ playing the role of $\eps$ and any connected graph with at most $h$ edges playing the role of $H$.
Choose $\eps$ small enough and $C$ large enough to apply Lemma~\ref{lem:B-degree-bound} with $C'_1$ playing the role of $C'$ and to apply Lemma~\ref{lem:configuration-degree-bound} with $C'_2$ playing the role of $C'$.

We define an auxiliary bipartite multi-hypergraph $\cG=(A,B)$ where $A \coloneqq \cB(H,R)$ is the set of $R$-bridges of $H$ and $B \coloneqq \binom{V(G)}{2}$.  
For every $S \in A$ and $\psi$-respecting $\phi : S \hookrightarrow G_p$, we include a copy of $\{S\} \cup \binom{\phi(V(S))}{2}\setminus \binom{\psi(V(S)\cap R)}{2}$ in $E(\cG)$.  
To find the desired embeddings $\phi_S : S \hookrightarrow G_p$ for every $S \in \cB(H, R)$, it suffices to show that \aas $\cG$ has an $A$-perfect matching.
For each $m, b\in [h]$, let $A_{m,b} := \{S\in A : e(S)=m, |V(S)\setminus R|=b\}$, let $\cG_{m,b} := \cG[A_{m,b}\cup B]$, and let $D_{m,b} \coloneqq n^b p^m / (16h^4)$.
We will apply Corollary~\ref{cor:EasyPerfectMatching} to $\cG$ with $\{A_{m,b} : m , b\in [h]\}$ as the partition of $A$ and $h^2$ as $r$ to show that \aas there is an $A$-perfect matching of $\cG$.  
To prove the ``moreover'' part, we will apply Theorem~\ref{thm:BipartitePerfectMatching} with $D \coloneqq n^b p^m / (600 h^2)$.

First, we claim that \aas the following hold.
\begin{enumerate}[label=(\alph*)]
    \item\label{embedding-theorem:A-degree-lower} every $S \in A$ satisfies $d_{\cG}(S) \geq n^{|V(S)\setminus R|}p^{e(S)} / 2$,
    \item\label{embedding-theorem:A-degree-upper} every $S \in A$ satisfies $d_{\cG}(S) \leq 2n^{|V(S)\setminus R|}p^{e(S)}$,
    \item\label{embedding-theorem:B-degree} every $\{u,v\}\in B$ satisfies $d_{\cG_{m,b}}(\{u,v\}) \leq n^bp^m / C'_1$, and
    \item\label{embedding-theorem:configuration-degree} for every distinct $u_1, u_2 \in V(G)$, the number of distinct $S_1,S_2 \in \cB(H,R)$ with roots together in an edge of $\cH$ and $\psi$-respecting embeddings $\phi_i : S_i \hookrightarrow G_p$ satisfying $u_i \in \phi_i(V(S_i))$ for $i \in \{1,2\}$, $E(\phi(S_1)) \cap E(\phi(S_2)) = \emptyset$, and $u_i \notin \psi(V(S_i) \cap R)$ for some $i \in \{1,2\}$ is at most $n^{2b}p^{2m} / C'_2$.
\end{enumerate}

Indeed, for every $R$-bridge $S \in \cB(H, R)$ of $H$, by Lemma~\ref{lem:NumberOfEmbeddings} applied with $S$ playing the role of $H$ and $1/2$ playing the role of $\eps$, with probability at least $1 - n^{\log n}$, we have $d_{\cG_{m,b}}(S) = (1\pm1/2)n^{|V(S)\setminus R|}p^{e(S)} / 2$.  Hence, by a union bound over the at most $n\Delta(H)$ $R$-bridges of $H$, \aas \ref{embedding-theorem:A-degree-lower} and \ref{embedding-theorem:A-degree-upper} hold. 
We have that \ref{embedding-theorem:B-degree} holds \aas by Lemma~\ref{lem:B-degree-bound} and \ref{embedding-theorem:configuration-degree} holds \aas by Lemma~\ref{lem:configuration-degree-bound}.

Assuming \ref{embedding-theorem:A-degree-lower}--\ref{embedding-theorem:configuration-degree}, we find the desired embeddings $\phi_S : S \hookrightarrow G_p$ for every $S \in \cB(H, R)$.  By \ref{embedding-theorem:A-degree-lower}, every $S \in A_{m,b}$ satisfies $d_{\cG_{m,b}}(S) \geq n^b p^m / 2 \geq 8h^4 D_{m,b}$, and by \ref{embedding-theorem:B-degree}, every $\{u,v\}\in B$ satisfies $d_{\cG_{m,b}}(\{u,v\}) \leq n^bp^m / C'_1 \leq D_{m,b}$.  
Therefore, by Corollary~\ref{cor:EasyPerfectMatching}, $\cG$ has an $A$-perfect matching, as desired.

Now suppose $e(S) = m$ and $|V(S)\setminus R| = b$ for all $S \in \cB(H,R)$ and
$\cH$ is a hypergraph with $V(\cH) = R$ such that $\Delta_2(\cH) \leq h$, the roots of every $R$-bridge of $H$ are in at most $h$ edges of $\cH$, and each edge of $\cH$ contains the roots of at most $h$ edges. 
By \ref{embedding-theorem:A-degree-lower}, every $S \in A$ satisfies $d_{\cG}(S) \geq n^b p^m / 2 \geq 300 h^2 D$, and by \ref{embedding-theorem:B-degree}, every $\{u,v\}\in B$ satisfies $d_{\cG}(\{u,v\}) \leq n^bp^m / C'_1 \leq D$. 
To prove the ``moreover'' part, we let $\cC$ be the configuration hypergraph for $\cG$ defined as follows.  For distinct vertices $u_1, u_2\in V(G)$, distinct $R$-bridges $S_1,S_2 \in \cB(H,R)$ with roots together in an edge of $\cH$ and $\psi$-respecting embeddings $\phi_i : S_i \hookrightarrow G_p$ satisfying $u_i \in \phi_i(V(S_i))$ for $i \in \{1,2\}$, $E(\phi_1(S_1)) \cap E(\phi_2(S_2)) = \emptyset$, and $u_i \notin \psi(V(S_i) \cap R)$ for some $i \in \{1,2\}$, and distinct $R$-bridges $S'_1,S'_2 \in \cB(H,R)\setminus\{S_1,S_2\}$ with roots together in an edge of $\cH$ and $\psi$-respecting embeddings $\phi'_i : S'_i \hookrightarrow G_p$ satisfying $E(\phi'_1(S'_1)) \cap E(\phi'_2(S'_2)) = \emptyset$, $u_i \in \phi_i(V(S'_i))$ for $i \in \{1,2\}$, and $u_i \notin \psi(V(S'_i) \cap R)$ for some $i \in \{1,2\}$, we include the size-4 edge consisting of the four edges corresponding to $(S_1, \phi_1)$, $(S_2, \phi_2)$, $(S'_1, \phi'_1)$, and $(S'_2, \phi'_2)$ in $\cC$.

Now we bound the maximum $D$-weighted degree of $\cC$, as follows.
Given $(S_1, \phi_1)$, there are at most $h$ edges of $\cH$ containing the roots of $S_1$.  For each such edge of $\cH$, there are most $h$ choices of an $S_2 \in \cB(H, R)$ with roots in that edge together with $S_1$.  For each such choice of $S_2$, by \ref{embedding-theorem:A-degree-upper}, there are at most $2n^bp^m = 1200h^2 D$ choices for the embedding $\phi_2$. 
For each such choice of $\phi_2$, there are at most $h^2$ choices for $u_1, u_2$.  For each such choice of $u_1,u_2$, by \ref{embedding-theorem:configuration-degree} there are at most $n^{2b}p^{2m} / C'_2$ choices for $(S'_1, \phi'_1)$ and $(S'_2, \phi'_2)$.  
Therefore, we have 
\begin{equation*}
    w_D(\cC) \leq \frac{3}{D^3}\cdot h\cdot h\cdot 1200h^2 D \cdot h^2 \cdot \frac{n^{2b}p^{2m}}{C'_2} = \frac{3600\cdot h^6\cdot (600h^2)^2}{C'_2}\leq 1,
\end{equation*}
as required.  Therefore, by Theorem~\ref{thm:BipartitePerfectMatching}, there exists a $\cC$-avoiding $A$-perfect matching of $\cG$.  By the choice of $\cG$, for every $A$-perfect matching of $\cG$, the corresponding embeddings $\phi_S : S\hookrightarrow G_p$ for $S \in \cB(H,R)$ satisfy $E(\phi_S(S)) \cap E(\phi_{S'}(S')) = \emptyset$ for distinct $S,S' \in \cB(H, R)$, since $R$ is independent in $H$.  Thus, by the choice of $\cC$, a $\cC$-avoiding $A$-perfect matching of $\cG$ yields the desired set of embeddings.
\end{proof}

\section{A Random Omni-Absorber Theorem}\label{s:RandomOmniAbsorber}

In this section, we prove Theorem~\ref{thm:RandomOmniAbsorber}. First we introduce anti edges and fake edges and determine their maximum rooted $2$-density. 

Before that, we need the following lemma that shows the maximum rooted density and maximum rooted $2$-density concatenate as follows. The proof proceeds via simple case analysis and computation, but we include its proof for completeness.

\begin{lem}\label{lem:2DensityConcatenate}
Let $H$ be a graph and $R\subseteq V(H)$ be an independent set of $H$. If $H_1$ is a proper induced subgraph of $H$ containing $R$ and we let $H_2:= H\setminus E(H_1)$, then 
$$m(H,R) \le \max~\big\{~m(H_1,R),~m(H_2,V(H_1))~\big\}$$
and
$$m_2(H,R) \le \max~\big\{~m_2(H_1,R),~m_2(H_2,V(H_1))~\big\}.$$
\end{lem}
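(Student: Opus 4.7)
The plan is to start from an arbitrary witness subgraph $H'\subseteq H$ in the definition of $m(H,R)$ or of $m_2(H)$, split its edges according to whether they lie entirely inside $V(H_1)$ or have at least one end outside $V(H_1)$, and bound each piece by the corresponding rooted density of $H_1$ or of $H_2$. Concretely, let $H'_1$ be the subgraph of $H'$ induced on $V_1:=V(H')\cap V(H_1)$ and let $H'_2$ be the spanning subgraph of $H'$ with edge set $E(H')\setminus E(H'_1)$, and set $V_2:=V(H')\setminus V(H_1)$. Because $H_1=H[V(H_1)]$ is induced in $H$, every edge of $H$ with both ends in $V(H_1)$ lies in $H_1$, and so $H'_1\subseteq H_1$ while $V(H_1)$ is independent in $H_2$; in particular, $m(H_2,V(H_1))$ and $m_2(H_2,V(H_1))$ are well-defined. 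Using $R\subseteq V(H_1)$, one also has the clean identities $e(H')=e(H'_1)+e(H'_2)$ and $|V(H')\setminus R|=|V_1\setminus R|+|V_2|$.

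For the first inequality, the ratio $e(H')/|V(H')\setminus R|$ is the mediant of $e(H'_1)/|V_1\setminus R|\le m(H_1,R)$ and $e(H'_2)/|V_2|\le m(H_2,V(H_1))$, so it is bounded by the larger of the two whenever both denominators are positive. The boundary cases are routine: if $V_2=\emptyset$ then $H'\subseteq H_1$; and if $V_1\setminus R=\emptyset$ then $V_1\subseteq R$, which forces $e(H'_1)=0$ since $R$ is independent in $H$.

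For the second inequality, the $m(H,R)$-contribution to $m_2(H,R)$ is already handled, so it remains to bound $(e(H')-1)/(v(H')-2)$ for $v(H')\ge 3$ by $M:=\max\{m_2(H_1,R),m_2(H_2,V(H_1))\}$. I would split into cases on $(|V_1|,|V_2|)$. If $V_2=\emptyset$ then $H'\subseteq H_1$ and the bound follows from $m_2(H_1)\le M$. If $|V_2|\ge 1$ and $|V_1|\ge 3$, summing the inequalities $(e(H'_1)-1)/(|V_1|-2)\le m_2(H_1)$ and $e(H'_2)/|V_2|\le m(H_2,V(H_1))$ yields the desired bound.

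The main obstacle is the remaining case $|V_2|\ge 1$ and $|V_1|\le 2$, where $H'_1$ is too small to feed into $m_2(H_1)$. Here $e(H'_1)\le 1$, and I would dispatch the two sub-subcases by hand. If $e(H'_1)=0$, then $(V(H'),E(H'))$ is a subgraph of $H_2$ with at least three vertices, so $(e(H')-1)/(v(H')-2)\le m_2(H_2)\le M$. If $e(H'_1)=1$, then $|V_1|=2$ forces $v(H')-2=|V_2|$, and the target ratio collapses to $e(H'_2)/|V_2|\le m(H_2,V(H_1))\le M$. This exhausts all cases and completes the plan.
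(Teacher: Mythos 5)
Your proof is correct and takes essentially the same approach as the paper's: you split an arbitrary witness $H'$ into the induced piece $H'_1\subseteq H_1$ and the complementary piece $H'_2\subseteq H_2$, use a mediant/additive bound when both pieces are substantial, and dispatch the small boundary cases by hand. The only cosmetic differences are that you phrase the generic case as ``summing inequalities'' rather than as a convex combination of ratios, and in the $m_2$ analysis you split the tiny case on $e(H'_1)\in\{0,1\}$ whereas the paper splits on $v(H'_1)\in\{\le 1,2\}$; both routes are sound and cover the same territory.
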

\begin{proof}
First we prove the first inequality. To that end, let $H_0$ be a subgraph of $H$ such that $V(H_0)\setminus R\ne \emptyset$. Let $H_1' := H_0\cap H_1$ and let $H_2':= H_0 \cap H_2$. Let $e_1 :=e(H_1')$, $e_2:=e(H_2')$, $v_1 := |V(H_1')\setminus R|$, and $v_2 := |V(H_2')\setminus V(H_1)|$. Then $e(H_0)=e_1+e_2$ and $|V(H_0)\setminus R| = v_1+v_2$. 
Thus
$$\frac{e(H_0)}{|V(H_0)\setminus R|} = \frac{e_1+e_2}{v_1+v_2}.$$
If $e_1=0$, then we have 
$$\frac{e(H_0)}{|V(H_0)\setminus R|} \le \frac{e_2}{v_2} \le m(H_2,V(H_1))$$
as desired. Similarly if $e_2=0$, then 
$$\frac{e(H_0)}{|V(H_0)\setminus R|} \le \frac{e_1}{v_1} \le m(H_1,R)$$
as desired. So we assume that $e_1,e_2\ne 0$. It follows that $v_1,v_2\ne 0$. Thus
{\small
\begin{eqnarray*}
\frac{e(H_0)}{|V(H_0)\setminus R|} &=& \frac{e_1+e_2}{v_1+v_2} = \frac{e_1}{v_1}\cdot \frac{v_1}{v_1+v_2} + \frac{e_2}{v_2}\cdot\frac{v_2}{v_1+v_2}\\ &\le& m(H_1,R) \cdot \frac{v_1}{v_1+v_2} + m(H_2,V(H_1))\cdot \frac{v_2}{v_1+v_2}\\ &\le& \max \{m(H_1,R),~m(H_2,V(H_1))\}
\end{eqnarray*}}
as desired. This proves the first inequality.

Finally we prove the second inequality. Given the first inequality, it suffices to prove that $m_2(H)\le \max~\big\{~m_2(H_1,R),~m_2(H_2,V(H_1))~\big\}$. To that end, let $H_0$ be a subgraph of $H$ such that $v(H_0)\ge 3$. Again let $H_1' := H_0\cap H_1$ and let $H_2':= H_0\cap H_2$. Let $e_1 :=e(H_1')$, $e_2:=e(H_2')$, $v_1 := |V(H_1')\setminus R|$, and $v_2 := |V(H_2')\setminus V(H_1)|$. Then $e(H_0)=e_1+e_2$ and $|V(H_0)\setminus R| = v_1+v_2$.
Thus
$$\frac{e(H_0)-1}{v(H_0)-2} = \frac{e_1+e_2-1}{v(H_1')+v_2-2}.$$

First suppose $v(H_1')\ge 3$. If $e_2=0$, then 
$$\frac{e(H_0)-1}{v(H_0)-2} \le \frac{e_1-1}{v(H_1')-2} \le m_2(H_1,R)$$
as desired. So we assume $e_2\ne 0$. It follows that $v_2\ne 0$. 
Thus
{\small
\begin{eqnarray*}
\frac{e(H_0)-1}{v(H_0)-2} &=& \frac{e_1+e_2-1}{v(H_1')+v_2-2} = \frac{e_1-1}{v(H_1')-2}\cdot \frac{v(H_1')-2}{v(H_1')+v_2-2} + \frac{e_2}{v_2}\cdot\frac{v_2}{v(H_1')+v_2-2}\\ &\le& m_2(H_1) \cdot \frac{v(H_1') - 2}{v(H_1')+v_2-2} + m(H_2,V(H_1))\cdot \frac{v_2}{v(H_1')+v_2-2}\\ &\le& \max \{m_2(H_1,R),~m_2(H_2,V(H_1))\}
\end{eqnarray*}}
as desired.

So we assume that $v(H_1')\le 2$. It follows that $v_2 \ne 0$. First suppose $v(H_1')\le 1$. Then $e_1=0$ and we find that
$$\frac{e(H_0)-1}{v(H_0)-2} \le \frac{e_2-1}{v(H_2')-2} \le m_2(H_2,V(H_1))$$
as desired. So we assume that $v(H_1')=2$. Hence $e_1\le 1$. Thus
$$\frac{e(H_0)-1}{v(H_0)-2} \le \frac{e_2}{v_2} \le m(H_2,V(H_1))$$
as desired.
\end{proof}

\subsection{Fake Edge and Private Absorbers}

Given our embedding theorem, we turn to proving our random omni-absorber theorem via embedding fake edges and absorbers. To that end, we recall the following gadgets from~\cite{DPI} restricted to the graph case.

\begin{definition}\label{def:BasicGadget}
Let $q\ge 3$ be an integer. Let $S$ be a set of vertices of size $2$.
\begin{itemize}
    \item An \emph{anti-edge on $S$}, denoted ${\rm AntiEdge}_q(S)$, is a set of new vertices $x_1,\ldots, x_{q-2}$ together with edges $\binom{S\cup \{x_i:~i\in[q-2]\} }{2} \setminus \{S\}$.
    \item A \emph{fake edge on $S$}, denoted ${\rm FakeEdge}_q(S)$, is a set of new vertices $x_1,\ldots, x_{q-2}$ together with  $\{ {\rm AntiEdge}_q(T): T\in \binom{S\cup \{x_i:~i\in[q-2]\} }{2} \setminus \{S\} \}$.
\end{itemize}    

\end{definition}

Next we collect basic facts about the divisibility of the degrees in these basic gadgets as follows.

\begin{fact}\label{fact:Div}
Let $q\ge 3$ be an integer. The following hold:
\begin{itemize}
    \item[(1)] If $F={\rm AntiEdge}_q(S)$, then $e(F) \equiv -1 \mod \binom{q}{2}$, $d_F(v)\equiv -1 \mod (q-1)$ for every $v\in S$ and $d_F(v)\equiv 0 \mod (q-1)$ for every $v\in V(F)\setminus S$.
    \item[(2)] If $F={\rm FakeEdge}_q(S)$, then $e(F) \equiv +1 \mod \binom{q}{2}$, $d_F(v)\equiv +1 \mod (q-1)$ for every $v\in S$ and $d_F(v)\equiv 0 \mod (q-1)$ for every $v\in V(F)\setminus S$.
\end{itemize}
\end{fact}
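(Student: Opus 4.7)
The plan is a direct computation from the definitions, treating (1) first and then using it to derive (2). The main observation for (1) is that ${\rm AntiEdge}_q(S)$ is literally $K_q$ with the single edge $S$ deleted: its vertex set is $S\cup\{x_1,\ldots,x_{q-2}\}$ and its edge set is every pair from this set except $S$ itself. All three claims then follow immediately: the edge count is $\binom{q}{2}-1\equiv -1\pmod{\binom{q}{2}}$; a vertex $v\in S$ loses exactly one incident edge relative to $K_q$, so its degree is $q-2\equiv -1\pmod{q-1}$; and each vertex in $V(F)\setminus S$ still has its full $K_q$-degree of $q-1\equiv 0\pmod{q-1}$.

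For (2), I would exploit the fact that ${\rm FakeEdge}_q(S)$ is, by construction, the edge-disjoint union of $\binom{q}{2}-1$ copies of ${\rm AntiEdge}_q(T)$, one for each pair $T\in\binom{S\cup\{x_1,\ldots,x_{q-2}\}}{2}\setminus\{S\}$. Edge-disjointness holds because each ${\rm AntiEdge}_q(T)$ introduces its own private $q-2$ new vertices, and the only vertices shared between distinct AntiEdges are roots lying in $S\cup\{x_1,\ldots,x_{q-2}\}$; any such shared edge would have to be of the form $T'\neq S$ connecting two roots, but by part (1) no AntiEdge contains edges between its own roots, let alone between roots of two different AntiEdges. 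Hence by part (1) the total edge count is $(\binom{q}{2}-1)^2\equiv 1\pmod{\binom{q}{2}}$, as required.

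For the degree bookkeeping I would partition $V(F)$ into three classes and sum the contribution of each AntiEdge that contains a given vertex, using the degree formulas from part (1). A vertex $v\in S$ is a root of precisely the $q-2$ AntiEdges on pairs $T$ with $v\in T$ and $T\neq S$ (there are $q-1$ pairs through $v$ in $\binom{S\cup\{x_i\}}{2}$, and we discard $T=S$), each contributing degree $q-2$, for a total of $(q-2)^2\equiv 1\pmod{q-1}$. A vertex $v=x_i$ is a root of all $q-1$ AntiEdges on pairs containing $v$, each contributing $q-2$, for a total of $(q-1)(q-2)\equiv 0\pmod{q-1}$. Finally, a private new vertex of a single ${\rm AntiEdge}_q(T)$ is a non-root of that AntiEdge and belongs to no other, so its degree is exactly $q-1\equiv 0\pmod{q-1}$.

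There is no real obstacle here; the whole argument is mechanical once one identifies the edge-disjoint decomposition of a FakeEdge into AntiEdges. The only potential pitfall is correctly classifying the three types of vertices of a FakeEdge (the two vertices of $S$, the auxiliary vertices $x_i$, and the private new vertices introduced inside each AntiEdge), and tracking whether a given vertex plays the role of a \emph{root} of each AntiEdge containing it; being careful about this bookkeeping is the only thing needed to avoid double counting.
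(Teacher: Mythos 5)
Your proof is correct, and it is exactly the routine verification the paper intends: the paper states this as a Fact without proof, since it follows directly from the definitions as you show (an anti-edge is $K_q$ minus one edge, and a fake edge is an edge-disjoint union of $\binom{q}{2}-1$ anti-edges, so the counts follow from part (1)). Your bookkeeping of roots versus private vertices and the congruences $(q-2)^2\equiv 1\pmod{q-1}$ and $\left(\binom{q}{2}-1\right)^2\equiv 1\pmod{\binom{q}{2}}$ is all accurate.
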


Thus Fact~\ref{fact:Div}(1) says that an anti-edge $F$ on $S$ has the negative divisibility properties of an actual edge on $S$. Similarly Fact~\ref{fact:Div}(2) says that a fake edge $F$ on $S$ has the same divisibility properties as an actual edge on $S$.  Furthermore, we note all of the above basic gadgets defined in Definition~\ref{def:BasicGadget} have at most $(q-2)\cdot \binom{q}{2} \le q^{3}$ vertices and hence at most $q^{6}$ edges.

Next we compute the density of fake edges and anti-edges.
Here then is our proposition computing the maximum rooted $2$-density of anti-edges and fake edges.

\begin{proposition}\label{prop:FakeEdge}
If $q\ge 3$ is an integer, then 
$$m_2({\rm Fake Edge}_q(S),S) = m_2({\rm AntiEdge}_q(S),S) = m({\rm Fake Edge}_q(S),S) = m({\rm AntiEdge}_q(S),S) = \frac{q+1}{2}.$$
\end{proposition}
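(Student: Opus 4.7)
The plan is to establish the anti-edge case by a direct case analysis and then lift the bound to the fake edge via Lemma~\ref{lem:2DensityConcatenate} applied iteratively, peeling off one anti-edge at a time.

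For ${\rm AntiEdge}_q(S)$, note that the whole graph has $q$ vertices and $\binom{q}{2} - 1$ edges, so rooting at $S$ immediately gives density $(\binom{q}{2}-1)/(q-2) = (q+1)/2$. For the matching upper bound on $m({\rm AntiEdge}_q(S), S)$, I would case-split on $|V(H') \cap S| \in \{0, 1, 2\}$: in the extremal case $|V(H') \cap S| = 2$ we have $e(H') \le \binom{v(H')}{2} - 1$, so $e(H')/|V(H') \setminus S| \le (v(H')+1)/2$ via the identity $(\binom{v}{2}-1)/(v-2) = (v+1)/2$, while $|V(H') \cap S| \in \{0,1\}$ give strictly smaller ratios $(v(H')-1)/2$ and $v(H')/2$ respectively. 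The same identity controls $m_2({\rm AntiEdge}_q(S))$: any $H'$ with $v(H') \ge 3$ satisfies $(e(H')-1)/(v(H')-2) \le (\binom{v(H')}{2}-1)/(v(H')-2) = (v(H')+1)/2 \le (q+1)/2$. Combining yields $m_2({\rm AntiEdge}_q(S), S) = (q+1)/2$.

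For ${\rm FakeEdge}_q(S)$, write $F$ as the edge-disjoint union of $k := \binom{q}{2}-1$ anti-edges $A_1, \dots, A_k$ with $A_i = {\rm AntiEdge}_q(T_i)$, where the anti-edges use pairwise disjoint sets of extras (also disjoint from $S \cup \{x_1, \dots, x_{q-2}\}$). I would first observe that $F[S \cup \{x_1,\dots,x_{q-2}\}]$ has no edges, since each $A_i$ omits $T_i$ itself and every other edge of $A_i$ touches at least one of $A_i$'s extras. Set $F^{(0)} := F[S \cup \{x_j\}]$ and $F^{(i)} := F^{(i-1)} \cup A_i$; by the disjointness of extras, $F^{(i-1)}$ is a proper induced subgraph of $F^{(i)}$ containing $S$. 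Inducting on $i$ using Lemma~\ref{lem:2DensityConcatenate} with $H_1 = F^{(i-1)}$ and $H_2 = A_i$ gives $m_2(F^{(i)}, S) \le \max(m_2(F^{(i-1)}, S),\, m_2(A_i, V(F^{(i-1)})))$. The key observation is that $V(A_i) \setminus V(F^{(i-1)})$ equals the set of extras of $A_i$, which is also $V(A_i) \setminus T_i$; thus $m_2(A_i, V(F^{(i-1)})) = m_2(A_i, T_i) = (q+1)/2$ by the anti-edge case already established. Taking $i = k$ yields $m_2({\rm FakeEdge}_q(S), S) \le (q+1)/2$, and the analogous iteration using the first bound of Lemma~\ref{lem:2DensityConcatenate} handles $m({\rm FakeEdge}_q(S), S)$.

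The main care needed is at the inductive step: verifying that $F^{(i-1)}$ is genuinely induced in $F^{(i)}$ (which rests on the disjointness of extras across anti-edges) and correctly identifying $m_2(A_i, V(F^{(i-1)}))$ with the intrinsic rooted density $m_2(A_i, T_i)$. All the density computations themselves collapse to the identity $(\binom{v}{2}-1)/(v-2) = (v+1)/2$ plus a short case split.
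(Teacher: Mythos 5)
Your proposal is correct and follows essentially the same route as the paper's proof: establish the two anti-edge densities by direct case analysis, then pass to ${\rm FakeEdge}_q(S)$ by concatenating via Lemma~\ref{lem:2DensityConcatenate}. The paper organizes the anti-edge bound slightly differently, setting $m:=|V(H')\setminus S|$ and writing $e(H')/m\le\bigl(\binom{m+2}{2}-1\bigr)/m=(m+3)/2\le(q+1)/2$ rather than splitting on $|V(H')\cap S|$, and it compresses your explicit $\bigl(\binom{q}{2}-1\bigr)$-step induction into a single appeal to the concatenation lemma and the definition of a fake edge, but those are only presentational differences; your verification that each $F^{(i-1)}$ is induced in $F^{(i)}$ and that the inner root set restricts to $T_i$ is exactly the implicit content of the paper's one-line reduction.
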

\begin{proof}
By Lemma~\ref{lem:2DensityConcatenate} and the definition of ${\rm FakeEdge}_q(S)$, it follows that it suffices to prove that $$m_2({\rm AntiEdge}_q(S),S) = m({\rm AntiEdge}_q(S),S) = \frac{q+1}{2}.$$ Let $H'\subseteq {\rm AntiEdge}_q(S)$ such that $H'\setminus S\ne \emptyset$, and let $m := |V(H')\setminus S|$. Note $1\le m\le q-2$. Then
$$\frac{e(H')}{m} \le \frac{\binom{m+2}{2}-1}{m} = \frac{m+3}{2} \le \frac{q+1}{2}.$$
Hence $m({\rm AntiEdge}_q(S),S) = \frac{q+1}{2}$ (since equality is attained when $m=q-2$).

Similarly let $H''\subseteq {\rm AntiEdge}_q(S)$ such that $v(H'')\ge 3$. First suppose $v(H'')<q$. Then
$$\frac{e(H'')-1}{v(H'')-2} \le \frac{\binom{v(H'')}{2}-1}{v(H'')-2} = \frac{v(H'')+1}{2} < \frac{q+1}{2}.$$
Next suppose $v(H'')=q$. In that case, we have that
$$\frac{e(H'')-1}{v(H'')-2} \le \frac{\binom{q}{2}-2}{q-2} < \frac{q+1}{2}.$$
Hence $m_2({\rm AntiEdge}_q(S)) \le \frac{q+1}{2}$. Combined with the first equality proved above, this yields that \\
$m_2({\rm AntiEdge}_q(S),S) = m({\rm AntiEdge}_q(S),S)$ as desired.
\end{proof}

\subsection{Proof of Random Omni-Absorber Theorem}

Now we prove Theorem~\ref{thm:RandomOmniAbsorber}.

\begin{proof}[Proof of Theorem~\ref{thm:RandomOmniAbsorber}]
By the definition of $d_{{\rm abs}}(K_q)$, there exists $C_1$ such that for all $K_q$-divisible graphs $L$ on at most $q^5$ vertices, there exists a $K_q$-absorber $A_L$ of $L$ on at most $C_1$ vertices such that $m_2(A_L,V(L))\le d_{{\rm abs}}(K_q)+\frac{\beta}{10}$. 
Let $C'$ be as in Theorem~\ref{thm:omni-absorber} and $C''$ and $\eps$ as in Theorem~\ref{thm:Embed} with $\max\{q^6, C' + 1, C_1^2\}$ playing the role of $h$. Let $C \coloneqq \max\{\eps^{-1}8q^2 C' (C'')^2, 4C_1C'(C'+1)C'', 2\eps^{-1}\}$. By Theorem~\ref{thm:omni-absorber}, there exists a $C'$-refined $K_q$-omni-absorber $A_0\subseteq K_n$ for $X$ with decomposition family $\mathcal{H}_0$ and decomposition function $\mathcal{Q}_{A_0}$ such that $\Delta(A_0)\le C'\cdot \Delta$ where $\Delta := \max\{\Delta(X), \sqrt{n}\cdot \log n\}$.  Note that since $\Delta(X) \leq pn / C$ and $p \geq n^{-\frac{2}{q + 1} + \beta} \geq Cn^{-1/2}\log n$, we have $\Delta \leq pn / C$.

Let $p_1 \coloneqq  \eps p / (8C'')$, let $p_2 \coloneqq p/4$, and let $G_1 \sim G(n,p_1)$ and $G_2\sim G(n,p_2)$ where $V(G_1) = V(G_2) = V(X)$.  Let $G'_1 \coloneqq G_1\setminus E(X)$ and $G'_2 \coloneqq G_2 \setminus (E(G_1) \cup X)$.  We can couple $G_1 \cup G_2$ with $G(n,p_1 + p_2 - p_1p_2)$ and $p_1 + p_2 - p_1p_2 \leq p / 2$, so it suffices to show that \aas $G_1 \cup G_2 \setminus X$ contains a $C$-refined $K_q$-omni-absorber $A$ for $X$. 
We will also assume $\Delta(G_1) \leq 2p_1n$ as this event also holds asymptotically almost surely by the Chernoff bounds.

For each $e \in E(A_0)$, let $H_e \cong {\rm FakeEdge}_q(e)$ so that $H_e$ and $H_{e'}$ are edge-disjoint for every distinct $e,e' \in E(A_0)$ and $V(H_e)$ intersects $V(X)$ only in the vertices of $e$.  Let $H \coloneqq \bigcup_{e\in E(A_0)}H_e$, and let $\psi : V(X) \rightarrow V(G_1)$ be the identity. 
We will embed $H$ in $G'_1$ using Theorem~\ref{thm:Embed} with  $\max\{q^{6}, C'\}$, $H$, $V(X)$, $K_n \setminus E(X)$, $\psi$, and $p_1$ playing the roles of $h$, $H$, $R$, $G$, $\psi$, and $p$, respectively.

Note that every $V(X)$-bridge of $H$ is a ${\rm FakeEdge}_q(e)$ for some $e$, so every $V(X)$-bridge of $H$ has the same number, say $h$, of edges.  Recall that $v({\rm FakeEdge}_q(e))\le q^3$, so $h \leq q^6$.  By Proposition~\ref{prop:FakeEdge}, we have that $m_2(S, V(X)) = \frac{q+1}{2}$ for every $S \in \cB(H, V(X))$.  
Since $n$ is large enough, we have that $p_1 \geq n^{-\frac{2}{q+1}} \cdot 
\log^{8q^{6}+3} n$.  
By the choice of $p_1$ and $C$, since $\Delta(A_0) \leq C'\Delta$ and $\Delta \leq pn / C$, we have $\Delta(H) \leq q^2 \Delta(A_0) \leq q^2 C'\Delta \leq p_1n/C''$.
Also, $\delta(K_n \setminus X) = n - 1 - \Delta(X) \geq (1 - \eps)n$ since $\Delta(X) \leq pn / C \leq \eps n / 2$. Finally, note that every two vertices of $X$ are in at most one $V(X)$-bridge of $H$.
Let $\cH$ be the hypergraph with $V(\cH) = V(X)$ and edge set $\{V(J) : J \in \cH_0\}$.  Since $A_0$ is $C'$-refined, we have $\Delta_2(\cH) \leq C'$, and by the construction of $H$, the roots of every $V(X)$-bridge of $H$ are in at most $C'$ edges of $\cH$.  Moreover, each edge of $\cH$ contains the roots of $\binom{q}{2} \leq q^6$ bridges.  
Hence, 
by Theorem~\ref{thm:Embed}, there exist $\psi$-respecting embeddings $\phi_e : H_e \hookrightarrow G'_1$ such that $\binom{\phi_e(V(H_e)) \cap \phi_{e'}(V(H_{e'}))}{2} = \emptyset$ (since $\binom{e \cap e'}{2} = \emptyset$) for every pair of distinct $e, e' \in E(A_0)$ and moreover, for every pair of distinct $u_1, u_2 \in V(G)$, there is at most one $J \in \cH_0$ containing edges $e_1$ and $e_2$ such that $u_i \in \phi_{e_i}(V(H_{e_i}))$ for $i \in \{1,2\}$ and $u_i \notin e_i$ for some $i \in \{1,2\}$.

We let $X'\coloneqq X\cup \bigcup_{e\in E(A_0)}\phi_e(H_e)$, and let
$$\mathcal{H}_1 \coloneqq \Bigg\{ (J\cap X) \cup \bigcup_{e\in J\setminus X} \phi_e(H_e) : J\in \mathcal{H}_0\Bigg\}.$$
Note that every element of $\mathcal{H}_1$ is a graph with at most $q^5$ vertices which is $K_q$-divisible by Fact~\ref{fact:Div}. 
Hence, by the choice of $C_1$, for every $J\in \mathcal{H}_1$, there exists a $K_q$-absorber $A_J$ of $J$ such that $v(A_J)\le C_1$ and $m_2(A_J,V(J)) \le d_{{\rm abs}}(K_q)+\frac{\beta}{10}$.  We assume that $A_J$ and $A_{J'}$ are edge-disjoint for distinct $J, J' \in \cH_1$ and $V(A_J)$ intersects $V(X')$ only in $V(J)$.  Let $H' \coloneqq \bigcup_{J\in\cH_1}A_J$.
We will embed $H'$ in $G'_2$ using Theorem~\ref{thm:Embed} with $\max\{C_1^2, C'+1\}$, $H'$, $V(X')$, $K_n \setminus (X \cup E(G_1))$, $\psi$, and $p_2$ playing the roles of $h$, $H$, $R$, $G$, $\psi$, and $p$, respectively.

Note that every $V(X')$-bridge $S$ of $H'$ has at most $C_1$ vertices and thus at most $C_1^2$ edges and by construction satisfies $m_2(S, V(S) \cap V(X')) \leq d_{{\rm abs}}(K_q) + \frac{\beta}{10}$.
Since $\beta > 0$ and $d_{{\rm abs}}(K_q)\ge 1$, we have $p_2 \geq n^{-\frac{1}{d_{{\rm abs}}(K_q)} + \beta}/4 \geq n^{-\frac{1}{d_{{\rm abs}}(K_q) + \beta/10} + \frac{\beta}{2}} \geq n^{-\frac{1}{m_2(S, V(S) \cap V(X'))}}\log^{8\max\{C_1^2, C'+1\}+3}n$ for every $S \in \cB(H', V(X'))$.
Since $A_0$ is $C'$-refined, we have\COMMENT{
    every vertex is in $\Delta(A_0 \cup X)$ edges of $A_0 \cup X$, and each of these are in at most $C'$ of the $J \in \cH_0$, and we have degree at most degree $C_1$ in the absorber for $J$}
    $\Delta(H') \leq C_1 C' \Delta(A_0 \cup X) \leq C_1C'(C' + 1)\Delta \leq p_2 n / C''$.
Also, $\delta(K_n \setminus (X \cup E(G_1))) \geq n - 1 - \Delta(X) - \Delta(G_1) \geq (1 - \eps)n$ since we are assuming $\Delta(G_1) \leq 2p_1 n$.
Finally, note that every two distinct vertices of $X'$ are in at most $C' + 1$ $V(X')$-bridges of $H'$\COMMENT{
    If the two vertices $u_1, u_2$ are together in some $\phi_e(H_e)$ for $e \in E(A_0)$ (either because $e = u_1u_2$ or otherwise), then they are not in any other $\phi_{e'}(H_{e'})$ by the choice of the embeddings, and $e$ is in at most $C'$ of the $J \in \cH_0$ since $A_0$ is $C'$-refined.  
    There is at most one other bridge of $H'$ containing $u_1$ and $u_2$ since there is at most one $J \in \cH_0$ containing edges $e_1$ and $e_2$ such that $u_i \in \phi_{e_i}(V(H_{e_i}))$ for $i \in \{1,2\}$ and $u_i \notin e_i$ for some $i \in \{1,2\}$, and if there is a $J \in \cH_0$ containing edges $e_1$ and $e_2$ such that $u_i \in \phi_{e_i}(V(H_{e_i}))$ and $u_i \in e_i$ for $i \in \{1,2\}$, then $u_1u_2 \in E(A_0)$. 
}.
Hence, 
by Theorem~\ref{thm:Embed}, there exist $\psi$-respecting embeddings $\phi_J : A_J \hookrightarrow G'_2$ for all $J \in \cH_1$ such that $\phi_J(A_J)$ and $\phi_{J'}(A_{J'})$ are edge-disjoint for every pair of distinct $J, J' \in \cH_1$.  

We let $A \coloneqq \bigcup_{e\in E(A_0)}\phi_e(H_e) \cup \bigcup_{J \in \cH_1}\phi_J(A_J)$.
We define a decomposition family $\cH_A$ for $A$ where for each $J \in \cH_0$, we include both the $K_q$-decomposition of $\phi_{J'}(A_{J'})$ and of $J' \cup \phi_{J'}(A_{J'})$ in $\cH_A$, where $J' \coloneqq (J \cap X) \cup \bigcup_{e \in J \setminus X} \phi_e(H_e)$ (these decompositions exist because $A_{J'}$ is an absorber).  We define a decomposition function $\cQ_A$ for $A$ as follows.  Given a $K_q$-divisible $L \subseteq X$, for every $J\in \cH_0$, we let $\cQ_{A}(L)$ contain the $K_q$-decomposition of $J' \cup \phi_{J'}(A_{J'})$ if $J \in \cQ_{A_0}(L)$ and of $\phi_{J'}(A_{J'})$ if $J \in \cH_0\setminus \cQ_{A_0}(L)$, where $J' = (J \cap X) \cup \bigcup_{e \in J \setminus L}\phi_e(H_e)$.  
By construction, $A$ is a $C$-refined $K_q$-omni-absorber for $X$ with decomposition family $\cH_A$ and decomposition function $\cQ_A$, and $A \subseteq G'_1 \cup G'_2$, as desired.
\end{proof}

\section{The Divisibility Fixer}\label{s:DivFixer}

In this section, we introduce the notion of a $K_q$-divisibility fixer which we will use to remove a `leave' such that the rest of the graph satisfies the necessary divisibility conditions for having a $K_q$-decomposition before finding such a decomposition. Crucially, we prove that for our range of $p$, $G(n,p)$ \aas contains such a $K_q$-divisibility fixer with at most $(q-2)n+O(1)$ edges. 
Here is our definition of divisibility fixer.

\begin{definition}\label{def:DivFixer}
Let $q \ge 3$ be an integer.  We say a graph $F$ is a \textit{$K_q$-divisibility fixer} if for every $m \in \{0, \dots, \binom{q}{2}-1\}$ and every $(d_v \in \{0, \dots, q - 2\} : v \in V(F))$ such that $\sum_{v\in V(F)}d_v \equiv 2m \mod q - 1$, there exists a spanning subgraph $F' \subseteq F$ such that $e(F') \equiv m \mod \binom{q}{2}$ and $d_{F'}(v) \equiv d_v \mod q - 1$ for every $v \in V(F)$.
\end{definition}

The following proposition shows how divisibility fixers `fix the divisibility' of a host graph.

\begin{proposition}\label{prop:DivFixer}
Let $q\ge 3$ be an integer. If $G$ is a graph and $F$ is a spanning subgraph of $G$ that is also a $K_q$-divisibility fixer, then there exists a subgraph $F'$ of $F$ such that $G-E(F')$ is $K_q$-divisible.     
\end{proposition}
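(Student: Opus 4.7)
The plan is to reduce to the definition of a $K_q$-divisibility fixer by choosing $m$ and the $d_v$ to be precisely the residues by which $G$ fails $K_q$-divisibility, and then verify the compatibility condition $\sum_v d_v \equiv 2m \pmod{q-1}$ is automatically satisfied.

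More concretely, let $m \in \{0,\dots,\binom{q}{2}-1\}$ be the unique residue with $e(G) \equiv m \pmod{\binom{q}{2}}$, and for each $v \in V(G)$ let $d_v \in \{0,\dots,q-2\}$ be the unique residue with $d_G(v) \equiv d_v \pmod{q-1}$. The first step is the key observation that
\[
\sum_{v \in V(F)} d_v \;\equiv\; \sum_{v \in V(G)} d_G(v) \;=\; 2 e(G) \;\equiv\; 2m \pmod{q-1},
\]
where the last congruence uses that $e(G) - m$ is a multiple of $\binom{q}{2} = q(q-1)/2$, so $2(e(G)-m)$ is a multiple of $q(q-1)$ and hence of $q-1$. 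Thus $(m, (d_v)_{v \in V(F)})$ is an admissible input to the defining property of a $K_q$-divisibility fixer.

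Applying that defining property to $F$ produces a spanning subgraph $F' \subseteq F$ with $e(F') \equiv m \pmod{\binom{q}{2}}$ and $d_{F'}(v) \equiv d_v \pmod{q-1}$ for every $v$. Since $F \subseteq G$, we may view $F'$ as a subgraph of $G$, and then
\[
e(G - E(F')) \;=\; e(G) - e(F') \;\equiv\; m - m \;\equiv\; 0 \pmod{\binom{q}{2}},
\]
\[
d_{G - E(F')}(v) \;=\; d_G(v) - d_{F'}(v) \;\equiv\; d_v - d_v \;\equiv\; 0 \pmod{q-1}
\]
for every $v \in V(G)$. Hence $G - E(F')$ is $K_q$-divisible, as required.

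There is no real obstacle; the proof is essentially a bookkeeping check. The only thing one must be careful about is the hand-shake identity together with the elementary observation that $2\binom{q}{2} = q(q-1)$ is divisible by $q-1$, which is precisely what makes the compatibility hypothesis in Definition~\ref{def:DivFixer} exactly match the constraint forced by any host graph.
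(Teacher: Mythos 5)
Your proof is correct and follows essentially the same strategy as the paper's: choose $m$ and the $d_v$ appropriately and invoke the defining property of a $K_q$-divisibility fixer. The paper makes the dual choice (taking $m \equiv e(F)-e(G)$ and $d_v \equiv -d_{G-E(F)}(v)$, then deleting $E(F)\setminus E(F')$ rather than $E(F')$), but this is a cosmetic difference; you are actually slightly more careful in that you explicitly verify the compatibility hypothesis $\sum_v d_v \equiv 2m \pmod{q-1}$ via the handshake lemma and $2\binom{q}{2}=q(q-1)$, a check the paper leaves implicit.
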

\begin{proof}
    Choose $m \in \{0, \dots, \binom{q}{2}-1\}$ so that $m + e(G) - e(F) \equiv 0 \mod \binom{q}{2}$, and for each $v \in V(F)$, choose $d_v \in \{0, \dots, q - 2\}$ so that $d_v + d_{G-E(F)}(v) \equiv 0\mod q - 1$.
    Since $F$ is a $K_q$-divisibility fixer, there exists a spanning subgraph $F' \subseteq F$ such that $e(F') \equiv m \mod \binom{q}{2}$ and $d_{F'}(v) \equiv d_v \mod q - 1$ for every $v \in V(F)$.  Hence,  $G - (E(F) \setminus E(F'))$ is $K_q$-divisible, as desired.
\end{proof}

To prove Theorem~\ref{thm:YusterRandomGeneral}, we will embed a $K_q$-divisibility fixer $F$ in $G \sim G(n,p)$ and use Proposition~\ref{prop:DivFixer} to find $F' \subseteq F$ such that $G - F'$ is $K_q$-divisible.  We will then find a $K_q$-decomposition of $G - E(F')$.  
In order to address Yuster's question, we need to construct a $K_q$-divisibility fixer $F$ with at most $qn$ edges, as follows.

\begin{lem}\label{lem:DivFixer}
For each integer $q\ge 3$, there exists a real $C\ge 1$ such that the following holds for all $\beta > 0$: If $p \ge n^{-\frac{2}{q+1}+\beta}$, then \aas $G(n,p)$ contains a $K_q$-divisibility fixer $F$ with $v(F)=n$ such that $e(F) \le (q-2)n + C$.
\end{lem}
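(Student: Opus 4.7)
The plan is to (i) explicitly construct a prototype $n$-vertex graph $F^{\star}$ that is a $K_q$-divisibility fixer with at most $(q-2)n + O(1)$ edges and whose rooted maximum $2$-density with respect to a suitable anchor set is at most $(q+1)/2$, and (ii) embed $F^{\star}$ into $G(n,p)$ via Theorem~\ref{thm:Embed}. Since being a $K_q$-divisibility fixer depends only on the isomorphism type of the graph, the embedded copy is the desired $F$.

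For the construction in (i), I would assemble $F^{\star}$ from overlapping copies of $K_q$ (a natural building block since $m_2(K_q) = (q+1)/2$ matches the $p$-threshold in the lemma) glued into a connected chain- or tree-like structure. Each $K_q$ contributes degree and edge-count effects that, together with the freedom to include or omit individual edges, provide the flexibility to realize any target residue pattern. For the degree residues modulo $q-1$, I would rely on \emph{local shift gadgets} whose net effect at a single vertex is $\pm 1 \pmod{q-1}$ with all other vertices' degrees $\equiv 0 \pmod{q-1}$; these are directly analogous to the fake-edge and anti-edge gadgets of Definition~\ref{def:BasicGadget}, and by combining several (using the Chinese Remainder Theorem when $q-1$ is composite) one can shift a designated vertex's degree residue by any prescribed amount. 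For the edge-count residue modulo $\binom{q}{2}$, I would include $O(1)$ auxiliary gadgets---short cycles of varying lengths---that can be toggled without disturbing the degree residues. A careful accounting shows $e(F^{\star}) \le (q-2)n + O(1)$, and a case analysis verifies that $F^{\star}$ satisfies Definition~\ref{def:DivFixer}.

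For the embedding in (ii), I would apply Theorem~\ref{thm:Embed} with $H = F^{\star}$, $G = K_n$, and an independent anchor set $R \subseteq V(F^{\star})$ chosen so that each $R$-bridge of $F^{\star}$ is a constant-size piece built from a bounded number of $K_q$'s (with roots distributed so that no bridge has excessive density at a non-root vertex), together with $\psi$ the identity on $R$. By construction, every $R$-bridge $S$ has bounded size and rooted $2$-density $m_2(S, V(S)\cap R) \le (q+1)/2$, so the hypothesis $p \ge n^{-2/(q+1) + \beta}$ implies $p \ge n^{-1/m_2(S, V(S)\cap R)} \log^{O(1)} n$ for every bridge $S$, with the buffer $\beta$ absorbing the polylogarithmic factor. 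The condition $\Delta(F^{\star}) \le pn/C$ is automatic since $\Delta(F^{\star}) = O(1)$. Theorem~\ref{thm:Embed} then yields, asymptotically almost surely, $\psi$-respecting embeddings of every $R$-bridge whose union is a copy of $F^{\star}$ inside $G(n,p)$; this copy is the desired fixer $F$, with $e(F) \le (q-2)n + O(1)$.

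The main obstacle is the gadget design in (i): one must combinatorially verify that the global assembly $F^{\star}$ really can realize every valid residue pattern $(d_v)_v$ with $\sum d_v \equiv 2m \pmod{q-1}$, while simultaneously keeping every $R$-bridge's rooted $2$-density at most $(q+1)/2$ (so the embedding threshold matches the hypothesis) and staying within the $(q-2)n + O(1)$ edge budget. Once that design and the choice of $R$ are in place, step (ii) is a direct invocation of Theorem~\ref{thm:Embed} at the precise $p$-threshold stated in the lemma.
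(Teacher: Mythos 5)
Your high-level plan (construct a prototype fixer and embed it into $G(n,p)$) is partially aligned with the paper's, but there is a genuine gap in step (ii) that the paper's proof carefully avoids, and a secondary issue in step (i).

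The gap in step (ii): Theorem~\ref{thm:Embed} cannot build the \emph{spanning} part of the fixer, and your plan relies on it doing exactly that. That theorem places the roots $\psi(R)$ but exercises no control over where the non-root vertices of the $R$-bridges land in $V(G)$ — different bridges merely avoid overlapping each other in more than one non-root vertex. If $|R| < n$ the union of the embedded bridges need not cover $[n]$, and adding the uncovered vertices as isolated vertices destroys Definition~\ref{def:DivFixer} (no $F' \subseteq F$ can realize a nonzero degree residue at an isolated vertex). If instead you force $|R| = n$ so that $\psi(R) = [n]$, then every non-root vertex of every bridge necessarily collapses onto some root image, and the embedded graph is a vertex-identified quotient of $F^{\star}$ — whose degrees at each physical vertex are sums over its preimages. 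You would then have to prove that this quotient is still a fixer, which does not follow from $F^{\star}$ being one and is not addressed in your write-up. The paper's proof sidesteps all of this: it obtains the spanning bulk (the $(q-2)$nd power of a Hamilton cycle, which has $(q-2)n - O(1)$ edges and is the correct $2$-degenerate structure) \emph{directly} as a subgraph of a slice of $G(n,p)$ via the Park--Pham theorem, and then uses Theorem~\ref{thm:Embed} only for a \emph{constant} number of fake-edge gadgets replacing a bounded set $S$ of multi-edges; the resulting vertex collisions are benign precisely because Lemma~\ref{lem:div-fixer-fake-edge} shows that replacing an edge by a fake edge with vertices inside $V(F_1)$ preserves the fixing property. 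The inductive core (Lemma~\ref{lem:div-fixer-fat-triangle} for the base, Lemma~\ref{lem:div-fixer-inductive-step} for adding a vertex with $q-2$ edges) is what actually verifies the fixer property, not a ``case analysis.''

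The issue in step (i): a chain of overlapping $K_q$'s has edge density roughly $\binom{q}{2}/(q-1) = q/2$ per new vertex, which exceeds the $(q-2)n + O(1)$ budget already for $q = 3$ (it gives $\approx 1.5n$ instead of $n + O(1)$), and the correct extremal structure here ($P_n^{q-2}$) contains no $K_q$ at all — its maximum clique is $K_{q-1}$. Separately, toggling a cycle to adjust the edge count modulo $\binom{q}{2}$ changes every cycle vertex's degree by $\pm 2$, which is nonzero modulo $q-1$ for $q \geq 4$, so cycles are not a valid edge-count gadget. The paper's fat-triangle gadget (three vertices with $q(q-1)$ parallel edges, each parallel edge later replaced by a fake edge) realizes both residue targets simultaneously without running into that problem.
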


To prove the divisibility fixer theorem, we will first show the existence of ``sparse'' divisibility fixers.  For inductive purposes, we will first show the existence of sparse divisibility fixers that are multi-graphs before using fake edges to construct new divisibility fixers which are simple graphs.  Moreover, it will be helpful to have a stronger property for fixing edge divisibility.

\begin{definition}
Let $q \ge 3$ be an integer.  We say a multi-graph $F$ is a \textit{strong $K_q$-divisibility fixer} if for every $m \in \{0, \dots, q(q-1)-1\}$ and every $(d_v \in \{0, \dots, q - 2\} : v \in V(F))$ such that $\sum_{v\in V(F)}d_v \equiv 2m \mod q - 1$, there exists a spanning subgraph $F' \subseteq F$ such that $e(F') \equiv m \mod q(q-1)$ and $d_{F'}(v) \equiv d_v \mod q - 1$ for every $v \in V(F)$.
\end{definition}

Note that a strong $K_q$-divisibility fixer is a $K_q$-divisibility fixer since $\binom{q}{2}\mid q(q - 1)$.

We first show that a multi-graph fixer exists on $3$ vertices.  Multi-graph fixers exist on one or two vertices as well if we allow loops; however, this approach would require more work to get rid of the loops.

\begin{lem}\label{lem:div-fixer-fat-triangle}
    For each integer $q \geq 3$, a 3-vertex loopless multi-graph with at least $q(q-1)$ parallel edges between every pair of vertices is a strong $K_q$-divisibility fixer. 
\end{lem}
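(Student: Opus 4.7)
My plan is to translate the definition directly into a system of linear congruences in three unknowns and then solve it, exploiting the fact that $q(q-1)$ parallel edges give plenty of room to adjust each variable. Let $v_1, v_2, v_3$ denote the vertices of $F$, and let $N_{ij} \ge q(q-1)$ be the number of parallel $v_iv_j$-edges. Given $m$ and $(d_1,d_2,d_3)$ as in the definition, I would seek non-negative integers $a_{ij} \le N_{ij}$ such that the spanning subgraph $F'$ taking $a_{ij}$ copies of the edge $v_iv_j$ satisfies $a_{12}+a_{13}+a_{23} \equiv m \pmod{q(q-1)}$ together with the three degree congruences $a_{12}+a_{13} \equiv d_1$, $a_{12}+a_{23} \equiv d_2$, and $a_{13}+a_{23} \equiv d_3$ modulo $q-1$.

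Next, I would observe that the hypothesis $d_1+d_2+d_3 \equiv 2m \pmod{q-1}$ is exactly the compatibility condition needed to solve the four congruences simultaneously. Summing the three degree congruences yields $2(a_{12}+a_{13}+a_{23}) \equiv 2m \pmod{q-1}$, and the edge-count condition forces $a_{12}+a_{13}+a_{23} \equiv m \pmod{q-1}$; subtracting each degree congruence from this gives the unique mod-$(q-1)$ solution $a_{jk} \equiv m - d_i \pmod{q-1}$ for each permutation $(i,j,k)$ of $(1,2,3)$. Let $b_{jk} \in \{0,\dots,q-2\}$ be the representative of $m-d_i$ modulo $q-1$ and set $s_0 := b_{12}+b_{13}+b_{23}$; a short calculation gives $s_0 \equiv 3m - (d_1+d_2+d_3) \equiv m \pmod{q-1}$, so $t := (m-s_0)/(q-1)$ is a well-defined integer.

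Finally, I would lift this mod-$(q-1)$ solution to a genuine solution modulo $q(q-1)$ while keeping $a_{ij} \le N_{ij}$. Writing $a_{ij} = b_{ij} + k_{ij}(q-1)$ with $k_{ij} \ge 0$, the edge-count condition becomes $k_{12}+k_{13}+k_{23} \equiv t \pmod{q}$. Letting $T \in \{0,1,\dots,q-1\}$ be the residue of $t$ modulo $q$, I would set $k_{12} := T$ and $k_{13} := k_{23} := 0$; then each $a_{ij} \le (q-2) + (q-1)^2 = q^2 - q - 1 < q(q-1) \le N_{ij}$, so the choice respects all of the bounds and yields the required spanning subgraph. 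The only real content is the modular-arithmetic reduction in the second paragraph; the $q(q-1)$ threshold on multiplicities is then precisely calibrated so that $k_{ij}$ may safely be taken as large as $q-1$, which is the widest swing one ever needs, so there is no further obstacle.
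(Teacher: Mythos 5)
Your proof is correct. Both arguments ultimately solve the same small system of linear congruences, but the decompositions are slightly different. The paper strengthens two of the degree constraints to hold modulo $q(q-1)$ (rather than merely modulo $q-1$), then chooses $e_{yz}, e_{xz}, e_{xy}$ sequentially so that three of the four congruences hold by construction and verifies the remaining degree congruence using the hypothesis $\sum d_v \equiv 2m \pmod{q-1}$. You instead split along the factorisation $q(q-1)$: first you solve the full system modulo $q-1$, where the hypothesis enters up front as the consistency condition that makes the unique candidate residues $a_{jk} \equiv m - d_i$ actually work, and then you lift to modulo $q(q-1)$ by pushing the residual discrepancy $T \equiv (m - s_0)/(q-1) \pmod q$ onto a single edge class via $k_{12} := T$. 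The calibration check $a_{12} \le (q-2) + (q-1)^2 = q^2 - q - 1 < q(q-1)$ cleanly explains why exactly $q(q-1)$ parallel edges suffice, which the paper leaves implicit (it simply takes all $e_{ij}$ in $\{0,\dots,q(q-1)-1\}$). Your route is a touch more conceptual about where the hypothesis is used and why the threshold is what it is; the paper's route is a touch shorter since it never has to introduce the lift explicitly.
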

\begin{proof}
    Let $F$ have vertex set $\{x,y,z\}$ with at least $q(q - 1)$ parallel edges between every pair of vertices.  
    Suppose we have $d_x, d_y, d_z \in \{0, \dots, q - 2\}$ and $m \in \{0, \dots, q(q - 1)-1\}$ such that $d_x + d_y + d_z \equiv 2m \mod q - 1$.  To show that $F$ is a strong $K_q$-divisibility fixer, we claim that it suffices to show that there is a solution to the following system of equations with three variables $e_{xy}, e_{xz}, e_{yz} \in \{0, \dots, q(q - 1) - 1\}$:
    \begin{align*}
        e_{xy} + e_{xz} &\equiv d_x \mod q(q - 1)\\
        e_{xy} + e_{yz} &\equiv d_y \mod q - 1\\
        e_{xz} + e_{yz} &\equiv d_z \mod q(q - 1)\\
        e_{xy} + e_{xz} + e_{yz} &\equiv m \mod q(q - 1).
    \end{align*}
    Indeed, if such a solution exists, then we can construct the desired subgraph $F' \subseteq F$ by including $e_{xy}$, $e_{xz}$, and $e_{yz}$ parallel edges between $x$ and $y$, $x$ and $z$, and $y$ and $z$, respectively.  Since $q - 1 \mid q(q - 1)$, the first three congruences ensure each vertex has the desired degree modulo $q - 1$, and the final congruence ensures that $e(F') \equiv m \mod q(q - 1)$, as desired.

    To find such a solution, first choose $e_{yz} \in \{0, \dots, q(q - 1)- 1\}$ such that $e_{yz} \equiv m - d_x \mod q(q - 1)$.  Then, choose $e_{xz} \in \{0, \dots, q(q - 1) - 1\}$ such that $e_{xz} \equiv d_z - e_{yz} \mod q(q - 1)$.  By this choice, the third congruence above holds.  Then, choose $e_{xy} \in \{0, \dots, q(q - 1) - 1\}$ such that $e_{xy} \equiv d_x - e_{xz} \mod q(q - 1)$.  By this choice, the first congruence holds.  Moreover, $e_{xy} + e_{xz} + e_{yz} \equiv d_x - e_{xz} + e_{xz} + m - d_x \equiv m \mod q(q - 1)$, so the fourth congruence holds.  Since $d_x + d_y + d_z \equiv 2m \mod q - 1$, using the three congruences we know, we have $d_y \equiv 2m - d_x - d_z \equiv 2(e_{xy} + e_{xz} + e_{yz}) - (e_{xy} + e_{xz}) - (e_{xz} + e_{yz}) \equiv e_{xy} + e_{yz} \mod q - 1$, as desired.
\end{proof}

Next we show that adding a vertex and $q - 2$ non-loop edges preserves the strong divisibility fixing property.  Here is the inductive step in which we need the strong divisibility fixing property.

\begin{lem}\label{lem:div-fixer-inductive-step}
    Let $q \geq 3$ be an integer.  If a multi-graph $F_1$ is a strong $K_q$-divisibility fixer and $F_2$ is obtained from $F_1$ by adding a vertex and at least $q - 2$ non-loop edges incident to it, then $F_2$ is also a strong $K_q$-divisibility fixer.
\end{lem}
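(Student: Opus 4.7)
The plan is to leverage the hypothesis that $d_v \in \{0, \ldots, q-2\}$ together with the fact that the new vertex $v$ has at least $q-2$ non-loop edges to $V(F_1)$, so that any target degree at $v$ can be realized exactly by selecting an appropriate subset of the new edges, after which the strong fixer property of $F_1$ handles everything else. Let $v$ be the new vertex, let $e_1, \ldots, e_k$ (with $k \geq q-2$) denote the new non-loop edges of $F_2$ incident to $v$, and suppose we are given targets $m \in \{0, \ldots, q(q-1)-1\}$ and $(d_u \in \{0, \ldots, q - 2\} : u \in V(F_2))$ with $\sum_{u} d_u \equiv 2m \pmod{q-1}$.

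First, I would pick an arbitrary subset $S \subseteq \{e_1, \ldots, e_k\}$ of size exactly $d_v$; this is possible because $d_v \le q - 2 \le k$. For each $u \in V(F_1)$, let $a_u$ denote the number of edges in $S$ with endpoint $u$, so $\sum_{u \in V(F_1)} a_u = d_v$. Define adjusted targets $d'_u \in \{0, \ldots, q-2\}$ by $d'_u \equiv d_u - a_u \pmod{q-1}$ and $m' \in \{0, \ldots, q(q-1)-1\}$ by $m' \equiv m - d_v \pmod{q(q-1)}$.

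Next, I would verify the compatibility condition needed to invoke the strong fixer property of $F_1$. Using $\sum_{u \in V(F_1)} d_u = \bigl(\sum_{u \in V(F_2)} d_u\bigr) - d_v \equiv 2m - d_v \pmod{q-1}$, one computes
\[
\sum_{u \in V(F_1)} d'_u \equiv \sum_{u \in V(F_1)}(d_u - a_u) \equiv (2m - d_v) - d_v \equiv 2(m - d_v) \equiv 2m' \pmod{q-1}.
\]
Therefore, since $F_1$ is a strong $K_q$-divisibility fixer, there exists a spanning subgraph $F'_1 \subseteq F_1$ with $d_{F'_1}(u) \equiv d'_u \pmod{q-1}$ for every $u \in V(F_1)$ and $e(F'_1) \equiv m' \pmod{q(q-1)}$.

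Finally, I would set $F'_2 \coloneqq F'_1 \cup S$, viewed as a spanning subgraph of $F_2$, and verify the three required properties: $d_{F'_2}(v) = |S| = d_v$; for every $u \in V(F_1)$, $d_{F'_2}(u) = d_{F'_1}(u) + a_u \equiv d'_u + a_u \equiv d_u \pmod{q-1}$; and $e(F'_2) = e(F'_1) + |S| \equiv m' + d_v \equiv m \pmod{q(q-1)}$. The argument is entirely routine; there is no real obstacle, as the matching bounds $d_v \le q-2$ and $k \ge q-2$ make every possible degree target at $v$ achievable exactly, while the modular arithmetic in $F_1$ is already absorbed by the inductive hypothesis.
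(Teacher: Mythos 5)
Your proof is correct and follows essentially the same route as the paper: choose a set $S$ of exactly $d_v$ new edges at the new vertex (the paper calls it $X$), subtract its contribution from the degree targets on $V(F_1)$ and from $m$, verify $\sum_{u\in V(F_1)} d'_u \equiv 2m' \pmod{q-1}$, invoke the strong fixer property of $F_1$, and adjoin $S$ to the resulting subgraph. The only cosmetic difference is notation (your $a_u$ plays the role of the paper's multiplicity $\mu_X(zv)$).
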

\begin{proof}
    Let $F_2$ be obtained from $F_1$ by adding a vertex $z$ and at least $q - 2$ non-loop edges incident to $z$.  Suppose we have $d_v \in \{0, \dots, q - 2\}$ for each $v\in V(F_2)$ and $m \in \{0, \dots, q(q - 1) - 1\}$ such that $\sum_{v\in V(F_2)}d_v \equiv 2m \mod q - 1$.  Let $X$ be a set of $d_z$ edges incident to $z$.  We will construct a subgraph $F' \subseteq F_2$ satisfying $d_{F'}(v) \equiv d_v \mod q - 1$ for every $v \in V(F_2)$ and $e(F') \equiv m \mod q(q - 1)$ such that $z$ is incident to an edge in $F'$ if and only if it is in $X$.  To that end, let $d'_v \in \{0, \dots, q - 2\}$ for each $v \in V(F_1)$ such that $d'_v \equiv d_v - \mu_X(zv) \mod q - 1$, where $\mu_X(zv)$ denotes the number of edges in $X$ incident to both $z$ and $v$, and let $m' \in \{0, \dots, q(q - 1) - 1\}$ such that $m' \equiv m - d_z \mod q(q - 1)$.  Since $\sum_{v\in V(F_1)}d'_v \equiv \sum_{v\in V(F_2)}d_v - 2d_z \equiv 2m - 2d_z \equiv 2m' \mod q - 1$ and $F_1$ is a $K_q$-divisibility fixer, there exists a spanning subgraph $F'_1 \subseteq F_1$ such that $e(F'_1) \equiv m' \mod q(q - 1)$ and $d_{F'_1}(v) \equiv d'_v \mod q - 1$ for every $v \in V(F_1)$.  By adding the vertex $z$ and the edges $X$ to $F'_1$, we obtain the desired subgraph $F' \subseteq F_2$. 
\end{proof}

With the following lemma, we can construct a $K_q$-divisibility fixer which is a simple graph from a $K_q$-divisibility fixer which is a multi-graph.  By replacing an edge with a fake edge, we can eliminate parallel edges while preserving the divisibility fixing property.  Although this operation does not preserve the strong divisibility fixing property, this lemma is sufficient for our purposes.

\begin{lem}\label{lem:div-fixer-fake-edge}
    Let $q \geq 3$ be an integer. If a multi-graph $F_1$ is a $K_q$-divisibility fixer, $e \in E(F_1)$, and $X \cong {\rm FakeEdge}_q(e)$ is edge-disjoint from $F_1$ with $X \subseteq V(F_1)$, then $F_2 \coloneqq F_1 - e \cup X$ is also a $K_q$-divisibility fixer.
\end{lem}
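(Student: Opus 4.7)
The plan is to leverage the fixer property of $F_1$ together with Fact~\ref{fact:Div}(2), which records that the entire fake edge $X$ has precisely the same divisibility footprint as a real edge on the root set $e$: namely $e(X) \equiv 1 \mod \binom{q}{2}$, both endpoints of $e$ have degree $\equiv 1 \mod q-1$ in $X$, and every other vertex of $V(X)$ (the internal vertices $x_1,\dots,x_{q-2}$) has degree $\equiv 0 \mod q-1$ in $X$. The hypothesis $V(X) \subseteq V(F_1)$ ensures that $V(F_2) = V(F_1)$, so any target tuple $(m,(d_v)_{v\in V(F_2)})$ for $F_2$ can be fed into $F_1$ directly without adjustment.

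Given such a target with $\sum_v d_v \equiv 2m \mod q-1$, I would first apply the $K_q$-divisibility fixer property of $F_1$ to produce a spanning subgraph $F'_1 \subseteq F_1$ with $e(F'_1) \equiv m \mod \binom{q}{2}$ and $d_{F'_1}(v) \equiv d_v \mod q-1$ for every $v\in V(F_1)$. Then I would define $F'_2 \subseteq F_2$ by cases according to whether $F'_1$ happens to use the edge $e$. If $e \notin E(F'_1)$, set $F'_2 := F'_1$, which lies inside $F_1 - e \subseteq F_2$ and already satisfies the required congruences. If $e \in E(F'_1)$, set $F'_2 := (F'_1 - e) \cup X$; this lies in $F_2 = (F_1 - e) \cup X$ since $X$ is edge-disjoint from $F_1$.

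In the second case, the required congruences follow directly from Fact~\ref{fact:Div}(2):
\begin{equation*}
    e(F'_2) = e(F'_1) - 1 + e(X) \equiv m - 1 + 1 = m \mod \binom{q}{2},
\end{equation*}
and for each endpoint $u_i$ of $e$,
\begin{equation*}
    d_{F'_2}(u_i) = d_{F'_1}(u_i) - 1 + d_X(u_i) \equiv d_{u_i} - 1 + 1 = d_{u_i} \mod q-1.
\end{equation*}
For any other vertex $v \in V(F_1) \setminus \{u_1, u_2\}$, either $v$ is an internal vertex of $X$ or $v \notin V(X)$; in both subcases $d_X(v) \equiv 0 \mod q-1$, so $d_{F'_2}(v) \equiv d_{F'_1}(v) \equiv d_v \mod q-1$.

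There is no real obstacle here: the whole argument is a one-line translation once one observes that $X$ mimics $e$ modulo $\binom{q}{2}$ and $q-1$. The only subtlety worth flagging is that the internal vertices of $X$ already sit inside $V(F_1)$ and therefore carry their own degree requirements $d_{x_j}$; this would be problematic if $X$ contributed a nonzero residue at an internal vertex, but Fact~\ref{fact:Div}(2) guarantees the contribution is $0 \mod q-1$, so the existing residue at each $x_j$ (already correctly set by $F'_1$) is preserved under the swap.
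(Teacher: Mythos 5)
Your proof is correct and takes essentially the same route as the paper: apply the fixer property to $F_1$ to obtain $F'_1$, then pass $F'_1$ through unchanged if it avoids $e$, or swap $e$ for $X$ if not, using Fact~\ref{fact:Div}(2) to check the congruences are preserved. The paper states this slightly more tersely but the construction and verification are identical.
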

\begin{proof}
    Suppose we have $d_v \in \{0, \dots, q - 2\}$ for each $v\in V(F_2)$ and $m \in \{0, \dots, \binom{q}{2}-1\}$ such that $\sum_{v\in V(F_2)}d_v \equiv 2m \mod q - 1$.  To show $F_2$ is a $K_q$-divisibility fixer, we show there exists a spanning subgraph $F' \subseteq F_2$ such that $d_{F'}(v) \equiv d_v \mod q - 1$ for every $v \in V(F_2)$ and $e(F') \equiv m \mod \binom{q}{2}$.
    Since $F_1$ is a $K_q$-divisibility fixer, there exists a spanning subgraph $F'_1 \subseteq F_1$ such that $d_{F'_1}(v) \equiv d_v \mod q - 1$ and $e(F'_1) \equiv m \mod \binom{q}{2}$.  If $e \notin E(F'_1)$, then $F'_1 \subseteq F_2$, so letting $F' \coloneqq F'_1$, we obtain the desired subgraph.  Otherwise, let $F' \coloneqq F'_1 - e \cup X$.  Now $F' \subseteq F'_2$, as required, and by Fact~\ref{fact:Div}, we have $e(F') \equiv e(F_1) \equiv m \mod \binom{q}{2}$ and $d_{F'}(v) \equiv d_{F'_1}(v) \equiv d_v \mod q - 1$ for every $v \in V(F_2)$, as desired.
\end{proof}

\begin{proof}[Proof of Lemma~\ref{lem:DivFixer}]
    
Let $p_1, p_2 \coloneqq n^{-\frac{2}{q+1} + \beta}/2$, and let $G_1 \sim G(n,p_1)$ and $G_2\sim G(n,p_2)$.  We can couple $G_1 \cup G_2$ with $G(n,p_1 + p_2 - p_1p_2)$ and $p_1 + p_2 - p_1p_2 \leq n^{-\frac{2}{q+1} + \beta}$, so it suffices to show that \aas $G_1 \cup G_2$ contains a $K_q$-divisibility fixer $F$ with $v(F)=n$ such that $e(F) \le (q-2)n + C$.

Let $F_1$ be the multi-graph obtained from $P_n^{q-2}$ (the $(q - 2)$nd power of an $n$-vertex path) by adding a set $S$ of edges between distinct vertices among the first $\max\{3, q - 2\}$ so that every pair of them has $q(q - 1)$ parallel edges; that is, $V(F_1)$ can be enumerated $\{v_1, \dots, v_n\}$ where $v_i$ and $v_j$ are adjacent if $|i - j| \leq q - 2$ for $i\neq j$, and for distinct $i, j \in \{1, \dots, \max\{3, q - 2\}\}$, there are $q(q - 1)$ parallel edges between $v_i$ and $v_j$.  By Lemma~\ref{lem:div-fixer-fat-triangle}, $F_1[\{v_1, v_2, v_3\}]$ is a strong $K_q$-divisibility fixer.  By induction, using Lemma~\ref{lem:div-fixer-inductive-step}, we have $F_1[\{v_1, \dots, v_i\}]$ is also a strong $K_q$-divisibility fixer for each $i \in \{3, \dots, n\}$.  
For each $e \in S$, let $H_e \cong {\rm FakeEdge}_q(e)$ so that $H_e$ and $H_{e'}$ are edge-disjoint for every distinct $e,e' \in S$ and $V(H_e)$ intersects $V(F_1)$ only in the vertices of $e$.  Let $H \coloneqq \bigcup_{e\in S}H_e$.  We will embed $F_1 - S$ in $G_1$ and embed $H$ in $G_2$ using Theorem~\ref{thm:Embed}.

By the Park--Pham Theorem~\cite{PP23}, $G(n,p_1)$ \aas contains the $(q - 2)$nd power of a Hamilton cycle. (For more information on the precise threshold for $G(n,p)$ to contain a power of a Hamilton cycle, see the paper of Kahn, Narayanan, and Park~\cite{KNP21}.)  Hence, we may assume $F_1 - S$ is a subgraph of $G_1$, and hence there exists a bijection $\psi$ from $R:=V(F_1-S)$ to $V(G_1)$ that preserves edges. 

Now we embed $H$ into $G_2 \setminus F$ using Theorem~\ref{thm:Embed} with $K_n \setminus (F_1 - S)$ playing the role of $G$ and $p_2$ playing the role of $p$.
Note that every $R$-bridge of $H$ is a ${\rm FakeEdge}_q(e)$ for some $e$. Hence by Proposition~\ref{prop:FakeEdge}, we have that that $m_2(T, R) = \frac{q+1}{2}$ for every $T \in \cB(H, R)$. 
Let $h:=q^5$. Since ${\rm FakeEdge}_q(e)$ has at most $q^5$ edges, we have that every $R$-bridge of $H$ has at most $h$ edges.  
Let $C'$ and $\varepsilon$ be as in Theorem~\ref{thm:Embed} for $h$, and let $C \coloneqq \max\{C', e(H) + 2(q-2)\}$.

Note that $\delta(K_n\setminus (F_1 - S)) \ge (1-\varepsilon)n$ since $\Delta(F_1-S) \le 2 (q-2)$ and $n$ is large enough. Also note that $p_2 \geq n^{-1/m_2(T, R)}\cdot \log^{8h+3} n$ for every $T \in \cB(H, R)$ since $n$ is large enough. Note that there are at most $q^4$ $R$-bridges of $H$ and so any two vertices of $R$ are in at most $q^4$ ($\le h$) $R$-bridges of $H$. Thus, we also find that $\Delta(H) \le q^4\cdot q^5 = q^9 \le {p_2\cdot n}/{C}$ since $n$ is large enough. 

Hence by Theorem~\ref{thm:Embed}, \aas there exist $\psi$-respecting embeddings $\phi_e : H_e \hookrightarrow G_2 - (E(F_1)\setminus S)$ such that $\phi_e(H_e)$ and $\phi_{e'}(H_{e'})$ are edge-disjoint for every pair of distinct $e,e' \in S$.  Let $F \coloneqq (F_1 - S) \cup \bigcup_{e\in S}\phi_e(H_e)$.
By Lemma~\ref{lem:div-fixer-fake-edge} (applied repeatedly, once for each edge of $S$), $F$ is a $K_q$-divisibility fixer in $G_1\cup G_2$ as desired.
Moreover, $e(F) \leq e(F_1 - S) + e(H) \leq (q - 2)n + C$, as required.
\end{proof}

\section{Regularity Boosting}\label{s:Regularity}

Rephrasing a decomposition problem in terms 
of a perfect matching of an auxiliary hypergraph is useful. To that end, we have the following definition.

\begin{definition}[Design hypergraph]\label{def:design-hypergraph}
Let $F$ be a hypergraph. If $G$ is a hypergraph, then the \emph{$F$-design hypergraph of $G$}, denoted ${\rm Design}(G,F)$, 
is the hypergraph with $V({\rm Design}(G,F))=E(G)$ and $E({\rm Design}(G,F)) = \{S\subseteq E(G): S \text{ is isomorphic to } F\}$.
\end{definition}

Note that $\mathrm{Design}(K_n, K_q)$ is $\binom{n - 2}{q - 2}$-regular and $\Delta_2(\mathrm{Design}(K_n, K_q)) \leq \binom{n - 3}{q - 3}$.  

In this section, we prove the following theorem which will be used for the regularity boosting of Step (3) of the main proofs.
Recall that we say a hypergraph $\cD$ is $(1 \pm \eps)D$-regular if $d_{\cD}(v) = (1 \pm \eps)D$ for all $v \in V(\cH)$.

\begin{thm}\label{thm:RegularCliques}
For each integer $q \ge 3$, there exists a real $C \ge 1$ such that the following holds for all $\beta > 0$: If $p \ge n^{-\frac{2}{q+1} + \beta}$ and $S'\subseteq S\subseteq K_n$ such that $\Delta(S) \le \frac{pn}{C}$, then \aas there exists a subhypergraph $\mc{D} \subseteq {\rm Design}( (K_n\setminus S)_p\cup S',K_q)$ which is  $\left(1/2\pm n^{-\beta/8}\right)\cdot p^{\binom{q}{2}-1}\cdot~\binom{n}{q-2}$-regular.
\end{thm}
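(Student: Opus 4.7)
Let $G^* := (K_n \setminus S) \cup S'$ and $G := (K_n \setminus S)_p \cup S'$. The plan is a three-stage argument following Section~\ref{s:Overview}: (1) find a fractional $K_q$-packing $\omega$ of $G^*$ whose expected surviving weights (after the random sampling of $(K_n\setminus S)_p$) are asymptotically uniform across all edges of $G^*$; (2) sample a random collection $\cD$ of cliques according to $\omega$; (3) apply Kim--Vu polynomial concentration to deduce that $\cD$ is regular \aas.

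For each $K_q$-clique $K$ of $G^*$, write $s(K) := |E(K) \cap E(S')|$ and $r(K) := \binom{q}{2} - s(K)$. For Step~1, we seek $\omega(K) \in [0,1]$ such that for every edge $e \in G^*$,
\[
\sum_{K \ni e,\, K \subseteq G^*} \omega(K)\cdot p^{r(K) - [e \notin S']} = (1 \pm n^{-\beta/4}) \cdot \tfrac{1}{2}\cdot p^{\binom{q}{2}-1}\binom{n-2}{q-2}.
\]
This is an asymmetric boosting problem: $S'$-edges are deterministically in $G$, while $(K_n\setminus S)$-edges contribute an extra factor of $p$, so $\omega$ must be biased towards cliques with few $S'$-edges. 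To build such an $\omega$, we adapt the Boost Lemma of Glock, K\"uhn, Lo, and Osthus~\cite{GKLO16}: rather than a uniformly-weighted random walk on $K_{q+1}$-minus-an-edge exchanges (which produces a uniform fractional decomposition), we bias the walk via a Metropolis-type acceptance rule so that its stationary distribution on $K_q$-cliques scales as $p^{s(K)}$. The hypothesis $\Delta(S) \le pn/C$ with $C$ sufficiently large ensures that $S'$ is locally sparse, so the required biases are realizable with $\omega(K) \le 1$.

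For Step~2, introduce independent Bernoulli variables $(X_K)_K$ with $\Prob{X_K = 1} = \omega(K)$, coupled with the random graph $(K_n\setminus S)_p$, and set $\cD := \{K : X_K = 1 \text{ and } K \subseteq G\}$. By Step~1, $\Expect{\deg_{\cD}(e) \mid e \in G}$ equals the target up to a $(1\pm n^{-\beta/4})$ factor for every $e \in V(G^*)$. For Step~3, $\deg_{\cD}(e)$ is a multilinear polynomial in the independent Bernoullis of degree at most $\binom{q}{2}$; its codegree parameters are bounded via the maximum rooted $2$-density of $K_q$, which equals $(q+1)/2$ by Proposition~\ref{prop:FakeEdge}. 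Since $p \ge n^{-2/(q+1)+\beta}$, Corollary~\ref{cor:KimVu-poly} (or a variant allowing two independent ground sets of Bernoullis) yields $\deg_{\cD}(e) = (1\pm n^{-\beta/4})\Expect{\deg_{\cD}(e) \mid e \in G}$ with probability at least $1-n^{-\log n}$. A union bound over all edges, together with $\binom{n-2}{q-2}=(1-o(1))\binom{n}{q-2}$, yields the claimed regularity.

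The main obstacle is Step~1. A naive uniform Boost-Lemma decomposition produces expected surviving weights that differ by a factor of $p$ between $S'$-edges and $(K_n\setminus S)$-edges, which ruins the regularity after sampling. Producing the correctly biased $\omega$ requires rerunning the Boost Lemma argument with a non-uniform stationary distribution (weights $\propto p^{s(K)}$) while keeping $\omega(K) \in [0,1]$ for every clique; the sparsity hypothesis $\Delta(S)\le pn/C$ is precisely what makes this biased walk well-defined.
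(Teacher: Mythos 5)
Your overall strategy is the same as the paper's (equalize the \emph{expected} surviving weight of every edge of $(K_n\setminus S)_p\cup S'$, then sample cliques according to the weights, then concentrate), but the step you yourself identify as the main obstacle --- constructing the biased fractional packing $\omega$ --- is asserted rather than proved, and the mechanism you propose does not exist. The Boost Lemma of~\cite{GKLO16} is not a random walk with a stationary distribution that one can tilt by a Metropolis acceptance rule: it is a deterministic linear-algebraic correction, built from the edge gadgets of Proposition~\ref{prop:EdgeGadget}, which adjusts a near-uniform weighting so that prescribed per-edge sums are met exactly. So ``biasing the walk so that its stationary distribution scales as $p^{s(K)}$'' is not a construction, and you give no argument that a weighting with uniform expected surviving degree, $\omega(K)\in[0,1]$, and controlled individual clique weights actually exists; the remark that $\Delta(S)\le pn/C$ ``makes the biased walk well-defined'' is not backed by any estimate. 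The paper resolves exactly this point differently and concretely: it first proves a generalized Boost Lemma (Lemma~\ref{lem:GeneralBoostLemma}) that achieves an \emph{arbitrary} target function $\phi:E\to[0,1]$ rather than a constant; it then saturates each $e\in S'$ using only cliques containing exactly one $S'$-edge, with uniform weights $1/|\cH'(e)|$, and runs the generalized Boost Lemma inside $K_n\setminus S$ with target $\phi(e)=1-\frac{1}{p}\psi_1(e)$, which is close to $1$ precisely because $\Delta(S)\le pn/C$ forces $\psi_1(e)\le q^qp/C$ (Lemma~\ref{lem:YusterFractionalPrelim}). Restricting the support to cliques with at most one $S'$-edge is not cosmetic: it is what keeps every clique's surviving probability equal to either $p^{\binom{q}{2}-1}$ or $p^{\binom{q}{2}-2}$ and what makes the codegree estimates in the concentration step (Lemma~\ref{lem:YusterFractionalConcentrated}) go through; your $\omega$ allows arbitrary $s(K)$, which would force a separate analysis of cliques with several deterministic edges.

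Two smaller issues in Step 3. First, you apply Kim--Vu jointly to the clique indicators $X_K$ and the edge indicators, which requires a product-space variant with two ground sets and two probabilities; Corollary~\ref{cor:KimVu-poly} as stated sparsifies a single vertex set with a single $p$, and the paper avoids the issue by splitting the argument into two stages (Kim--Vu over the edge revelation to get $\psi_p(e)=1\pm n^{-\beta/7}$, then independent sampling of cliques with probabilities $\psi_p(Q)\cdot D/2$ and a Chernoff bound per edge). Second, the fact $m_2(K_q)=(q+1)/2$ is elementary but is not Proposition~\ref{prop:FakeEdge}, which concerns anti-edges and fake edges. Neither of these is fatal, but as written the proposal leaves the central construction of the asymmetric fractional packing unestablished.
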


First we recall the notion of \emph{edge-gadgets} from~\cite{GKLO16} as follows.

\begin{proposition}[Proposition 6.2 in~\cite{GKLO16}]\label{prop:EdgeGadget}
Let $q > r \ge 1$ be integers and let $e$ and $J$ be disjoint sets with $|e| = r$ and $|J| = q$. Let $G$ be the complete $r$-uniform hypergraph on vertex set $e\cup J$. There exists a function $\psi: \binom{V(G)}{q} \rightarrow \mathbb{R}$ such that
\begin{itemize}
    \item[(i)] for all $e'\in G$, we have $\sum_{H\in \binom{V(G)}{q}: e'\subseteq H} \psi(H) = {\bf 1}_{e=e'}$, and
    \item[(ii)] for all $H\in \binom{V(G)}{q}$, we have $|\psi(H)| \le \frac{2^{r-j}\cdot (r-j)!}{\binom{q-r+j}{j}} \le 2^r\cdot r!$ where $j:= |e\cap H|$.
\end{itemize}
\end{proposition}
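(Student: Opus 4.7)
I would prove Proposition~\ref{prop:EdgeGadget} by seeking $\psi$ of the symmetric form $\psi(H) = c(|e \cap H|)$ for constants $c(0), c(1), \ldots, c(r) \in \mathbb R$, which reduces property (i) to a triangular $(r+1) \times (r+1)$ linear system solvable by back-substitution. Property (ii) is then verified by reverse induction on $|e \cap H|$; this numerical verification is the main obstacle.

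Fix $e' \in \binom{V(G)}{r}$ and let $k := |e \cap e'|$. The $q$-subsets $H$ with $H \supseteq e'$ and $|e \cap H| = j$ arise by choosing $j - k$ additional vertices from $e \setminus e'$ and the remaining $(q - j) - (r - k) = q - r + k - j$ vertices from $J \setminus e'$; there are $\binom{r-k}{j-k}\binom{q-r+k}{q-r+k-j} = \binom{r-k}{j-k}\binom{q-r+k}{j}$ such sets. With the symmetric ansatz, property (i) becomes
$$\sum_{j=k}^{r} c(j)\binom{r-k}{j-k}\binom{q-r+k}{j} = \mathbf{1}_{k=r}, \quad k = 0, 1, \ldots, r.$$
The $k$-th equation involves only $c(k), \ldots, c(r)$, and the coefficient of $c(k)$ is $\binom{q-r+k}{k} > 0$ (since $q > r$). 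Back-substitution starting from $c(r) = 1/\binom{q}{r}$ (which exactly matches the claimed bound with equality) thus determines the $c(j)$ uniquely, and the resulting $\psi$ satisfies (i).

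To establish $|c(j)| \le \frac{2^{r-j}(r-j)!}{\binom{q-r+j}{j}}$, I would induct on $r - j$; the base $j = r$ is equality. For $k < r$, solving the $k$-th equation for $c(k)$, applying the inductive hypothesis to each $c(j)$ with $j > k$, and using the monotonicity $\binom{q-r+k}{j} \le \binom{q-r+j}{j}$ (immediate since $k \le j$ makes the top argument smaller while the bottom is fixed) yield
$$|c(k)|\binom{q-r+k}{k} \le \sum_{j=k+1}^{r} 2^{r-j}(r-j)!\binom{r-k}{j-k}.$$
Setting $i = j - k$ and $m = r - k$ and using $\binom{m}{i}(m-i)! = m!/i! \le m!$ for $i \ge 1$, the sum is at most $m!\sum_{i=1}^{m} 2^{m-i} = m!(2^m - 1) < 2^m m!$, which gives $|c(k)| \le \frac{2^{r-k}(r-k)!}{\binom{q-r+k}{k}}$, completing the inductive step. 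The weaker second bound in (ii) then follows from $\binom{q-r+j}{j} \ge 1$, $2^{r-j} \le 2^r$, and $(r-j)! \le r!$.
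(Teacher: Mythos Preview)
Your proof is correct. Note, however, that the paper does not itself prove this proposition; it is quoted verbatim as Proposition~6.2 of \cite{GKLO16} and used as a black box, so there is no in-paper argument to compare against.
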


To prove Theorem~\ref{thm:RegularCliques}, we require a more general fractional decomposition version of the Boost Lemma~\cite{GKLO16}; the proof is essentially the same as the second half of that lemma but we include it for completeness, but first a definition. 

\begin{definition}\label{def:fractional-decomposition}
A \emph{$K_q$-weighting} of a graph $G$ is a function $\psi$ from the $q$-cliques of $G$ to $\mathbb R_{\ge 0}$; for an edge $e$ of $G$, we let $\psi(e):= \sum_{Q: e\subseteq Q} \psi(Q)$; similarly for a set of edges $F$ of $G$ we let $\psi(F):= \sum_{Q: V(F)\subseteq Q} \psi(Q)$. The \emph{support} of a $K_q$-weighting $\psi$ is the set of $q$-cliques $Q$ of $G$ such that $\psi(Q)>0$.

A \emph{fractional $K_q$-packing} is a $K_q$-weighting such that $\psi(e)\in [0,1]$ for all $e\in G$, and a \textit{fractional $K_q$-decomposition} is a $K_q$-weighting such that $\psi(e) = 1$ for all $e \in G$.
\end{definition}

We note the same definitions extend naturally to hypergraphs. Here we state our general boost lemma for hypergraphs for potential future uses even though we require it only for graphs.  Throughout this section, if $\cH$ is a family of $q$-cliques of a hypergraph $G$, then we use notation as if $\cH$ is a hypergraph with $V(\cH) = V(G)$ and $E(\cH) = \cH \subseteq \binom{V(G)}{q}$.

\begin{lem}\label{lem:GeneralBoostLemma}
For integers $q > r \ge 2$, there exists $\alpha > 0$ such that the following holds: Let $G$ be an $r$-uniform hypergraph, let $\mathcal{H}$ be a family of $q$-cliques of $G$ and $\mathcal{Q}$ be a family of $(q+r)$-cliques of $G$ such that for all $Q\in \mc{Q}$, we have $\binom{Q}{q} \subseteq \mc{H}$. Let $\phi:E(G)\rightarrow [0,1]$. If there exists a real $d > 0$ such that 
$$\frac{\bigg| |\mc{H}(e)|-d\cdot \phi(e) \bigg|}{|\mathcal{Q}(e)|} \cdot \max_{R\in \binom{V(G)}{q}} |\mathcal{Q}(R)| \le \alpha$$
for all $e \in E(G)$, then there exists a $K_q^r$-weighting $\psi$ of $G$ with $\psi(e)=\phi(e)$ for all $e\in G$ whose support is in $\mc{H}$ and whose weights are in $\left(1\pm \frac{1}{2}\right) \cdot \frac{1}{d}$.
\end{lem}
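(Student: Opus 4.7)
The plan is to adapt the classical Boost Lemma proof of Glock--K\"uhn--Lo--Osthus: start from the uniform weighting $\psi_0(H) := \mathbf{1}_{H \in \mathcal{H}}/d$ and then use the edge-gadgets from Proposition~\ref{prop:EdgeGadget} to correct the edge weights one at a time. Concretely, for each edge $e \in E(G)$ let $\delta_e := \phi(e) - |\mathcal{H}(e)|/d$ denote the required correction, and for each $Q \in \mathcal{Q}(e)$ apply Proposition~\ref{prop:EdgeGadget} to the pair $(e, V(Q)\setminus e)$ (of sizes $r$ and $q$) to obtain a signed weighting $\psi_{e,Q} : \binom{V(Q)}{q} \to \mathbb{R}$ with $|\psi_{e,Q}(H)| \le 2^r r!$ which shifts the weight of $e$ by $1$ while leaving every other edge weight unchanged. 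Extending $\psi_{e,Q}$ by zero elsewhere, the candidate weighting is
\[
\psi := \psi_0 + \sum_{e \in E(G)} \frac{\delta_e}{|\mathcal{Q}(e)|} \sum_{Q \in \mathcal{Q}(e)} \psi_{e,Q}.
\]

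With this definition, two of the three required properties are almost immediate. First, $\psi(e') = \phi(e')$ for every $e' \in E(G)$ follows from Proposition~\ref{prop:EdgeGadget}(i): the inner double sum telescopes to $\delta_{e'}$, giving $\psi(e') = |\mathcal{H}(e')|/d + \delta_{e'} = \phi(e')$. Second, the support of $\psi$ lies in $\mathcal{H}$: the hypothesis $\binom{V(Q)}{q} \subseteq \mathcal{H}$ for every $Q \in \mathcal{Q}$ implies that if $H \notin \mathcal{H}$ then $V(H) \not\subseteq V(Q)$ for every $Q \in \mathcal{Q}$, so every correction term vanishes at such an $H$ and $\psi(H) = \psi_0(H) = 0$.

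The main (and still fairly routine) obstacle is bounding $|\psi(H) - 1/d|$ for $H \in \mathcal{H}$ to ensure the weights lie in $(1 \pm 1/2)/d$. The plan is to use Proposition~\ref{prop:EdgeGadget}(ii) together with the hypothesis rewritten as $|\delta_e|/|\mathcal{Q}(e)| \le \alpha/(dM)$, where $M := \max_{R \in \binom{V(G)}{q}} |\mathcal{Q}(R)|$, to bound the correction by
\[
2^r r! \cdot \frac{\alpha}{dM} \sum_{e \in E(G)} \bigl|\{Q \in \mathcal{Q}(e) : V(H) \subseteq V(Q)\}\bigr|.
\]
Then swap the order of summation and index the pairs $(e, Q)$ by $Q$ first: each $Q \in \mathcal{Q}$ with $V(H) \subseteq V(Q)$ contributes at most $\binom{q+r}{r}$ candidate edges $e \subseteq V(Q)$, and the number of such $Q$ is at most $|\mathcal{Q}(V(H))| \le M$. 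This reduces the bound to $2^r r! \binom{q+r}{r} \alpha / d$, so setting $\alpha := \bigl(2 \cdot 2^r r! \binom{q+r}{r}\bigr)^{-1}$ (which depends only on $q$ and $r$) yields a deviation of at most $1/(2d)$ and hence $\psi(H) \in (1 \pm 1/2)/d \ge 0$, which in particular confirms that $\psi$ is nonnegative and completes the proof.
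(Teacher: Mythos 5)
Your proposal is correct and follows essentially the same route as the paper: start from the uniform weighting $1/d$ on $\mathcal{H}$ and add edge-gadget corrections from Proposition~\ref{prop:EdgeGadget}, with your coefficient $\delta_e/|\mathcal{Q}(e)|$ equal to the paper's $c_e/d$, the same telescoping via property (i) to get $\psi(e)=\phi(e)$, and the same double-counting over pairs $(e,Q)$ with $V(H)\subseteq V(Q)$ bounded by $\binom{q+r}{r}\cdot M$ to choose $\alpha = \bigl(2\cdot 2^r r!\binom{q+r}{r}\bigr)^{-1}$. The only cosmetic difference is that you define $\psi$ on all $q$-cliques and verify the corrections vanish off $\mathcal{H}$ (using $\binom{Q}{q}\subseteq\mathcal{H}$), whereas the paper defines $\psi$ on $\mathcal{H}$ only; both are fine.
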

\begin{proof}
For each $e\in G$ and $Q\in \mc{Q}(e)$, there exists $\psi_{e,Q}:\binom{Q}{q}\rightarrow \mathbb{R}$ as in Proposition~\ref{prop:EdgeGadget}. 
We extend $\psi_{e,q}$ to have domain $\binom{V(G)}{q}$ by letting $\psi_{e,Q}(H) = 0$ if $H$ is not contained in $\binom{Q}{q}$.
For every $e\in G$, define
$$c_e:= \frac{d\cdot \phi(e)-|\mc{H}(e)|}{|\mc{Q}(e)|}.$$
Let $M:= \max_{R\in \binom{V(G)}{q}} |\mathcal{Q}(R)|$. Note that by assumption
$$|c_e| \le \frac{\alpha}{M}.$$

We now define $\psi: \mc{H}\rightarrow \mathbb{R}$ as follows:
$$\psi(H):= \frac{1}{d} + \frac{1}{d} \cdot \sum_{e\in G} c_e \cdot \sum_{Q\in \mc{Q}(e)} \psi_{e,Q}(H).$$
Note that for every $e\in G$, we have
\begin{align*}\sum_{H\in \mc{H}(e)} \psi(H) &= \frac{|\mc{H}(e)|}{d} + \frac{1}{d}\cdot \sum_{e'\in G}c_{e'} \cdot \sum_{Q\in \mc{Q}(e')} \sum_{H\in \mc{H}(e)} \psi_{e',Q}(H)\\
&= \frac{|\mc{H}(e)|}{d} + \frac{1}{d}\cdot  \sum_{e'\in G}c_{e'} \cdot \sum_{Q\in \mc{Q}(e')} {\bf 1}_{e=e'}\\
&= \frac{|\mc{H}(e)| + c_{e} \cdot |\mc{Q}(e)|}{d} = \phi(e).\\
\end{align*}
Let $\gamma:= 2^r\cdot r!$. Furthermore, for every $H\in \mc{H}$, we have 
\begin{align*}
\bigg|d\cdot \psi(H)-1\bigg| &= \bigg|\sum_{e\in G} c_e\cdot \sum_{Q\in \mc{Q}(e)} \psi_{e,Q}(H)\bigg| \le \sum_{e\in G} |c_e|\cdot \sum_{Q\in \mc{Q}(e): H\subseteq Q} |\psi_{e,Q}(H)| \\ 
&\le \frac{\alpha}{M} \cdot \sum_{e\in G}  \sum_{Q\in \mc{Q}(e): H\subseteq Q} 2^r\cdot r! \le \frac{\alpha}{M} \cdot \gamma \cdot \sum_{Q\in \mc{Q}(H)} \sum_{e\in Q} 1\\
&\le \frac{\alpha}{M} \cdot \gamma \cdot \binom{q+r}{r} \cdot |\mc{Q}(H)| \le \frac{\alpha}{M} \cdot \gamma \cdot \binom{q+r}{r} \cdot M \\
&\le \alpha \cdot \gamma \cdot \binom{q+r}{r} \le \frac{1}{2},
\end{align*}
where the last inequality follows since we choose $\alpha \le {1}/({2\cdot \binom{q+r}{r}\cdot \gamma})$. Thus $\psi$ is a $K_q^r$-weighting of $G$ with $\psi(e)=\phi(e)$ for all $e\in G$ whose support is in $\mc{H}$ and whose weights are in $\left(1\pm \frac{1}{2}\right) \cdot \frac{1}{d}$ as desired.
\end{proof}

Next we prove the following lemma which shows that in the set-up of Theorem~\ref{thm:RegularCliques} there exists a fractional $K_q$-packing of $(K_n\setminus S)\cup S'$ whose expected weight in $(K_n\setminus S)_p\cup S'$ is $1$ for every edge. 
In order to control the expected weight of a clique in $(K_n\setminus S)_p\cup S'$, we only weight cliques that induce at most one edge of $S'$.

\begin{lem}\label{lem:YusterFractionalPrelim}
For each integer $q\ge 3$, there exists a real $C \ge 1$ such that the following holds for all $n \geq C$ and $p \in (0,1]$.
Let $S'\subseteq S\subseteq K_n$, let $\mc{H}$ be the set of $q$-cliques in $K_n\setminus S$, and let $\mc{H}'$ be the set of $q$-cliques in $(K_n\setminus S)\cup S'$ containing exactly one edge of $S'$. If $\Delta(S) \le \frac{pn}{C}$, then there exists a fractional $K_q$-packing $\psi$ of $(K_n\setminus S) \cup S'$ such that $\psi(e)=1$ for all $e\in S'$, and for all $e\in K_n\setminus S$ we have 
$$\sum_{H \in \cH(e)} \psi(H) + \frac{1}{p}\cdot \sum_{H \in \cH'(e)} \psi(H)= 1,$$ 
 and each clique in the support of $\psi$ is in $\cH \cup \cH'$ and has weight in $\left.(1 \pm 1/2) \middle/\binom{n}{q-2}\right.$.
\end{lem}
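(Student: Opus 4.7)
The plan is to produce $\psi$ as a sum $\psi = \psi_1 + \psi_2$, where $\psi_2$ is supported on $\cH'$ and is designed so that $\sum_{H \in \cH'(f)} \psi_2(H) = 1$ for every $f \in S'$, while $\psi_1$ is supported on $\cH$ and handles the residual weight needed at each edge of $K_n \setminus S$. Since each $H \in \cH'$ contains a unique edge $f_H \in S'$, I define $\psi_2$ explicitly by $\psi_2(H) := 1/|\cH'(f_H)|$; this immediately yields $\psi_2(e) = 1$ for every $e \in S'$. Using $\Delta(S) \leq pn/C$, a direct exclusion count (excluding $q$-cliques through $f$ that contain another edge of $S$) gives $|\cH'(f)| = (1 - O(pq^2/C)) \binom{n-2}{q-2}$, so for $C$ sufficiently large (in terms of $q$) the weights $\psi_2(H)$ lie in $(1 \pm 1/2)/\binom{n}{q-2}$. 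A parallel count, splitting $f \in S'$ into those sharing a vertex with $e \in K_n \setminus S$ (at most $2\Delta(S)$ such $f$, each extending in $\binom{n-3}{q-3}$ ways) and those disjoint from $e$ (at most $|S| \leq n\Delta(S)/2$ such $f$, each extending in $\binom{n-4}{q-4}$ ways), gives $|\cH'(e)| \leq O(pq^2/C)\binom{n-2}{q-2}$, hence $\psi_2(e)/p \leq O(q^2/C) \leq 1/20$ once $C$ is taken large.

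With $\psi_2$ in hand, I would construct $\psi_1$ by applying Lemma~\ref{lem:GeneralBoostLemma} with $r = 2$, host graph $K_n \setminus S$, clique family $\cH$, auxiliary family $\cQ$ equal to the set of all $(q+2)$-cliques in $K_n \setminus S$, target $\phi_1(e) := 1 - \psi_2(e)/p \in [19/20, 1]$, and scale $d := \binom{n}{q-2}$. Note that each $q$-subclique of a $(q+2)$-clique in $K_n\setminus S$ lies in $\cH$, as required. An analogous exclusion argument bounds $|\cH(e)| = (1 - O(pq^2/C))\binom{n-2}{q-2}$ and $|\cQ(e)| \geq \binom{n-2}{q}/2$, while $\max_R |\cQ(R)| \leq \binom{n-q}{2}$. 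Combining these with $\binom{n-2}{q-2}/\binom{n}{q-2} = 1 - O(q/n)$ yields $||\cH(e)| - d\phi_1(e)| \leq O(q^2/C)\binom{n}{q-2}$, so the hypothesis of the boost lemma reduces to $O(q^4/C) \leq \alpha$, which holds once $C$ is large in terms of $q$ and $\alpha$. The boost lemma then yields $\psi_1$ on $\cH$ with $\psi_1(e) = \phi_1(e)$ for every $e \in K_n \setminus S$ and $\psi_1(H) \in (1 \pm 1/2)/\binom{n}{q-2}$.

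Setting $\psi := \psi_1 + \psi_2$ finishes the argument. For $e \in S'$ we have $\psi(e) = \sum_{H \in \cH'(e)}\psi_2(H) = 1$. For $e \in K_n \setminus S$,
\begin{equation*}
    \sum_{H \in \cH(e)}\psi(H) + \frac{1}{p}\sum_{H \in \cH'(e)}\psi(H) = \psi_1(e) + \frac{\psi_2(e)}{p} = \phi_1(e) + \frac{\psi_2(e)}{p} = 1,
\end{equation*}
and this also gives the packing inequality $\psi(e) = \psi_1(e) + \psi_2(e) = 1 - \psi_2(e)(1-p)/p \leq 1$ since $p \leq 1$ and $\psi_2(e) \geq 0$. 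The support lies in $\cH \cup \cH'$ by construction, and the weight window was already established. The main obstacle is verifying the boost lemma hypothesis with sharp enough constants: the sparsity saving of $O(1/C)$ from $\Delta(S) \leq pn/C$ must beat the $O(q^4)$ overhead coming from the ratio $\max_R|\cQ(R)|/|\cQ(e)|$, and tracking the various $O(q/n)$ and $O(q^2/C)$ corrections carefully so that the final weights fit into the precise $(1 \pm 1/2)/\binom{n}{q-2}$ window is the essential technical step.
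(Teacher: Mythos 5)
Your proposal is correct and follows essentially the same route as the paper: uniform weights $1/|\cH'(f)|$ on the cliques of $\cH'$ through each $f \in S'$ (your $\psi_2$ is the paper's $\psi_1$), then Lemma~\ref{lem:GeneralBoostLemma} with $r=2$, $\cQ$ the $(q+2)$-cliques of $K_n\setminus S$, $d=\binom{n}{q-2}$, and target $\phi(e)=1-\psi_2(e)/p$ to supply the remainder on $\cH$, with the same counting estimates (up to immaterial $q$-dependent constants) verifying the boost hypothesis and the weight window.
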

\begin{proof}
We choose $C$ large enough as needed throughout the proof.
For each $e\in S'$ and $H\in \mc{H}'(e)$, let $\psi_1(H) := \frac{1}{|\mc{H}'(e)|}$. Note that $\psi_1(e) = 1$ for all $e\in S'$ by construction (recall that $\psi_1(e):= \sum_{H\in \mc{H}'(e)} \psi_1(H)$). 

For $e\in K_n\setminus S$, we define
$$\phi(e):= 1-\frac{1}{p}\cdot \psi_1(e).$$
We note that for all $e\in S'$, we have that $|\mc{H}'(e)| \ge \frac{1}{2} \cdot \binom{n}{q-2}$ since $\Delta(S)\le \frac{pn}{C}$ and $C$ is large enough. 
Hence, we find that for all $e \in K_n \setminus S$, we have $\psi_1(e) \leq 2|\cH'(e)| / \binom{n}{q - 2} \leq 2q\Delta(S)n^{q - 3} / \binom{n}{q - 2} \leq pq^q / C$\COMMENT{by the same argument in Claim~\ref{claim:fractional-concentration-codegrees-bound}\ref{fractional-concentration-better-degree-upper-bound}, we have $|\cH'(e)| \leq q\Delta(S)n^{q-3}$}
 and thus $\phi(e) \ge 1- {q^q}/{C}$.

Let $d:=\binom{n}{q-2}$ and let $\mc{Q}$ be the set of $(q+2)$-cliques in $K_n\setminus S$. Let $\alpha$ be as in Lemma~\ref{lem:GeneralBoostLemma} for $q$ and $r=2$. Since $C$ is large enough, we find that $|\mc{Q}(e)| \ge \frac{1}{2} \cdot \binom{n}{q}$ for all $e\in K_n\setminus S$. We also have that $\max_{R\in \binom{V(K_n\setminus S)}{q}} |\mc{Q}(R)| \le \binom{n}{2}$. Meanwhile, for all $e\in K_n\setminus S$, we have that $d\ge |\mc{H}(e)|\ge (1-\frac{qp}{C})\cdot d$ since $\Delta(S)\le \frac{pn}{C}$. Combining, we find that
$$\frac{\bigg| |\mc{H}(e)|-d\cdot \phi(e) \bigg|}{|\mathcal{Q}(e)|} \cdot \max_{R\in \binom{V(K_n\setminus S)}{q}} |\mathcal{Q}(R)| \le \frac{ \left(\frac{qp}{C}+\frac{q^q}{C}\right)\cdot d}{\frac{1}{2}\cdot \binom{n}{q}} \cdot \binom{n}{2} \le \frac{4q^q}{C}\binom{n}{q - 2}\binom{n}{2} / \binom{n}{q} \leq \frac{2q^{q+2}}{C} \le \alpha,$$
\COMMENT{
\begin{equation*}
    \binom{n}{q - 2}\binom{n}{2} / \binom{n}{q} = \frac{\frac{n!}{(n - (q - 2))!(q - 2)!}\frac{n(n-1)}{2}}{\frac{n!}{(n - q)!q!}} = \frac{n(n-1)}{(n - q + 2)(n - q + 1)}\frac{q(q-1)}{2} \leq \frac{q^2}{2}
\end{equation*}
}
since $C$ is large enough. Hence the conditions of Lemma~\ref{lem:GeneralBoostLemma} are satisfied for $K_n\setminus S$, $\mc{H}$, $\mc{Q}$, $\phi$ and $d$.

Now by Lemma~\ref{lem:GeneralBoostLemma}, there exists a fractional $K_q$-packing $\psi_2$ of $K_n\setminus S$ such that $\psi_2(e)=\phi(e)$ for all $e\in K_n\setminus S$ and whose non-zero weights are in $(1\pm 1/2)\cdot \frac{1}{d}$. Let $\psi:= \psi_1+\psi_2$. Note that for $e\in S'$, $\psi(e)=\psi_1(e)=1$. Meanwhile for $e\in K_n\setminus S$, we have that 
$$\psi_2(e) +\frac{1}{p}\cdot \psi_1(e) = \phi(e)+\frac{1}{p}\cdot \psi_1(e) = 1$$ 
by definition of $\phi$. Moreover, each clique in the support of $\psi$ has a weight of $(1\pm 1/2)\cdot \frac{1}{d}$ (even the cliques in $\mc{H}'$ which have weight $\frac{1}{|\mc{H}'(e)|}$ and $\frac{d}{2}\le |\mc{H}'(e)|\le d$ as noted before). Thus $\psi$ is as desired.
\end{proof}

Next we show that what survives of $\psi$ in $(K_n\setminus S)_p$ is close to its expectation yielding a $K_q$-weighting of $(K_n\setminus S)\cup S'$ whose weights on edges are very regular.

\begin{lem}\label{lem:YusterFractionalConcentrated}
For each integer $q\ge 3$, there exists a real $C \ge 1$ such that the following holds for all $\beta > 0$: If $S'\subseteq S\subseteq K_n$ and $1\ge p \ge n^{-\frac{2}{q+1}+\beta}$ such that $\Delta(S) \le \frac{pn}{C}$, then \aas there exists a $K_q$-weighting $\psi$ of $(K_n\setminus S)_p \cup S'$ such that all of the following hold:
\begin{enumerate}[label=(\arabic*)]
    \item\label{fractional-concentration-weight} each clique in the support of $\psi$ has weight in $\left.(1 \pm 1/2)\middle/  \left(p^{\binom{q}{2}-1}\cdot \binom{n}{q-2}\right)\right.$ and 
    \item\label{fractional-concentration-degree} $\psi(e) = 1\pm n^{-\beta/7}$ for all $e\in (K_n\setminus S)_p\cup S'$.
\end{enumerate}
\end{lem}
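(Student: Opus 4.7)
The plan is to take the fractional $K_q$-packing $\psi_0$ produced by Lemma~\ref{lem:YusterFractionalPrelim}, rescale its weights by $1/p^{\binom{q}{2}-1}$, and restrict it to those $q$-cliques whose edges are all present in $(K_n\setminus S)_p\cup S'$. Concretely, let $\psi_0$ be the packing of $(K_n\setminus S)\cup S'$ from Lemma~\ref{lem:YusterFractionalPrelim}, and for each $q$-clique $H$ in its support that survives (that is, all edges of $H\cap(K_n\setminus S)$ are kept in $(K_n\setminus S)_p$), set $\psi(H):=\psi_0(H)/p^{\binom{q}{2}-1}$. Since $\psi_0(H)\in (1\pm 1/2)/\binom{n}{q-2}$, part~\ref{fractional-concentration-weight} is immediate. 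For part~\ref{fractional-concentration-degree}, we first check that the expectation is correct. If $e\in S'$ then every $H\ni e$ in the support has exactly one edge in $S'$ (namely $e$), so its $\binom{q}{2}-1$ other edges lie in $K_n\setminus S$ and $H$ survives with probability $p^{\binom{q}{2}-1}$; hence $\mathbb{E}[\psi(e)] = p^{\binom{q}{2}-1}\cdot (1/p^{\binom{q}{2}-1})\cdot\sum_{H\in\cH'(e)}\psi_0(H) = \psi_1(e) = 1$. If instead $e\in K_n\setminus S$, conditional on $e\in(K_n\setminus S)_p$, cliques in $\cH(e)$ survive with probability $p^{\binom{q}{2}-1}$ and cliques in $\cH'(e)$ with probability $p^{\binom{q}{2}-2}$, and Lemma~\ref{lem:YusterFractionalPrelim} gives $\mathbb{E}[\psi(e)\mid e\in(K_n\setminus S)_p]=\sum_{\cH(e)}\psi_0(H)+p^{-1}\sum_{\cH'(e)}\psi_0(H)=1$.

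The main work is concentration. For each edge $e$ we introduce one or two auxiliary weighted hypergraphs on vertex set $E(K_n\setminus S)\setminus\{e\}$ (fixing $e\in S'$ or conditioning on $e\in(K_n\setminus S)_p$). The hypergraph $\cF_e^{(1)}$ has, for each $H\in\cH(e)$, a hyperedge equal to $E(H)\setminus\{e\}$ of size $k_1:=\binom{q}{2}-1$ with weight $\psi_0(H)/p^{k_1}$; the hypergraph $\cF_e^{(2)}$ has, for each $H\in\cH'(e)$, a hyperedge equal to $E(H)\cap(K_n\setminus S)\setminus\{e\}$ of size $k_2:=\binom{q}{2}-2$ with weight $\psi_0(H)/p^{k_1}$. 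Then the contribution to $\psi(e)$ is exactly $w(\cF_e^{(1)'})+w(\cF_e^{(2)'})$, where $'$ denotes restriction to surviving vertices. We apply Corollary~\ref{cor:KimVu-poly} to each of these hypergraphs with $\beta':=\beta/2$.

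The crucial codegree estimate is: if $i$ edges $f_1,\dots,f_i\in K_n\setminus S$ extend together with $e$ into a $q$-clique, then, since $\{e,f_1,\dots,f_i\}$ form $i+1$ edges in $K_q$ and $m_2(K_q)=\tfrac{q+1}{2}$, the vertex set $V(e)\cup\bigcup_j V(f_j)$ has size $v_i^*\geq 2+\tfrac{2i}{q+1}$. Consequently,
\[
\Delta_i(\cF_e^{(1)},w)\;\leq\;\binom{n}{q-v_i^*}\cdot\frac{2}{\binom{n}{q-2}\,p^{k_1}}\;\leq\;C_q\cdot n^{-\frac{2i}{q+1}}\cdot\frac{1}{p^{k_1}},
\]
and likewise for $\cF_e^{(2)}$. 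Combined with $p\geq n^{-2/(q+1)+\beta}$, this gives $\Delta_i p^{-i}\leq C_q n^{-i\beta}/p^{k_1}$, so the choice $K:=C_q n^{\beta'-\beta}/p^{k_1}$ satisfies $\Delta_i\leq K p^i/n^{\beta'}$ for every $i\in[k_1]$. Since $w(\cF_e^{(1)})\geq 1/(2p^{k_1})$ (using $\Delta(S)\leq pn/C$ and $C$ large so that $\sum_{\cH(e)}\psi_0(H)\geq 1/2$), we get $K/w(\cF_e^{(1)})\leq 2C_q n^{-\beta/2}$, and Corollary~\ref{cor:KimVu-poly} yields $w(\cF_e^{(1)'})=\bigl(\sum_{\cH(e)}\psi_0(H)\bigr)(1\pm n^{-5\beta/12})$ with probability at least $1-n^{-\log n}$. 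The same application to $\cF_e^{(2)}$, whose expectation is $p^{-1}\sum_{\cH'(e)}\psi_0(H)=O(1/C)$, gives $w(\cF_e^{(2)'})$ concentrated around its mean with the same relative error, so absolute deviation $O(n^{-5\beta/12}/C)$. Summing the two contributions yields $|\psi(e)-1|\leq n^{-\beta/7}$ for $n$ large, with probability at least $1-2n^{-\log n}$ for each $e$.

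The main obstacle is the concentration step, and within it the codegree calculation: the naive application of Kim--Vu to the weighted hypergraph $\cF_e^{(1)}$ before rescaling by $p^{-k_1}$ gives useless bounds because the effective weights are too small relative to $\Delta_1$. The rescaling $\psi_0\mapsto\psi_0/p^{\binom{q}{2}-1}$ is what makes the ratio $K/w(\cF)$ manageable, and the density bound $m_2(K_q)=(q+1)/2$ is what makes $\Delta_i$ decrease geometrically in $i$ so that the choice $p\geq n^{-2/(q+1)+\beta}$ is exactly strong enough to absorb $K$ into $w(\cF)\cdot n^{\beta/2}$. Finally, a union bound over the at most $\binom{n}{2}$ edges of $(K_n\setminus S)_p\cup S'$ (which has failure probability $O(n^{2-\log n})=o(1)$) gives the desired conclusion asymptotically almost surely.
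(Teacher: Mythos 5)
Your high-level plan matches the paper's: rescale the weighting from Lemma~\ref{lem:YusterFractionalPrelim} by $p^{-(\binom{q}{2}-1)}$, split the contribution at each edge into the part coming from cliques entirely inside $K_n\setminus S$ and the part coming from cliques with one edge of $S'$, build the obvious auxiliary weighted hypergraphs in $E(K_n\setminus S)$, and concentrate each via Corollary~\ref{cor:KimVu-poly} using the $2$-density of $K_q$ for the codegree bounds. The expectation computation and the handling of $\cF_e^{(1)}$ (the paper's $\cF_{1,e}$) are fine, and your observation about the necessity of the rescaling is accurate.

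There is, however, a genuine gap in the treatment of $\cF_e^{(2)}$. You assert that ``the same application'' gives concentration ``with the same relative error,'' but Kim--Vu's error is $n^{-\beta'/3}\max\{\sqrt{K/w},\,K/w\}$, and while $w(\cF_e^{(1)})=\psi_1(e)/p^{k_1}\geq\tfrac12 p^{-k_1}$, you only have $w(\cF_e^{(2)})=\psi_2(e)/p^{k_1}$ with $\psi_2(e)= O(p/C)$ (and possibly $\psi_2(e)=0$). With the $K$ you derive, $K/w(\cF_e^{(2)})$ is of order $n^{-\beta/2}/p$, which can be polynomially large in $n$; the resulting absolute deviation $n^{-\beta'/3}Kp^{k_2}$ is of order $n^{-2\beta/3}/p$, which is useless for small $p$. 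To close this you must improve the codegree bound for $\cF_e^{(2)}$ by an extra factor of $p$. The paper achieves this in two cases: when no $S'$ edge lies inside $\binom{V(U')}{2}$, the hypothesis $\Delta(S)\leq pn/C$ gives $|\cH_2(V(U'))|\leq q\Delta(S)n^{q-1-v(U')}\leq (qp/C)n^{q-v(U')}$; when some $S'$ edge $f$ does lie in $\binom{V(U')}{2}$, including $f$ in the $2$-density count gives $v(U')-2\geq 2(i+1)/(q+1)$ rather than $2i/(q+1)$, again gaining a $p$. Your naive bound using only the edges of $U'$, which all lie in $K_n\setminus S$, misses this factor entirely. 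One then also needs a floor on $K$ of the form $K_{2,e}:=\max\{\psi_2(e),\,p\cdot n^{-\beta/6}\}$ to handle the case $\psi_2(e)=0$, converting the relative error into a small additive one. A minor secondary issue: when $e\in S'$ the surviving clique has $\binom{q}{2}-1$ random edges (its unique $S'$ edge is $e$ itself), so the corresponding auxiliary hypergraph should be $k_1$-uniform, not $k_2$-uniform as your definition of $\cF_e^{(2)}$ says.
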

\begin{proof}
Note we assume $n$ is large enough as needed throughout the proof since the outcome need only hold asymptotically almost surely. Similarly we choose $C\ge C'$ where $C'$ is the value in Lemma~\ref{lem:YusterFractionalPrelim}. Let $\psi$ be a fractional $K_q$-packing of $(K_n\setminus S) \cup S'$ as in Lemma~\ref{lem:YusterFractionalPrelim}. Let $\mc{H}_1$ be the set of $q$-cliques in $K_n\setminus S$ with nonzero weight in $\psi$. Let $\mc{H}_2$ be the set of $q$-cliques in $(K_n\setminus S)\cup S'$ that contain one edge of $S'$ and have nonzero weight in $\psi$.
For $i\in \{1,2\}$, define $\psi_i$ as $\psi_i(H)=\psi(H)$ if $H\in \mc{H}_i$ and $0$ otherwise. Recall that by the outcome of Lemma~\ref{lem:YusterFractionalPrelim}, we have that 
\begin{equation}\label{fractional-concentrated-psi-e-in-S'}
    \psi(e)=\psi_2(e)=1 \text{ for all } e\in S',
\end{equation}
\begin{equation}\label{fractional-concentrated-psi-e-notin-S'}
    \psi_1(e)+\frac{1}{p}\cdot \psi_2(e) = 1 \text{ for all } e\in K_n\setminus S,
\end{equation}
and
\begin{equation}\label{fractional-concentration-psi-support}
    \text{$\psi(H) = \left.(1 \pm 1/2) \middle/\binom{n}{q-2}\right.$ if $H \in \cH_1 \cup \cH_2$ and $\psi(H) = 0$ otherwise.}
\end{equation}
We let $\psi_p$ denote the $K_q$-weighting of $(K_n\setminus S)\cup S'$ where 
\begin{equation*}
    \psi_p(H) \coloneqq \left\{\begin{array}{l l}
    \left.\psi(H)\middle/p^{\binom{q}{2}-1}\right. &\text{if } E(H)\subseteq (K_n\setminus S)_p\cup S'\\
    0 &\text{otherwise.}
    \end{array}\right.
\end{equation*}
Similarly for $i\in\{1,2\}$, we define $\psi_{i,p}$ as $\psi_{i,p}(H)=\psi_p(H)$ if $H\in \mc{H}_i$ and $0$ otherwise.

We show that $\psi_p$ satisfies \ref{fractional-concentration-weight} and \ref{fractional-concentration-degree}.  
Indeed, \ref{fractional-concentration-weight} holds for $\psi_p$ by \eqref{fractional-concentration-psi-support}.  To show \ref{fractional-concentration-degree}, we define the following weighted hypergraphs to which we will apply Corollary~\ref{cor:KimVu-poly} (with $n^2$ playing the role of $n$).  
We abuse notation and use $w$ to denote the weight function in each of these weighted hypergraphs, but there is no ambiguity as these hypergraphs are pairwise edge-disjoint.  Let $k \coloneqq \binom{q}{2}-1$.

\begin{itemize}
\item For $e\in S'$, let $(\cF_{0,e}, w)$ be the weighted hypergraph with $V(\cF_{0,e}):= (K_n\setminus S)\setminus \{e\}$ and $E(\cF_{0,e}):=\{E(H)\setminus \{e\}: H\in \mc{H}_2(e)\}$ and edge weights $w(Z):= \psi(V(Z)\cup e)$.
Let $\cF_{0,e}':= \cF[V(\cF_{0,e})_p]$, and let $K_{0,e}\coloneqq 1$. 
Note that $\cF_{0,e}$ is $k$-uniform and $e(\cF_{0,e}) \leq n^2$.

\item For $e\in K_n\setminus S$, let $(\cF_{1,e}, w)$ be the weighted hypergraph with $V(\cF_{1,e}):= (K_n\setminus S)\setminus \{e\}$ and $E(\cF_{1,e}):=\{E(H)\setminus \{e\}: H\in \mc{H}_1(e)\}$ and edge weights $w(Z):= \psi(V(Z)\cup e)$.
Let $\cF_{1,e}':= \cF[V(\cF_{1,e})_p]$, and let $K_{1,e}:= w(\cF_{1,e})=\psi_1(e)$.
Note that $\cF_{1,e}$ is $k$-uniform and $e(\cF_{1,e}) \leq n^2$.

\item For $e\in K_n\setminus S$, let $(\cF_{2,e}, w)$ be the weighted hypergraph with $V(\cF_{2,e}):= (K_n\setminus S)\setminus \{e\}$ and $E(\cF_{2,e}):=\{E(H)\setminus (S\cup \{e\}): H\in \mc{H}_2(e)\}$ and edge weights $w(Z):= \psi(V(Z)\cup e)$.
Let $\cF_{2,e}':= \cF[V(\cF_{2,e})_p]$, and let $K_{2,e}:=\max\left\{\psi_2(e),~p\cdot n^{-\beta/6}\right\}$. 
Note that $\cF_{2,e}$ is $(k - 1)$-uniform and $e(\cF_{2,e}) \leq n^2$.



\end{itemize}

Next we bound the weighted codegrees of these hypergraphs, but first we need the following claim.
\begin{claim}\label{claim:fractional-concentration-codegrees-bound}
    For every $U \subseteq (K_n \setminus S) \cup S'$, the following holds:
\begin{enumerate}[label=(\alph*)]
    \item\label{fractional-concentration-easy-degree-upper-bound} 
    $|\cH_1(V(U))|, |\cH_2(V(U))| \leq n^{q - v(U)}$;
    \item\label{fractional-concentration-better-degree-upper-bound} 
    if $\binom{V(U)}{2} \subseteq K_n \setminus S$,  then $|\cH_2(V(U))| \leq q\cdot \Delta(S)\cdot n^{q - 1 - v(U)} \leq q\cdot  p \cdot n^{q - v(U)} / C$;
    \item\label{fractional-concentration-2-density} if $\cH_1(V(U))$ or $\cH_2(V(U))$ is non-empty, then $e(U) - 1 \leq (v(U) - 2)\cdot(q + 1) / 2$.
\end{enumerate}
\end{claim}
\begin{proofclaim}
Let $U \subseteq (K_n \setminus S) \cup S'$.  First, note that if $\cH_1(V(U))$ or $\cH_2(V(U))$ is non-empty, then $U$ is isomorphic to a subgraph of $K_q$.  
Hence, $v(U) \leq q$, and there are at most $\binom{n - v(U)}{q - v(U)} \leq n^{q - v(U)}$ $q$-cliques of $K_n$ containing $V(U)$, so \ref{fractional-concentration-easy-degree-upper-bound} holds. 
To prove \ref{fractional-concentration-better-degree-upper-bound}, we assume $v(U) < q$, as otherwise $|\cH_2(V(U))| = 0$ given $\binom{V(U)}{2} \subseteq K_n\setminus S$.  
If $\binom{V(U)}{2} \subseteq K_n \setminus S$, then there are at most $v(U)\cdot \Delta(S)\cdot n^{q - 1 - v(U)}$ $q$-cliques of $\cH_2(V(U))$ containing vertices $u,v$ with $u \in V(U)$ and $uv \in S'$ and at most $|S'|\cdot n^{q - 2 - v(U)} \leq \Delta(S)\cdot n^{q - 1 - v(U)}$ $q$-cliques of $\cH_2(V(U))$ containing vertices $u,v$ with $u,v\notin V(U)$ and $uv \in S'$; in total, there are at most $(v(U) + 1)\cdot n^{q - 2 - v(U)} \leq q\cdot \Delta(S)\cdot n^{q - 1 - v(U)}$ $q$-cliques of $\cH_2(V(U))$, so \ref{fractional-concentration-better-degree-upper-bound} holds.
Finally, every subgraph of $K_q$ has 2-density at most $m_2(K_q) = 2 / (q + 1)$, so \ref{fractional-concentration-2-density} holds.
\end{proofclaim}

\begin{claim}\label{cl:codegF}
Let $e \in S'$.  For all $i\in [k]$, we have $\Delta_i(\cF_{0,e}, w)\le K_{0,e}\cdot p^i\cdot n^{-\beta/2}$.
\end{claim}
\begin{proofclaim}
Let $i\in [k]$, and let $U\subseteq (K_n\setminus S)\setminus \{e\}$ with $|U|=i$.  To show $\Delta_i(\cF_{0,e}) \leq K_{0,e} \cdot p^i\cdot n^{-\beta / 2}$, it suffices to show that $d_{\cF_{0,e}, w}(U) \le p^i\cdot n^{-\beta/2}$ since $K_{0,e}=1$.

Let $U':= U\cup \{e\}$. By \eqref{fractional-concentration-psi-support} and \ref{fractional-concentration-easy-degree-upper-bound}, we have
$$d_{\cF_{0,e},w}(U) = \sum_{Z\in \mc{H}_2(V(U'))} \psi(V(Z)) \leq \left.\frac{3}{2}\cdot |\cH_2(V(U'))|\middle/\binom{n}{q - 2}\right. \leq q^q\cdot n^{2 - v(U')}.$$
Furthermore, by \ref{fractional-concentration-2-density}, $i=e(U')-1\le (v(U')-2)\cdot (q+1) / 2$. Hence,
$$p^i \ge \frac{n^{\beta\cdot i}}{n^{i\cdot 2/(q+1)}} \geq \frac{n^{\beta\cdot i}}{n^{v(U')-2}} \ge d_{\cF_{0,e},w}(U) \cdot n^{\beta/2},$$
as desired where we used that $n$ is large enough and $i\ge 1$. 
\end{proofclaim}

\begin{claim}\label{cl:codegF1}
Let $e \in K_n \setminus S$.  For all $i\in [k]$, we have $\Delta_{i}(\cF_{1,e}, w)\le K_{1,e}\cdot p^i\cdot n^{-\beta/2}$.
\end{claim}
\begin{proofclaim}
By \ref{fractional-concentration-better-degree-upper-bound} and \eqref{fractional-concentration-psi-support}, we find that 
$$\psi_2(e) \le \left.\frac{3}{2}\cdot |\mc{H}_2(e)|\middle/{\binom{n}{q-2}}\right. \le q^q\cdot \frac{p}{C} \leq \frac{p}{2},$$ 
so by \eqref{fractional-concentrated-psi-e-notin-S'} $K_{1,e} = \psi_1(e) = 1 - \frac{1}{p}\cdot \psi_2(e) \ge \frac{1}{2}$.
We omit the remainder of the proof as it is nearly identical to that of Claim~\ref{cl:codegF} 
\COMMENT{
Let $i\in [k]$, and let $U\subseteq (K_n\setminus S)\setminus \{e\}$ with $|U|=i$. To show $\Delta_i(\cF_{1,e}, w) \leq K_{1,e}\cdot p^i \cdot n^{-\beta/2}$, it suffices to show that $d_{\cF_{1,e}, w}(U) \le p^i\cdot n^{-\beta/2} / 2$ since $K_{1,e} \geq 1/2$.\\
\indent Let $U':= U\cup \{e\}$.  By \eqref{fractional-concentration-psi-support} and \ref{fractional-concentration-easy-degree-upper-bound}, we have
$$d_{\cF_{1,e},w}(U) = \sum_{Z\in \mc{H}_1(V(U'))} \psi(V(Z)) \leq \left.\frac{3}{2}|\cH_2(V(U'))|\middle/\binom{n}{q - 2}\right. \leq q^q\cdot n^{2 - v(U')}.$$
Furthermore, by \ref{fractional-concentration-2-density}, $i=e(U')-1\le (v(U')-2)\cdot (q+1) / 2$. Hence,
$$p^i \ge \frac{n^{\beta\cdot i}}{n^{i\cdot 2/(q+1)}} \geq \frac{n^{\beta\cdot i}}{n^{v(U')-2}} \ge 2 \cdot d_{\cF_{1,e},w}(U) \cdot n^{\beta/2},$$
as desired where we used that $n$ is large enough and $i\ge 1$.\\ 
}
\end{proofclaim}

\begin{claim}\label{cl:codegF2}
Let $e \in K_n\setminus S$.  For all $i\in [k-1]$, we have $\Delta_{i}(\cF_{2,e}, w)\le K_{2,e}\cdot p^i\cdot n^{-\beta/2}$.
\end{claim}
\begin{proofclaim}
Let $i\in [k-1]$, and let $U\subseteq (K_n\setminus S)\setminus \{e\}$ with $|U|=i$. To show $\Delta_i(\cF_{2,e}) \leq K_{2,e}\cdot p^i\cdot n^{-\beta / 2}$, it suffices to show that $d_{\cF_{2,e},w}(U) \le K_{2,e}\cdot p^i\cdot n^{-\beta/2}$.

Let $U':=U\cup \{e\}$. We split into two cases depending on whether $\binom{V(U')}{2} \subseteq K_n\setminus S$.

First suppose that $\binom{V(U')}{2} \subseteq K_n\setminus S$. 
By \eqref{fractional-concentration-psi-support} and \ref{fractional-concentration-better-degree-upper-bound}, we have
\begin{equation*}
    d_{\cF_{2,e},w}(U) = \sum_{Z\in \mc{H}_2(V(U'))} \psi(V(Z)) \leq \left.\frac{3}{2}\cdot |\cH_2(V(U'))|\middle/\binom{n}{q - 2}\right. \leq \frac{q^q\cdot p}{C\cdot n^{v(U') - 2}}.
\end{equation*}
Furthermore, by \ref{fractional-concentration-2-density}, $i=e(U')-1\le (v(U')-2)\cdot (q+1) / 2$.  Hence $p^i \ge \frac{n^{\beta\cdot i}}{n^{v(U')-2}}$.
Substituting this into the inequality above, we find that
$$d_{\cF_{2,e},w}(U) \le \frac{p^{i+1}}{n^{\beta\cdot i}}\cdot \frac{q^q}{C} \le p\cdot n^{-\beta/6} \cdot p^i\cdot n^{-\beta/2} \le K_{2,e}\cdot p^i\cdot n^{-\beta/2}$$
as desired where we used that $n$ is large enough and $K_{2,e} \ge p\cdot n^{-\beta/6}$ by definition of $K_{2,e}$.

So we assume that $\binom{V(U')}{2} \cap S\ne \emptyset$, and in particular we may assume that there exists $f \in S' \cap \binom{V(U')}{2}$.
Let $U'':= U'\cup \{f\}$.
By \eqref{fractional-concentration-psi-support} and \ref{fractional-concentration-easy-degree-upper-bound}, we have
\begin{equation*}
    d_{\cF_{2,e},w}(U) = \sum_{Z\in \mc{H}_2(V(U''))} \psi(V(Z)) \leq \left.\frac{3}{2}\cdot |\cH_2(V(U''))|\middle/\binom{n}{q - 2}\right. \leq q^q \cdot n^{2 - v(U'')}.
\end{equation*}
Furthermore, by \ref{fractional-concentration-2-density}, $i + 1 =e(U'')-1\le (v(U'')-2)\cdot (q+1) / 2$.  Hence $p^{i+1} \ge \frac{n^{\beta(i+1)}}{n^{v(U')-2}}$.
Substituting this into the inequality above, we find that
$$d_{\cF_{2,e},w}(U) \le \frac{p^{i+1}}{n^{\beta}}\cdot q^q \le (p\cdot n^{-\beta/6}) \cdot p^i\cdot n^{-\beta/2} \le K_{2,e}\cdot p^i\cdot n^{-\beta/2}$$
as desired where we used that $n$ is large enough and $K_{2,e} \ge p\cdot n^{-\beta/6}$ by definition of $K_{2,e}$. 
\end{proofclaim}


Recall that \ref{fractional-concentration-weight} holds by construction, so it suffices to show that \ref{fractional-concentration-degree} holds asymptotically almost surely.    
Since $n$ is large enough and there are at most $n^2$ edges of $K_n$, it suffices by the union bound to show that for each $e\in (K_n\setminus S)\cup S'$, we have with probability at least $1-n^{-\log n}$ that $\psi_p(e) = 1\pm n^{-\beta/7}$ if $e \in (K_n\setminus S)_p \cup S'$.

First suppose $e\in S'$. 
It follows from Claim~\ref{cl:codegF} and Corollary~\ref{cor:KimVu-poly} applied to $\cF_{0,e}$ with $n^2$ playing the role of $n$ and $\beta / 4$ playing the role of $\beta$, that with probability at least $1-n^{-4\log n}$, we have
$$w(\cF_{0,e}') = w(\cF_{0,e})\cdot p^k \cdot \left(1\pm n^{-\beta/6}\right).$$ 
By definition, we have that $\psi_p(e) = \left.w(\cF'_{0,e})\middle/p^{\binom{q}{2}-1}\right.$, and 
by \eqref{fractional-concentrated-psi-e-in-S'}, we have $w(\cF_{0,e}) = \psi(e) = 1$.
Hence, $\psi_p(e) = 1\pm n^{-\beta/6}$ as desired. 

Now let $e \in K_n\setminus S$.
By definition, we have for $e \in (K_n \setminus S)_p$ that $\psi_{i,p}(e) = \left.w(\cF_{i,e}')\middle/p^{\binom{q}{2}-1}\right.$ for $i \in \{1,2\}$.  Moreover, $w(\cF_{i,e}) = \psi_i(e)$ for $i \in \{1,2\}$.  Hence, via a similar argument as above using Claim~\ref{cl:codegF1} instead of Claim~\ref{cl:codegF}, with probability at least $1 - n^{-4\log n}$ we have
\COMMENT{It follows from Claim~\ref{cl:codegF1} and Corollary~\ref{cor:KimVu-poly} applied to $\cF_{1,e}$ with $n^2$ playing the role of $n$ and $\beta / 4$ playing the role of $\beta$, that with probability at least $1-n^{-4\log n}$, we have
$$w(\cF_{1,e}') = w(\cF_{1,e})\cdot p^k \cdot \left(1\pm n^{-\beta/6}\right).$$
By definition, we have for $e\in (K_n\setminus S)_p$ that $\psi_{1,p}(e) = \left.w(\cF_{1,e}')\middle / {p^{\binom{q}{2}-1}}\right.$, and recall that $w(\cF_{1,e})=\psi_1(e)$.}
\begin{equation}\label{psi_1-concentration}
    \psi_{1,p}(e) = \psi_{1}(e)\cdot \left(1\pm n^{-\beta/6}\right) \text{ for } e \in (K_n\setminus S)_p.
\end{equation}
We also claim that with probability at least $1 - n^{-4\log n}$,
\begin{equation}\label{psi_2-concentration}
    \psi_{2,p}(e) = \frac{1}{p}\cdot \psi_{2}(e)\cdot \left(1\pm n^{-\beta/6}\right) \pm 2\cdot n^{-\beta/6} \text{ for } e \in (K_n\setminus S)_p.
\end{equation}
Indeed, if $\psi_2(e) \geq K_{2,e}$, then $\psi_{2}(e) = K_{2,e}$ and it follows from Claim~\ref{cl:codegF2} and Corollary~\ref{cor:KimVu-poly} that with probability at least $1-n^{-4\log n}$, we have
$$w(\cF_{2,e}') = w(\cF_{2,e})\cdot p^{k-1} \cdot \left(1\pm n^{-\beta/6}\right)$$
and $\psi_{2,p}(e) = \frac{1}{p}\cdot \psi_2(e) \cdot \left(1\pm n^{-\beta/6}\right)$ for $e \in (K_n\setminus S)_p$, as claimed.  
Otherwise, $\psi_2(e) < K_{2,e}$ and hence $K_{2,e} = p\cdot n^{-\beta/6}$, and
it follows from Claim~\ref{cl:codegF2} and  Corollary~\ref{cor:KimVu-poly} that with probability at least $1-n^{-4\log n}$, we have
$$w(\cF_{2,e}') \le p^{k-1} \cdot 2\cdot K_{2,e} = 2\cdot p^{k}\cdot n^{-\beta/6}$$
and $\psi_{2,p}(e) \leq \frac{1}{p}\cdot\psi_2(e) \pm 2n^{-\beta / 6}$ for $e \in (K_n\setminus S)_p$\COMMENT{using that $\frac{1}{p}\psi_2(e) \leq {K_{2,e}}/{p} = n^{-\beta / 6}$ and $w(\cF_{2,e}')\ge 0$}, as claimed.

Combining \eqref{fractional-concentrated-psi-e-notin-S'}, \eqref{psi_1-concentration}, and \eqref{psi_2-concentration}, we find that with probability at least $1-n^{-\log n}$, for $e\in (K_n\setminus S)_p$,
\begin{align*}
    \psi_p(e) &= \psi_{1,p}(e)+\psi_{2,p}(e)\\
    &= \psi_{1}(e)\cdot \left(1\pm n^{-\beta/6}\right) + \frac{1}{p}\cdot \psi_{2}(e)\cdot \left(1\pm n^{-\beta/6}\right) \pm 2\cdot n^{-\beta/6} = 1\pm 3\cdot n^{-\beta/6}=1\pm n^{-\beta/7}
\end{align*}
as desired since $n$ is large enough so that $3\cdot n^{-\beta/6} \le n^{-\beta/7}$.
\end{proof}

Finally to prove Theorem~\ref{thm:RegularCliques}, we turn the $K_q$-weighting of Lemma~\ref{lem:YusterFractionalConcentrated} into a very regular collection of cliques by reweighting so the weights of clique lie between $1/4$ and $3/4$ and then sampling according to the weights (as was done in~\cite{GKLO16}). The proof is essentially the same as the first half of the Boost Lemma from~\cite{GKLO16} but we include it for completeness.

\begin{proof}[Proof of Theorem~\ref{thm:RegularCliques}]
Let $\psi$ be a $K_q$-weighting of $G:=(K_n\setminus S)_p\cup S'$ as in Lemma~\ref{lem:YusterFractionalConcentrated}. Let $D:= p^{\binom{q}{2}-1}\cdot \binom{n}{q-2}$. Since $q\geq 3$ and $(q - 2)/(\binom{q}{2}-1) = 2/(q + 1)$, we have 
$$D\ge n^{\beta\cdot \left(\binom{q}{2}-1\right)}/q^q \ge n^{\beta}.$$
We choose each clique $Q$ independently with probability $\psi(Q)\cdot \frac{D}{2}$. Recall that $\psi(Q) =\frac{1\pm 1/2}{D}$ and hence this probability is in $1/2 \pm 1/4$. Let $\mc{D}$ be the set of chosen cliques. Note that $\mc{D}$ is also a subhypergraph of ${\rm Design}(G,K_q)$.

For each edge $e$ of $G$, the expected number of elements of $\mc{D}$ containing $e$ is
$$\Expect{|\mc{D}(e)|} = \frac{D}{2}\cdot \psi(e) = \frac{D}{2}\cdot \left(1~\pm~n^{-\beta/7}\right).$$
By the Chernoff bounds, we find that 
\begin{equation*}
    \Prob{ \left| |\mc{D}(e)| - \Expect{|\mc{D}(e)|}\right| \ge n^{-\beta/7}\cdot \frac{D}{2} \cdot \psi(e)} \le 2\cdot e^{-n^{-2\beta/7}\cdot \frac{D}{2} \cdot \psi(e)/ 3} \le 2\cdot e^{-n^{5\beta/7}/12} \le e^{-\log^2 n},
\end{equation*}
where we used that $n$ is large enough, $D \geq n^\beta$, and $\beta > 0$.
Since there are at most $\binom{n}{2}$ edges of $K_n$, we have by the union bound asymptotically almost surely that for all $e\in G$,
\begin{equation*}
    d_{\cD}(e) = |\mc{D}(e)| = \left(1\pm n^{-\beta/7}\right)\frac{D}{2}\cdot \psi(e) = \left(1\pm n^{-\beta/8}\right)\cdot \frac{D}{2}
\end{equation*}
as desired. 
\end{proof}

\section{The Random Reserve}\label{s:Random-Reserve}

The last ingredient for Theorems~\ref{thm:YusterRandomGeneral} and~\ref{thm:YusterRandomRegularGeneral} is a lemma about the existence of the reserve edge set $X$ needed in Step (1) of our proof outline. 
In order to apply Theorem~\ref{thm:NibbleReserves} in Step (4), we will need to reserve a random $X \subseteq G(n,p)$ in Step (1) with the property that \aas every edge $e$ outside of $X$ has many `reserve cliques' in $X$ (that is $Q \cong K_q$ such that $e\in Q$ and $Q\setminus e \subseteq X$) and in addition no edge in $X$ is used by too many reserve cliques (and also such that $\Delta(X)$ is small of course).
To that end, we use the following definition.

\begin{definition}[Reserve design hypergraph]\label{def:reserve-design}
Let $F$ be a hypergraph. If $G$ is a hypergraph and $A,B$ are disjoint subsets of $E(G)$, then the \emph{$F$-design reserve hypergraph of $G$ from $A$ to $B$}, denoted ${\rm Reserve}_F(G,A,B)$, 
is the bipartite hypergraph $\mathcal{D}=(A,B)$ with $V(\mathcal{D}):=A\cup B$ and $$E(\mathcal{D}) := \{S\subseteq A\cup B: S \text{ is isomorphic to } F \text{ and } |S\cap A|=1\}.$$
\end{definition}

We will use this random reserve lemma to find $X\subseteq G(n,p)$ and $\cG_2 \subseteq {\rm Reserve}_{K_q}(G(n,p), G(n,p)\setminus X, X)$ such that every edge in $X$ has degree at least $D / C$ and every edge not in $X$ has degree at most $D$ for some $D$ on the order of $p^{\binom{q}{2} - 1}\binom{n}{q - 2}$ and some constant $C$ depending only on $q$.
We will also use it to show $\Delta_2(\cG_1 \cup \cG_2) \leq \Delta_2({\rm Design}(G(n,p), K_q)) \leq D^{1 - \beta / (7q)}$ where $\cG_1 \subseteq {\rm Design}(G(n,p), K_q)$ is obtained from Theorem~\ref{thm:RegularCliques}.  Then we will be able to apply Theorem~\ref{thm:NibbleReserves}.
For the proof of Theorem~\ref{thm:YusterRandomRegularGeneral}, we will have a coupling of $G_{n,d}$ and $G(n, (1 - o(1))d/n)$ such that \aas $G(n, (1 - o(1))d/n) \subseteq G_{n,d}$, and we will use Theorem~\ref{thm:EasyPerfectMatching} to ``cover down'' $G(n, (1 - o(1))d/n) \setminus G_{n,d}$ with another set of ``reserve cliques''.  We need the random reserve lemma to control degrees of a similar auxiliary hypergraph for this step as well.

Here then is our random reserve lemma.
The proof of this lemma is a simple application of Corollary~\ref{cor:KimVu-poly}.  We note that it also follows from Lemma~\ref{lem:NumberOfEmbeddings}, and naturally there are similarities in the proof.

\begin{lem}\label{lem:RandomReserve}
For each integer $q\ge 3$ and real $\eps > 0$, there exists $\varepsilon' > 0$ such that the following holds for all $\beta > 0$: If $G$ is an $n$-vertex graph with $\delta(G)\ge (1-\varepsilon')n$  and $p\ge n^{-\frac{2}{q+1} +\beta}$, then asymptotically almost surely 
\begin{enumerate}[label=(\arabic*)]
    \item\label{RandomReserve:degree} every edge $e \in E(G)$ is in $(1 \pm \eps)\cdot p^{\binom{q}{2}-1}\cdot \binom{n}{q - 2}$ $q$-cliques of $G_p + e$ and 
    \item\label{RandomReserve:codegree} every distinct $e,f \in E(G)$ are in at most $p^{\binom{q}{2} - 1}\cdot \binom{n}{q -2}\cdot n^{-\beta / 7}$ $q$-cliques of $G_p + e + f$. 
\end{enumerate}
\end{lem}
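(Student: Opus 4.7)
The strategy is to apply Corollary~\ref{cor:KimVu-poly} to suitable auxiliary hypergraphs, in the same spirit as Lemma~\ref{lem:NumberOfEmbeddings}.

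For~\ref{RandomReserve:degree}, fix $e\in E(G)$ and let $\cF_e$ be the $(\binom{q}{2}-1)$-uniform multi-hypergraph with vertex set $E(G)\setminus\{e\}$ whose edges are the sets $E(Q)\setminus\{e\}$, one per $q$-clique $Q$ of $G+e$ containing $e$; then the number of $q$-cliques of $G_p+e$ containing $e$ equals $|\cF_e[E(G_p)\setminus\{e\}]|$. A short inductive count using $\delta(G)\ge(1-\varepsilon')n$ gives $|\cF_e|=(1\pm O(q\varepsilon'))\binom{n}{q-2}$. If $U\subseteq E(G)\setminus\{e\}$ has $|U|=i\ge 1$ and is contained in a common edge of $\cF_e$, then $U\cup\{e\}$ is a subgraph of $K_q$; the bound $m_2(K_q)=(q+1)/2$ then forces $v(U\cup\{e\})\ge 2+2i/(q+1)$, and hence $\Delta_i(\cF_e)\le n^{q-2-2i/(q+1)}\le \binom{n}{q-2}p^i n^{-\beta}$ since $p\ge n^{-2/(q+1)+\beta}$. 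Applying Corollary~\ref{cor:KimVu-poly} with $K=\binom{n}{q-2}$ yields $|\cF_e[E(G_p)\setminus\{e\}]|=|\cF_e|p^{\binom{q}{2}-1}(1\pm O(n^{-\beta/3}))$ with probability at least $1-n^{-\log n}$; choosing $\varepsilon'$ small enough that $|\cF_e|=(1\pm\eps/2)\binom{n}{q-2}$ and taking a union bound over the at most $n^2$ choices of $e$ yields~\ref{RandomReserve:degree}.

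For~\ref{RandomReserve:codegree}, fix distinct $e,f\in E(G)$ and let $\cF_{e,f}$ be the analogous $(\binom{q}{2}-2)$-uniform hypergraph on $E(G)\setminus\{e,f\}$ whose edges correspond to $q$-cliques of $G+e+f$ containing both $e$ and $f$. Since $e\cup f$ already spans $3$ or $4$ vertices, $|\cF_{e,f}|\le n^{q-3}$; and the same 2-density argument applied to $U\cup\{e,f\}\subseteq K_q$ gives $\Delta_i(\cF_{e,f})\le n^{q-2-2(i+1)/(q+1)}$ for $i\ge 1$. Choosing $K=n^{q-2-2/(q+1)}$ and applying Corollary~\ref{cor:KimVu-poly} with $\beta'=\beta/2$, the codegree hypothesis holds for all $i\in[\binom{q}{2}-2]$ (the tightest case being $i=1$, where the bound $K\ge n^{q-2-2/(q+1)-\beta+\beta'}$ is comfortably satisfied), so with probability at least $1-n^{-\log n}$,
\[
|\cF_{e,f}[E(G_p)\setminus\{e,f\}]|\le \bigl(|\cF_{e,f}|+n^{-\beta/6}K\bigr)p^{\binom{q}{2}-2}.
\]
Setting $D:=p^{\binom{q}{2}-1}\binom{n}{q-2}$, the ratio of each summand on the right to $Dn^{-\beta/7}$ is $O((pn)^{-1}n^{\beta/7})=o(1)$ by direct computation using $p\ge n^{-2/(q+1)+\beta}$, so a union bound over the at most $n^4$ ordered pairs $(e,f)$ completes~\ref{RandomReserve:codegree}.

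The main obstacle will be the parameter bookkeeping in~\ref{RandomReserve:codegree}: unlike in~\ref{RandomReserve:degree}, where the target and the expectation agree up to $(1\pm o(1))$, here the expectation $|\cF_{e,f}|p^{\binom{q}{2}-2}$ is already smaller than the target by a polynomial factor $\Omega(pn\cdot n^{-\beta/7})$, so one must calibrate $K$ large enough to satisfy the $i=1$ codegree hypothesis yet small enough that the additive Kim--Vu error $n^{-\beta'/3}Kp^{\binom{q}{2}-2}$ stays below the target. Fortunately, the slack $pn\gg n^{\beta/7}$ available for the full allowed range of $p$ is exactly what makes this possible.
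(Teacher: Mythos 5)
Your proposal is correct and takes essentially the same route as the paper's own proof: the same auxiliary hypergraphs $\cF_e$ and $\cF_{e,f}$ indexed by edges of $G$, the same codegree bounds via $m_2(K_q)=\frac{q+1}{2}$, an application of Corollary~\ref{cor:KimVu-poly} (with $n^2$ playing the role of $n$), and a union bound, with only cosmetic differences (you take $K=n^{q-2-2/(q+1)}$ and bound $|\cF_{e,f}|\le n^{q-3}$ directly, whereas the paper takes $K_{e,f}=p\binom{n}{q-2}$ and reads both bounds off $\Delta_1(\cF_e)$). The only bookkeeping caveat is that in part~\ref{RandomReserve:degree} your chain $n^{q-2-2i/(q+1)}\le\binom{n}{q-2}p^in^{-\beta}$ is off by a constant factor at $i=1$; this is harmless because $v(U\cup\{e\})\ge 3$ there (or, as in the paper, because one should anyway run the hypothesis with $n^{-\beta/2}$, which absorbs all such constants).
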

\begin{proof}
We choose $\varepsilon' > 0$ small enough as needed throughout the proof, and we assume $n$ is large enough as needed throughout the proof since the outcome need only hold asymptotically almost surely. 
We will apply Corollary~\ref{cor:KimVu-poly} (with $n^2$ playing the role of $n$) to the following hypergraphs. 
First, let $\cH$ be the set of $q$-cliques of $G$, and let $k \coloneqq \binom{q}{2} - 1$.  

\begin{itemize}
    \item For $e\in E(G)$, let $\cF_{e}$ be the hypergraph with $V(\cF_{e}):= E(G)\setminus \{e\}$ and $E(\cF_{e}):= \{E(H)\setminus \{e\}: H\in \mc{H}(e)\}$.
    Let $\cF_{e}' := \cF_{e}[V(\cF_{e})_p]$, and let $K_{e}:=\binom{n}{q-2}$.
    Note that $\cF_{e}$ is $k$-uniform.
    \item For distinct $e,f \in K_n$, let $\cF_{e,f}$ be the hypergraph with $V(\cF_{e,f}) \coloneqq E(G)\setminus \{e,f\}$ and $E(\cF_{e,f}) = \{Z \setminus \{f\} : Z \in \cF_{e}(f)\}$.
    Let $\cF_{e,f}':=\cF_{e,f}[V(\cF_{e,f})_p]$, and let $K_{e,f} :=p\cdot K_{e}$. 
    Note that $\cF_{e,f}$ is $(k-1)$-uniform. 
\end{itemize}

Note that since $\delta(G) \geq (1 - \eps')n$ and $e(\cF_e) = |\cH(e)|$, we have
\begin{equation}\label{eqn:random-reserve:K-bounds}
    K_e = \binom{n}{q - 2} \geq e(\cF_{e}) \geq \frac{((1 - \eps' \cdot q)n)^{q - 2}}{(q-2)!} \geq \left(1 - \frac{\eps}{2}\right)\cdot \binom{n}{q - 2}
\end{equation}
for every $e \in E(G)$.
In order to apply Corollary~\ref{cor:KimVu-poly}, we need the following claim.

\begin{claim}\label{cl:F1bounds}
Let $e \in E(G)$.  For all $i\in [k]$, we have $\Delta_i(\cF_{e}) \le K_{e} \cdot p^i \cdot n^{-\beta/2}$, and for all $f \in E(G)\setminus\{e\}$, we have $\Delta_{i-1}(\cF_{e,f}) \leq K_{e,f}\cdot p^{i-1}\cdot n^{-\beta/2}$.
\end{claim}
\begin{proofclaim}
Let $i\in [k]$, and let $U\subseteq G\setminus \{e\}$ with $|U|=i$. It suffices to show that $|\cF_{e}(U)| \le p^i\cdot K_{e}\cdot n^{-\beta/2}$.

Let $U':= U\cup \{e\}$. Notice that $|\cF_{e}(U)| = |\mc{H}(V(U'))|\le n^{q-v(U')}$. Yet $i=e(U')-1 \le (v(U')-2)\cdot m_2(K_q) = (v(U')-2)\cdot \frac{q+1}{2}$. Hence
$$p^i \ge \frac{n^{\beta\cdot i}}{n^{v(U')-2}} \ge n^{\beta} \cdot \frac{n^{q-v(U')}}{n^{q-2}} \ge n^{\beta}\cdot \frac{|\cF_{e}(U)|}{q^q\cdot K_{e}} \ge \frac{n^{\beta/2}\cdot |\cF_{e}(U)|}{K_{e}}$$
as desired where we used that $n^{\beta/2} \ge q^q$ since $n$ is large enough.
Finally, for $f \in E(G)\setminus \{e\}$, we have
\begin{equation*}
    \Delta_{i-1}(\cF_{e,f}) \leq \Delta_i(\cF_{e}) \leq K_{e}\cdot p^{i}\cdot n^{-\beta/2} = K_{e,f}\cdot p^{i-1}\cdot n^{-\beta/2},
\end{equation*}
as claimed.
\end{proofclaim}
\begin{claim}\label{cl:Random-Reserve:degree}
\ref{RandomReserve:degree} holds asymptotically almost surely.
\end{claim}
\begin{proofclaim} 
Since $n$ is large enough and there are at most $n^2$ edges of $K_n$, it suffices by the union bound to show that for each $e\in E(G)$, we have with probability at least $1-n^{-\log n}$ that $e(\cF_{e}') = (1 \pm \eps)p^{\binom{q}{2} - 1}\binom{n}{q-2}$.
By \eqref{eqn:random-reserve:K-bounds}, we have $\max\{\sqrt{K_e / e(\cF_e)}, K_e / e(\cF_e)\} \leq 2$.  Hence, it follows from Claim~\ref{cl:F1bounds} and Corollary~\ref{cor:KimVu-poly} applied to $\cF_{e}$ with $n^2$ playing the role of $n$ and $\beta / 4$ playing the role of $\beta$ (where each edge has weight $1$) that with probability at least $1-n^{-4\log n}$, we have
\begin{equation*}
    e(\cF'_e) = e(\cF)\cdot p^k\left(1 \pm 2n^{-\frac{\beta}{6}}\right).
\end{equation*}
By \eqref{eqn:random-reserve:K-bounds}, have $e(\cF_e) = (1 \pm \eps/2)\binom{n}{q - 2}$, and we thus have $e(\cF'_e) = (1 \pm \eps)p^{\binom{q}{2}-1}\binom{n}{q - 2}$ with probability at least $1 - n^{-4\log n}$, as desired.
\end{proofclaim}

\begin{claim}\label{cl:Random-Reserve:codegree}
\ref{RandomReserve:codegree} holds asymptotically almost surely.
\end{claim}
\begin{proofclaim}
Since $n$ is large enough and there are at most $n^4$ pairs of edges of $G$, it suffices by the union bound to show that for distinct $e,f\in E(G)$, we have with probability at least $1-n^{-\log n}$ that $e(\cF_{e,f}') \le p^{\binom{q}{2} - 1}n^{q - 2}n^{-\beta/7}$.
By Claim~\ref{cl:F1bounds} and Corollary~\ref{cor:KimVu-poly} applied to $\cF_{e,f}$ with $n^2$ playing the role of $n$ and $\beta/4$ playing the role of $\beta$ (where each edge has weight $1$),  with probability at least $1-n^{-4\log n}$ we have that
\begin{align*}
e(\cF_{e,f}') &\le e(\cF_{e,f})\cdot p^{k-1} \left(1+n^{-\beta/6}\cdot \frac{K_{e,f}}{e(\cF_{e,f})}\right)=p^{k-1}\cdot (e(\cF_{e,f})+ K_{e,f}\cdot n^{-\beta/6}) \\
&\le p^k\cdot K_{e}\cdot n^{-\beta/7}= p^{\binom{q}{2}-1}\cdot \binom{n}{q -2} \cdot n^{-\beta/7}
\end{align*}
as desired where we used that $e(\cF_{e,f}) = \Delta_0(\cF_{e,f})\le K_{e,f}\cdot n^{-\beta/2}$ from Claim~\ref{cl:F1bounds} and $K_{e,f} = p \cdot K_{e}$ and $n^{-\beta/2}+n^{-\beta/6}\le n^{-\beta/7}$ since $n$ is large enough.
\end{proofclaim}

\noindent The lemma now follows from Claims~\ref{cl:Random-Reserve:degree} and \ref{cl:Random-Reserve:codegree}.
\end{proof}

\section{Proofs of the Main Theorems}\label{s:main-proof}

In this section, we prove our main theorems, namely Theorems~\ref{thm:YusterRandomGeneral} and~\ref{thm:YusterRandomRegularGeneral}.
First we prove the following lemma, which combines the Omni-Absorber Theorem, Theorem~\ref{thm:RandomOmniAbsorber}, the Divisibility 
Fixer Lemma, Lemma~\ref{lem:DivFixer}, the Regularity Boosting Theorem, Theorem~\ref{thm:RegularCliques}, and the Random Reserve Lemma, Lemma~\ref{lem:RandomReserve}, to show that for $p$ above the threshold in Theorem~\ref{thm:GeneralQAbsorberThreshold}, asymptotically almost surely $G \sim G(n,p)$ contains all of the ingredients we need to execute the refined absorption approach.
In this lemma, we also consider a coupling of $G$ and $G' \sim G(n,(1 +o(1))p)$ such that \aas $G \subseteq G'$.  We need this coupling for the proof of Theorem~\ref{thm:YusterRandomRegularGeneral}, in which we use known cases of the Sandwich Conjecture (Conjecture~\ref{conj:SandwichConj}).  For Theorem~\ref{thm:YusterRandomGeneral}, we do not need this coupling and it suffices to take $G' = G$.  Similarly, we only need \ref{what-appears-in-Gnp:reserves2} below for the proof of Theorem~\ref{thm:YusterRandomRegularGeneral}, and we only need \ref{what-appears-in-Gnp:div-fixer} below for the proof of Theorem~\ref{thm:YusterRandomGeneral}.

\begin{lem}\label{lem:what-appears-in-Gnp}
  For each integer $q \geq 3$, there exist a real $C \geq 1$ and $\gamma > 0$ such that the following holds for all $\beta > 0$. Let $a \coloneqq \max\{d_{\rm abs}(K_q), \frac{q + 1}{2}\}$.  
  If $p \geq n^{-\frac{1}{a} + \beta}$, $p \leq p' \leq (1 + \gamma)p$, and we have a coupling of $G \sim G(n,p)$ and $G' \sim G(n,p')$ such that \aas $G \subseteq G'$, then asymptotically almost surely there exist 
  $F, X, X', A \subseteq G$, 
  hypergraphs $\cG_1 \subseteq {\rm Design}(G \setminus (X' \cup A), K_q)$ and
  $\cG_2 \subseteq {\rm Reserve}_{K_q}(G \setminus (X' \cup A) \cup X, G \setminus (X' \cup A), X)$, and 
  $D,D' \geq n^\beta$ such that the following holds.
    \begin{enumerate}[label=(\ref*{lem:what-appears-in-Gnp}.\arabic*), left=0pt]
    \item\label{what-appears-in-Gnp:div-fixer} $F \subseteq X' \setminus X$ is a $K_q$-divisibility fixer such that $v(F) = n$ and $e(F) \leq (q - 2)n + C$. \label{what-appears-in-Gnp:first}
    \item\label{what-appears-in-Gnp:omni-absorber} $A$ is a $K_q$-omni-absorber for $X'$. 
    \item\label{what-appears-in-Gnp:boosted-design-hypergraph} Every vertex of $\cG_1$ has degree at most $D$ and at least $D\left(1 - D^{-\beta/(17q)}\right)$.
    \item\label{what-appears-in-Gnp:reserves} $X\subseteq X'$, and $\cG_2$ satisfies
      \begin{itemize}
        \item every edge of $G \setminus (X' \cup A)$ has degree at least $D / C$ in $\cG_2$ and
        \item every edge of $X$ has degree at most $D$ in $\cG_2$.
        \end{itemize}
    \item\label{what-appears-in-Gnp:codegrees} $\Delta_2(\cG_1 \cup \cG_2) \leq D^{1 - \beta/(17q)}$.
    \item\label{what-appears-in-Gnp:reserves2} $\cG \coloneqq {\rm Reserve}_{K_q}((G' \setminus G) \cup (X' \setminus X), G' \setminus G, X' \setminus X)$ satisfies
    \begin{itemize}
        \item every edge of $X'\setminus X$ has degree at most $D'$ in $\cG$ and 
        \item every edge of $G' \setminus G$ has degree at least $8\binom{q}{2}D'$ in $\cG$.
    \end{itemize}
    \label{what-appears-in-Gnp:last}
\end{enumerate}
\end{lem}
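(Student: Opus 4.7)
The plan is to expose $G$ via a coupling with four independent Bernoulli slices $G_i \sim G(n, p_i)$, $i \in \{0,1,2,3\}$, with $p_0 = p_1 = p_2 = \delta p$ and $p_3 \approx (1 - 3\delta)p$, for a sufficiently small constant $\delta = \delta(q) > 0$ chosen so that $\Delta(G_0 \cup G_1 \cup G_2) \le 6\delta pn$ (which holds \aas by the Chernoff bound) is below all the $pn/C$ thresholds appearing in the theorems invoked. The four slices will be used in sequence for the divisibility fixer, the reserve $X$, the omni-absorber $A$, and the regularity boost producing $\cG_1$.

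First I would construct the set-aside objects and the omni-absorber. Lemma~\ref{lem:DivFixer} applied to $G_0$ yields, \aas, a $K_q$-divisibility fixer $F \subseteq G_0$ with $v(F) = n$ and $e(F) \le (q-2)n + O(1)$. Set $X := G_1$ and $X' := G_0 \cup G_1$, so $F \subseteq X' \setminus X$ and $\Delta(X') \le 4\delta pn$ \aas. Theorem~\ref{thm:RandomOmniAbsorber} applied with $X'$ and $p_2$ then gives, \aas, a refined $K_q$-omni-absorber $A \subseteq (K_n \setminus X')_{p_2}$ for $X'$; by the coupling $A \subseteq G_2$. This establishes \ref{what-appears-in-Gnp:div-fixer} and \ref{what-appears-in-Gnp:omni-absorber}.

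Next I would build $\cG_1$ and $\cG_2$. Apply Theorem~\ref{thm:RegularCliques} with $S := X' \cup A$ and $S' := (G_0 \cup G_2) \setminus (X' \cup A)$, using $G_3$ as $(K_n \setminus S)_{p_3}$, to obtain $\cG_1 \subseteq {\rm Design}(G \setminus (X' \cup A), K_q)$ that is $(1/2 \pm n^{-\beta/16}) D^*$-regular, where $D^* := p_3^{\binom{q}{2}-1}\binom{n}{q-2} \ge n^\beta$; setting $D := (1/2 + n^{-\beta/16}) D^*$ yields \ref{what-appears-in-Gnp:boosted-design-hypergraph}. For $\cG_2$, begin with the full reserve hypergraph ${\rm Reserve}_{K_q}((G \setminus (X' \cup A)) \cup X, G \setminus (X' \cup A), X)$: Lemma~\ref{lem:RandomReserve} applied to $X = G_1 \sim G(n, p_1)$ gives the required degree lower bound (by counting $q$-cliques of $X + e$ for each $e \in G \setminus (X' \cup A)$). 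A Kim--Vu computation analogous to the proof of Lemma~\ref{lem:RandomReserve} shows that the degree of each $b \in X$ is at most a constant multiple of $D$; restricting $\cG_2$ to a random sub-hypergraph via independent Bernoulli selection then brings these degrees below $D$ while maintaining the lower bound on $(G \setminus (X' \cup A))$-edge degrees (after enlarging $C$). The codegree bound \ref{what-appears-in-Gnp:codegrees} is immediate from $\cG_1 \cup \cG_2 \subseteq {\rm Design}(G, K_q)$ together with Lemma~\ref{lem:RandomReserve}\ref{RandomReserve:codegree} applied to $G \sim G(n,p)$.

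Finally, for \ref{what-appears-in-Gnp:reserves2}: couple $G'$ with $G$ so that $G' = G \cup G''$ with $G'' \sim G(n, (p'-p)/(1-p))$ independent of $G$, and choose $\gamma$ small enough so that $p'-p \le p_0 / C''$ for any prescribed constant $C''$. Lemma~\ref{lem:RandomReserve} applied to $G_0 \sim G(n, p_0)$ (noting $p_0 \ge n^{-2/(q+1) + \beta/2}$) shows every edge of $K_n$ lies in $(1 \pm o(1)) p_0^{\binom{q}{2}-1}\binom{n}{q-2}$ $q$-cliques of $G_0 + e$, so setting $D' := \frac{1}{16\binom{q}{2}} p_0^{\binom{q}{2}-1}\binom{n}{q-2}$ immediately yields the lower bound for $e \in G''$. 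For $e \in X' \setminus X \subseteq G_0$, the expected degree in $\cG$ is at most $(\binom{q}{2}-1)(p'-p)\, p_0^{\binom{q}{2}-2}\binom{n-2}{q-2} \le D'/2$ by the choice of $\gamma$, and an analogous Kim--Vu argument yields concentration. The main obstacle is the careful calibration of $\delta$, $\gamma$, $D$, and $D'$ so that the hypotheses of all invoked theorems hold simultaneously, and supplying the missing concentration statements for the degrees of $b \in X$ in $\cG_2$ and of $e \in X' \setminus X$ in $\cG$, both of which can be obtained via the Kim--Vu polynomial concentration method (Corollary~\ref{cor:KimVu-poly}) that underlies Lemma~\ref{lem:RandomReserve}.
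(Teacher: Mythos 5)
Your proposal follows essentially the same route as the paper: split $p$ into four independent slices, put the divisibility fixer in the first slice, take $X$ to be the second slice and $X'$ the union of the first two, embed the omni-absorber in the third slice via Theorem~\ref{thm:RandomOmniAbsorber}, boost regularity on the last slice via Theorem~\ref{thm:RegularCliques}, get the $\cG_2$ degree bounds from Lemma~\ref{lem:RandomReserve} plus Bernoulli sparsification, read off the codegree bound from Lemma~\ref{lem:RandomReserve}\ref{RandomReserve:codegree}, and handle the lower bound in \ref{what-appears-in-Gnp:reserves2} by counting cliques of the first slice through an edge of $G'\setminus G$; this is exactly the paper's proof structure.

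Two points to repair. First, your application of Theorem~\ref{thm:RegularCliques} with $S:=X'\cup A$ and $S':=G_2\setminus A$ violates its hypotheses: one needs $S'\subseteq S$, and the fresh part of the last slice is distributed as $(K_n\setminus(G_0\cup G_1\cup G_2))_{p_3}$, not $(K_n\setminus(X'\cup A))_{p_3}$, since all of $G_2$ (not just $A$) has been revealed. Take $S$ to be all previously revealed edges and $S'$ the uncovered leftover $G_2\setminus A$, which is what the asymmetric $S'\subseteq S$ formulation of Theorem~\ref{thm:RegularCliques} was designed for; then ${\rm Design}((K_n\setminus S)_{p_3}\cup S',K_q)={\rm Design}(G\setminus(X'\cup A),K_q)$ as required. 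Second, for the degree upper bound in \ref{what-appears-in-Gnp:reserves2} on edges of $X'\setminus X$ you propose a first-moment bound over $G'\setminus G$ followed by a new Kim--Vu concentration step (which you leave unproved, and which also needs a codegree-type bound on the number of cliques through a fixed pair with remaining edges in $G_0$). The paper avoids this extra work with a deterministic subtraction: for $e\in X'\setminus X$ one has $\cG(\{e\})\subseteq {\rm Design}(G',K_q)(\{e\})\setminus{\rm Design}(G,K_q)(\{e\})$, and the two-sided counts from Lemma~\ref{lem:RandomReserve} (applied to both $p$ and $p'$) immediately give $d_{\cG}(e)\le\bigl((1+\gamma)^{\binom{q}{2}}-(1-\gamma)\bigr)p^{\binom{q}{2}-1}\binom{n}{q-2}\le D'$ for $\gamma$ small, after which the lower bound for $G'\setminus G$ from the first slice finishes the calibration. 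Your route can be made to work, but it needs these concentration statements supplied and $\gamma$ chosen after $\delta$; the subtraction trick is the cleaner path.
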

\begin{proof}
Let $C_1$ be as in Lemma~\ref{lem:DivFixer} for $q$. Let $\varepsilon_2$ be as $\varepsilon'$ in Lemma~\ref{lem:RandomReserve} for $q$ with $1/2$ playing the role of $\eps$. Let $C_2$ be the maximum of ${1}/{\varepsilon_2}$ and the value of $C$ in Theorem~\ref{thm:RegularCliques} for $q$. Let $C_3$ be as in Theorem~\ref{thm:RandomOmniAbsorber} for $q$.
Let $C \coloneqq \max\{C_1, 8\cdot 2^{\binom{q}{2} + 2}\cdot (16\cdot C_2 \cdot C_3)^{\binom{q}{2}-1}\}$, and we let $\gamma > 0$ be sufficiently small as needed throughout the proof. 

Let
$$p_1:= p_2:= \frac{p}{16\cdot C_2\cdot C_3},~~p_3:= \frac{p}{4\cdot C_2},~~p_4:=\frac{p-p_1-p_2(1-p_1)-p_3(1-p_1)(1-p_2)}{(1-p_1)(1-p_2)(1-p_3)}.$$
Note that $p_1,p_2\le \frac{p}{16}$ and $p_3 \le \frac{p}{4}$. Hence $p_1+p_2+p_3 \le \frac{p}{2}$ and it follows that $p_4 \ge p-(p_1+p_2+p_3) \ge \frac{p}{2}$. On the other hand, we see that $p_4 \le p$. 
Note also that
\begin{equation}\label{eqn:pi-lower-bound}
    p_i \geq n^{-\frac{1}{a} + \frac{\beta}{2}} \text{ for each } i \in [4].
\end{equation}
Now we let
$$G_1 := G(n,p_1),~~G_2:= (K_n\setminus G_1)_{p_2},~~G_3:= (K_n\setminus (G_1\cup G_2))_{p_3},~~G_4:= (K_n\setminus (G_1\cup G_2\cup G_3))_{p_4}.$$
By the definition of $p_4$\COMMENT{
    For each $e\in K_n$, choose $X_e \in [0, 1]$ independently and uniformly at random, and let $e \in G_i$ for $i \in [4]$ if $X_e \in [x_{i-1}, x_i)$ for $x_i$ as follows.  Let $x_0 = 0$ and $x_1 = p_1$, so $G_1 \sim G(n, p_1)$, as required.  We choose $x_2 \coloneqq p_2(1 - p_1) + p_1$ so that 
    \begin{equation*}
        p_2 = \ProbCond{e\in G_2}{e\notin G_1} = \frac{x_2 - x_1}{1 - x_1},
    \end{equation*}
    so $G_2 \sim (K_n \setminus G_1)_{p_2}$, as required.
    Note that $1 - x_2 = (1 - p_1)(1 - p_2)$.
    We choose $x_3 \coloneqq p_3(1 - p_1)(1 - p_2) + p_2(1 - p_1) + p_1$ so that 
    \begin{equation*}
        p_3 = \ProbCond{e\in G_3}{e\notin G_1\cup G_2} = \frac{x_3 - x_2}{1 - x_2},
    \end{equation*}
    so $G_3 \sim (K_n \setminus (G_1 \cup G_2))_{p_3}$, as required.
    Note that $1 - x_3 = (1 - p_1)(1 - p_2) - p_3(1 - p_1)(1 - p_2) = (1 - p_1)(1 - p_2)(1 - p_3)$.
    We choose $x_4 \coloneqq p$ so that 
    \begin{equation*}
        p_4 = \ProbCond{e\in G_4}{e\notin G_1\cup G_2 \cup G_3} = \frac{x_4 - x_3}{1 - x_3},
    \end{equation*}
    so $G_4 \sim (K_n \setminus (G_1 \cup G_2 \cup G_3))_{p_4}$, as required.  Since $x_4 = p$, ...
}, we can couple $G'_i \sim G(n,p_i)$ for $i \in [4]$ and $G \sim G(n,p)$ such that 
$$G_i = G'_i \setminus \bigcup_{j<i}G_j \text{ for $i \in [4]$}\qquad\text{and}\qquad G = G'_1\cup G'_2 \cup G'_3\cup G'_4$$ and $G'_i$ and $G'_j$ are independent for distinct $i,j \in [4]$.

\begin{claim}
  Asymptotically almost surely, there exist $F$, $A$, and $\cG_1$ such that the following holds.
    \begin{enumerate}[label=(\alph*)]
    \item\label{what-appears-claim:bounded-max-degree} For all $i\in [4]$, we have $\Delta(G'_i) \leq 2p_in$.\label{what-appears-claim:first}
    \item\label{what-appears-claim:reserves-degree-upper} Every edge $e \in K_n$ is in at most $(1 + \gamma)^{\binom{q}{2}}\cdot p^{\binom{q}{2}-1}\cdot\binom{n}{q-2}$ $q$-cliques of $G' + e$ and in at least $(1 - \gamma)\cdot p^{\binom{q}{2}-1}\cdot \binom{n}{q - 2}$ $q$-cliques of $G + e$.
    In particular, $\Delta_1({\rm Design}(G', K_q)) \leq 2p^{\binom{q}{2}-1}\cdot\binom{n}{q-2}$.
    \item\label{what-appears-claim:reserves-codegree} Every distinct $e,f \in K_n$ are in at most $p^{\binom{q}{2} - 1}\cdot\binom{n}{q - 2}\cdot n^{-\beta / 7}$ $q$-cliques of $G + e + f$.
    In particular, 
    \begin{equation*}
        \Delta_2({\rm Design}(G, K_q)) \leq p^{\binom{q}{2}-1}\cdot\binom{n}{q-2}\cdot n^{-\frac{\beta}{7}}.
    \end{equation*}
    \item\label{what-appears-claim:reserves-degree-lower2} Every edge $e \in K_n$ is in at least $\frac{1}{2}\cdot (p_1)^{\binom{q}{2}-1}\cdot \binom{n}{q - 2}$ $q$-cliques of $G_1 + e$.
    \item\label{what-appears-claim:div-fixer} $F \subseteq G_1$ is a $K_q$-divisibility fixer such that $v(F) = n$, and $e(F) \leq (q - 2)n + C_1$. 
    \item\label{what-appears-claim:reserves-degree-lower} Every edge $e \in K_n \setminus G_1$ is in at least $\frac{1}{2}\cdot (p_2)^{\binom{q}{2}-1}\cdot \binom{n}{q - 2}$ $q$-cliques of $G_2 + e$.
    \item\label{what-appears-claim:omni-absorber} $A \subseteq G_3$ is a $K_q$-omni-absorber for $G_1 \cup G_2$.
    \item\label{what-appears-claim:boosted-design-hypergraph} $\cG_1 \subseteq {\rm Design}(G_3 \cup G_4 \setminus A, K_q)$ is $(1/2 \pm n^{-\beta / 16})\cdot (p_4)^{\binom{q}{2}-1}\cdot \binom{n}{q - 2}$-regular.
    \label{what-appears-claim:last}
\end{enumerate}  
\end{claim}
\begin{proofclaim}
  By the Chernoff bounds, $G(n, p')$ \aas has maximum degree at most $2p'n$ for all $p' \geq \log^2 n / n$, say. Therefore, \aas \ref{what-appears-claim:bounded-max-degree} holds.
  By Lemma~\ref{lem:RandomReserve} applied with both $p$ and $p'$ playing the role of $p$, with $\gamma$ playing the role of $\eps$, and with $K_n$ playing the role of $G$, \aas \ref{what-appears-claim:reserves-degree-upper} and \ref{what-appears-claim:reserves-codegree} hold. 
  By \eqref{eqn:pi-lower-bound} and Lemma~\ref{lem:RandomReserve} (with $\beta / 2$, $p_1$, and $1/2$ playing the roles of $\beta$,  $p$, and $\eps$, respectively, and $K_n$ playing the role of $G$) \aas \ref{what-appears-claim:reserves-degree-lower2} holds.
  We show the remaining events hold conditional on \ref{what-appears-claim:bounded-max-degree}--\ref{what-appears-claim:reserves-degree-lower2}.
  
  By \eqref{eqn:pi-lower-bound} and Lemma~\ref{lem:DivFixer} (with $\beta/2$ and $p_1$ playing the roles of $\beta$ and $p$, respectively), $G_1$ asymptotically almost surely  contains a $K_q$-divisibility fixer $F$ such that $v(F) = n$, and $e(F) \le (q-2)n + C_1$, so \aas \ref{what-appears-claim:div-fixer} holds.

  Since $\Delta(G_1) \leq \Delta(G'_1) \leq 2p_1 n$ (assuming \ref{what-appears-claim:bounded-max-degree}), we have that 
$$\delta(G_2) \ge n-\Delta(G_1) \ge (1-2p_1)\cdot n \ge (1-\varepsilon_2)\cdot n,$$
where we used that $2p_1\le \varepsilon_2$ which follows by the definition of $p_1$ since $C_2 \ge {1}/{\varepsilon_2}$. 
Thus, \aas \ref{what-appears-claim:reserves-degree-lower} holds by \eqref{eqn:pi-lower-bound} and Lemma~\ref{lem:RandomReserve} (with $\beta / 2$ and $p_2$ playing the roles of $\beta$ and $p$, respectively, and $K_n \setminus G_1$ playing the role of $G$).  

Note that assuming \ref{what-appears-claim:bounded-max-degree},
$$\Delta(G_1 \cup G_2)\le \Delta(G'_1)+\Delta(G'_2) \le 2(p_1+p_2)\cdot n = \frac{p_3\cdot n}{C_3}$$
by definition of $p_1$, $p_2$ and $p_3$. Hence by \eqref{eqn:pi-lower-bound} and Theorem~\ref{thm:RandomOmniAbsorber}, \aas there exists a $C_3$-refined $K_q$-omni-absorber $A$ for $G_1 \cup G_2$ such that $A \subseteq G_3$, and thus \ref{what-appears-claim:omni-absorber} holds.

Let $S:= G_1 \cup G_2 \cup G_3$, and let $S':= G_3\setminus A$.  Note that 
$$\Delta(S) \le \Delta(G'_1) + \Delta(G'_2) + \Delta(G'_3) \leq 2(p_1+p_2+p_3)\cdot n\le \frac{pn}{C_2}$$
since $C_3\ge 1$. 
Hence by \eqref{eqn:pi-lower-bound} and Theorem~\ref{thm:RegularCliques} (with $\beta / 2$ and $p_4$ playing the roles of $\beta$ and $p$, respectively), \aas there exists a subhypergraph $\mc{G}_1$ of ${\rm Design}(G_3 \cup G_4 \setminus A,K_q)$ that is $\left(1/2\pm n^{-\beta/16}\right)\cdot (p_4)^{\binom{q}{2}-1}\cdot~\binom{n}{q-2}$-regular, so \ref{what-appears-claim:boosted-design-hypergraph} holds.
\end{proofclaim}

Let $$D:= \left(\frac{1}{2} + n^{-\frac{\beta}{16}}\right)\cdot (p_4)^{\binom{q}{2}-1}\cdot~\binom{n}{q-2}\qquad\text{and}\qquad D' \coloneqq 2\sqrt{\gamma}\cdot p^{\binom{q}{2}-1}\cdot\binom{n}{q-2},$$
and note that since $p_4 \ge \frac{p}{2} \ge \frac{1}{2}\cdot n^{-\frac{2}{q+1} + \beta}$, we have that $D \ge D' \geq \gamma \cdot n^{\beta \cdot \left(\binom{q}{2}-1\right)}\ge n^{\beta},$ as required.
Let
\begin{equation*}
  X \coloneqq G_2 \qquad\text{and}\qquad X' \coloneqq G_1 \cup G_2.
\end{equation*}

Now we show that \ref{what-appears-in-Gnp:first}--\ref{what-appears-in-Gnp:last} hold assuming \ref{what-appears-claim:first}--\ref{what-appears-claim:last} hold and $G \subseteq G'$.
\ref{what-appears-in-Gnp:div-fixer} and \ref{what-appears-in-Gnp:omni-absorber} follow from \ref{what-appears-claim:div-fixer} and \ref{what-appears-claim:omni-absorber} immediately.

Now we show \ref{what-appears-in-Gnp:boosted-design-hypergraph}.
By \ref{what-appears-claim:boosted-design-hypergraph}, every vertex of $\cG_1$ has degree at most $D$, as required.
Similarly every vertex of $\mc{G}_1$ has degree at least 
\begin{equation*}
    \left(1-n^{-\frac{\beta}{16}}\right)\cdot (p_4)^{\binom{q}{2}-1}\cdot~\binom{n}{q-2} = D - 2n^{-\frac{\beta}{16}}\cdot (p_4)^{\binom{q}{2}-1}\cdot \binom{n}{q - 2} \geq D - 2n^{-\frac{\beta}{16}}\cdot D \geq D\left(1 - D^{-\frac{\beta}{17q}}\right),
\end{equation*}
where in the last inequality we used that $D \leq n^q$ and $2n^{-\beta / 16} \leq n^{-\beta / 17}$.

Now we show \ref{what-appears-in-Gnp:reserves}. 
Since ${\rm Reserve}_{K_q}(G \setminus (X' \cup A) \cup X, G \setminus (X' \cup A), X) \subseteq {\rm Design}_{K_q}(G', K_q)$, by \ref{what-appears-claim:reserves-degree-upper}, every edge of $X$ has degree at most $2p^{\binom{q}{2}-1}\cdot\binom{n}{q - 2}$ in ${\rm Reserve}_{K_q}(G \setminus (X' \cup A) \cup X, G \setminus (X' \cup A), X)$.  Since $p_4 \geq p/2$, every edge of $X$ has degree at most $2^{\binom{q}{2}+2}D$ in ${\rm Reserve}_{K_q}(G \setminus (X' \cup A) \cup X, G \setminus (X' \cup A), X)$.  Moreover, since every edge of ${\rm Reserve}_{K_q}(G \setminus (X' \cup A) \cup X, G \setminus (X' \cup A), X)$ contains one element of $G \setminus (X' \cup A)$,  by \ref{what-appears-claim:reserves-degree-lower}, every edge of $G \setminus (X' \cup A)$ has degree at least $\frac{1}{2}\cdot (p_2)^{\binom{q}{2}-1}\cdot\binom{n}{q - 2}$ in ${\rm Reserve}_{K_q}(G \setminus (X' \cup A) \cup X, G \setminus (X' \cup A), X)$.  Since $p_2 \geq p_4 / (16 \cdot C_2 \cdot C_3)$, every edge of $G \setminus (X' \cup A)$ has degree at least $D / (2\cdot (16\cdot C_2\cdot C_3)^{\binom{q}{2}-1})$ in ${\rm Reserve}_{K_q}(G \setminus (X' \cup A) \cup X, G \setminus (X' \cup A), X)$.  By a standard application of the Chernoff bounds and the union bound, by considering a random subhypergraph of ${\rm Reserve}_{K_q}(G \setminus (X' \cup A) \cup X, G \setminus (X' \cup A), X)$ obtained by including each edge independently with probability $1 / 2^{\binom{q}{2} + 3}$, there exists a subhypergraph $\cG_2 \subseteq {\rm Reserve}_{K_q}(G \setminus (X' \cup A) \cup X, G \setminus (X' \cup A), X)$ in which every edge of $X$ has degree at least $D / (4\cdot 2^{\binom{q}{2} + 3}\cdot (16\cdot C_2 \cdot C_3)^{\binom{q}{2} - 1}) \geq D / C$ and every edge of $G \setminus (X' \cup A)$ has degree at most $D$\COMMENT{
Indeed, the expected degree of an edge of $X$ is at least $D / (2\cdot 2^{\binom{q}{2} + 3}\cdot (16\cdot C_2 \cdot C_3)^{\binom{q}{2} - 1}) \geq D / C \geq n^{\beta}/C$ and by the Chernoff bounds, the probability that the degree of an edge of $X$ is less than half this quantity is at most $\exp(-n^{\beta}/(8C))$.  Similarly, the expected degree of an edge of $G \setminus (X' \cup A)$ is at most $D / 2$, and by the Chernoff bounds, the probability that the degree of an edge of $G \setminus (X' \cup A)$ is at most $D$ is at most $\exp(-D / 4) \leq \exp(-n^{\beta} / 4)$.  Since there are at most $n^2$ total edges, it follows from the union bound.
}, as desired.

Now we show \ref{what-appears-in-Gnp:codegrees}.
Since $\cG_1 \cup \cG_2 \subseteq {\rm Design}_{K_q}(G, K_q)$, by \ref{what-appears-claim:reserves-codegree}, we have $\Delta_2(\cG_1 \cup \cG_2) \leq p^{\binom{q}{2} - 1}\cdot \binom{n}{q - 2} \cdot n^{-\beta/7}$.  Since $p_4 \geq p / 2$ and $D \leq n^q$, we have
\begin{equation*}
    \Delta_2(\cG_1 \cup \cG_2) \leq p^{\binom{q}{2} - 1}\cdot \binom{n}{q - 2} \cdot n^{-\frac{\beta}{7}} \leq 2^{\binom{q}{2}+1}\cdot  D \cdot n^{-\frac{\beta}{7}} \leq D^{1 - \frac{\beta}{17q}},
\end{equation*}
as desired.

Finally we show \ref{what-appears-in-Gnp:reserves2}.  
Since every $e \in X' \setminus X$ satisfies\COMMENT{Note that $\cG(\{e\})$ is the set of $S \cong K_q$ containing $e$ with one edge in $G' \setminus G$ and the rest in $X' \setminus X$, while ${\rm Design}(G', K_q)(\{e\})$ is the set of $S \cong K_q$ containing $e$ such that $S\subseteq G'$ and ${\rm Design}(G, K_q)(\{e\})$ is the set of $S \cong K_q$ containing $e$ such that $S\subseteq G$, so (assuming $G \subseteq G'$ and thus $X' \subseteq G'$) every such $S \in \cG(\{e\})$ is in the former but not the latter.}
\begin{equation*}
    \cG(\{e\}) \subseteq {\rm Design}(G', K_q)(\{e\}) \setminus {\rm Design}(G, K_q)(\{e\}),
\end{equation*}
by \ref{what-appears-claim:reserves-degree-upper}, since ${\rm Design}(G, K_q)(\{e\}) \subseteq {\rm Design}(G', K_q)(\{e\})$  (assuming $G \subseteq G'$), we have 
\begin{equation*}
    d_{\cG}(e) \leq (1 + \gamma)^{\binom{q}{2}}\cdot p^{\binom{q}{2}-1}\cdot \binom{n}{q-2} - (1 - \gamma)\cdot p^{\binom{q}{2}-1}\cdot \binom{n}{q - 2} \leq D',
\end{equation*}
as desired.
Since every edge of $\cG$ contains one element of $G' \setminus G$, by \ref{what-appears-claim:reserves-degree-lower2}, every $e \in G' \setminus G$ satisfies
\begin{equation*}
    d_{\cG}(e) \geq \frac{1}{2}\cdot (p_1)^{\binom{q}{2}-1}\cdot\binom{n}{q - 2} \geq \gamma^{\frac{1}{3}}\cdot p^{\binom{q}{2}-1}\cdot\binom{n}{q - 2} \geq 8\binom{q}{2}D',
\end{equation*}
as desired.
\end{proof}

\subsection{Proof of $G(n,p)$ Theorem}

We are now prepared to prove Theorem~\ref{thm:YusterRandomGeneral}.

\begin{proof}[Proof of Theorem~\ref{thm:YusterRandomGeneral}]
  Let $G \sim G(n,p)$, and let $C$ be as in Lemma~\ref{lem:what-appears-in-Gnp}.  By Lemma~\ref{lem:what-appears-in-Gnp} (with $G$ playing the role of both $G$ and $G'$), asymptotically almost surely, there exist $F, X, X', A \subseteq G$, hypergraphs $\cG_1 \subseteq {\rm Design}(G \setminus (X' \cup A), K_q)$ and $\cG_2 \subseteq {\rm Reserve}_{K_q}(G\setminus (X' \cup A) \cup X, G \setminus (X' \cup A), X)$, and $D \geq n^\beta$ satisfying \ref{what-appears-in-Gnp:div-fixer}--\ref{what-appears-in-Gnp:codegrees}.  We prove that $G$ has a $K_q$-packing containing all but at most $(q - 2)n + C$ edges of $G$ assuming these events hold.

  Let $D_0, \alpha > 0$ be $D_\beta$ and $\alpha$ as in Theorem~\ref{thm:NibbleReserves} with $\beta / (17q)$ and $\binom{q}{2}$ playing the roles of $\beta$ and $r$, respectively.  We may assume $n$ is large enough so that $D \geq n^\beta \geq D_0$ and $D / C \geq D^{1 - \alpha}$.  Thus, by \ref{what-appears-in-Gnp:boosted-design-hypergraph}, \ref{what-appears-in-Gnp:reserves}, and \ref{what-appears-in-Gnp:codegrees}, and by Theorem~\ref{thm:NibbleReserves} applied to $\cG_1 \cup \cG_2$, there exists a $(G\setminus (X'\cup A))$-perfect matching of $\mc{G}_1\cup \mc{G}_2$, which is a $K_q$-packing $\mathcal{Q}_1$ of $G \setminus (X' \cup A) \cup X$ such that $G \setminus (X' \cup A) \subseteq \bigcup \mc{Q}_1$. 

By \ref{what-appears-in-Gnp:div-fixer}, since $F$ is a $K_q$-divisibility fixer we have by Proposition~\ref{prop:DivFixer} that there exists $F'\subseteq F$ such that $G\setminus F'$ is $K_q$-divisible and $e(F') \leq e(F) \leq (q - 2)n + C$.  Note that $A$ is also $K_q$-divisible since it admits a $K_q$-decomposition (since the empty graph is a $K_q$-divisible subgraph of $X'$ and $A$ is an omni-absorber for $X'$ by \ref{what-appears-in-Gnp:omni-absorber}).

Let $Q_1:= \bigcup \mc{Q}_1$, and let $L := X'\setminus (F' \cup  Q_1)$.  Note that $G \setminus F'$ is the disjoint union of $Q_1$, $A$, and $L$\COMMENT{$A$ and $X'$ are edge-disjoint by the definition of omni-absorber}.  Since $G-F'$, $Q_1$, and $A$ are all $K_q$-divisible, so is $L$.

Since $L\subseteq X'$, we have by \ref{what-appears-in-Gnp:omni-absorber} and the definition of omni-absorber that there exists a $K_q$-decomposition $\mathcal{Q}_2$ of $L\cup A$. But then $\mathcal{Q}_1\cup \mathcal{Q}_2$ is a $K_q$-decomposition of $G-F'$ and hence is a $K_q$-packing of $G$ covering all but $e(F') \le e(F) \le (q-2)n+C$ edges as desired.
\end{proof}

\subsection{Proof for Random Regular Graphs}

To prove Theorem~\ref{thm:YusterRandomRegularGeneral}, we will need some recent results on the Sandwich Conjecture of Kim and Vu~\cite{KV04}.

\begin{conj}[Sandwich Conjecture]\label{conj:SandwichConj}
For every $d\gg \log n$, there are $p_{*} = (1-o(1))\frac{d}{n}, p^{*} = (1+o(1))\frac{d}{n}$ and a coupling $(G_{*},G,G^{*})$ such that $G_{*} \sim G(n,p_{*}),G^{*} \sim G(n,p^{*}), G\sim G_{n,d}$ and $\Prob{G_{*}\subseteq G\subseteq G^{*} }=1-o(1)$.
\end{conj}

Gao, Isaev, and McKay~\cite{GIM20} show that the Sandwich Conjecture holds when both $d, n-d \gg \log^4 n / \log^3 \log n$. In a separate paper, the same authors~\cite{GIM22} show the Sandwich Conjecture holds for all $d$ above that range. Combining these two results yields the following.

\begin{thm}[Combination of results from~\cite{GIM20} and~\cite{GIM22}]\label{thm:SandwichResult} 
Sandwich Conjecture holds for all $d \gg \log^4 n / \log^3 \log n$.
\end{thm}

We are now prepared to prove Theorem~\ref{thm:YusterRandomRegularGeneral}. The proof is nearly identical to that of Theorem~\ref{thm:YusterRandomGeneral} except that instead of using the divisibility fixer from \ref{what-appears-in-Gnp:div-fixer}, we use $\cG = {\rm Reserve}_{K_q}((G\setminus G_*) \cup (X'\setminus X), G\setminus G_*, X'\setminus G)$ to decompose $G \setminus G_*$. We note this requires applying Lemma~\ref{lem:what-appears-in-Gnp} with $G^*$ playing the role of $G'$ and hence requires both sides of the sandwich.

\begin{proof}[Proof of Theorem~\ref{thm:YusterRandomRegularGeneral}]
It suffices to prove the result assuming $d \geq n^{1 - \frac{1}{a} + 2\beta}$, so let $G \sim G_{n,d}$ where $d \geq n^{1 - \frac{1}{a} + 2\beta}$ and $q - 1 \mid d$ and $q \mid d\cdot n$.  Note that $G$ is $K_q$-divisible.
Let $C \geq 1$ and $\gamma > 0$ be as in Lemma~\ref{lem:what-appears-in-Gnp}.  By Theorem~\ref{thm:SandwichResult}, there exist $p_*, p^* \in [0, 1]$ such that $p_* \geq n^{-\frac{1}{a} + \beta}$ and $p^* \leq (1 + \gamma)p_*$ and a coupling $(G_*, G, G^*)$, such that $G_* \sim G(n,p_*)$ and $G^* \sim G(n,p^*)$ and \aas $G_* \subseteq G \subseteq G^*$.

By Lemma~\ref{lem:what-appears-in-Gnp} (with $G_*$ and $G^*$ playing the role of $G$ and $G'$, respectively), asymptotically almost surely, there exist $X, X', A \subseteq G_*$, hypergraphs $\cG_1 \subseteq {\rm Design}(G_* \setminus (X' \cup A), K_q)$, $\cG_2 \subseteq {\rm Reserve}_{K_q}(G_*\setminus (X' \cup A) \cup X, G_* \setminus (X' \cup A), X)$, and $\cG \coloneqq {\rm Reserve}_{K_q}((G^* \setminus G_*) \cup (X' \setminus X), G^* \setminus G_*, X' \setminus X)$, and $D,D' \geq n^\beta$ satisfying \ref{what-appears-in-Gnp:omni-absorber}--\ref{what-appears-in-Gnp:reserves2}.  We prove that $G$ has a $K_q$-decomposition assuming these events hold and $G_* \subseteq G \subseteq G^*$.

Let $D_0, \alpha > 0$ be $D_\beta$ and $\alpha$ as in Theorem~\ref{thm:NibbleReserves} with $\beta / (17q)$ and $\binom{q}{2}$ playing the roles of $\beta$ and $r$, respectively.  We may assume $n$ is large enough so that $D \geq n^\beta \geq D_0$ and $D / C \geq D^{1 - \alpha}$.  Thus, by \ref{what-appears-in-Gnp:boosted-design-hypergraph}, \ref{what-appears-in-Gnp:reserves}, and \ref{what-appears-in-Gnp:codegrees}, and by Theorem~\ref{thm:NibbleReserves} applied to $\cG_1 \cup \cG_2$, there exists a $(G_*\setminus (X'\cup A))$-perfect matching of $\mc{G}_1\cup \mc{G}_2$, which is a $K_q$-packing $\mathcal{Q}_1$ of $G_* \setminus (X' \cup A) \cup X$ such that $G_* \setminus (X' \cup A) \subseteq \bigcup \mc{Q}_1$. 

Let $\cG'\coloneqq \cG[(G\setminus G_*) \cup (X'\setminus X)]$.  Since every edge of $\cG$ contains one element of $G^* \setminus G_*$, by \ref{what-appears-in-Gnp:reserves2}, every edge of $G\setminus G_*$ has degree at least $8\binom{q}{2}D'$ in $\cG'$.  Since $\cG' \subseteq \cG$, by \ref{what-appears-in-Gnp:reserves2}, every edge of $X' \setminus X$ has degree at most $D$ in $\cG'$.
Hence by Theorem~\ref{thm:EasyPerfectMatching}, there exists a $(G\setminus G_*)$-perfect matching of $\mc{G'}$ which is a $K_q$-packing $\mathcal{Q}_2$ of $(G\setminus G_*)\cup (X \setminus X')$ such that $G \setminus G_* \subseteq \bigcup \mc{Q}_2$.

Let $Q_1:= \bigcup \mc{Q}_1$ and $Q_2:= \bigcup \mc{Q}_2$. Note that $Q_1$ and $Q_2$ are edge-disjoint. Let $L := X'\setminus (Q_1\cup Q_2)$. Since $\mathcal{Q}_1$ is a $K_q$-packing of $G_*\setminus (X' \cup A) \cup X$ and $\cQ_2$ is a $K_q$-packing of $G \setminus G_* \cup X' \setminus X$, we have that $G$ is the disjoint union of $Q_1$, $Q_2$, $A$, and $L$.  Since $G$ is $K_q$-divisible and $Q_1$, $Q_2$, and $A$ admit $K_q$-decompositions, we find that $L$ is $K_q$-divisible. 

Since $L\subseteq X'$, we have by the definition of $K_q$-omni-absorber that there exists a $K_q$-decomposition $\mathcal{Q}_3$ of $L\cup A$. But then $\mathcal{Q}_1\cup \mathcal{Q}_2\cup \mc{Q}_3$ is a $K_q$-decomposition of $G$ as desired.
\end{proof}

\subsection{Proofs of Fractional Decomposition Theorems}

We now prove Theorem~\ref{thm:YusterRandomFractionalGeneral}. The proof is essentially the same as that of Theorem~\ref{thm:YusterRandomGeneral} except that we do not use a divisibility fixer; however, without a divisibility fixer, we no longer guarantee that $L$ is $K_q$-divisible. However, we will be  able to `fractionally absorb' any $L\subseteq X$ into $A$ regardless of whether $L$ is $K_q$-divisible. To that end, we make the following definition.

\begin{definition}[Fractional $K^r_q$-omni-absorber]
Let $q > r \geq 1$ be integers, and let $X$ be an $r$-uniform hypergraph. We say a hypergraph $A$ is a \textit{fractional $K^r_q$-omni-absorber} for $X$ with \emph{decomposition family} $\mathcal{H}$ and \emph{decomposition function} $\psi_A$ if $V(X)=V(A)$, $X$ and $A$ are edge-disjoint, $\mathcal{H}$ is a family of subgraphs of $X\cup A$ each isomorphic to $K^r_q$ such that $|E(H)\cap E(X)|\le 1$ for all $H\in\mathcal{H}$, and for every $\phi:X\rightarrow [0,1]$, there exists $\psi_A(\phi):\mathcal{H}\rightarrow [0,1]$ such that $\psi_A(\phi)(e)=\phi(e)$ for $e\in X$ and $\psi_A(\phi)(e)=1$ for $e\in A$. 
\end{definition}

If $X$ is a graph and $m$ is a positive integer, we let $m*X$ denote the multigraph obtained by replacing each edge of $X$ with $m$ parallel edges. The following proposition shows a $K_q$-omni-absorber for $q(q-1)*X$ is also a fractional $K_q$-omni-absorber for $X$. 

\begin{proposition}
Let $q\ge 3$ be an integer, and let $X$ be a graph. If $A$ is a $K_q$-omni-absorber for $q(q-1)*X$, then $A$ is a fractional $K_q$-omni-absorber for $X$.
\end{proposition}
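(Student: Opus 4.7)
The plan is to construct the decomposition family and function for the fractional omni-absorber from the given integer omni-absorber by a natural projection plus a randomization. Write $m := q(q-1)$, and let $\mathcal{H}$ and $\mathcal{Q}_A$ denote the decomposition family and function witnessing that $A$ is a $K_q$-omni-absorber for $m*X$. I would first define $\mathcal{H}'$ as the family of $K_q$'s obtained by projecting each $H \in \mathcal{H}$ from $(m*X) \cup A$ down to $X \cup A$, identifying each of the $m$ parallel copies of an edge of $m*X$ with the corresponding base edge of $X$. Since $H \cong K_q$ uses exactly one edge between each pair of its $q$ vertices and $|E(H) \cap E(m*X)| \le 1$, the projection $H'$ is again isomorphic to $K_q$, lies in $X \cup A$, and satisfies $|E(H') \cap E(X)| \le 1$, as required by the definition of a fractional $K_q$-omni-absorber.

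The key observation driving the argument is that for every subgraph $S \subseteq X$, the multigraph $m*S$ is a $K_q$-divisible submultigraph of $m*X$: we have $e(m*S) = m \cdot e(S)$, which is divisible by $\binom{q}{2} = m/2$, and $d_{m*S}(v) = m \cdot d_S(v)$, which is divisible by $q-1$. Hence $\mathcal{Q}_A(m*S)$ is a well-defined $K_q$-decomposition of $(m*S) \cup A$, so we are free to feed any $m*S$ through the given omni-absorber.

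Given $\phi : E(X) \to [0,1]$, I would then define $\psi_A(\phi)$ by randomization. Let $\mathbf{S}$ denote the random subgraph of $X$ obtained by including each edge $e$ of $X$ independently with probability $\phi(e)/m$, and set
\[
\psi_A(\phi)(H') := \mathbb{E}\bigl[\bigl|\{H \in \mathcal{Q}_A(m*\mathbf{S}) : H \text{ projects to } H'\}\bigr|\bigr]
\]
for each $H' \in \mathcal{H}'$. Since every edge of $m*\mathbf{S}$ is covered exactly once by $\mathcal{Q}_A(m*\mathbf{S})$ and the number of copies of $e \in X$ inside $m*\mathbf{S}$ is $m \cdot \mathbb{1}[e \in \mathbf{S}]$, summing $\psi_A(\phi)(H')$ over those $H' \in \mathcal{H}'$ containing a given edge $e \in X$ yields $\mathbb{E}[m \cdot \mathbb{1}[e \in \mathbf{S}]] = \phi(e)$, while for each $e \in A$ it yields $\mathbb{E}[1] = 1$, as $\mathcal{Q}_A(m*\mathbf{S})$ covers each $A$-edge exactly once regardless of $\mathbf{S}$.

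The one subtlety I would have to address is the upper bound $\psi_A(\phi)(H') \le 1$. Here the plan is to argue that for every fixed $S \subseteq X$, at most one $H \in \mathcal{Q}_A(m*S)$ projects to a given $H' \in \mathcal{H}'$. If $H'$ has no edge in $X$, this is immediate since $H'$ is its own unique preimage under the projection. If $H'$ has a (unique) edge $e \in X$, then its remaining $\binom{q}{2} - 1 \ge 2$ edges lie in $A$, and any two distinct $H, \tilde{H} \in \mathcal{Q}_A(m*S)$ projecting to $H'$ would share those $A$-edges, contradicting the pairwise edge-disjointness of the cliques in the $K_q$-decomposition $\mathcal{Q}_A(m*S)$ of $(m*S) \cup A$. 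Thus the random variable inside the expectation is $\{0,1\}$-valued, so $\psi_A(\phi)(H') \in [0,1]$, completing the construction.
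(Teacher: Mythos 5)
Your proof is correct but takes a genuinely different route from the paper. The paper's argument is a deterministic ``layer cake'': it orders the edges $e_1,\ldots,e_m$ of $X$ so that $\phi(e_1)\le\cdots\le\phi(e_m)$, feeds the nested subgraphs $q(q-1)*L_i$ with $L_i = X\setminus\{e_1,\ldots,e_{i-1}\}$ through the omni-absorber to obtain $K_q$-decompositions $\cH_i$, and assigns to each clique $H$ the weight $\sum_{i:H\in\cH_i} w_i$, where the nonnegative increments $w_i$ telescope to $\phi(e_i)/(q(q-1))$; since $\sum_i w_i=1$ and the decompositions are nested by construction, the bound $\psi_A(\phi)(H)\le 1$ is automatic. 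You instead sample a random subgraph $\mathbf S\subseteq X$ with edge probabilities $\phi(e)/(q(q-1))$ and take expectations — essentially the probabilistic relaxation of the same idea — which requires a separate (but clean and correct) argument that for each fixed $\mathbf S$ at most one clique of $\cQ_A(q(q-1)*\mathbf S)$ projects to any given $H'$, via edge-disjointness in $A$. Both arguments use the same key observation that $q(q-1)*S$ is always $K_q$-divisible, so both apply the omni-absorber to a one-parameter family of subgraphs and average; the paper's version is shorter because the ordering makes the upper bound free, while yours is arguably more conceptual and doesn't need the sort.
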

\begin{proof}
Let $A$ be a $K_q$-omni-absorber for $q(q - 1)*X$ with decomposition family $\cH$.
Let $m:= e(X)$. Let $\phi:X\rightarrow [0,1]$. Let $e_1,\ldots, e_m$ be an ordering of $X$ such that $\phi(e_1)\le \phi(e_2) \le \ldots \le \phi(e_m)$. For $i\in [m]$, let $L_i:= X\setminus \{e_j:j\in [i-1]\}$. Let $w_1=\frac{\phi(e_1)}{q(q-1)}$; for $i\in [m]\setminus \{1\}$, let $w_i := \frac{\phi(e_i)-\phi(e_{i-1})}{q(q-1)}$; let $w_{m+1} := 1-\sum_{i=1}^m w_i = 1 -\frac{\phi(e_m)}{q(q-1)}$.  Note that $\sum_{i=1}^{m+1}w_i = 1$ and that for all $i\in [m]$, we have $\sum_{j\in [i]} w_j = \frac{\phi(e_i)}{q(q-1)}$.

For each $i\in [m]$, $q(q-1)*L_i$ is $K_q$-divisible. Since $A$ is a $K_q$-omni-absorber for $q(q-1)*X$, we have that for each $i\in [m]$, there exists a $K_q$-decomposition $\cH_i \subseteq \mc{H}$ of $(q(q-1)*L_i) \cup A$. Similarly, there exists a $K_q$-decomposition $\cH_{m+1}$ of $A$. For every $H \in \cH$, we let
\begin{equation*}
    \psi_{A}(\phi)(H) := \sum_{i : H \in \cH_i} w_i.
\end{equation*}
Now for $i\in [m]$, we have 
$$\psi_{A}(\phi)(e_i) = \sum_{H\in \mc{H}: e_i\in H} \psi_{A}(\phi)(H) = \sum_{j\in [m+1]}~\sum_{H\in \mc{H}_j: e_i\in H} w_j = q(q-1)\cdot \sum_{j\in [i]} w_j = \phi(e_i),$$ 
where we used the definition of $\psi_A(\phi)(H)$, that $e_i$ is in exactly $q(q-1)$ $H\in \mc{H}_j$ for all $j\in [i]$ and in exactly zero $H\in \mc{H}_j$ for $j\in [m+1]\setminus [i]$, and that $\sum_{j\in [i]} w_j = \frac{\phi(e_i)}{q(q-1)}$ by the definition of the $w_j$. Meanwhile for $e\in A$, we have that 
$$\psi_{A}(\phi)(e) = \sum_{H\in \mc{H}: e\in H} \psi_{A}(\phi)(H) = \sum_{i\in [m+1]}~\sum_{H\in \mc{H}_i: e\in H} w_i = \sum_{i\in [m+1]} w_i = 1,$$
where we used the definition of $\psi_A(\phi)(H)$, that $e$ is in exactly one $H\in \mc{H}_i$ for all $i\in [m+1]$, and that $\sum_{i\in [m+1]} w_i=1$.
Hence $\psi_A$ is as desired.
\end{proof}

We also note that in fact the main proof from~\cite{DPI} yields a simple $K_q$-omni-absorber $A$ for $q(q-1)*X$ and hence our Theorem~\ref{thm:RandomOmniAbsorber} would yield a simple $K_q$-omni-absorber for $q(q-1)*X$ as well.

Given the above, the proof of Theorem~\ref{thm:YusterRandomGeneral} is easily modified to prove Theorem~\ref{thm:YusterRandomFractionalGeneral}: namely, we leave out the divisibility fixer altogether but then the subgraph $L$ of $X$ is not necessarily $K_q$-divisible; nevertheless since $A$ is a fractional $K_q$-omni-absorber for $X$, we find that $A\cup L$ admits a fractional $K_q$-decomposition as desired. 

\section{Sparse Absorber Constructions}\label{s:Construction}

In this section, we prove Theorems~\ref{thm:GeneralTriangleAbsorberThreshold} and~\ref{thm:GeneralQAbsorberThreshold}. Namely we introduce new sparser $K_q$-absorber constructions.




We need to recall the notion of a transformer from~\cite{BKLO16}.

\begin{definition}
Let $q\ge 3$ be an integer. Let $L$ and $L'$ be $K_q$-divisible graphs. A \emph{$K_q$-transformer} $T$ from $L$ to $L'$ is a graph such that $V(L)\cup V(L')\subseteq V(T)$ is an independent set in $T$ and both $T\cup L$ and $T\cup L'$ admit $K_q$-decompositions.     
\end{definition}

First we show in the case $q=3$ that `star' transformers with maximum rooted $2$-density at most $3+\varepsilon$ exist for any $\varepsilon > 0$ as follow.

\begin{lem}\label{lem:StarTransformerTriangle}
Let $q=3$. Let $L,L' \cong K_{1,q-1}$ such that $L\cup L' \cong K_{2,q-1}$. For every real $\varepsilon > 0$, there exists a $K_q$-transformer $T$ from $L$ to $L'$ such that $m_2(T,V(L)\cup V(L')) \le q +\varepsilon$.
\end{lem}
\begin{proof}
Let $k$ be an even integer such that $k\ge \frac{1}{\varepsilon}$. Let $V(L)\cap V(L') = \{v_0,v_{k+1}\}$. Let $V(L)\setminus V(L') = \{x\}$ and $V(L')\setminus V(L) = \{x'\}$. We now construct $T$ as follows. Let $V(T)\setminus (V(L)\cup V(L')) := \{v_i : i\in [k] \}$. We let
$$E(T) := \{ v_{i-1}v_i : i\in [k+1]\} \cup \{ xv_i, x'v_i: i\in [k] \},$$
or informally $T$ consists of a path $v_0v_1\ldots v_kv_{k+1}$ whose internal vertices are complete to $x$ and $x'$. We note that $V(L)\cup V(L')$ is an independent set in $T$ and that $T$ is a $K_q$-transformer from $L$ to $L'$. Since $T$ has degeneracy at most $3$, every $H' \subseteq T$ satisfies $e(H') \leq 3v(H')-6$, so $m_2(T) \le 3$. 

Now let $H'\subseteq T$ such that $V(H')\setminus (V(L)\cup V(L'))\ne \emptyset$. Let $m:= |V(H')\setminus (V(L)\cup V(L'))|$. First suppose that $m < k$. It follows that the degeneracy of $H'$ rooted at $V(L)\cup V(L')$ is at most $3$. Hence
$$\frac{e(H')}{m} \le \frac{3m}{m} \le 3.$$
So suppose that $m=k$. But then
$$\frac{e(H')}{m} \le \frac{e(T)}{k} \le \frac{3k+1}{k} \le 3 + \frac{1}{k} \le 3 + \varepsilon.$$
Thus, we find that $m(T,V(L)\cup V(L'))\le 3+\varepsilon$. Since $m_2(T)\le 3$, this implies that $m_2(T,V(L)\cup V(L'))\le 3+\varepsilon$ as desired.
\end{proof}

More generally, we show such `star' transformers exist with rooted $2$-density at most $q+\frac{1}{2}$ as follows. We believe that iterating this construction (which would correspond to the construction above in the $q=3$ case) would yield a rooted $2$-density at most $q+\varepsilon$ for any $\varepsilon > 0$, but we opted not to pursue this for brevity.

\begin{lem}\label{lem:StarTransformerGeneral}
Let $q\ge 3$ be an integer. Let $L,L' \cong K_{1,q-1}$ such that $L\cup L' \cong K_{2,q-1}$. Then there exists a $K_q$-transformer $T$ from $L$ to $L'$ such that $m_2(T,V(L)\cup V(L')) \le q +\frac{1}{2}$.
\end{lem}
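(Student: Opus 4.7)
The plan is to construct an explicit transformer $T$ whose non-root vertices form a $(q-1)\times(q-2)$ grid, generalizing the triangle case where a single ``row'' sufficed. Label $V(L)\cap V(L')=\{y_1,\ldots,y_{q-1}\}$, $\{x\}=V(L)\setminus V(L')$, and $\{x'\}=V(L')\setminus V(L)$, and introduce $(q-1)(q-2)$ new vertices $Z=\{u_{i,j}:i\in[q-1],\,j\in[q-2]\}$, with rows $U_i:=\{u_{i,1},\ldots,u_{i,q-2}\}$ and columns $B_j:=\{u_{1,j},\ldots,u_{q-1,j}\}$. I would define $E(T)$ as the union of: all edges within each row $U_i$ and within each column $B_j$; all edges $y_iu_{i,j}$; and all edges $xu_{i,j}$ and $x'u_{i,j}$. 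By construction $V(L)\cup V(L')$ is independent in $T$.

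To verify that $T$ is a $K_q$-transformer, I would exhibit the decomposition of $T\cup L$ as $\{Q_i:i\in[q-1]\}\cup\{R_j:j\in[q-2]\}$, where $Q_i:=\{x,y_i\}\cup U_i$ is a $K_q$ using the edge $xy_i\in L$ together with the row edges of $U_i$ and the edges from $x,y_i$ to $U_i$, and $R_j:=\{x'\}\cup B_j$ is a $K_q$ using the column edges of $B_j$ and the edges from $x'$ to $B_j$. A direct check confirms each edge of $T\cup L$ appears in exactly one such $K_q$; the decomposition of $T\cup L'$ is obtained by swapping the roles of $x$ and $x'$.

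The heart of the proof is bounding $m_2(T,V(L)\cup V(L'))$. Let $R:=V(L)\cup V(L')$, take any $H'\subseteq T$ with $V(H')\setminus R\neq\emptyset$, and write $Z':=V(H')\cap Z$, $r_i:=|Z'\cap U_i|$, and $c_j:=|Z'\cap B_j|$. Classifying the edges of $H'$ by type yields
\[
e(H')\le (a+a')|Z'|+\sum_{i=1}^{q-1}s_ir_i+\sum_{i=1}^{q-1}\binom{r_i}{2}+\sum_{j=1}^{q-2}\binom{c_j}{2},
\]
where $a,a',s_i\in\{0,1\}$ indicate whether $x,x',y_i\in V(H')$. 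Using $a+a'\le 2$, $\sum s_ir_i\le|Z'|$, and the estimates $\binom{r_i}{2}\le r_i(q-3)/2$ and $\binom{c_j}{2}\le c_j(q-2)/2$ (from $r_i\le q-2$ and $c_j\le q-1$), we conclude $e(H')\le 3|Z'|+(q-\tfrac{5}{2})|Z'|=(q+\tfrac{1}{2})|Z'|$, which gives the rooted density bound $m(T,R)\le q+\tfrac{1}{2}$.

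The main obstacle will be upgrading this to the 2-density bound. When $|V(H')\cap R|\ge 2$, the inequality $(q+\tfrac{1}{2})(v(H')-2)\ge(q+\tfrac{1}{2})|Z'|$ combined with the rooted density bound immediately yields $e(H')-1\le(q+\tfrac{1}{2})(v(H')-2)$. The delicate cases occur when $|V(H')\cap R|\le 1$: if $|Z'|\ge 2q/3$, the row-column estimate $(q-\tfrac{5}{2})|Z'|$ plus the contribution of at most one root remains tight enough, and for smaller $|Z'|$ one replaces the row-column estimate by the crude bound $e(H')\le\binom{|Z'|}{2}+|Z'|$, yielding 2-density at most $(|Z'|+2)/2\le q+\tfrac{1}{2}$ so long as $|Z'|\le 2q-1$. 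This elementary but somewhat tedious case analysis is the most technical part and completes the bound $m_2(T,V(L)\cup V(L'))\le q+\tfrac{1}{2}$.
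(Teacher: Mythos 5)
Your construction is exactly the paper's (up to transposing the grid and renaming vertices), and your two-pronged argument for the $2$-density bound — establish $m(T,R)\le q+\tfrac12$ by summing row-clique, column-clique, and root-edge contributions, then in the root-poor cases ($|V(H')\cap R|\le 1$) split on the order of $H'$, using the grid-structure bound for large $|Z'|$ and a crude $\binom{|Z'|}{2}$-type bound for small $|Z'|$ — mirrors the paper's, which splits on $v(H'')\ge q+2$ versus $v(H'')\le q+1$. The intervals in your threshold $|Z'|\ge 2q/3$ and the crude-bound range $|Z'|\le 2q-1$ (resp.\ $\le 2q$) overlap, so all cases are covered and the argument is correct; you additionally spell out the two $K_q$-decompositions explicitly (the row cliques $Q_i$ and column cliques $R_j$), which the paper merely asserts.
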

\begin{proof}
Let $V(L)\cap V(L') = \{v_{1,j}:j\in [q-1]\}$. Let $V(L)\setminus V(L') = \{x\}$ and $V(L')\setminus V(L) = \{x'\}$. We now construct $T$ as follows. Let $V(T)\setminus (V(L)\cup V(L')) := \{v_{i,j} : i\in \{2,\ldots, q-1\},~j\in[q-1] \}$. We let
\begin{align*}E(T) := &\{ v_{i,j}v_{i,j'} : i\in \{2,\ldots,q-1\},~j<j'\in [q-1]\} \cup \{ v_{i,j}v_{i',j} : i < i'\in [q-1],~j\in [q-1]\}\\
&\cup \{ xv_{i,j}, x'v_{i,j}: i\in \{2,\ldots,q-1\},~j\in [q-1] \},
\end{align*}
or more informally $T$ consists of a $(q-1)\times (q-1)$ grid with vertices $(v_{i,j}:i,j\in [q-1])$ whose columns are cliques and whose all but first row are cliques and whose vertices not in the first row are complete to $x$ and $x'$. We note that $V(L)\cup V(L')$ is an independent set in $T$ and that $T$ is a $K_q$-transformer from $L$ to $L'$. 

We note that $T\setminus (V(L)\cup V(L'))$ has maximum degree at most $2q-5$ and that every vertex of $T\setminus (V(L)\cup V(L'))$ has exactly three neighbors in $V(L)\cup V(L')$. 

Now let $H'\subseteq T$ such that $V(H')\setminus (V(L)\cup V(L'))\ne \emptyset$. Let $m:= |V(H')\setminus (V(L)\cup V(L'))|$. It follows that
$$\frac{e(H')}{m} \le \frac{\frac{(2q-5)m}{2} + 3m}{m} = q+\frac{1}{2}.$$
Thus $m(T,V(L)\cup V(L')) \le q +\frac{1}{2}$.

Now let $H''\subseteq T$ such that $v(H'')\ge 3$. We want to show that $\frac{e(H'')-1}{v(H'')-2}\le q + \frac{1}{2}$. Let $m:= |V(H'')\setminus (V(L)\cup V(L'))|$ and $m':= v(H'')-m$. If $m=0$, then $e(H'')=0$ and hence $\frac{e(H'')-1}{v(H'')-2} \le 0$ as desired. So we assume that $m\ne 0$. Next suppose that $m' \ge 2$. It follows as above that
$$\frac{e(H'')-1}{v(H'')-2} \le \frac{e(H'')}{m} \le q+\frac{1}{2}$$
as desired. 

So we assume that $m'\le 1$. Next suppose that $v(H'') - 2 < q$. But then
$$\frac{e(H'')-1}{v(H'')-2} \le \frac{\binom{v(H'')}{2}-1}{v(H'')-2} = \frac{v(H'')+1}{2} \le \frac{q+2}{2} \le q$$
as desired since $q\ge 2$. So we assume that $v(H'')\ge q+2$. Next suppose $m'= 1$. But then
$$\frac{e(H'')-1}{v(H'')-2} \le \frac{\frac{(2q-5)m}{2}+m}{m-1} = \left(q-\frac{3}{2}\right)\cdot \left(1+\frac{1}{m-1}\right) \le \left(q-\frac{3}{2}\right)\cdot \left(1+\frac{1}{q}\right) \le q$$
as desired. Finally we assume that $m'= 0$. But then
$$\frac{e(H'')-1}{v(H'')-2} \le \frac{\frac{(2q-5)m}{2}}{m-2} = \left(q-\frac{5}{2}\right)\cdot \left(1+\frac{2}{m-2}\right) \le \left(q-\frac{5}{2}\right)\cdot \left(1+\frac{2}{q}\right) \le q$$
as desired. Thus, we find that $m_2(T)\le q+\frac{1}{2}$. Since $m(T,V(L)\cup V(L'))\le q+\frac{1}{2}$, this implies that $m_2(T,V(L)\cup V(L'))\le q+\frac{1}{2}$ as desired.
\end{proof}

Following~\cite{GKLO16} and~\cite{BGKLMO20}, we write $\nabla_q~X$ for the graph obtained from a graph $X$ by replacing every edge $e$ of $X$ with ${\rm AntiEdge}_q(e)$ (with all new vertices distinct).
Similarly, we write $\tilde{\nabla}_q~X$ for $\nabla_q~X \cup X$.

Using the star transformers from above, we show we can build sparser absorbers for a clique of anti-edges $\nabla_q~K_q$. This is accomplished by building a transformer from $(\nabla_q)^2~K_q$ to itself as follows. 

\begin{lem}\label{lem:SparseAntiCliqueAbsorber}
Let $q\ge 3$ be an integer and let $L:=\nabla_q~K_q$. If $q=3$, then for every real $\varepsilon > 0$, there exists a $K_q$-absorber $A$ for $L$ such that $m_2(A,V(L))\le 3+\varepsilon$. If $q\ge 4$, then there exists a $K_q$-absorber $A$ for $L$ such that $m_2(A,V(L))\le q+\frac{1}{2}$.
\end{lem}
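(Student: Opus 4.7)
The plan is to construct the absorber $A$ by chaining two copies of the $\tilde{\nabla}_q$-construction with a transformer built out of the star transformers of Lemmas~\ref{lem:StarTransformerTriangle} and~\ref{lem:StarTransformerGeneral}. Set $N \coloneqq \nabla_q L = (\nabla_q)^2 K_q$, built on $V(L) \cup Y$ with $Y$ a fresh set of non-root vertices, and take a disjoint copy $L^{**} \cong L$ on a fresh vertex set, together with $N^{**} \coloneqq \nabla_q L^{**}$ on yet more fresh vertices. Two basic facts will be used repeatedly: $L \cup N = \tilde{\nabla}_q L$ and $L^{**} \cup N^{**} = \tilde{\nabla}_q L^{**}$ admit natural $K_q$-decompositions (pairing each edge of $L$, respectively $L^{**}$, with its corresponding anti-edge); and $V(L)$ (respectively $V(L^{**})$) is independent in $N$ (respectively $N^{**}$) by the construction of $\nabla_q$.

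The heart of the argument is to construct a $K_q$-transformer $T$ from $N$ to $\tilde{\nabla}_q L^{**}$: a graph $T$, edge-disjoint from $N \cup L^{**} \cup N^{**}$, with $V(N) \cup V(\tilde{\nabla}_q L^{**})$ independent in $T$, such that both $T \cup N$ and $T \cup \tilde{\nabla}_q L^{**}$ admit $K_q$-decompositions, and moreover $m_2(T, V(N) \cup V(\tilde{\nabla}_q L^{**})) \leq 3+\varepsilon$ when $q=3$ and $\leq q+\tfrac{1}{2}$ when $q\geq 4$. To do so, I will exploit the local structure: the natural bijection between edges of $L$ and edges of $L^{**}$ pairs up the anti-edges of $N$ with those of $N^{**}$, and I will build $T$ edge-by-edge over these pairs. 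For each matched pair of anti-edges (together with the $L^{**}$-edges bridging them), the local transformer is constructed by applying the star transformer of Lemma~\ref{lem:StarTransformerTriangle} (for $q=3$) or Lemma~\ref{lem:StarTransformerGeneral} (for $q\geq 4$) to swap the $K_{1,q-1}$ star structures on the outer vertices of the two anti-edges. Since these local pieces are edge-disjoint and share only root vertices, Lemma~\ref{lem:2DensityConcatenate} ensures that the rooted $2$-density of the full $T$ is bounded by the star transformer's rooted $2$-density.

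Given such a $T$, I define $A \coloneqq N \cup T \cup L^{**} \cup N^{**}$. Independence of $V(L)$ in $A$ follows since $V(L)$ is independent in $N$, disjoint from $V(L^{**}) \cup V(N^{**})$, and contained in the independent set $V(N) \cup V(\tilde{\nabla}_q L^{**})$ of $T$. The two required decompositions come from
\[
A \;=\; (T \cup N) \;\cup\; \tilde{\nabla}_q L^{**}, \qquad A \cup L \;=\; \tilde{\nabla}_q L \;\cup\; (T \cup \tilde{\nabla}_q L^{**}),
\]
where in each expression the first summand decomposes by the transformer property (respectively by the natural decomposition of $\tilde{\nabla}_q L$) and the second summand decomposes by the natural decomposition of $\tilde{\nabla}_q L^{**}$ (respectively by the transformer property). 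Applying Lemma~\ref{lem:2DensityConcatenate} once more, together with the fact that each of $N$, $L^{**}$, $N^{**}$ is a disjoint union of anti-edges, each of rooted $2$-density $(q+1)/2$ by Proposition~\ref{prop:FakeEdge}, yields $m_2(A, V(L)) \leq \max\{(q+1)/2, \, m_2(T, \cdot)\}$, which gives the claimed bounds $3+\varepsilon$ (for $q=3$) and $q+\tfrac{1}{2}$ (for $q\geq 4$).

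The main obstacle will be the construction of $T$: the anti-edges inside $N$ and $N^{**}$ do not natively contain $K_{1,q-1}$ stars (only $K_{1,q-2}$ stars at their outer vertices, the "missing" edge being precisely the $\{u,v\}$ edge that the anti-edge replaces), so the star transformers of Lemmas~\ref{lem:StarTransformerTriangle}/\ref{lem:StarTransformerGeneral} cannot be applied off the shelf. The construction must therefore carefully use the edges of $L^{**}$ (which bridge the $N$-side and the $N^{**}$-side) to manufacture the $K_{1,q-1}$-star configurations that the star transformer requires, and one must then verify that the resulting $T$ genuinely transforms $N$ into $\tilde{\nabla}_q L^{**}$ while respecting the independence of the root set and without introducing any dense subgraph that would exceed the star transformer's rooted $2$-density bound.
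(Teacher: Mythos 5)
Your top-level reduction is sound and in fact close in spirit to the paper's: with a $K_q$-transformer $T$ from $N$ to $\tilde{\nabla}_q L^{**}$ in hand, the splittings $A = (T \cup N) \cup \tilde{\nabla}_q L^{**}$ and $A \cup L = \tilde{\nabla}_q L \cup (T \cup \tilde{\nabla}_q L^{**})$ would give both required decompositions (the paper's $A_3 \cup A_4$ is essentially a transformer from $(\nabla_q)^2 K_q$ to an isomorphic copy of itself, padded with an extra $K_q$ to fix parity, so your variant of the frame is legitimate). The genuine gap is that you never construct $T$, and the sketch you offer for it cannot work as described. Lemmas~\ref{lem:StarTransformerTriangle} and~\ref{lem:StarTransformerGeneral} apply only to two stars $L, L' \cong K_{1,q-1}$ with $L \cup L' \cong K_{2,q-1}$, i.e.\ the two stars must share their $q-1$ leaves; but a matched anti-edge of $N$ and its partner in $N^{**}$ live on disjoint vertex sets, so there is no $K_{2,q-1}$ configuration to transform, and the edges of $L^{**}$ do not ``bridge'' the two sides (they join root vertices inside $V(N^{**})$). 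Some new intermediate gadget is unavoidable: the paper introduces, for each edge $e$ of the $N$-side and its partner $\phi(e)$, a fresh clique $Q_e \cong K_{q-2}$ joined completely to the four ends of $e$ and $\phi(e)$ (so either $e$ or $\phi(e)$ can be consumed together with $Q_e$), and then uses $K_q$-divisibility to group the edges at each vertex $v$ into sets of size $q-1$, so that the stars from $v$ and from $\phi(v)$ to the corresponding gadget vertices genuinely form a $K_{2,q-1}$ to which the star transformer applies. Your proposal contains no such mechanism, and you yourself flag exactly this as ``the main obstacle,'' so the core of the proof is missing rather than proved.

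Second, even granting some connecting gadget, the density verification is not routine, and this is precisely where the $q=3$ case is delicate. The natural gadget ($K_{q+2}$ minus a $K_4$, rooted at the $K_4$) has maximum rooted density $(q+5)/2$, which is at most $q + \tfrac{1}{2}$ only for $q \ge 4$; for $q = 3$ it equals $4 > 3 + \eps$, and the paper needs a separate discharging argument over the union of the gadgets with the two copies of $(\nabla_q)^2 K_q$ to recover the bound $3$. Your accounting only invokes Proposition~\ref{prop:FakeEdge} (density $(q+1)/2$ for anti-edges) and the star transformer bounds, i.e.\ exactly the pieces that are not the bottleneck; the unconstructed connecting structure is where the bounds $3+\eps$ (resp.\ $q+\tfrac{1}{2}$) must actually be established, so the claimed bound on $m_2(A, V(L))$ is also unproved as it stands.
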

\begin{proof}
Let $S_1:= L = \nabla_q~K_q$, and let $S_2:= \nabla_q~S_1$. We let $A_1:= S_2$. We now construct a graph $A_2$ that is vertex-disjoint from $A_1$ as follows: let $S'_0\cong K_q$, $S'_1:= \nabla_q~S'_0$, $S'_2 := \nabla_q~S'_1$, and let $A_2 := \bigcup_{i=0}^{2}~S'_i$.
Note there exists the natural bijection $\phi$ from $S_2$ to $S'_2$.

Now for each edge $e\in S_{2}$, we add a new $K_{q-2}$ labeled $Q_e$ (and whose vertices are labeled $q_{e,i}$ for $i\in [q-2]$) where the vertices of $Q_e$ are complete to the ends of $e$ and $\phi(e)$. We let $A_3$ be the graph with all these new edges and their ends.

Note that $\nabla_q$ preserves $K_q$-divisibility, and hence $S'_0,S'_1,S'_2,S_1,S_2$ are $K_q$-divisible. Hence for every $v\in V(S_2)$, we have that $d_{S_2}(v)$ is divisible by $q-1$ and similarly for $S'_{2}$. Thus for each $v\in V(S_2)$, there exists a partition of the edges of $S_{2}$ incident with $v$ into sets $M_{v,1},\ldots, M_{v,j_v}$ of size $q-1$. Note that applying $\phi$ to these sets also yields a partition of the edges of $S_2'$ incident with $\phi(v)$.

Let $T$ be as in Lemma~\ref{lem:StarTransformerGeneral} for $q$ if $q\ge 4$, and as in Lemma~\ref{lem:StarTransformerTriangle} for $q$ and $\varepsilon$ if $q=3$. Finally for each $v\in V(S_2)$, $j\in [j_v]$ and $i\in [q-2]$, we attach a copy of $T$ rooted at $\{ q_{e,i}: e\in M_{v,j}\}\cup \{v,\phi(v)\}$.
We let $A_4$ be the graph with all these new edges and their ends.

We let $A := \bigcup_{i=1}^4 A_i$. We note that $A$ is a $K_q$-absorber for $L$. 

Now we show $m_2(A, V(L))$ is at most $q + \frac{1}{2}$ if $q \geq 4$ and at most $3 + \eps$ if $q = 3$.  We will use Lemma~\ref{lem:2DensityConcatenate} in conjunction with the following series of claims.

\begin{claim}\label{cl:S'_0andS'_1}
    $m_2(S'_0 \cup S'_1, V(L)) \leq q$.
\end{claim}
\begin{proofclaim}
    Since $S'_0 \cong K_q$ and there are no edges with ends in both $S'_0$ and $L$, we have $m_2(S'_0, V(L)) = m_2(S'_0) = (q + 1)/2 \leq q$.
    Since there are no edges with ends in both $S'_1$ and $L$, we have $m_2(S'_1, V(S'_0 \cup L)) = m_2(S'_1, V(S'_0)) = (q + 1)/2 \leq q$ by Proposition~\ref{prop:FakeEdge} and Lemma~\ref{lem:2DensityConcatenate}.
    Therefore, the claim follows from Lemma~\ref{lem:2DensityConcatenate} with $S'_0 \cup V(L)$ and $S'_1 \cup V(S'_0 \cup L)$ playing the roles of $H_1$ and $H_2$, respectively.
\end{proofclaim}

\begin{claim}\label{cl:S_2andS'_2}
    $m_2(S_2 \cup S'_2, V(L \cup S'_1)) \leq q$.  
\end{claim}
\begin{proofclaim}
    Since there are no edges with ends in both $S_2$ and $S'_1$, we have $m_2(S_2, V(S'_1 \cup L)) = (q + 1)/2 \leq q$ by Proposition~\ref{prop:FakeEdge} and Lemma~\ref{lem:2DensityConcatenate}.
    Similarly, since there are no edges with ends in both $S'_2$ and $S_2$, we have $m_2(S'_2, V(S_2 \cup S'_1 \cup L)) \leq (q + 1)/2 \leq q$ by Proposition~\ref{prop:FakeEdge} and Lemma~\ref{lem:2DensityConcatenate}.
    Therefore, the claim follows from Lemma~\ref{lem:2DensityConcatenate} with $S_2 \cup V(S'_1)$ and $S'_2 \cup V(S_2 \cup S'_1)$ playing the roles of $H_1$ and $H_2$, respectively.
\end{proofclaim}

\begin{claim}\label{cl:A_3}
    If $q \geq 4$, then $m_2(A_3, V(S_2 \cup S'_2)) \leq q + \frac{1}{2}$.
\end{claim}
\begin{proofclaim}
    By the construction of $A_3$ and Lemma~\ref{lem:2DensityConcatenate}, it suffices to prove that if $Z\cong K_{q+2}$ and $Y\subseteq Z$ with $Y\cong K_4$, then $m_2(Z-E(Y),V(Y))\le q+\frac{1}{2}$. 

    First we show $m(Z-E(Y),V(Y))\le q+\frac{1}{2}$. To that end, let $H'\subseteq Z$ where $V(H')\setminus V(Y)\ne \emptyset$. Let $m:= |V(H')\setminus V(Y)|$. But then 
$$\frac{e(H')}{m} \le \frac{4m+\binom{m}{2}}{m} = \frac{m+7}{2} \le \frac{q+5}{2} \le q+\frac{1}{2}$$
as desired since $q\ge 4$. This proves that $m(Z-E(Y),V(Y))\le q+\frac{1}{2}$.

Thus it remains to show for the case $q\ge 4$ that $m_2(Z)\le q+\frac{1}{2}$. To that end, let $H''\subseteq Z$ where $v(H'')\ge 3$. But then 
$$\frac{e(H'')-1}{v(H'')-2} \le \frac{\binom{v(H'')}{2}-1}{v(H'')-2} = \frac{v(H'')+1}{2} \le \frac{q+3}{2}$$
since $q\ge 3$.
\end{proofclaim}

We only use Claim~\ref{cl:S_2andS'_2} when $q = 4$ because we do not have $m(A_3, V(A_1 \cup A_2)) \leq 3 + \eps$ for $q = 3$ (the value is 4).  Instead, we use a more sophisticated discharging argument to bound $m_2(A_3 \cup S_2 \cup S'_2, V(S'_1 \cup S_1))$, as follows.

\begin{claim}\label{cl:rooted-densityA_3andS_2andS'_2}
    If $q = 3$, then $m(A_3 \cup S_2 \cup S'_2, V(S'_1 \cup S_1)) \leq q$.
\end{claim}
\begin{proofclaim}
    Let $H'\subseteq A_3\cup S_2 \cup S_2'$ with $V(H')\setminus V(S'_1 \cup S_1) \neq \emptyset$.  Let $B_1 \coloneqq V(H') \cap V(S'_1 \cup S_1)$, let $B_2 \coloneqq V(H') \cap ((V(S_2) \cup V(S'_2)) \setminus B_1)$, let $B_3 \coloneqq V(H') \setminus (B_1 \cup B_2)$, and let $B \coloneqq B_2 \cup B_3$.  Let $m \coloneqq |B|$.  We will show that $e(H) / m \leq 3$.  

    To that end, we apply a discharging argument as follows. For each $v\in B_3$, let ${\rm ch}(v) := d_{H'}(v) - 3$. For each $v\in B_2$, let ${\rm ch}(v):= d_{H'\setminus B_3}(v) - 3$. 

    Note that since $q = 3$, we have $S_1 \cong C_6$ and $S_2, S'_2 \cong C_{12}$, and the sets $B_1$, $B_2$, and $B_3$ are independent in $H'$.
    Hence, $\sum_{v\in B} {\rm ch}(v) = e(H')-3m$.
    Moreover, $d_{H'\setminus B_3}(v)\le 2$ for all $v\in B_2$, so we find that ${\rm ch}(v) \le -1$ for all $v\in B_2$. Meanwhile, $d_{H'}(v)\le 4$ for all $v\in B_3$ and this equals $4$ only if $|N_{H'}(v)\cap B_2|\ge 2$ since $v$ has two neighbors in each of $B_1$ and $B_2$. So we apply a discharging rule as follows.

\vskip.1in
\noindent {\bf Discharging Rule: Every vertex $v$ of $B_3$ with $d_{H'}(v)=4$ sends $+1/2$ charge to each vertex of $N_{H'}(v)\cap B_2$.}
\vskip.1in

Let ${\rm ch}_F$ denote the final charges after applying this discharging rule. We claim that ${\rm ch}_F(v)\le 0$ for all $v\in B_2\cup B_3$. First suppose $v\in B_3$. If $d_{H'}(v)\le 3$, then ${\rm ch}_F(v) = {\rm ch}(v)\le 0$ as desired. So we assume $d_{H'}(v)=4$. But then by the discharging rule, we have that ${\rm ch}_F(v) \le {\rm ch}(v) - 2 (1/2) \le 0$ as desired. So finally we assume that $v\in B_2$. But then $|N_{H'}(v)\cap B_3|\le 2$ and hence ${\rm ch}_F(v) \le -1 + 2(1/2) \le 0$ as desired.

Thus $\sum_{v\in B} {\rm ch}(v) = \sum_{v\in B} {\rm ch}_F(v) \leq 0$. Hence $e(H')\leq 3m$ and we have $e(H') / m \leq 3$ as desired.
Thus $m_2(A_3\cup S_2 \cup S_2', V(S'_1\cup S_1))\le 3$ as desired.
\end{proofclaim}
\begin{claim}\label{cl:2-densityA_3andS_2andS'_2}
    If $q = 3$, then $m_2(A_3 \cup S_2 \cup S'_2) \leq q$.
\end{claim}
\begin{proofclaim}
    As mentioned in the previous claim, in this case, $S_2, S'_2 \cong C_{12}$.  Note also that $A_3 \cup S_2 \cup S'_2$ is $4$-regular, so 
    \begin{equation*}
        \frac{e(A_3 \cup S_2 \cup S'_2) - 1}{v(A_3 \cup S_2 \cup S'_2) - 2} = \frac{71}{34} \leq 3,
    \end{equation*}
    as desired.  Moreover, every proper subgraph $H' \subsetneq A_3 \cup S_2 \cup S'_2$ is $3$-degenerate. Thus for all $H' \subsetneq A_3 \cup S_2 \cup S'_2$ with $v(H')\ge 3$, we have
    \begin{equation*}
        \frac{e(H') - 1}{v(H') - 2} \leq \frac{3\cdot v(H') - 6}{v(H') - 2} \leq 3,
    \end{equation*}
    as desired.
\end{proofclaim}

\begin{claim}\label{cl:A_4}
    $m_2(A_4, V(A_3 \cup A_2 \cup A_1))$ is at most $q + \frac{1}{2}$ if $q \geq 4$ and at most $q + \eps$ if $q = 3$.
\end{claim}
\begin{proofclaim}
    The claim follows immediately from Lemmas~\ref{lem:StarTransformerTriangle} and~\ref{lem:StarTransformerGeneral}.
\end{proofclaim}

Now the result follows by Lemma~\ref{lem:2DensityConcatenate} together with Claims~\ref{cl:S'_0andS'_1}, \ref{cl:S_2andS'_2}, \ref{cl:A_3}, and \ref{cl:A_4} when $q = 4$ and Claims~\ref{cl:S'_0andS'_1}, \ref{cl:rooted-densityA_3andS_2andS'_2}, \ref{cl:2-densityA_3andS_2andS'_2}, and \ref{cl:A_4} when $q = 3$.
\end{proof}

Next we show how to use $K_q$-absorbers for the anti-clique combined with the $\nabla_q$ operator to build a $K_q$-absorber $A_L$ for any $K_q$-divisible graph $L$ such that the rooted $2$-density of $A_L$ at $L$ is at most the maximum of the rooted $2$-densities of an anti-clique absorber and an anti-edge as follows.

\begin{lem}\label{lem:NablaConstruction}
Let $q\ge 3$ be an integer, and let $L$ be a $K_q$-divisible graph. Let $A$ be a $K_q$-absorber for $L$, and let $B$ be a $K_q$-absorber for $\nabla_q~K_q$. If $\mathcal{Q}_1$ is a $K_q$-decomposition of $A$ and $\mathcal{Q}_2$ is a $K_q$-decomposition of $L\cup A$, then
$$A':=\nabla_q~L \cup \nabla_q~A\cup \bigcup_{Q\in \mathcal{Q}_1\cup\mathcal{Q}_2} B_Q$$
is also a $K_q$-absorber for $L$,
where $B_Q$ is copy of $B$ rooted at $\nabla_q~Q$.
Furthermore, $m_2(A',V(L)) \le \max\left\{\frac{q+1}{2},~m_2(B,V(\nabla_q~K_q))\right\}$.
\end{lem}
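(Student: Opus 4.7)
My plan is to verify the three absorber conditions for $A'$ and then bound its rooted $2$-density by iterated application of Lemma~\ref{lem:2DensityConcatenate}; the two $K_q$-decompositions needed for the absorber property will exploit $\mathcal{Q}_2$ and $\mathcal{Q}_1$ in dual roles.

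First, I would check that $V(L) \subseteq V(A')$ is independent in $A'$: each ${\rm AntiEdge}_q(e)$ comprises $q-2$ fresh vertices and contributes no edge on $e$ itself, so $\nabla_q L \cup \nabla_q A$ has no edge within $V(L)$ (using that $V(L)$ is independent in $A$), and each absorber $B_Q$ is rooted at $V(\nabla_q Q)$, whose vertices are the fresh anti-edge vertices, disjoint from $V(L)$. To decompose $A'$, I would use that for each $Q \in \mathcal{Q}_2$ the subgraph $\nabla_q Q \cup B_Q \cong \nabla_q K_q \cup B$ admits a $K_q$-decomposition since $B$ is a $K_q$-absorber for $\nabla_q K_q$, and $\{\nabla_q Q : Q \in \mathcal{Q}_2\}$ partitions $\nabla_q L \cup \nabla_q A$ because $\mathcal{Q}_2$ partitions $E(L\cup A)$; for each $Q \in \mathcal{Q}_1$, $B_Q$ alone is $K_q$-decomposable since $B$ is. For the dual decomposition of $L \cup A'$, the key observation is the clique identity ${\rm AntiEdge}_q(e) \cup \{e\} \cong K_q$, which gives a $K_q$ covering each edge $e \in L$ together with its anti-edge gadget; then $\nabla_q Q \cup B_Q$ for each $Q \in \mathcal{Q}_1$ decomposes into $K_q$s and collectively covers $\nabla_q A$ (since $\mathcal{Q}_1$ partitions $E(A)$) together with $\bigcup_{Q \in \mathcal{Q}_1} B_Q$; and each $B_Q$ for $Q \in \mathcal{Q}_2$ is decomposed on its own.

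For the density bound, I would apply Lemma~\ref{lem:2DensityConcatenate} in two stages. In the first stage, starting from the empty graph on $V(L) \cup (V(A)\setminus V(L))$ (treating $V(A)\setminus V(L)$ as isolated vertices, which contribute no edges), I would add the anti-edge gadgets of $\nabla_q L \cup \nabla_q A$ one at a time, using Proposition~\ref{prop:FakeEdge} to note that each ${\rm AntiEdge}_q(e)$ has maximum rooted $2$-density exactly $(q+1)/2$ at its endpoints, which already lie in the current root set. Iterating Lemma~\ref{lem:2DensityConcatenate} yields $m_2(\nabla_q L \cup \nabla_q A, V(L)) \leq (q+1)/2$. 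In the second stage, I would add the absorbers $B_Q$ one at a time; each is rooted at $V(\nabla_q Q)$, which has already been absorbed into the current root set, and by construction $m_2(B_Q, V(\nabla_q Q)) = m_2(B, V(\nabla_q K_q))$, yielding the final bound via concatenation.

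The main subtlety will be the bookkeeping in the density computation: Lemma~\ref{lem:2DensityConcatenate} requires $H_1$ to be a proper induced subgraph of $H$, so each iterative step must isolate exactly one new gadget, and anti-edges whose endpoints lie in $V(A)\setminus V(L)$ must be handled by first including those vertices (as isolated vertices) in the root set so that Proposition~\ref{prop:FakeEdge} applies at each step. Once the bookkeeping is set up, the bound $m_2(A', V(L)) \leq \max\{(q+1)/2,\ m_2(B, V(\nabla_q K_q))\}$ follows immediately from the two concatenation steps.
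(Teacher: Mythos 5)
Your proof is essentially the paper's: the two decompositions you give are exactly the paper's $\mathcal{Q}_1'$ and $\mathcal{Q}_2'$ (using the decomposition of $B\cup\nabla_q K_q$ over $\mathcal{Q}_2$ and of $B$ over $\mathcal{Q}_1$ to decompose $A'$, then swapping these roles and adding the cliques $\nabla_q e\cup\{e\}\cong K_q$ for $e\in L$ to decompose $L\cup A'$), and your density argument via Proposition~\ref{prop:FakeEdge} and iterated Lemma~\ref{lem:2DensityConcatenate} is a correct spelling-out of what the paper calls a direct consequence of those two results. One small correction: $V(\nabla_q Q)$ is not disjoint from $V(L)$ (it contains $V(Q)$, which may meet $V(L)$); the independence of $V(L)$ in $A'$ instead follows because the root set $V(\nabla_q Q)$ is independent in $B_Q$ and the non-root vertices of $B_Q$ are new.
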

\begin{proof}
Note that $V(L)$ is an independent set in $A'$. Let $\mathcal{B}_1$ be a $K_q$-decomposition of $B$ and let $\mathcal{B}_2$ be a $K_q$-decomposition of $B\cup \nabla_q~K_q$. Similarly for each $Q\in \mathcal{Q}_1\cup\mathcal{Q}_2$, we let $(\mathcal{B}_1)_Q$ be the $K_q$-decomposition of $B_Q$ corresponding to $\mathcal{B}_1$ and we let $(\mathcal{B}_2)_Q$ be the $K_q$-decomposition of $B_Q\cup \nabla_q~Q$ corresponding to $\mathcal{B}_2$. Now we let
$$\mathcal{Q}_1' := \bigcup_{Q\in \mathcal{Q}_2} (\mathcal{B}_2)_Q~\cup \bigcup_{Q\in \mathcal{Q}_1} (\mathcal{B}_1)_Q$$
and
$$\mathcal{Q}_2' := \{\tilde{\nabla}_q~e:e\in L\}~\cup \bigcup_{Q\in \mathcal{Q}_1} (\mathcal{B}_2)_Q~\cup \bigcup_{Q\in \mathcal{Q}_2} (\mathcal{B}_1)_Q.$$
Note that $\mathcal{Q}_1'$ is a $K_q$-decomposition of $A'$ and $\mathcal{Q}_2'$ is a $K_q$-decomposition of $L\cup A'$. Hence $A'$ is a $K_q$-absorber of $L$ as desired. Finally, the desired inequality is a direct consequence of Proposition~\ref{prop:FakeEdge}, Lemma~\ref{lem:2DensityConcatenate} and the construction above.
\end{proof}

We are now prepared to prove Theorems~\ref{thm:GeneralTriangleAbsorberThreshold} and~\ref{thm:GeneralQAbsorberThreshold}.

\begin{proof}[Proof of Theorems~\ref{thm:GeneralTriangleAbsorberThreshold} and~\ref{thm:GeneralQAbsorberThreshold}]
These follow immediately from the existence of $K_q$-absorbers  and Lemmas~\ref{lem:SparseAntiCliqueAbsorber} and~\ref{lem:NablaConstruction}.
\end{proof}

We note the same proof also yields $K_q$-absorbers of rooted degeneracy at most $2q-2$.

\section{Concluding Remarks and Further Directions}\label{s:Conclusion}

As mentioned in Section~\ref{s:Overview}, not only did Keevash~\cite{K14} and Glock, K\"uhn, Lo, and Osthus~\cite{GKLO16} prove the Existence Conjecture, they also proved ``typicality versions''.  A graph $G$ is \textit{$(\xi, h, p)$-typical} if for any set $A \subseteq V(G)$ with $|A| \leq h$, we have $|\bigcap_{v\in A} N(v)| = (1 \pm \xi)\cdot p^{|A|}\cdot v(G)$.  Keevash's results apply to $(\xi, h, p)$-typical graphs for $p = v(G)^{-\varepsilon}$ for some small unspecified $\varepsilon > 0$ and large $h \in \mathbb N$ depending on $q$. Random graphs are \aas typical, so it is natural then to wonder whether the stronger typicality versions hold for our range of $p$ and whether our proof extends to give these. One difficulty is in the Embedding Theorem (Theorem~\ref{thm:Embed}), namely in concentrating the expected number of copies of an absorber in a typical graph. This did not seem to us to follow from typicality. Nevertheless, some quasi-random version may hold where the quasi-randomness would be whatever is needed for this concentration property to hold. Indeed, it is conceivable to us that the typical versions do not hold for the small values of $p$ we consider.

Another natural direction is to consider $F$-packings with small leave of $G(n,p)$ and $F$-decompositions of $G_{n,d}$ for other graphs $F$, in particular cycles. Similarly, it would be interesting to study these problems for $K_q^r$ in random $r$-uniform hypergraphs and more generally for any fixed hypergraph $F$.

There is also a plethora of other interesting avenues of research in combining the various questions extending the Existence Conjecture.
The Nash-Williams Conjecture~\cite{N-W70} asserts that every $K_3$-divisible $n$-vertex graph with minimum degree at least $3n/4$ admits a $K_3$-decomposition, and it has been conjectured \cite{Gu91, BKLO16} that more generally a $K_q$-divisible $n$-vertex graph with minimum degree at least $(1 - 1/(q + 1))n$ admits a $K_q$-decomposition.  
Hence it is natural to consider the threshold for linear leave of $G_p$ where $G$ is a high-minimum-degree graph (so $G(n,p)$ is the case $G=K_n$), combining the Nash-Williams-type problems and Yuster's $G(n,p)$ problem. 
Similarly, it may be possible to combine our work here with our binomial random $q$-uniform result from~\cite{DKPIV}; namely, if one chooses both edges uniformly at random with some probability $p_1$ and cliques uniformly at random with some probability $p_2$, what conditions on $p_1$ and $p_2$ are needed to ensure a packing with linear leave of $G(n,p_1)$ whose cliques are from the binomial $q$-uniform hypergraph $G^{(q)}(n,p_2)$? Clearly some trade-off between the two parameters is necessary.
Finally, it would be interesting to consider high-girth designs -- the existence of which is proved in~\cite{DPII} -- in these settings.  Namely can the packing of $G(n,p)$ with linear leave, or the decomposition of $G_{n,d}$, also be made to be high girth?

\subsection{Improving the Values of $p$ and $d$}

Another direction is to improve the values of $p$ and $d$ themselves in Theorems~\ref{thm:YusterRandom} and \ref{thm:YusterRandomRegular}. However, it seems our values are quite close to the theoretical limit possible with only using constant-sized absorbers. In particular, we note the following lemma.


\begin{lem}\label{lem:AbsLowerBound}
Let $q\ge 3$ be an integer. Let $L$ be a $K_q$-divisible graph and $A$ be a $K_q$-absorber for $L$. If every vertex of $V(A)\setminus V(L)$ is incident with an edge of $A$ and there exist $K_q$ decompositions $\mathcal{B}_1$ of $A$ and $\mathcal{B}_2$ of $L\cup A$ such that $\mathcal{B}_1\cap \mathcal{B}_2 =\emptyset$, then every vertex in $V(A)\setminus V(L)$ has degree at least $2q-2$ in $A$. 
\end{lem}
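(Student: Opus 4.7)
The plan is to fix an arbitrary vertex $v \in V(A) \setminus V(L)$ and argue that the hypothesis $d_A(v) = q-1$ leads to a contradiction with $\mathcal{B}_1 \cap \mathcal{B}_2 = \emptyset$. Since $d_A(v)$ must be a positive multiple of $q-1$ (as $v$ lies in a $K_q$-decomposition of $A$ and has at least one incident edge by hypothesis), ruling out the case $d_A(v)=q-1$ forces $d_A(v) \geq 2(q-1) = 2q-2$.

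The first step is to observe that since $v \notin V(L)$, the graphs $A$ and $L \cup A$ have the same neighborhood at $v$: that is, $N_{L \cup A}(v) = N_A(v)$, and in particular $d_{L \cup A}(v) = d_A(v)$. Consequently, the number of cliques of $\mathcal{B}_1$ containing $v$ equals the number of cliques of $\mathcal{B}_2$ containing $v$, and both equal $d_A(v)/(q-1)$.

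Now suppose for contradiction that $d_A(v) = q-1$. Then there is a unique $Q_1 \in \mathcal{B}_1$ with $v \in V(Q_1)$ and a unique $Q_2 \in \mathcal{B}_2$ with $v \in V(Q_2)$. Since $Q_1$ is a $K_q$ in $A$ containing $v$, its $q-1$ edges at $v$ must account for all of $N_A(v)$, so $V(Q_1) = \{v\} \cup N_A(v)$. Similarly $Q_2 \subseteq L \cup A$ is a $K_q$ containing $v$, and its edges at $v$ lie in $L \cup A$, hence (as $v \notin V(L)$) entirely in $A$; thus $V(Q_2) = \{v\} \cup N_{L \cup A}(v) = \{v\} \cup N_A(v) = V(Q_1)$. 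Two $K_q$'s on the same vertex set are identical as subgraphs, so $Q_1 = Q_2$, contradicting $\mathcal{B}_1 \cap \mathcal{B}_2 = \emptyset$.

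There is no real obstacle here — the argument is essentially a counting/pigeonhole observation. The only subtlety to keep in mind is that the independence of $V(L)$ in $A$ is not directly used; what matters is $v \notin V(L)$, which ensures the neighborhoods in $A$ and $L \cup A$ coincide at $v$, so the single clique at $v$ in each decomposition is forced to lie inside $A$ and to be uniquely determined by $N_A(v)$.
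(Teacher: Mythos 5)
Your proof is correct. The key insight — that if $v\notin V(L)$ has degree exactly $q-1$ in $A$, then the single $K_q$ through $v$ in each decomposition is forced to be $\{v\}\cup N_A(v)$, hence identical, contradicting $\cB_1\cap\cB_2=\emptyset$ — is precisely the content of the paper's argument, just packaged differently. The paper proceeds directly: it fixes an edge $e$ at $v$, finds the cliques $Q_1\in\cB_1$ and $Q_2\in\cB_2$ through $e$, uses $V(Q_1)\neq V(Q_2)$ to locate an edge $f$ at $v$ inside $Q_1$ but outside $Q_2$, and then exhibits a second clique $Q_2'\in\cB_2$ through $f$ that is edge-disjoint from $Q_2$, giving two disjoint $(q-1)$-sets of neighbors. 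You instead invoke the observation that $d_A(v)$ must be a positive multiple of $q-1$ (since $\cB_1$ decomposes $A$) and rule out $q-1$ by contradiction. Your route is a touch cleaner since it avoids hunting for the edge $f$, while the paper's route is constructive and bypasses the divisibility step. Both are valid and of essentially the same depth.
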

\begin{proof}
Let $v\in V(A)\setminus V(L)$. Since $v$ is not isolated in $A$, $v$ is incident with an edge $e$ of $A$. Since $\mathcal{B}_1$ is a $K_q$-decomposition of $A$, there exists $Q_1\in \mathcal{B}_1$ such that $e\in Q_1$. Similarly since $\mathcal{B}_2$ is a $K_q$-decomposition of $L\cup A$, there exists $Q_2\in \mathcal{B}_2$ such that $e\in Q_2$. 

Since $\mathcal{B}_1\cap \mathcal{B}_2=\emptyset$, we find that $V(Q_1)\ne V(Q_2)$. It follows that there exists $f$ incident with $v$ such that $f\not\in Q_2$. Since $\mathcal{B}_2$ is a $K_q$-decomposition of $L\cup A$, there exists $Q_2'\in \mathcal{B}_2$ such that $f\in Q_2'$. Note that $f\in A$. Since $A$ is a simple graph and $\mathcal{B}_2$ is a $K_q$-decomposition of $L\cup A$, it follows that $E(Q_2)\cap E(Q_2')=\emptyset$ and hence $(V(Q_2)\setminus \{v\})\cap (V(Q_2')\setminus \{v\})=\emptyset$. Hence $v$ has degree at least $(q-1)+(q-1)=2q-2$ in $A$.         
\end{proof}

\begin{cor}\label{cor:AbsLowerBound}
For each integer $q\ge 3$, $d_{\rm abs}(K_q)\ge q-1$.    
\end{cor}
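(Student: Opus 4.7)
The plan is to apply Lemma~\ref{lem:AbsLowerBound} to a carefully chosen witness $L$ whose size depends only on $q$. I will take $L:=\nabla_q K_q$, and set $C_1:=v(L)$. Iterating Fact~\ref{fact:Div} over all $\binom{q}{2}$ anti-edges of $L$ shows that $L$ is $K_q$-divisible. I also claim $L$ is $K_q$-free: the $q$ original vertices form an independent set in $L$ (the edges of $K_q$ are deleted in the construction), and any two new vertices from distinct anti-edges are non-adjacent in $L$, so any $K_q$ in $L$ would have to lie inside a single anti-edge; but the subgraph induced by one anti-edge is $K_q$ minus an edge, which contains no $K_q$. In particular $L$ has edges yet admits no $K_q$-decomposition.

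Now consider any $K_q$-absorber $A$ for this fixed $L$ (on arbitrarily many vertices), and fix $K_q$-decompositions $\mathcal{B}_1$ of $A$ and $\mathcal{B}_2$ of $L\cup A$ guaranteed by the absorber definition. To bring this into a form fitting Lemma~\ref{lem:AbsLowerBound}, I first set $\mathcal{B}:=\mathcal{B}_1\cap\mathcal{B}_2$ and $A':=A\setminus \bigcup\mathcal{B}$, so that $\mathcal{B}_1\setminus\mathcal{B}$ and $\mathcal{B}_2\setminus\mathcal{B}$ are now disjoint $K_q$-decompositions of $A'$ and $L\cup A'$, respectively. Then I delete from $V(A')\setminus V(L)$ every vertex not incident to an edge of $A'$, obtaining a subgraph $A''$; both decompositions restrict to $A''$, and $A''$ is still a $K_q$-absorber for $L$ satisfying both the no-isolated-vertex and disjointness hypotheses of the lemma. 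Moreover, $V(A'')\setminus V(L)$ must be nonempty: if it were empty, then $V(A'')=V(L)$, and since $V(L)$ is independent in $A''$ we would get $L\cup A''=L$, so $\mathcal{B}_2\setminus\mathcal{B}$ would be a $K_q$-decomposition of $L$, contradicting the $K_q$-freeness of $L$.

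Applying Lemma~\ref{lem:AbsLowerBound} to $A''$ now yields $d_{A''}(v)\ge 2q-2$ for every $v\in V(A'')\setminus V(L)$. Summing this bound, and using that $V(L)$ is independent in $A''$ so each edge of $A''$ contributes to the degree sum at most twice, one obtains $e(A'')\ge (q-1)\cdot |V(A'')\setminus V(L)|$. Viewing $A''$ as a subgraph of $A$ with $V(A'')\setminus V(L)\ne\emptyset$ witnesses $m(A,V(L))\ge q-1$, whence $m_2(A,V(L))\ge q-1$. Because this bound holds for \emph{every} $K_q$-absorber $A$ of our fixed $L$ (regardless of $v(A)$), for any $r<q-1$ no choice of $C_2$ can satisfy the defining condition of $d_{\rm abs}(K_q)\le r$ with $C_1=v(\nabla_q K_q)$, yielding $d_{\rm abs}(K_q)\ge q-1$. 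The main obstacle is arranging the two hypotheses of Lemma~\ref{lem:AbsLowerBound} after passing to $A''$, in particular showing $V(A'')\setminus V(L)\ne\emptyset$, which is precisely why $L$ is chosen so as to not be $K_q$-decomposable.
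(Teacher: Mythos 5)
Your proof is correct and follows essentially the same route as the paper's: pass to the symmetric difference of the two decompositions, strip isolated non-root vertices, and apply Lemma~\ref{lem:AbsLowerBound}. The only real differences are that you explicitly exhibit $L=\nabla_q\,K_q$ as the $K_q$-divisible, $K_q$-free witness (the paper simply posits such an $L$ exists) and you spell out why $V(A'')\setminus V(L)\neq\emptyset$; incidentally, the published proof contains a small typo (it should read $A:=\bigcup\mathcal{B}_1$, not $\bigcup\mathcal{B}_2$, so that $\mathcal{B}_1$ decomposes $A$ and $\mathcal{B}_2$ decomposes $L\cup A$), which your formulation avoids.
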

\begin{proof}
Let $L$ be a $K_q$-divisible graph without $K_q$ as a subgraph. Let $A''$ be a $K_q$-absorber for $L$. Let $A'$ be obtained from $A''$ by deleting isolated vertices in $V(A)\setminus V(L)$. By definition of $K_q$-absorber, there exists a $K_q$ decomposition $\mathcal{B}_1'$ of $A'$ and a $K_q$-decomposition $\mathcal{B}_2'$ of $L\cup A'$. Let $\mathcal{B}_1 := \mathcal{B}_1'\setminus \mathcal{B}_2'$ and let $\mathcal{B}_2 := \mathcal{B}_2'\setminus \mathcal{B}_1'$. 

Let $A\coloneqq \bigcup \mathcal{B}_2$. Note that $\mathcal{B}_2$ is a $K_q$-decomposition of $A$. Since $L$ has no $K_q$ subgraph, we find that $\mathcal{B}_1$ is a $K_q$ decomposition of $L\cup A$.  Thus $A$ is a $K_q$-absorber for $L$. By construction $\mathcal{B}_1\cap \mathcal{B}_2 = \emptyset$. Hence by Lemma~\ref{lem:AbsLowerBound}, we have that every vertex in $V(A)\setminus V(L)$ has degree at least $2q-2$ in $A$. Hence $m_2(A,V(L))\ge m(A,V(L))\ge q-1$. Since $A$ is a subgraph of $A''$, we find that $m_2(A'',V(L))\ge q-1$.

Since $A''$ is arbitrary, this implies that $d_{\rm abs}(K_q)\ge q-1$ as desired.
\end{proof}

We conjecture that the correct value is as follows:

\begin{conj}
For each integer $q\ge 3$, $d_{\rm abs}(K_q)=q$.    
\end{conj}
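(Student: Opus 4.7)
My plan is to establish $d_{\rm abs}(K_q) = q$ by tightening both of the current sandwich bounds $q - 1 \le d_{\rm abs}(K_q) \le q + \tfrac{1}{2}$ (with $\le 3$ for $q = 3$) coming from Corollary~\ref{cor:AbsLowerBound}, Theorem~\ref{thm:GeneralTriangleAbsorberThreshold}, and Theorem~\ref{thm:GeneralQAbsorberThreshold}.

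For the upper bound $d_{\rm abs}(K_q)\le q$, the paragraph following Lemma~\ref{lem:StarTransformerGeneral} already remarks that iterating the $q = 3$ path construction of Lemma~\ref{lem:StarTransformerTriangle} should produce star transformers of rooted $2$-density $q + \varepsilon$ for every $\varepsilon > 0$. I would flesh this out by replacing the fixed $(q-1) \times (q-1)$ grid with a long ``tube'' of $k$ analogous building blocks glued along shared $(q-1)$-rows, so that each interior block contributes only its $q$-degenerate bulk while the anchor completions to $x$ and $x'$ (which are responsible for the extra $\tfrac{1}{2}$ in Lemma~\ref{lem:StarTransformerGeneral}) are restricted to a constant number of end blocks. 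A direct density calculation analogous to the one in the proof of Lemma~\ref{lem:StarTransformerGeneral} would then give $m_2(T, V(L) \cup V(L')) \le q + O(1/k)$. Plugging this improved transformer into Lemma~\ref{lem:SparseAntiCliqueAbsorber} and then Lemma~\ref{lem:NablaConstruction} yields $d_{\rm abs}(K_q) \le q + \varepsilon$ for every $\varepsilon > 0$, hence $d_{\rm abs}(K_q) \le q$.

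For the lower bound $d_{\rm abs}(K_q)\ge q$, I would strengthen Lemma~\ref{lem:AbsLowerBound} by exploiting the full ``$2$-fold cover'' structure: after the preprocessing in the proof of Corollary~\ref{cor:AbsLowerBound} one may assume $\mathcal{B}_1 \cap \mathcal{B}_2 = \emptyset$, so every edge of $A$ lies in exactly one clique of each of $\mathcal{B}_1$ and $\mathcal{B}_2$, and these two cliques differ. For each $v \in V(A) \setminus V(L)$ this yields two distinct partitions of $N_A(v)$ into $(q-1)$-sets. Running the ladder in Lemma~\ref{lem:AbsLowerBound} symmetrically --- from an edge $vw$ with $V(Q_1) \ne V(Q_2)$, pick both $x \in V(Q_1) \setminus V(Q_2)$ and $y \in V(Q_2) \setminus V(Q_1)$ and trace the $\mathcal{B}_2$-clique $Q_2'$ of $vx$ and the $\mathcal{B}_1$-clique $Q_1'$ of $vy$ --- gives four cliques through $v$. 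Within each decomposition the two cliques through $v$ share only $\{v\}$, while across decompositions pairs share at most $\{v,w\}$, $\{v,x\}$, or $\{v,y\}$. A direct inclusion-exclusion on the union then gives $d_A(v) \ge 4q - 7$, which exceeds $2q$ for all $q \ge 4$. To handle $q = 3$, where the bound is only $5$ rather than $6$, I would extend the argument into the ``second shell'': each cross-edge forced in $A$ lies in a further $\mathcal{B}_2$-clique not through $v$, and a global averaging over $v$ together with an amortized count of these extension cliques should give $e(A) \ge q \cdot |V(A)\setminus V(L)|$, hence $m(A, V(L)) \ge q$.

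The main obstacle is the $q = 3$ case of the lower bound: a purely local count around a single vertex $v$ saturates at density $2$ (the worst case is a ``hub plus $4$-cycle'' configuration on $5$ vertices with $8$ edges), so the bound $3$ must come from propagating local mismatches outward rather than from a local count alone. The right amortization is not obvious --- a natural attempt is to average over $v$ together with the choice of starting edge $vw$, but one then has to control how many second-shell extension cliques can be shared between different starting configurations, which is a delicate combinatorial question about the intersection pattern of two edge-disjoint $K_q$-decompositions. A further risk is that the conjecture may simply be false for small $q$, in which case this approach would still establish $d_{\rm abs}(K_q) \le q$ but the exact lower bound would require either a different structural argument or an explicit small absorber achieving density strictly below $q$.
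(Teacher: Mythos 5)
This statement is a conjecture in the paper, not a proven result; the authors explicitly leave both the improved upper bound $d_{\rm abs}(K_q)\le q$ and the improved lower bound $d_{\rm abs}(K_q)\ge q$ open, so there is no paper proof to compare against. Your proposal is therefore a genuine attempt at an open problem, and the right thing to do is check it on its own merits.

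The upper-bound sketch is aligned with the paper's own parenthetical remark after Lemma~\ref{lem:StarTransformerGeneral} (iterating the $q=3$ path idea to get density $q+\varepsilon$), and it is plausible, but it is only a sketch: you have not given the actual tube construction, verified the two required $K_q$-decompositions, or carried out the density computation. That part would need to be done in full before this could count as a proof.

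The lower-bound argument has a concrete error. You claim that after tracing the four cliques $Q_1,Q_2,Q_1',Q_2'$ through $v$, every cross-decomposition pair shares at most two of $v$'s neighbours (``at most $\{v,w\}$, $\{v,x\}$, or $\{v,y\}$''), yielding $d_A(v)\ge 4q-7$. This is false. Cliques from $\mathcal{B}_1$ and from $\mathcal{B}_2$ are under no edge-disjointness constraint with each other (each edge of $A$ is covered exactly once by \emph{each} decomposition), so $Q_1\in\mathcal{B}_1$ and $Q_2\in\mathcal{B}_2$ may share up to $q-1$ vertices, and likewise $Q_1'$ and $Q_2'$ may share a $K_{q-1}$ through $v$. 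Concretely, take $V(Q_1)=\{v,w,a_1,\dots,a_{q-2}\}$, $V(Q_2)=\{v,w,a_1,\dots,a_{q-3},b\}$, $V(Q_2')=\{v,a_{q-2},c_1,\dots,c_{q-2}\}$, $V(Q_1')=\{v,b,c_1,\dots,c_{q-2}\}$; then the four cliques cover only $2q-2$ neighbours of $v$, matching Lemma~\ref{lem:AbsLowerBound} and not improving it. A quick sanity check also shows the claimed bound cannot be right: it would give $m(A,V(L))\ge (4q-7)/2=2q-3.5$, which for $q\ge 5$ exceeds the paper's proven upper bound $q+\tfrac12$ from Theorem~\ref{thm:GeneralQAbsorberThreshold}. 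The ``second shell'' amortization you propose for $q=3$ is, as you note yourself, the genuine difficulty, and the proposal does not resolve it; so neither half of the conjecture is actually established here.
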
 

For $q=3$, we have by Theorem~\ref{thm:GeneralTriangleAbsorberThreshold} that $d_{\rm abs}(K_q)\le 3$; we think a more involved lower bound argument, say using discharging, could show that $d_{\rm abs}(K_3)\ge 3$, but we leave this as an open question. Similarly, Theorem~\ref{thm:GeneralQAbsorberThreshold} yields that $d_{\rm abs}(K_q)\le q+0.5$ and we think an inductive construction as in the proof of Theorem~\ref{thm:GeneralTriangleAbsorberThreshold} could show that $d_{\rm abs}(K_q)\le q$; similarly, we believe a more involved lower bound argument could show that $d_{\rm abs}(K_q)\ge q$ thus proving our conjecture. Again, though, we leave this as open.

In light of Corollary~\ref{cor:AbsLowerBound}, it seems a new approach is needed to push past this theoretical barrier in Theorems~\ref{thm:YusterRandom} and \ref{thm:YusterRandomRegular}. We believe using non-constant sized absorbers might improve the values multiplicatively (say to $p=n^{-(1+\varepsilon)/q}$ for some small $\varepsilon>0$) but that new approaches would be needed to approach the correct multiplicative value of roughly $2/q$. 
Alternatively, it is conceivable that the correct answer could lie closer to the absorber/booster thresholds and hence that a better lower bound argument for $p$ and $d$ values in $G(n,p)$ and $G_{n,d}$ is required.

For fractional decompositions of $G(n,p)$, it is not apparent that absorbers (or their fractional ilk) are needed and hence the theoretical barrier mentioned above need not necessarily apply to finding fractional decompositions. That said, as mentioned in the proof overview, we were unable to use the Boost Lemma to directly find a fractional decomposition when $p\le n^{-1/(q+.5)}$ (due to the fact that $m(K_{q+2},K_q)=q+.5$). Perhaps new ideas could be found to work around this difficulty and push past the absorber barrier for fractional decompositions. 

\subsection{Optimal Leave Number}

Finally, one last direction to discuss is the optimality of the leave we obtained. It turns out for all graphs there is a natural lower bound on how many edges must be left out of a $K_q$-packing. While this number will be far from correct for some graphs (e.g. triangle-free graphs for a $K_3$-packing), for $G(n,p)$ in our range of $p$, it is conceivable that this is number is attainable. Here is the definition.

\begin{definition}
Let $q\ge 3$ be an integer. Let $G$ be a graph. We define the \emph{optimal $K_q$-leave number} of $G$ to be the smallest integer $k$ such that $k\equiv e(G) \mod \binom{q}{2}$ and $k\ge \frac{1}{2} \cdot \sum_{v\in V(G)} (d(v) \mod (q-1))$.

We say a $K_q$-packing $\mathcal{Q}$ of $G$ is \emph{optimal} if $e(G)-e(\bigcup \mathcal{Q})$ equals the optimal $K_q$-leave number of $G$.
\end{definition}

The optimal $K_q$-leave number is a natural lower bound for the number of leftover edges (or `leave') in a $K_q$-packing as the next proposition shows.

\begin{proposition}
Let $q\ge 3$ be an integer. Let $G$ be a graph. If $\mathcal{Q}$ is a $K_q$-packing of $G$, then $e(G)-e(\bigcup \mathcal{Q})$ is at least the optimal $K_q$-leave number of $G$.
\end{proposition}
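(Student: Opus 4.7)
The plan is to verify that the leave $L \coloneqq G \setminus \bigcup \mathcal{Q}$ satisfies both defining conditions of the optimal $K_q$-leave number, namely that $e(L) \equiv e(G) \pmod{\binom{q}{2}}$ and $e(L) \geq \frac{1}{2}\sum_{v \in V(G)}(d_G(v) \bmod (q-1))$, from which the claim follows immediately by minimality.

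For the modular condition on $e(L)$, I would observe that since $\mathcal{Q}$ is a $K_q$-packing, the edges covered by $\bigcup \mathcal{Q}$ partition into $|\mathcal{Q}|$ pairwise edge-disjoint copies of $K_q$, so $e(\bigcup \mathcal{Q}) = |\mathcal{Q}|\cdot \binom{q}{2}$. Hence $e(L) = e(G) - |\mathcal{Q}|\binom{q}{2} \equiv e(G) \pmod{\binom{q}{2}}$.

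For the degree condition, the key observation is that every vertex in a copy of $K_q$ has degree exactly $q-1$ within that copy, and the copies in $\mathcal{Q}$ are edge-disjoint, so for every $v \in V(G)$, the degree $d_{\bigcup \mathcal{Q}}(v)$ is a multiple of $q - 1$. Therefore $d_L(v) = d_G(v) - d_{\bigcup \mathcal{Q}}(v) \equiv d_G(v) \pmod{q - 1}$, and since $d_L(v) \geq 0$, we get $d_L(v) \geq d_G(v) \bmod (q-1)$. Summing over all vertices and using $\sum_{v}d_L(v) = 2 e(L)$ yields $e(L) \geq \frac{1}{2} \sum_{v \in V(G)}(d_G(v) \bmod (q-1))$, as desired.

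This argument is entirely routine — there is no real obstacle — so I expect the proof to occupy only a few lines, consisting of these two observations followed by the remark that the optimal $K_q$-leave number is by definition the minimum integer satisfying both conditions that $e(L)$ was just shown to satisfy.
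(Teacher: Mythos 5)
Your proposal is correct and takes essentially the same route as the paper: verify that the leave $L = G\setminus \bigcup\mathcal{Q}$ satisfies $e(L)\equiv e(G)\pmod{\binom{q}{2}}$ via $e(\bigcup\mathcal{Q})=|\mathcal{Q}|\binom{q}{2}$, and $d_L(v)\equiv d_G(v)\pmod{q-1}$ (whence $d_L(v)\ge d_G(v)\bmod(q-1)$), then sum degrees and invoke minimality of the optimal leave number. The paper phrases the first two facts via the $K_q$-divisibility of $\bigcup\mathcal{Q}$, but the content is identical.
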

\begin{proof}
Let $Q:= \bigcup \mathcal{Q}$ and let $H:= G\setminus Q$. Since $Q$ admits a $K_q$-decomposition (namely $\mathcal{Q})$, we have that $e(Q) \equiv 0 \mod \binom{q}{2}$ and that $d_Q(v) \equiv 0 \mod (q-1)$ for all $v\in V(G)$. Hence since $e(H)=e(G)-e(Q)$, we have that $e(H) \equiv e(G) \mod \binom{q}{2}$. Similarly since $d_H(v) = d_G(v)-d_Q(v)$, we have that $d_H(v) \equiv d_G(v) \mod (q-1)$ for all $v\in V(G)$. Thus $2\cdot e(H) = \sum_{v\in V(G)} d_H(v) \ge \sum_{v\in V(G)} (d_G(v) \mod (q-1))$. Since $e(H)$ is integer and satisfies the above, it follows that $e(H)$ is at least the optimal $K_q$-leave number of $G$.    
\end{proof}

It might be helpful to the reader to note the optimal $K_3$-leave number of $G(n,p)$ is $\frac{n}{4} \pm O(\sqrt{n \log n})$ asymptotically almost surely since that is exactly half the number of odd degree vertices of $G(n,p)$. This is because the number of odd degree vertices of $G(n,p)$ is $\frac{n}{2}$ in expectation but is only concentrated over an interval of size $\Theta(\sqrt{n \log n})$. Thus one cannot prove a better bound on the leave than $\frac{n}{4} + O(\sqrt{n \log n})$, but perhaps one can prove that \aas any individual 
instance has an optimal $K_3$-packing.

Theorem~\ref{thm:YusterTriangle} yielded a leave of $n+O(1)$. In fact, a Hamilton cycle with an edge-disjoint $4$-cycle and $5$-cycle constitutes a $K_3$-divisibility fixer. Moreover such a fixer has the property that it only needs to delete at most $\frac{n}{2} + 5$ edges to fix divisibility (by taking the smaller of the two halves of the Hamilton cycle linking odd degree vertices, and then deleting either the $4$-cycle or $5$-cycle as need to fix the number of edges). 

This bound is still off by a factor of $2$ from the optimal $K_3$-leave number of $G(n,p)$. To obtain the correct bound, better results about $G(n,p)$ need to be proved. We opted not to pursue said results but rather leave these as open questions.

First we conjecture that $G(n,p)$ contains a perfect matching on its odd-degree vertices for large enough $p$.

\begin{conj}
If $p\ge n^{-1/2}$, then $G(n,p)$ asymptotically almost surely contains a perfect matching of its odd-degree vertices. 
\end{conj}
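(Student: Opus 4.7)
The plan is to verify Tutte's condition for the induced subgraph $G[O]$, where $O$ denotes the set of odd-degree vertices of $G \sim G(n,p)$, by establishing that $G[O]$ inherits the pseudorandom expansion properties of $G(n,p)$. The handshake lemma automatically gives that $|O|$ is even, so only the existence of a perfect matching in $G[O]$ needs to be argued.

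First I would pin down the rough size and local structure of $O$. For each $v \in V(G)$ the parity indicator $X_v \coloneqq \mathbf{1}[d_G(v) \text{ odd}]$ satisfies $\Prob{X_v = 1} = \frac{1}{2}(1 - (1-2p)^{n-1}) = \frac{1}{2} + o(1)$, and for distinct $u, v$ the pair $(X_u, X_v)$ is independent conditional on $\mathbf{1}[uv \in G]$ because the remaining edges at $u$ and at $v$ lie in disjoint independent families; Chebyshev's inequality then yields that asymptotically almost surely $|O| = \frac{n}{2} \pm O(\sqrt{n \log n})$. Next, for each $v$, conditional on $v \in O$ and on $N_G(v)$, each $w \in N_G(v)$ lies in $O$ with probability $\frac{1}{2} + o(1)$, and these events are nearly independent across $w \in N_G(v)$ since their parities depend on disjoint edge families apart from the $O((np)^2)$ edges lying inside $N_G(v)$. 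A Chernoff bound together with a union bound then shows that \aas every $v \in O$ has $|N_G(v) \cap O| \ge np/4 \gg \log n$. Extending the same conditional-parity computation to arbitrary subsets $S \subseteq O$ with $|S| \le |O|/2$ gives $|N_G(S) \cap O| \ge |S| + 1$ with high probability, and these two expansion statements together imply Tutte's condition for $G[O]$; alternatively, once $G[O]$ is shown to be a near-regular expander, a random balanced bipartition of $O$ followed by the Erd\H{o}s--R\'enyi bipartite perfect-matching threshold produces the matching directly since $p$ lies well above $\log |O| / |O|$.

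The main obstacle, and the reason the conjecture does not immediately follow from the classical toolkit, is the delicate coupling between $O$ and the edge set of $G[O]$: the vertices we seek to match are defined by the degrees of the very graph whose edges must supply the matching. The key technical content is to show that conditioning on a small vertex set $S$ lying in $O$ perturbs the distribution of the edges inside $O$ only negligibly. This requires either an explicit computation using the independence of disjoint edge families, as sketched above, or, more cleanly, a two-round exposure $G = G_1 \cup G_2$ with independent $G_i \sim G(n, p_i)$ and $p_1 + p_2 - p_1 p_2 = p$, where a thin slice $G_2$ is used to ``reveal'' a partial parity structure while the denser $G_1$ retains enough independent randomness to supply the matching edges inside $O$. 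Handling the overlap $G_1 \cap G_2$ and the bilinear dependence of $O$ on both slices is the crux of the argument; once accomplished, the threshold $p \ge n^{-1/2}$ is far above what is actually needed, suggesting that the conjecture in fact holds essentially down to the classical perfect-matching threshold $p = \Omega(\log n / n)$.
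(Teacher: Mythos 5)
First, note that the statement you are proving is left as an open conjecture in the paper: the authors explicitly decline to prove it, and nothing in the paper supplies a proof. So your proposal must stand entirely on its own, and as written it does not. The decisive problem is the one you yourself flag as ``the crux'': the set $O$ of odd-degree vertices is a function of every edge of $G$, so once $O$ is determined, the edges inside $O$ are no longer distributed as independent Bernoulli$(p)$ variables, and neither the Erd\H{o}s--R\'enyi bipartite matching threshold nor a Chernoff-plus-union-bound expansion argument can be invoked for $G[O]$ as if it were a fresh binomial random graph. Your proposed fix via a two-round exposure $G = G_1 \cup G_2$ does not escape this: the parity of $d_G(v)$ depends on the $G_1$-edges as well, so to know $O$ you must expose $G_1$ at every vertex, after which the $G_1$-edges inside $O$ carry no residual randomness; conversely, keeping $G_1$ unexposed means you do not know $O$. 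You acknowledge that ``handling the overlap and the bilinear dependence'' is the key technical content and then leave it unaccomplished, which is precisely the gap --- the remainder of the sketch (sizes of $O$, degrees into $O$) is routine and is not where the difficulty of the conjecture lies.

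There are also secondary gaps even granting favorable conditioning. The step ``$|N_G(S)\cap O| \ge |S|+1$ for all $S \subseteq O$ with $|S| \le |O|/2$ \dots with high probability'' cannot be obtained by proving it for each $S$ separately and union bounding, since there are exponentially many sets and the per-set failure probability is not controlled in your sketch; standard expansion arguments must treat small and large sets by different mechanisms, and none of that is set up here. Moreover, vertex expansion of this form does not by itself ``imply Tutte's condition'' for the non-bipartite graph $G[O]$: Tutte's condition concerns the number of odd components of $G[O] - S$, and passing from expansion to a bound on odd components requires an additional connectivity/component-counting argument that you do not supply. So the proposal is a plausible program, not a proof, and its central obstruction --- the coupling between the random set $O$ and the random edges within it --- is exactly what the paper's authors leave open.
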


Indeed, this should be true even for $p \gg n^{-1}\cdot \log n$, but we would only need for $p\ge n^{-1/2}$ say. However, given the structure of our proof (namely using one slice to find a random reserve $X$ and another slice to find an omni-absorber $A$ for that $X$), we would actually need a stronger theorem to improve the leave. In particular, it seems that the optimal leave would need to be found in the last slice. So we conjecture the following:

\begin{conj}
Let $\phi:[n]\rightarrow \{0,1\}$ such that $|\{i\in [n]:\phi(i)=1\}|$ is even.  If $p\ge n^{-1/2}$, then $G(n,p)$ asymptotically almost surely contains a perfect matching on $\{ v_i: d_{G(n,p)}(v_i) \equiv \phi(i)-1 \mod 2\}$. 
\end{conj}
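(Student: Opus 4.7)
Write $E' \coloneqq \{v_i : \phi(i) = 1\}$ and let $O(G)$ denote the set of odd-degree vertices of $G \sim G(n,p)$, so that the target set is $S_\phi = O(G) \triangle E'$. Since $|O(G)|$ is always even and $|E'|$ is even by hypothesis, $|S_\phi|$ is always even; moreover, since the parity of $d_G(v)$ is an essentially fair coin flip for $p \gg 1/n$, standard concentration gives $|S_\phi| = n/2 \pm O(\sqrt{n \log n})$ \aas. The plan is to deduce the conjecture from the following stronger ``matching universality'' statement: for $p \ge n^{-1/2}$, \aas $G(n,p)$ contains a perfect matching on every even-sized $T \subseteq V$ with $|T| \ge n/3$. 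Since $|S_\phi|$ \aas lies in this range, the conjecture would follow.

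The plan for the universality statement is a union bound. For a fixed even $T$ of size at least $n/3$, the induced subgraph $G[T]$ is distributed as $G(|T|, p)$; for $p \ge n^{-1/2} \gg (\log |T|)/|T|$, Hall-type expansion arguments via codegree concentration yield a perfect matching in $G[T]$ with high probability. The main obstacle is that at the threshold $p = n^{-1/2}$ the isolated-vertex obstruction in $G[T]$ has probability only $\exp(-\Theta(\sqrt n))$, which is far too large for a naive union bound over the $2^{\Theta(n)}$ candidate subsets $T$. The key to circumventing this is that $S_\phi$ is not adversarial: whether $v \in S_\phi$ depends only on $d_G(v) \bmod 2$, which, up to a bias of $(1-2p)^{n-1} = o(1)$, is independent of the neighborhood structure around $v$. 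In particular, low-degree or poorly expanding vertices of $G$ are no more likely to lie in $S_\phi$ than in its complement, so the isolated-vertex obstruction for $T = S_\phi$ does not arise with the same probability as for adversarial $T$.

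Concretely, I would implement this via a two-round sprinkling argument. Couple $G(n,p) = G_1 \cup G_2$ with $G_1 \sim G(n, p - q)$ and $G_2 \sim G(n, q)$ for $q = n^{-1/2}/\log^3 n$. In the first round, using only $G_1$, construct a near-matching on $V(G)$ together with a small reserve of high-degree ``absorber'' vertices (in the style of Montgomery's distributive absorption), chosen so that the vertex set of the near-matching can be locally modified to equal any target within Hamming distance $O(\sqrt n \log n)$ of its initial value. In the second round, reveal $G_2$; this determines the degree parities and hence pins down $S_\phi$. Then use the absorber together with Posa-type rotations in $G_1$ and the extra edges of $G_2$ to adjust the near-matching into a perfect matching on $S_\phi$. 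The hard part will be quantifying this final adjustment: a single edge of $G_2$ can flip the $S_\phi$-membership of two vertices that are far apart in $G_1$, so the rerouting is necessarily nonlocal, and one must show that the absorber supports all required corrections simultaneously with high probability jointly over all vertex parities. Coordinating the absorber construction in $G_1$ with the joint distribution of parity flips induced by $G_2$ is the principal technical challenge I would anticipate.
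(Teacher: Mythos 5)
You should first note the status of this statement: the paper does not prove it. It is posed as an open conjecture in the concluding section (as the extra ingredient that would be needed to push the leave in the triangle case toward the optimal $K_3$-leave number), so there is no proof in the paper to compare against; your proposal has to stand on its own, and as written it is a plan with two substantive gaps rather than a proof.

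The first gap is the reduction. The ``matching universality'' statement you want -- a.a.s.\ $G(n,p)$ has a perfect matching on \emph{every} even $T$ with $|T|\ge n/3$ -- is not merely hard to prove by a union bound; it is false for every $p=o(1)$. At $p=n^{-1/2}$ all degrees are a.a.s.\ $O(\sqrt{n})=o(n)$, so for any vertex $v$ one can choose an even set $T$ of size $n/3$ containing $v$ and disjoint from $N(v)$; then $v$ is isolated in $G[T]$ and $G[T]$ has no perfect matching. You partially recognize this by pivoting to the non-adversarial nature of $S_\phi$, but you frame the obstruction as a union-bound inefficiency, when in fact any correct argument must work with the joint distribution of $S_\phi$ and $G$ from the start; there is no deterministic universality statement to fall back on.

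The second gap is quantitative and sinks the sprinkling plan as described. With $q=n^{-1/2}/\log^3 n$, each vertex's degree in $G_2$ is binomial with mean $\Theta(\sqrt{n}/\log^3 n)\to\infty$, so its parity is within $o(1)$ of a fair coin \emph{regardless of $G_1$}. Hence membership in $S_\phi$ is essentially re-randomized in the second round, and the symmetric difference between $S_\phi$ and any vertex set you commit to after round one is a.a.s.\ $(1/2-o(1))n$, not $O(\sqrt{n}\log n)$. An absorber built to correct $O(\sqrt{n}\log n)$ discrepancies is therefore short by a factor of order $\sqrt{n}$; to make the scheme work the round-one structure would have to yield a perfect matching on an arbitrary parity-typical set of about $n/2$ vertices, which is essentially the original problem (and a union bound over such sets runs into exactly the $\exp(-\Theta(\sqrt{n}))$ failure probabilities you identified). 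Going the other way, shrinking $q$ so that only $O(\sqrt{n}\log n)$ parities flip forces $q=O(n^{-3/2}\log n)$, at which point $G_2$ is far too sparse to support the nonlocal rerouting you need, and $G_1$ already carries all the parity information, so nothing is gained. A smaller but related imprecision: the parity of $d_G(v)$ is a function of the edges at $v$, so it is not ``independent of the neighborhood structure around $v$''; what one can hope to exploit is that conditioning on parity events barely distorts other local statistics, and controlling this jointly over all $n$ parity constraints is precisely where the real work in this conjecture lies.
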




For general $q$, we note the optimal $K_q$-leave number of $G(n,p)$ here is $\frac{(q-2)n}{4}~\pm O(\sqrt{n \log n})$ asymptotically almost surely. This is because the expected number of vertices with degree congruent to $i \mod (q-1)$ is $\frac{n}{q-1}$ and hence the expected optimal $K_q$-leave number is roughly $\frac{1}{2} \cdot \sum_{i=0}^{q-2}~i\cdot \frac{n}{q-1} = \frac{(q-2)n}{4}$. Again, one cannot prove a better bound on the leave than $\frac{(q-2)n}{4} + O(\sqrt{n \log n})$, but perhaps one can prove that \aas any individual instance has an optimal $K_q$-packing. Thus we also leave the question of whether the optimal $K_q$-leave number for $G(n,p)$ for general $q$ can be attained for our range of $p$.

Lastly, one could consider the best possible leave of a $K_q$-packing of $G_{n,d}$ if one does not assume that $(q - 1) \mid d$ or $q \mid dn$.  In fact, Yuster had conjectured that \aas there exist $\lfloor dn / (q(q - 1))\rfloor$ pairwise edge-disjoint copies of $K_q$ in $G_{n,d}$ provided $(q - 1) \mid d$, which would be a $K_q$-packing with leave of size the optimal $K_q$-leave number.  However, this conjecture is easily seen to be false when $q \nmid dn$, since it is not possible to delete fewer than $\binom{q}{2}$ edges of a $d$-regular graph where $(q - 1) \mid d$ and $q \nmid dn$ and obtain a $K_q$-divisible graph\COMMENT{We need to delete edges at some vertex $v$, and the number of edges incident to $v$ that we delete must be divisible by $(q - 1)$.  Hence, we must delete at least $q - 1$ edges at $v$, and then the same argument applied to other ends of the edges we delete implies that we need to delete at least $(q-1)(q - 2)/2$ additional edges.  Therefore, we need to delete at least $q - 1 + (q - 1)(q - 2)/2 = \binom{q}{2}$ edges.}.
Instead, the best possible leave seems to be whatever number is the smallest number of edges in an $n$-vertex graph $L$ with $e(L) \equiv nd/2 \mod \binom{q}{2}$ and degrees congruent to $d \mod q - 1$.


\end{document}